\documentclass[a4paper,11pt]{article}
\usepackage{amsmath,amssymb,amsbsy,amsfonts,mathrsfs}
\usepackage{microtype}
\usepackage{graphicx}
\usepackage[arrow,matrix,curve]{xy}
\usepackage{color}
\usepackage{a4wide}
\usepackage[normalem]{ulem}
\usepackage{hyperref}
\usepackage{wasysym}
\usepackage{comment}
\usepackage{rotating}
\definecolor{darkgreen}{rgb}{0,0.6,0}
\definecolor{darkred}{rgb}{0.8,0.1,0.1}
\definecolor{midnightblue}{rgb}{0.1,0.1,0.44}
\hypersetup{
     colorlinks=true,         
     linkcolor=midnightblue,
     citecolor=blue,
}


 1


\usepackage{amsthm}

\theoremstyle{plain}
\newtheorem{Theorem}{Theorem}[section]
\newtheorem{Lemma}[Theorem]{Lemma}
\newtheorem{Proposition}[Theorem]{Proposition}
\newtheorem{Corollary}[Theorem]{Corollary}

\theoremstyle{definition}
\newtheorem{Definition}[Theorem]{Definition}
\newtheorem{Example}[Theorem]{Example}

\newtheorem{Remark}[Theorem]{Remark}

\numberwithin{equation}{section}


\newcommand{\eq}{\begin{equation}}
\newcommand{\eqa}{\begin{eqnarray}}
\newcommand{\en}{\end{equation}}
\newcommand{\ena}{\end{eqnarray}}
\newcommand{\enn}{\nonumber \end{equation}}

\def\sk{\vskip .4cm}
\def\noi{\noindent}

\def\de{\delta}
\def\epsi{{\varepsilon}}
\def\st {\star}
\def\f{{\rm{f}\,}}
\def\of{{\overline{{\rm{f}}\,}}}

\def\D/h{\widehat{\fmslash D}}

\def\al{\alpha}
\def\la{\lambda}
\def\be{\beta}
\def\ga{\gamma}

\def\de{\delta}

\def\5bar{{\overline 5}}

\def\MMM{{\mathscr M}^{}}
\def\RR{{\mathcal R}}

\newcommand{\MMMod}[3]{{}^{#1}{\!}_{#2}\MMM{\!}_{#3}}
\newcommand{\AAAlg}[3]{{}^{#1}{\!}_{#2}\AAA{\!}_{#3} }

\def\R{{R}}
\def\oR{{\overline{\R}}}

\def\gg{{\hat g}}


\def\UU{H}

\def\FF{\mathcal F}

\def\varepsi{\varepsilon}

\def\s'O{\stackrel{_{{\displaystyle\st \footnotesize '}}}{_{^{^{\displaystyle\otimes}}}}}

\def\AAs{{A_\st }}

\def\D{\Delta}
\def\1s{{1_\st }}
\def\3s{{3_\st }}
\def\2s{{2_\st }}
\def\ef1{{1_\FF}}
\def\ef2{{3_\FF}}
\def\ef3{{2_\FF}}

\def\hbar{\lambda}

\def\nn{\nonumber}

\def\ker{{\rm ker}}

\def\bbC{\mathbb{C}}
\def\bbK{\mathbb{K}}
\def\bfK{\mathbb{K}}
\def\bbA{\mathbb A}
\def\bbR{\mathbb R}
\def\AA{A}
\def\AAA{\mathscr A}
\def\dd{{\nabla}}
\def\dif{{\mathrm d}}

\def\gg{\mathfrak{g}}

\def\rep{\mathsf{rep}}

\def\trgl{\triangleright}
\def\btrgl{\blacktriangleright}

\def\op{{op}}
\def\cop{{cop}}
\def\eqv{\mathrm{eqv}}


\def\Hom{\text{\sl Hom}}
\def\Con{\text{\sl Con}}
\def\End{\text{\sl End}}

\def\ra{\triangleright}
\def\RA{\blacktriangleright}
\def\pP{P}
\def\qQ{Q}

\def\id{\mathrm{id}}

\title{\bf Noncommutative connections on bimodules \\ and Drinfeld twist deformation}

\author{
{\bf Paolo Aschieri}$^\text{a}$\thanks{e-mail: \texttt{aschieri@to.infn.it}} ~
and~
{\bf Alexander Schenkel}$^\text{b}$\thanks{e-mail: \texttt{schenkel@math.uni-wuppertal.de}} \vspace{2mm}\\
$^\text{a}$ {\small Dipartimento di Scienze e Innovazione Tecnologica}\\
{\small and INFN Gruppo collegato di Alessandria, }\\
{\small Universit{\`a} del Piemonte Orientale, }\\
{\small Viale T.~Michel~11,~15121~Alessandria,~Italy.}\\~
\hfill \\~
$^\text{b}$ {\small  Fachgruppe Mathematik,} \\
{\small Bergische~Universit\"at~Wuppertal, } \\
{\small Gau\ss stra\ss e~20,~42119~Wuppertal,~Germany. }\\~}

\begin{document}
\maketitle

\begin{abstract}
Given a Hopf algebra $H$,
we study modules and bimodules over  an algebra $A$ that carry an $H$-action,
as well as their morphisms and connections. Bimodules naturally 
arise when considering noncommutative analogues of tensor bundles.
For quasitriangular Hopf algebras and
bimodules with an extra quasi-commutativity property
we  induce connections on the tensor product over $A$ of two bimodules
from connections on the individual bimodules. 
This construction applies to arbitrary connections, i.e.~not
necessarily $H$-equivariant ones, and further 
extends to the tensor algebra generated by a bimodule and its dual.
Examples of these noncommutative structures arise in
deformation quantization via Drinfeld twists
of the commutative differential geometry of a smooth manifold,
where the Hopf algebra $H$ is the universal enveloping algebra of vector fields
{(or a finitely generated Hopf subalgebra)}.

We extend the Drinfeld twist
deformation theory of modules and algebras to
morphisms and connections that are not necessarily $H$-equivariant.
The theory canonically lifts to the tensor product structure. 
 \end{abstract}
 \paragraph*{Keywords:}
Noncommutative geometry,  Drinfeld twist, bimodule connections, universal deformation
formula 
\paragraph*{MSC 2010:}46L87,  17B37,   53D55,     81R60

\newpage

\tableofcontents


\section{Introduction}
Powerful methods for studying deformations of an algebra $A$  are available if this algebra 
carries a representation of a group (or 
Hopf algebra $H$).  In this case one can
first consider a deformation of the group in a quantum group (or of
the Hopf algebra $H$ in a deformed Hopf algebra), and then use the group
(or Hopf algebra) action in order to induce a deformation of the algebra $A$.

Noncommutative manifolds, i.e.~noncommutative deformations of the
algebra of functions on a manifold, are frequently constructed along
this line,  correspondingly quantum groups and Hopf algebras play a
fundamental role in this field.
Deformation via Drinfeld twists is an example \cite{Drinfeld83,Drinfeld89,GIAQUINTO}.
Here  a twist element $\FF\in H\otimes H$ of the Hopf algebra $H$
induces a new Hopf algebra $H^\FF$ and a deformed algebra $A_\st$. If the algebra $A$
is commutative and $H$ is co-commutative (like the universal
enveloping algebra of a Lie algebra), then the commutation relations in the twist deformed
algebra $A_\st$ are determined by the triangular $\RR$-matrix
$\RR=\FF^{-1}_{21}\FF$,  and $A_\st$ is an example of a
quasi-commutative algebra. 
The  algebra $A_\star$ is typically noncommutative, 
i.e.~it is a quantization of $A$.

This research area benefits from the interplay of different
approaches. Let us consider for example quantum groups:
They are studied as noncommutative algebras
of ``functions on a noncommutative group'' \cite{FRT}, 
but also as quasitriangular Hopf algebras \cite{Drinfeld85} together with 
their associated categories of representations \cite{Res89,Drinfeld89}.
They also originated and provided new methods in deformation
quantization (see \cite{Takhtadzhyan} for an introduction).

\subsubsection*{Noncommutative differential geometry}

Hopf algebra actions also play a central role in the study of the
differential geometry of noncommutative manifolds (the
differential  calculus on quantum groups \cite{Woronowicz}
is a leading example).

The algebraic structures underlying noncommutative differential
geometry are quite rich.  If $A$ is the noncommutative analogue of the
algebra of functions on a manifold $M$,
modules over $A$ are then the noncommutative  analogues of 
(modules of sections of) vector bundles over $M$. An analogue of the fibre-wise 
tensor product of vector bundles is achieved restricting to the
subclass of $A$-bimodules 
(compatible left and right  $A$-modules) and considering their tensor product over
$A$. A notable example of an $A$-bimodule is that of one-forms.
When the algebra $A$ carries a representation of a Hopf algebra $H$, 
we study $\MMMod{H}{A}{}$-modules, which are left $H$-modules and also left
$A$-modules in a compatible way. These are the noncommutative analogues of
vector bundles with a lift of the $H$-action from functions on the
base manifold to sections.
$A$-bimodules compatible with the $H$-action,
i.e.~$\MMMod{H}{A}{A}$-modules, form a tensor algebra over $A$.

Noncommutative differential geometry is the study  of maps between modules
and bimodules, like the exterior derivative, 
connections and their curvatures. In particular, 
$A$-linear maps (left or right) are
relevant because, as in the commutative case,  the curvature
of a connection is an $A$-linear map
and also the difference between two connections is an
$A$-linear map.  This latter property is the affine space structure of connections.

When all modules carry an $H$-action
it would seem natural to
consider also $H$-equivariant maps. On the contrary,
a main theme in this work is the  study of the
general structure of non $H$-equivariant homomorphisms,
connections and curvatures.
This case, for example, arises when one considers the Levi-Civita connection
of a Riemannian manifold and studies deformations of the manifold that
are not isometric (i.e.~when the Hopf algebra $H$ is not related to the Lie algebra
of Killing vector fields). More in general if  the
connection is a dynamical field, like in gauge and gravity theories, 
it is not equivariant under the $H$-action.
When homomorphisms are not $H$-equivariant then they are $H$-covariant,
i.e.~they transform under
the $H$-adjoint action. This is the canonical  lift to linear and (left or right) $A$-linear maps
of the $H$-action on the $\MMMod{H}{A}{A}$-modules.
Thus, 
linear and $A$-linear maps between $\MMMod{H}{A}{A}$-modules form also an
$H$-module and their deformation can be studied via deformation
of the Hopf algebra $H$.  
\sk
We study the Drinfeld twist deformation theory of modules
and algebras in this noncommutative differential geometric context. We develop in
particular a theory of connections and of their twist deformations.

Connections in noncommutative geometry have been introduced in the mid eighties \cite{Connes}
and then investigated further since the mid nineties \cite{Mourad:1994xa,DuViMasson,Madore:2000aq,DuViLecture}.
On right (or left) $A$-modules there is a well-established notion
of connection,  however, these connections present issues when
considered  on $A$-bimodules. In fact, two $A$-bimodules can be tensored into 
another $A$-bimodule but there is no corresponding operation on
connections such that connections on the individual $A$-bimodules  induce a
connection on the tensor product $A$-bimodule. 
One way out is to restrict to a subclass of connections that have
extra properties \cite{Mourad:1994xa,DuViMasson,Madore:2000aq,DuViLecture},  in particular their curvature turns out to be both 
left and right $A$-linear. 
An alternative route we advocate is to restrict to a subclass of
$A$-bimodules with extra properties, so that the usual connections on
right (or left) $A$-modules induce connections on tensor product modules.  

Our results on  the theory of connections on
$\MMMod{H}{A}{A}$-modules and their twist deformation
can be organized according to the extra properties we demand:
{\it  i\/}) We study the deformation of connections on right $A$-modules
and the dual theory on left $A$-modules. Connections form an affine 
space and deformation is an affine space isomorphism. It does not 
preserve flatness: flat connections are deformed in non flat ones and vice versa.
{\it ii\/}) The study of $H$-covariant (not necessarily
$H$-equivariant) homomorphisms on tensor products of
$\MMMod{H}{A}{A}$-modules requires a quasitriangular Hopf algebra
$H$. 
We first study a tensor product of linear maps compatible with
the $H$-action and then show that
there is a canonical way to twist deform this tensor product.
{\it iii\/}) If furthermore the algebra $A$ and the $A$-bimodules are
quasi-commutative,
i.e., if they are compatible with the braiding structure of the
quasitriangular Hopf algebra $H$, the  tensor product of linear
maps induces a tensor product 
over $A$ of right  $A$-linear maps and we 
develop a theory of connections on tensor product modules.
Arbitrary  connections on the individual $A$-bimodules  induce a
connection on the tensor product $A$-bimodule. There is also a canonical
extension of a connection on an  $A$-bimodule to the tensor algebra
generated by the $A$-bimodule and its dual. 
In the special case of $H$-equivariant connections we recover the
usual notion of bimodule connections \cite{Mourad:1994xa,DuViMasson,Madore:2000aq,DuViLecture}.
An early account of our results appeared in the PhD thesis \cite{SchenkelDiss} and the proceedings
articles \cite{SchenkelPro,AschieriPro}.

\sk

In the present work we have been led by the example of deformation quantization
of commutative manifolds. In this case $A=C^\infty(M)[[h]]$ (the
algebra of formal power series in $h$ with coefficients in
$C^\infty(M)$) and we canonically have the Lie algebra of derivations
of $A$ and the associated Hopf algebra $H=U\Xi[[h]]$,  where $U\Xi$ is
the universal  enveloping algebra of vector fields on $M$. 
Vector fields and  one-forms are canonically
$\MMMod{H}{A}{A}$-modules, the $H$-action
on these modules being via the Lie derivative.
The twist deformation of  these modules and of the Lie derivative and
inner derivative homomorphisms has been studied in
\cite{Aschieri:2005yw, Aschieri:2005zs}
in order to formulate a noncommutative gravity theory, see \cite{book}
for a pedagogical introduction. 
It is deforming arbitrary connections on the tensor algebra of vector
bundles over commutative manifolds
that we are led to the general theory of connections on
quasi-commutative bimodules presented in this paper. 
The deformation of commutative differential geometry in the more
general framework of cochain twists, leading also to a nonassociative
geometry, but considering only $H$-equivariant connections and
homomorphisms, has been studied in \cite{BeggsMajid1}.

In the wide class of examples obtained via twist  deformation of commutative differential
geometries  the Hopf algebra $H$ is always triangular.
A caveat is here in order: It can be that there are no nontrivial
examples of truly quasitriangular Hopf algebras 
acting on quasi-commutative algebras and bimodules. If this is the
case, then the theory of connections we present is only suitable for triangular Hopf algebras
and the proofs of the theorems in this paper have the advantage of singling
out the specific passages where triangularity (in the form of quasi-commutativity) is needed.

\subsubsection*{Categorical aspects of twist deformation}
The categorical aspects underlying Drinfeld twist theory emerge also
in the study of homomorphisms and connections. It is useful to formulate some of our findings in this language.

Let us consider the category $\rep_\eqv^H$ of
representations of the Hopf algebra $H$. 
The objects in $\rep_\eqv^H$ are $H$-modules, the morphisms
are $H$-equivariant maps between $H$-modules and their composition is the usual composition $\circ$.
 The tensor product of representations structures $\rep_\eqv^H$ as a
monoidal category,  $\big(\rep_\eqv^H,\otimes\big)$.
 Given a twist $\FF$ of the Hopf algebra $H$,  one can 
 deform $H$ into the Hopf algebra $H^\FF$
and consider  the corresponding  monoidal category
$\big(\rep_\eqv^{H^\FF},\otimes_\star\big)$ of representations of $H^\FF$.
The objects in $\rep_\eqv^{H^\FF}$ are $H^\FF$-modules, the morphisms
are $H^\FF$-equivariant maps between $H^\FF$-modules and their composition is the usual composition $\circ$.
It follows from Drinfeld's work \cite{Drinfeld89} that the two
categories $\big(\rep_\eqv^H,\otimes\big)$ and $\big(\rep_\eqv^{H^\FF},\otimes_\star\big)$
are equivalent as monoidal categories.
Following \cite{Drinfeld89}, Giaquinto and Zhang \cite{GIAQUINTO} studied
the twist deformation of the category  $\big(\MMMod{H}{A}{},{}_A\Hom^\eqv,\circ\big)$. 
In this category the objects are $\MMMod{H}{A}{}$-modules
(i.e.~modules that are both left $H$-modules and left
$A$-modules in a compatible way), the morphisms are $H$-equivariant and left $A$-linear maps
 and their composition is the usual composition $\circ$.
They proved that 
the categories   $\big(\MMMod{H}{A}{},{}_A\Hom^\eqv,\circ\big)$ and 
$\big(\MMMod{H^\FF}{A_\star}{},{}_{A_\star}\Hom^\eqv,\circ\big)$ are equivalent.

\sk
Led by the structures required in noncommutative differential
geometry we investigate 
the category $\big(\MMMod{H}{A}{},{}_A\Hom,\circ\big)$ where now
morphisms are not $H$-equivariant but just left $A$-linear maps. 
In this case the twist does not only deform the Hopf algebra $H$, the algebra $A$ and the
modules, but also the
morphisms. We show that this twist deformation gives an
equivalence of categories. However, this equivalence is not between 
the deformed category $\big(\MMMod{H^\FF}{A_\star}{},{}_{A_\star}\Hom,\circ\big)$ (with morphisms
left $A_\st$-linear maps) and the initial category
 $\big(\MMMod{H}{A}{},{}_A\Hom,\circ\big)$, but between $\big(\MMMod{H^\FF}{A_\star}{},{}_{A_\star}\Hom,\circ\big)$
   and the category  $\big(\MMMod{H}{A}{},{}_A\Hom,\circ_\star\big)$ obtained from  $\big(\MMMod{H}{A}{},{}_A\Hom,\circ\big)$ 
    by deforming just the composition law of morphisms.
If we restrict to $H$-equivariant morphisms
we recover the results of \cite{GIAQUINTO}.
A similar equivalence is found when we consider the category with objects $\MMMod{H}{}{A}$-modules 
(or $\MMMod{H}{A}{A}$-modules)
and morphisms  right $A$-linear maps.

For quasitriangular Hopf algebras $H$ we also study the category $\rep^H$.
The objects in $\rep^H$ are $H$-modules, the morphisms 
 linear maps (not necessarily $H$-equivariant) between $H$-modules 
 and their composition is the usual composition $\circ$. 
This category is an ``almost monoidal'' category because the
tensor product on morphisms that we consider (i.e.~the one compatible with
the lift of the $H$-action from the tensor product of modules to the tensor product of morphisms)
 spoils the bifunctor properties of the tensor product (it is a bifunctor up to braiding). 
We show that twist deformation is however compatible with this tensor structure.
Also the category with objects given by
quasi-commutative  $\MMMod{H}{A}{A}$-modules
and morphisms given by right $A$-linear maps (and usual composition $\circ$) is an ``almost monoidal''
category.
Here too we show that twist deformation is  compatible with this tensor structure.

\subsubsection*{Outline}
We clarify the structure of the paper by outlining its content.
In Section \ref{sec:prelim} we settle our notation and
we recall elementary Hopf algebra notions.
In Section \ref{sec:hopfalg} we first introduce Hopf algebra twists and review how
they induce deformations of algebras that are also $H$-modules 
(i.e.~$H$-module algebras). Then we study algebras that 
transform under an $H$-adjoint action, like the algebra of
linear maps of  an $H$-module.
The twist deformation of such an algebra leads to a deformed algebra that is isomorphic
to the original one and we begin with the detailed study of this isomorphism, 
that we denote by $D_\FF$.
\sk

Section \ref{sec:modhom} is devoted to the deformation of endomorphisms and
homomorphisms between modules that carry both an
$H$ and an $A$-module structure, i.e.~$\MMMod{H}{}{A}$-modules
(like e.g.~the module of one-forms on a manifold).
As already said, we do not restrict ourselves to $H$-equivariant
maps between these modules. We first  just consider linear maps
(this is propaedeutical and will be needed for the study of connections),
then  we consider $A$-linear ones.
Due to the isomorphism $D_\FF$ between  the deformed  algebra of endomorphisms of
a module  and the algebra of endomorphisms of
the deformed module, there is a canonical way to deform right
$A$-linear endomorphisms (right $A$-linear maps) on $\MMMod{H}{}{A}$-modules
into right $A_\star$-linear ones on $\MMMod{H^\FF}{}{A_\star}$-modules. 
Similarly, also the deformation of homomorphisms between two modules is canonical.
In categorical language we have constructed a deformation
functor that maps $\MMMod{H}{}{A}$-modules to  $\MMMod{H^\FF}{}{A_\star}$-modules,
 and that has a nontrivial action, given by $D_\FF$,
on the corresponding right $A$-module homomorphisms. This functor
implies that the category of  $\MMMod{H}{}{A}$-modules with right $A$-linear maps as morphisms and
with a twist deformed composition law is equivalent to that of
$\MMMod{H^\FF}{}{A_\star}$-modules with right $A_\star$-linear maps as morphisms and with the
 usual composition law.  

In this section we also consider the deformation of $\MMMod{H}{A}{}$-modules
and of left $A$-linear homomorphisms (left $A$-linear maps). 
The dual $V^\prime$ of a right $A$-module $V$ is a left $A$-module,
and we show that dualizing a deformed module is
equivalent to deforming the dual module.
This result will be needed later in order to study connections on dual modules and their deformation.
\sk

In Section \ref{sec:tenprod} we study tensor products of modules and of homomorphisms, as well as their
deformation. A tensor product of linear maps is a linear map on the
tensor product of the original modules. 
We require $H$-covariance of this construction, i.e.~that it transforms
according to the  $H$-adjoint action, like the original linear maps.
This is achieved  if the Hopf algebra is quasitriangular, in fact it is lifting the braiding
of modules to linear maps  (via an adjoint action) that 
the tensor product of linear maps is obtained. 
Twist deformation of homomorphisms is compatible with this tensor product,
 hence again there is a canonical way to deform tensor products
of homomorphisms. Otherwise stated, the deformation functor is
compatible with the tensor product structure. We would actually have
an equivalence of monoidal categories if  the tensor product between linear maps would
structure the category of $H$-modules as a monoidal category. This is, however, only almost the case,
since the tensor product is a bifunctor up to a braiding.

In the second part of this section we finally consider left $H$-module
$A$-bimodules, i.e.~$\MMMod{H}{A}{A}$-modules. We focus on the subclass of quasi-commutative ones,
i.e.~of those $A$-bimodules compatible with the braiding structure of the
quasitriangular Hopf algebra $H$. Correspondingly right $A$-linear
homomorphisms (right $A$-linear maps) inherit a braided left $A$-linearity property.
The tensor product structure we have studied before induces a tensor product structure over $A$.
In particular, the tensor product of two right $A$-linear maps is again a right $A$-linear map
on the tensor product module (over $A$).
The deformation is also induced canonically on  tensor
products of quasi-commutative $A$-bimodules and of their right $A$-linear
homomorphisms. Otherwise stated, the deformation functor is compatible with the
tensor product structure over $A$.

We conclude the section by recalling that the universal $\RR$-matrix of a 
quasitriangular Hopf algebra $H$ is an
example of twist of $H$. If we use it in order to twist
deform quasi-commutative $A$-bimodules and their right $A$-linear homomorphisms,
 we obtain an isomorphism between left and right $A$-linear homomorphisms.

\sk
In Section \ref{sec:connections} we consider a differential calculus over the algebra $A$
(a differential graded algebra over $A$) and connections on right $A$-modules.
These are in particular  linear maps between modules, and carry an affine space structure with
respect to right $A$-module homomorphisms. 
We then deform the differential calculus and the connections. 
Using the results of the previous sections we obtain an affine space
isomorphism between connections on right $A$-modules and deformed
connections on right $A_\st$-modules. 
When the Hopf algebra is quasitriangular, and we restrict ourselves to
quasi-commutative $A$-bimodules, arbitrary connections on $A$-bimodules can be summed
to give a connection on the tensor product module (over $A$) of the initial
$A$-bimodules. This operation is again compatible with twist deformation,
in fact we show that the sum of deformed connections equals the
deformation of the sum of connections.

Finally we study connections on dual modules. A connection on a
right $A$-module induces a  connection on the dual left $A$-module (and there is an
affine space isomorphism between the affine spaces of
connections on a right $A$-module  and of
connections on the dual left $A$-module).
Again deformation and  dualization are compatible. 
The dual construction can be applied to connections with right
Leibniz rule on quasi-commutative $A$-bimodules to obtain connections 
with left Leibniz rule on the dual $A$-bimodules.
These can be mapped into right Leibniz rule  connections by extending the left to
right isomorphism of homomorphisms that we have constructed with the
$\RR$-matrix in the end of Section \ref{sec:tenprod}. In this way, from a
connection on a quasi-commutative $A$-bimodule we induce a connection
on the tensor algebra (over $A$) generated by this $A$-bimodule and its dual.
\sk

In Section \ref{sec:curvature} we deform the curvature of a connection and notice that it is
different from the curvature of the deformed connection.
In particular, flat connections are not deformed into flat connections.
Similarly we calculate the curvature of the sum of connections  and find that it 
 differs in general from the sum of the curvatures of the original connections.


\section{\label{sec:prelim}Preliminaries and notation on modules, algebras and Hopf algebras}
In this section we fix the notation and recall some basic facts about Hopf algebras and their
modules.  In deformation quantization the field of complex numbers
$\bbC$ is replaced by the ring $\bbC[[h]]$ of formal power series (in
an indeterminate, say $h$) with
coefficients in $\bbC$.  In order to cover also this example we
shall consider modules and algebras over a commutative ring $\bbK$ with unit element $1\in\bbK$.

A {\bf $\bbK$-module} is an abelian
group $V$ together with a map $\bbK\times V\rightarrow V$, $(\la, v)\mapsto \la v$,
 such that for all $\la, \tilde\la\in \bbK$ and $v,\tilde v\in V$,  
\eq
(\tilde\la \la) v=\tilde\la(\la v)~,~~\la(v+\tilde v)=\la v+\la \tilde v~,~~
(\la+\tilde \la)v=\la v+\tilde\la v~,~~ 1v=v~.
\en
A {\bf $\bbK$-module homomorphism} (or {\bf$\bbK$-linear map}) between two
$\bbK$-modules $V$ and $W$ is a homomorphism $P: V\rightarrow W$ of abelian groups 
 that satisfies, for all $v\in V$ and $\la\in \bbK$,  $P(\la
 v)=\la P(v)$.
The $\bbK$-module of  all $\bbK$-linear maps between $V$ and $W$ is
denoted $\Hom_\bbK(V,W)$.

An {\bf algebra} is a $\bbK$-module $\AA$ together with a 
$\bbK$-linear map $\mu:\AA\otimes\AA\to\AA$ (product), where $\otimes$ is the tensor product over $\bbK$.
We denote by $a\otimes b$ the image of $(a,b)$ under the canonical
$\bbK$-bilinear map $\AA\times\AA \to\AA\otimes\AA$ 
and write for the product $\mu(a\otimes b) = a\,b$.
The algebra $\AA$ is called {\bf associative} if,
for all $a,b,c\in\AA$,
$(a\, b)\, c = a\, (b\, c)$.
It is called {\bf unital} if
there exists a unit element $1\in\AA$ satisfying $1\,a=a\,1=a$, for all $a\in \AA$.
An {\bf algebra homomorphism} between two algebras $A$ and $B$
is a $\bbK$-linear map $\varphi: A\to B$, such that for all $a,\tilde a\in A$,
$\varphi(a\,\tilde a) = \varphi(a)\,\varphi(\tilde a)$. If $A$ and $B$ are unital, then 
$\varphi$ is also required to preserve the unit, i.e.~$\varphi(1) = 1$.
In the following, algebras will always be associative and unital if not otherwise
stated.
\begin{Definition}
A {\bf Hopf algebra} is an algebra $\UU$ together with 
two algebra homomorphisms $\Delta:\UU\to\UU\otimes\UU$ (coproduct), $\varepsilon:\UU\to\bbK$ (counit) and a
$\bbK$-linear map $S:\UU\to\UU$ (antipode) satisfying, for all $\xi\in\UU$,
\begin{subequations}
\label{eqn:Hopfalgebraaxioms}
\begin{flalign}
(\Delta\otimes\id)\Delta(\xi)&= (\id\otimes\Delta)\Delta(\xi)
~,\quad\text{(coassociativity)}\\
(\varepsilon\otimes\id)\Delta(\xi)&=(\id\otimes \varepsilon)\Delta(\xi) = \xi~,\\
\mu\bigl((S\otimes\id)\Delta(\xi)\bigr) &= \mu\bigl((\id\otimes S)\Delta(\xi)\bigr)=\varepsilon(\xi) 1~.
\end{flalign}
\end{subequations}
 The product in the algebra $\UU\otimes\UU$  is defined by, for all $\xi,\zeta,\tilde \xi,\tilde\zeta\in\UU$,
\begin{flalign}
(\xi\otimes \zeta)\, (\tilde\xi\otimes\tilde\zeta)=(\xi\, \tilde\xi)\otimes(\zeta\, \tilde\zeta)~.
\end{flalign}
\end{Definition} 
\sk

It is useful to introduce a compact notation  (Sweedler's notation) for the coproduct, for all $\xi\in\UU$,
$\Delta(\xi)=\xi_{1}\otimes\xi_{2}$ (sum understood). The Hopf algebra
properties (\ref{eqn:Hopfalgebraaxioms}) in this notation read
\begin{subequations}
\begin{flalign}
(\xi_{1})_1\otimes(\xi_1)_2\otimes \xi_{2}&=
\xi_{1}\otimes(\xi_2)_1\otimes (\xi_2)_2
=:\xi_1\otimes\xi_2\otimes\xi_3
~,\\
\varepsilon(\xi_{1})\xi_{2}&=\xi_{1}\varepsilon(\xi_{2})=\xi~,\\
S(\xi_1)\, \xi_2 &= \xi_1\, S(\xi_2)=\varepsilon(\xi) 1~.\label{2.6}
\end{flalign}
\end{subequations}
Likewise we denote the three times iterated application of the
coproduct on $\xi$  by $\xi_1\otimes\xi_2\otimes\xi_3\otimes \xi_4$.
It can be shown that the antipode of a Hopf algebra is unique and satisfies
$S(\xi\, \zeta)=S(\zeta)\, S(\xi)$ (antimultiplicative property), $S(1)=1$, $S(\xi_1)\otimes S(\xi_2)=S(\xi)_2\otimes S(\xi)_1$ 
and $\varepsilon(S(\xi))=\varepsilon(\xi)$, for all $\xi,\zeta\in\UU$.
\begin{Definition}
Let $A$ be an algebra. A {\bf left $A$-module} (or {\bf $\MMMod{}{A}{}$-module}) is a $\bbK$-module $V$ together with a 
$\bbK$-linear map $\cdot:\AA\otimes V\to V$ satisfying, for all $a,b\in\AA$ and $v\in V$,
\begin{flalign}
(a\, b) \cdot v = a\cdot(b\cdot v)~,~~1\cdot v = v~.
\end{flalign} 
The map $\cdot:\AA\otimes V\to V$ is called an action of
$A$ on $V$ or a representation of $A$ on $V$. 

A  $\bbK$-linear map $P : V\to W$ between two $\MMMod{}{A}{}$-modules
$V$ and $W$ is an  {\bf $\MMMod{}{A}{}$-module homomorphism} (or {\bf left $A$-linear
map}) if, for all $a\in A$ and $v\in V$,
$P(a\cdot v) = a\cdot P(v)$. We denote the $\bbK$-module of all left $A$-linear
maps between $V$ and $W$ by ${_A}\Hom(V,W)$.

Similarly a {\bf right $\AA$-module} (or {\bf $\MMMod{}{}{A}$-module}) is a $\bbK$-module $V$ together with a 
$\bbK$-linear map $\cdot:V\otimes\AA\to V$ 
satisfying, for all $a,b\in \AA$ and $v\in V$,
\begin{flalign}
 v\cdot(a\,b)=(v\cdot a)\cdot b~,~~v\cdot 1=v~.
\end{flalign}

A  $\bbK$-linear map $P : V\to W$ between two $\MMMod{}{}{A}$-modules
$V$ and $W$ is an  {\bf $\MMMod{}{}{A}$-module homomorphism} (or {\bf right $A$-linear
map}) if, for all $a\in A$ and $v\in V$,
$P(v\cdot a) = P(v)\cdot a$. We denote the $\bbK$-module of all right $A$-linear
maps between $V$ and $W$ by $\Hom_A(V,W)$. 
\end{Definition}
\sk

A left and a right module structure on $V$ are compatible if left and
right actions commute.
\begin{Definition}
Let $A$ and $B$ be algebras. An {\bf $(A,B)$-bimodule} (or {\bf $\MMMod{}{A}{B}$-module}) 
is a left $A$-module and a right $B$-module $V$
satisfying the compatibility condition,
for all $a\in A$, $b\in B$ and $v\in V$,
\begin{flalign}
(a\cdot v)\cdot b &=a\cdot (v\cdot b)~.
\end{flalign}
In case of $B=A$ we call $V$ an {\bf $A$-bimodule}  (or {\bf $\MMMod{}{A}{A}$-module}).

A  $\bbK$-linear map $P : V\to W$ between two $\MMMod{}{A}{B}$-modules
$V$ and $W$ is an  {\bf $\MMMod{}{A}{B}$-module homomorphism} if it is left $A$-linear and right $B$-linear.
\end{Definition}
\sk

The algebra $A$ can itself be a module over another algebra $H$. If
$H$ is further a Hopf algebra we have the notion of an $H$-module
algebra, expressing covariance of $A$ under $H$.
\begin{Definition}
 Let $\UU$ be a Hopf algebra. A {\bf left $\UU$-module algebra} (or {\bf $\AAAlg{H}{}{}$-algebra})
  is an algebra $\AA$ which is also a left $\UU$-module (we denote
the $H$-action by $\trgl$),
such that for all $\xi\in\UU$ and $a,b\in\AA$,
\begin{flalign}
\xi \trgl(a\,b)=(\xi_1\trgl a)\,(\xi_2\trgl b)~,~~\xi\trgl1 =\varepsilon(\xi)\,1~.
\label{coproductonab}\end{flalign}

An {\bf $\AAAlg{H}{}{}$-algebra homomorphism} between two $\AAAlg{H}{}{}$-algebras $A$ and $B$ is
an algebra homomorphism $\varphi: A\rightarrow  B$ that intertwines 
between the left action of $H$ on $A$ and the left action 
of $H$ on $B$,
for all $\xi\in H,\, a\in A$, $\varphi(\xi\trgl a)=\xi \trgl \varphi(a)$.
This property is also called {\bf $H$-equivariance} of the algebra homomorphism $\varphi$. 
\end{Definition}
\sk

We can now consider $\MMMod{}{A}{B}$-modules $V$, where $A,B$ are $\AAAlg{H}{}{}$-algebras
 and $V$ is also a left $H$-module. Compatibility between the Hopf algebra structure
of $H$ and the $(A,B)$-bimodule structure of $V$  leads to the following
covariance requirement.
\begin{Definition}
Let $H$ be a Hopf algebra and $A,B$ be $\AAAlg{H}{}{}$-algebras.
A {\bf left $\UU$-module $(\AA,B)$-bimodule} (or {\bf $\MMMod{H}{A}{B}$-module}) is an $\MMMod{}{A}{B}$-module $V$ 
which is also a left $\UU$-module, such that
for all $\xi\in\UU$, $a\in A$, $b\in B$ and $v\in V$,
\begin{subequations}
\label{eqn:moduleHcovariance}
\begin{flalign}
\label{eqn:moduleHcovariance2} \xi\trgl(a\cdot v) = (\xi_1\trgl a)\cdot (\xi_2\trgl v)~,\\
\label{eqn:moduleHcovariance1} \xi\trgl(v\cdot b) = (\xi_1\trgl v)\cdot (\xi_2\trgl b)~.
\end{flalign}
\end{subequations}
In case of $B=A$ we say that $V$ is a {\bf left $H$-module $A$-bimodule} (or {\bf $\MMMod{H}{A}{A}$-module}).

An algebra $E$ is a {\bf left  $\UU$-module $(\AA,B)$-bimodule algebra} (or {\bf $\AAAlg{H}{A}{B}$-algebra}), if $E$ as a
module is an $\MMMod{H}{A}{B}$-module and if $E$ is also an $\AAAlg{H}{}{}$-algebra.
\end{Definition}
$\MMMod{H}{A}{}$-modules and $\MMMod{H}{}{B}$-modules are defined similarly
to $\MMMod{H}{A}{B}$-modules, where (\ref{eqn:moduleHcovariance}) is restricted to
(\ref{eqn:moduleHcovariance2}) or (\ref{eqn:moduleHcovariance1}), respectively.  
For a coherent notation we shall call left $H$-modules (where $H$ is a Hopf algebra) 
also {\bf $\MMMod{H}{}{}$-modules}.

\sk

We can consider different classes of maps between $\MMMod{H}{A}{B}$-modules.
The first option is to consider maps that are compatible with all module structures,  
i.e.~$\MMMod{H}{A}{B}$-module homomorphisms.
 Let $V,W$ be $\MMMod{H}{A}{B}$-modules, then  a map $P:V\to W$ is an $\MMMod{H}{A}{B}$-module homomorphisms
  if it is an $H$-equivariant map (or $\MMMod{H}{}{}$-module homomorphism), for
  all $\xi\in H$ and $v\in V$,
$P(\xi\ra v) = \xi\ra P(v)$, and if it is also a  left $A$-linear map
and a right $B$-linear map.
A second option, as motivated  in the introduction, is to consider maps
 that are only compatible with the left $A$-module structure or the
 right $B$-module structure, i.e.~left $A$-linear maps and
 right $B$-linear maps.
A third option is to consider just $\bbK$-linear maps.

\begin{Example}
Consider the universal enveloping algebra
$U\Xi$ associated with the Lie algebra of vector fields $\Xi$ on a
smooth manifold $M$.
This is the tensor algebra (over $\bbR$) generated by the elements of
$\Xi$ and the unit element $1$ modulo the left and right ideal
generated by the elements $uv-vu-[u,v]$, for all $u,v\in \Xi$.
$U\Xi$ has a natural Hopf algebra structure; on the generators $u\in\Xi$
and the unit element $1$ we define
\begin{flalign}
\label{cosclass}
 \nonumber \qquad\qquad \qquad\qquad \Delta(u) &= u\otimes 1 + 1\otimes u~,  &\Delta(1) &= 1\otimes 1~,\qquad \qquad\qquad\qquad\qquad\\
 \varepsilon(u) &= 0~,  &\varepsilon(1) &= 1~,\\
 \nonumber S(u) &= -u~,  &S(1) &= 1~,
\end{flalign}
and extend $\Delta$ and  $\varepsilon$ 
as algebra homomorphisms and $S$ as an antialgebra homomorphism to all $U\Xi$.

Let $V$ be the vector space of one-forms $\Omega$ (vector fields $\Xi$) on 
$M$, and $A=C^\infty (M)$ be the algebra of smooth functions on $M$.   
$\Omega$ ($\,\Xi$) is a $\MMMod{}{C^\infty(M)}{C^\infty(M)}$-module (the right module
structure equals the left module structure because $C^\infty(M)$ is a
commutative algebra). 

$C^\infty(M)$ is a $\AAAlg{U\Xi}{}{}$-algebra; the first of property ({\ref{coproductonab}})
for $\xi$ a vector field is just the Leibniz rule.  
Employing the Lie derivative, we also have that  $\Omega$ ($\,\Xi$) is a
$\MMMod{U\Xi}{C^\infty(M)}{C^\infty(M)}$-module.

Another example is obtained by considering smooth complex valued
functions on $M$, and similarly vector fields and one-forms over the
field $\bbC$ rather than $\bbR$. 
\end{Example}


\section{\label{sec:hopfalg}Hopf algebra twists and deformations}
We first review some well-known results on deformations of a Hopf algebra 
and its modules by Drinfeld twists. We then introduce a deformation isomorphism
which will be of utmost importance in the study of deformations of module homomorphisms and connections.


\subsection{Twist deformation preliminaries}
\begin{Definition}\label{twistdeff}
Let $H$ be a Hopf algebra. A {\bf twist $\FF$} is an element 
$\FF\in \UU\otimes \UU$ that is invertible and that satisfies
\begin{subequations}
\label{propF}
\begin{flalign}
\label{propF1}
\FF_{12}(\Delta\otimes \id)\FF &=\FF_{23}(\id\otimes \Delta)\FF~,\quad
\text{($_{\,}2$-cocycle property)}\\
\label{propF2}
(\varepsilon\otimes \id)\FF &=1=(\id\otimes \varepsilon)\FF~,\quad 
\text{(normalization property)}
\end{flalign}
\end{subequations}
where $\FF_{12}=\FF\otimes 1$ and $\FF_{23}=1\otimes \FF$.
\end{Definition}
\sk

We shall frequently use the notation (sum over $\al$ understood)
\eq\label{Fff}
\FF=\f^\al\otimes\f_\al~~~,~~~~\FF^{-1}=\of^\al\otimes\of_\al~,
\en
where $ \f^\alpha, \f_\alpha, \of^\al,\of_\al$ are elements in  $\UU$. 

In order to get familiar with this notation we rewrite
(\ref{propF1}), (\ref{propF2}) and the inverse of (\ref{propF1}), 
\eq
((\Delta\otimes\id)\FF^{-1}) 
\FF^{-1}_{12} =((\id \otimes \Delta)\FF^{-1})\FF^{-1}_{23}~,\label{Finverse}
\en using the notation (\ref{Fff}). Explicitly,
\begin{subequations}\label{eqn:twistpropsimp}
\begin{flalign}
\f^\beta \f^\alpha_{_1}\otimes \f_\beta \f^\alpha_{_2}\otimes \f_\alpha &=
\f^\alpha\otimes \f^\beta \f_{\alpha_1}\otimes \f_\beta
\f_{\alpha_2}~ ,\label{2.21}\\
\varepsilon(\f^\alpha)\f_\alpha &= 1 =  \f^\alpha\varepsilon(\f_\alpha) ~, \label{2.23}\\
\label{ass}
\of_{_1}^\al\of^\be\otimes \of_{_2}^\al\of_\be\otimes \of_\al&=
\of^\al\otimes {\of_{\al_1}}\of^\be\otimes {\of_{\al_2}}\of_\be~.
\end{flalign}
 \end{subequations}
We next recall how
a twist $\FF$ induces a 
deformation of the Hopf 
algebra $\UU$ into a Hopf algebra  $\UU^\FF$, and of all its
$\MMMod{H}{}{}$-modules
into 
$\MMMod{H^\FF}{}{}$-modules. In particular  
$\AAAlg{H}{}{}$-algebras are deformed into  
$\AAAlg{H^\FF}{}{}$-algebras, and commutative ones are typically deformed
into noncommutative ones. In this respect  $\FF$ induces a quantization.

\begin{Theorem}\label{TwistedHopfAlg}
The twist ${\cal F}$ of the Hopf algebra $\UU$ determines a new 
 Hopf algebra $\UU^\FF$,  given by
\begin{flalign}
(\UU ,\mu, \Delta^{\cal F}, S^{\cal F}, \varepsilon)~.
\end{flalign}
As algebras $\UU^\FF =\UU$ and they also have the same counit 
$\varepsilon^\FF=\varepsi$. 
The coproduct is, for all $\xi \in \UU$,
\begin{flalign}
\Delta^{\FF}(\xi) = {\FF}\Delta(\xi){\FF}^{-1}~ .\label{2.4.1}
\end{flalign}
The antipode is, for all $\xi\in H$,
\begin{flalign}
S^\FF(\xi)=\chi S(\xi)\chi^{-1}~,\label{2.4.3}
\end{flalign}
where
\begin{flalign}
\chi := \f^\al S(\f_\al) \quad,\quad \chi^{-1} = S(\of^\al) \of_\al~ .\label{2.4.2}
\end{flalign}
\end{Theorem}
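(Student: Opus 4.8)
The plan is to verify directly that the proposed data $(\UU,\mu,\Delta^\FF,S^\FF,\varepsilon)$ satisfy the Hopf algebra axioms, taking the algebra structure $(\UU,\mu,1)$ as given and only checking the coalgebra axioms, the compatibility of $\Delta^\FF$ with $\mu$, and the antipode identities. First I would establish that $\Delta^\FF$ is coassociative. Writing $\Delta^\FF(\xi)=\FF\Delta(\xi)\FF^{-1}$, I would compute $(\Delta^\FF\otimes\id)\Delta^\FF(\xi)$ and $(\id\otimes\Delta^\FF)\Delta^\FF(\xi)$ and reduce both to the same expression. The key point is that the discrepancy between the two sides is governed precisely by the combination appearing in the $2$-cocycle property \eqref{propF1}: applying $\Delta\otimes\id$ (resp.\ $\id\otimes\Delta$) to the outer factors of $\FF$ and $\FF^{-1}$ produces the elements $\FF_{12}(\Delta\otimes\id)\FF$ and $\FF_{23}(\id\otimes\Delta)\FF$, which coincide by \eqref{propF1}; coassociativity of the undeformed $\Delta$ handles the innermost factor. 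I would do this conjugation bookkeeping carefully, since keeping track of which tensor leg each $\FF$ acts on is where errors creep in.

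Next I would check the counit axiom for $\Delta^\FF$. Applying $\varepsilon\otimes\id$ to $\FF\Delta(\xi)\FF^{-1}$, I would use the normalization property \eqref{propF2}, namely $(\varepsilon\otimes\id)\FF=1=(\id\otimes\varepsilon)\FF$ (and the analogous statements for $\FF^{-1}$, which follow since $\FF$ is invertible and $\varepsilon$ is an algebra map), together with the undeformed counit axiom $\varepsilon(\xi_1)\xi_2=\xi=\xi_1\varepsilon(\xi_2)$. The normalization conditions are exactly what is needed for the twisting factors to drop out, leaving $\xi$. That $\Delta^\FF$ is an algebra homomorphism $\UU\to\UU\otimes\UU$ is immediate: it is the composite of the algebra map $\Delta$ with conjugation by the invertible element $\FF$, which is an inner automorphism of $\UU\otimes\UU$; and $\varepsilon$ remains an algebra map since it is unchanged.

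The main work, and the step I expect to be the principal obstacle, is verifying the antipode axiom $\mu\bigl((S^\FF\otimes\id)\Delta^\FF(\xi)\bigr)=\varepsilon(\xi)1=\mu\bigl((\id\otimes S^\FF)\Delta^\FF(\xi)\bigr)$ for $S^\FF(\xi)=\chi S(\xi)\chi^{-1}$ with $\chi=\f^\al S(\f_\al)$. Here I would first confirm that $\chi$ is invertible with inverse $S(\of^\al)\of_\al$, which should follow from the cocycle and normalization properties together with the antimultiplicativity of $S$. Then, expanding $\Delta^\FF(\xi)=\f^\al\xi_1\of^\be\otimes\f_\al\xi_2\of_\be$ and applying $S^\FF\otimes\id$ followed by multiplication, I would aim to collapse the expression using the undeformed antipode identity \eqref{2.6} $S(\xi_1)\xi_2=\varepsilon(\xi)1$. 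The delicate part is that the twist factors $\f^\al,\of^\be$ become entangled with $\xi_1,\xi_2$ inside the antipode, so I would need to rewrite $\chi$ and $\chi^{-1}$ in a form that lets the cocycle identity \eqref{2.21} (and its inverse \eqref{ass}) reorganize the $\f$'s so that an $S(\zeta_1)\zeta_2$ pattern emerges and telescopes to a counit. I anticipate that the cleanest route is to first derive auxiliary identities expressing $\chi$ and $\chi^{-1}$ through $\FF$ and $\FF^{-1}$ applied to $\Delta$, reducing the antipode check to a formal consequence of the cocycle property rather than an ad hoc computation; getting those intermediate identities in exactly the right shape is where the real care is required, and I would present them as separate displayed equations before assembling the final telescoping argument.
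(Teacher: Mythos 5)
Your proposal is correct and follows essentially the same route as the proof the paper relies on (the paper itself gives no proof, citing instead \cite{Majid:1996kd}, Theorem 2.3.4): a direct verification in which the $2$-cocycle property \eqref{propF1} yields coassociativity of $\Delta^\FF$, the normalization \eqref{propF2} yields the counit axiom, conjugation by the invertible $\FF$ gives the algebra-map property, and the antipode axiom is checked for $S^\FF = \chi S(\,\cdot\,)\chi^{-1}$. You also correctly isolate the genuine content of the antipode step: once $\chi\chi^{-1}=\chi^{-1}\chi=1$ is established (the only place the cocycle condition enters beyond $\FF\FF^{-1}=\FF^{-1}\FF=1\otimes 1$), the auxiliary identities $S(\f^\al)\chi^{-1}\f_\al = 1$ and $\of^\be\,\chi\, S(\of_\be)=1$ follow at once by applying $\mu\circ(S\otimes\id)$ and $\mu\circ(\id\otimes S)$ to $\FF^{-1}\FF=1\otimes 1$, after which $\mu\bigl((S^\FF\otimes\id)\Delta^\FF(\xi)\bigr)$ and $\mu\bigl((\id\otimes S^\FF)\Delta^\FF(\xi)\bigr)$ telescope to $\varepsilon(\xi)1$ exactly as you anticipate.
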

A proof of this theorem can be found in textbooks on Hopf algebras,
see e.g.~\cite{Majid:1996kd}, Theorem 2.3.4.

\begin{Remark}\label{rem:dequant}
It is easy to show that the Hopf algebra $\UU^\FF$ admits the twist $\FF^{-1}$, indeed
\eq\FF_{12}^{-1}(\Delta^\FF\otimes \id)\FF^{-1}=\FF_{23}^{-1}(\id\otimes \Delta^\FF)\FF^{-1}
\en
is equivalent to (\ref{Finverse}). From (\ref{2.4.1}), (\ref{2.4.3})
and (\ref{2.4.2}) we see
that  the Hopf algebra
$(H^\FF)^{\FF^{-1}}$ is canonically isomorphic to $H$. 
We say that we twist back  $H^\FF$ to $H$ via the
twist $\FF^{-1}$.
\end{Remark}
\sk

\begin{Theorem}\label{Theorem1}
Given a Hopf algebra $H$, 
a twist $\FF\in H\otimes H$ and  an $\AAAlg{H}{}{}$-algebra  $\AA$ (not
necessarily associative or with unit), then there exists
an  $\AAAlg{H^\FF}{}{}$-algebra $\AA_\st$. 
The algebra $\AA_\st$ has the same $\bbK$-module
structure as $\AA$ and
the action of $H^\FF$ on $\AA_\st$
is that of $H$ on $\AA$.
The product in $\AA_\st $ is defined by, for all $a,b\in A$,
\eq
a\st  b :=\mu\circ \FF^{-1}\trgl(a\otimes b)=(\of^\al\trgl
a)\,(\of_\al\trgl b)~.
\en
If $\AA$ has a unit element then $\AAs$ has the same unit element. If 
$\AA$ is associative then $\AA_\st $ is an associative algebra as well.
\end{Theorem}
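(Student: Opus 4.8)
The plan is to verify, in turn, each of the four claims in Theorem~\ref{Theorem1}: that $\AA_\st$ is an $\AAAlg{H^\FF}{}{}$-algebra, that it inherits the unit of $\AA$, and that associativity is preserved. The overall strategy is direct computation using the defining formula $a \st b = (\of^\al \trgl a)\,(\of_\al \trgl b)$, the twist properties \eqref{eqn:twistpropsimp}, and the $\AAAlg{H}{}{}$-algebra covariance \eqref{coproductonab}, while keeping careful track of which Hopf algebra structure ($\UU$ versus $\UU^\FF$) is being invoked at each point.

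First I would establish the module-algebra compatibility, i.e.~that for all $\xi \in \UU^\FF$ and $a,b \in \AA_\st$ one has $\xi \trgl (a \st b) = (\Delta^\FF(\xi)) \trgl (a \ots b)$ in the sense $\xi \trgl(a\st b) = (\xi^{\FF}_1 \trgl a)\st(\xi^\FF_2 \trgl b)$, where $\Delta^\FF(\xi)=\xi^\FF_1\otimes\xi^\FF_2$. Expanding the right-hand side using the definition of $\st$ gives a product of four terms $\of^\be \trgl(\xi^\FF_1 \trgl a)$ and $\of_\be\trgl(\xi^\FF_2 \trgl b)$; substituting $\Delta^\FF(\xi)=\FF \Delta(\xi)\FF^{-1}$ from \eqref{2.4.1} and using that $\trgl$ is a genuine $H$-action (so $\of^\be \f^\al \trgl a = (\of^\be \f^\al)\trgl a$) collapses the $\FF$ against $\FF^{-1}$, leaving $\xi_1 \trgl a$ and $\xi_2 \trgl b$ paired against a single remaining $\FF^{-1}$. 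Applying \eqref{coproductonab} for the ordinary coproduct of $\UU$ then reassembles this into $\xi \trgl(a\st b)$, as desired. The normalization condition $\xi \trgl 1 = \varepsilon^\FF(\xi)\,1 = \varepsilon(\xi)\,1$ is immediate since $\varepsilon^\FF = \varepsilon$ and the action is unchanged.

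Next, for the unit I would check that the unit $1$ of $\AA$ satisfies $1 \st a = a \st 1 = a$: expanding $1 \st a = (\of^\al \trgl 1)\,(\of_\al \trgl a)$ and using $\xi \trgl 1 = \varepsilon(\xi)\,1$ together with the normalization \eqref{2.23} in the form $\varepsilon(\of^\al)\of_\al = 1 = \of^\al \varepsilon(\of_\al)$ reduces this to $1 \cdot a = a$, and symmetrically on the other side. Finally, for associativity I would compute both $(a \st b)\st c$ and $a \st (b \st c)$, expanding each into a product of three factors of the form $(\cdots \trgl a)(\cdots \trgl b)(\cdots \trgl c)$ with coefficients built from two copies of $\FF^{-1}$. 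The two bracketings produce the coefficient patterns on the two sides of the dual cocycle identity \eqref{ass}, namely $\of^\al_1 \of^\be \otimes \of^\al_2 \of_\be \otimes \of_\al$ versus $\of^\al \otimes \of_{\al_1}\of^\be \otimes \of_{\al_2}\of_\be$; here associativity of the undeformed product $\cdot$ lets me regroup freely, and the cocycle property \eqref{ass} equates the two resulting expressions.

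The main obstacle I anticipate is purely bookkeeping: keeping the Sweedler indices and the two summation labels $\al,\be$ straight, and correctly recognizing when a factor like $\FF^{-1}_{12}$ or $(\Delta\otimes\id)\FF^{-1}$ has been produced so that the appropriate form of the $2$-cocycle identity can be applied. There is no conceptual difficulty beyond this, since the covariance computation hinges only on the defining relation $\Delta^\FF = \FF \Delta(\,\cdot\,)\FF^{-1}$ and associativity hinges only on \eqref{ass}; the delicate point is to apply these identities in the precise index configuration that the expansion of $\st$ generates, rather than a superficially similar one.
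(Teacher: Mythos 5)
Your proposal is correct and follows essentially the same route as the paper's proof: the $H^\FF$-covariance via $\Delta^\FF(\xi)=\FF\Delta(\xi)\FF^{-1}$ combined with the undeformed module-algebra property (\ref{coproductonab}), the unit via the normalization (\ref{2.23}), and associativity via the inverse $2$-cocycle identity (\ref{ass}). The only cosmetic difference is that you run the covariance computation backwards, from $(\xi_{1_\FF}\trgl a)\st (\xi_{2_\FF}\trgl b)$ to $\xi\trgl(a\st b)$, whereas the paper starts from $\xi\trgl(a\st b)$; the ingredients are identical.
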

\begin{proof}
We have to prove  that the product in $\AA_\st$ 
is compatible with the Hopf algebra structure on $H^\FF$, for all $a,b\in A$ and $\xi\in H$,
\eqa
\xi\trgl(a\st b)&=&\xi\trgl\bigl(\mu\circ \FF^{-1}\trgl (a\otimes b)\bigr)\nn\\
&=&
\mu\circ \D(\xi)\trgl \circ \,\FF^{-1}\trgl (a\otimes b)\nn\\
&=&
\mu\circ(\D(\xi)\, \FF^{-1})\trgl  (a\otimes b)\nn\\
&=&\mu\circ\FF^{-1}\trgl\circ\D^\FF(\xi)\trgl(a\otimes b)\nn\\
&=&(\xi_{1_\FF}\trgl a)\st(\xi_{2_\FF}\trgl b)~,
\ena
where we used the notation $\Delta^\FF(\xi)=\xi_{1_\FF}\otimes \xi_{2_\FF}$.

If $\AA$ has a unit element $1$, then  $1\st a=a\st1=a$ follows from the
normalization property (\ref{propF2}) of the twist $\FF$.
If $\AA$ is an associative algebra we also have to prove
associativity of the new product, for all $a,b,c\in A$,
\begin{flalign}
\nn (a\st b)\st c&=\of^\al\trgl \bigl((\of^\be\trgl a)(\of_\be\trgl
b)\bigr)\;(\of_\al\trgl c)\\
&=
(\of^\al\trgl a)\,\of_\al\trgl \bigl((\of^\be\trgl b)(\of_\be\trgl
c)\bigr)=a\st (b\st c)~,
\end{flalign}
where we used the twist cocycle property  (\ref{propF1})  in
the notation  adopted in (\ref{ass}).
\end{proof}

\begin{Theorem}\label{Theorem2}
In the hypotheses of Theorem \ref{Theorem1}, given another $\AAAlg{H}{}{}$-algebra $B$ and an $\MMMod{H}{A}{B}$-module
 $V$, then there exists
an $\MMMod{H^\FF}{A_\star}{B_\star}$-module $V_\st$.
The module $V_\st$ has the same $\bbK$-module
structure as $V$ and the left action of $H^\FF$ on $V_\st$
is that of $H$ on $V$. The $A_\st$ and $B_\st$ action on $V_\st$
are respectively defined by, for all $a\in A$, $b\in B$ and $v\in V$,
\begin{subequations}
\label{modst}
\begin{flalign}
a\st  v &=\cdot\circ \FF^{-1}\trgl(a\otimes v)=(\of^\al\trgl
a)\cdot (\of_\al\trgl v)~, \label{lmodst}\\
v\st  b &=\cdot\circ \FF^{-1}\trgl(v\otimes b)=(\of^\al\trgl
v)\cdot(\of_\al\trgl b)~. \label{rmodst}
\end{flalign}
 \end{subequations}
If $V=E$ is further an $\AAAlg{H}{A}{B}$-algebra, then $E_\st$ is an $\AAAlg{H^\FF}{A_\st}{B_\st}$-algebra,
where the product in the algebra $E_\st$ is given in Theorem \ref{Theorem1}.
\end{Theorem}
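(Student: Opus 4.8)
The plan is to mimic the proof of Theorem \ref{Theorem1} step by step, treating the left action $\cdot:A\otimes V\to V$ and the right action $\cdot:V\otimes B\to V$ exactly as the product $\mu$ was treated there. Relative to Theorem \ref{Theorem1} the only genuinely new ingredient is the bimodule compatibility of the two deformed actions; the associativity and unitality statements are direct transcriptions of the corresponding arguments for the product.

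First I would verify the $H^\FF$-covariance of the deformed actions. For the left action the computation is identical to the one in Theorem \ref{Theorem1} with $\mu$ replaced by $\cdot$: writing $a\st v=\cdot\circ\FF^{-1}\trgl(a\otimes v)$ and applying $\xi\trgl(-)$, one moves the $H$-action through the map $\cdot$ using the covariance (\ref{eqn:moduleHcovariance2}) of $V$, so that $\xi\trgl\circ\,\cdot=\cdot\circ\Delta(\xi)\trgl$, and then rewrites $\Delta(\xi)\FF^{-1}=\FF^{-1}\Delta^\FF(\xi)$ coming from (\ref{2.4.1}) to obtain $\xi\trgl(a\st v)=(\xi_{1_\FF}\trgl a)\st(\xi_{2_\FF}\trgl v)$. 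The same argument with (\ref{eqn:moduleHcovariance1}) gives $\xi\trgl(v\st b)=(\xi_{1_\FF}\trgl v)\st(\xi_{2_\FF}\trgl b)$. Unitality $1\st v=v$ and $v\st 1=v$ follows from the normalization property (\ref{propF2}) exactly as the unit statement in Theorem \ref{Theorem1}.

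Next I would establish associativity of each action together with the bimodule compatibility. For the left $A_\st$-module property $(a\st a')\st v=a\st(a'\st v)$, expanding both sides by the definitions and pushing the outer actions inward via the covariance (\ref{eqn:moduleHcovariance2}) reduces both sides to expressions in $\of^\al,\of_\al,\of^\be,\of_\be$ acting on $a,a',v$, and these are identified by the cocycle identity in the form (\ref{ass}), precisely as associativity was proved in Theorem \ref{Theorem1}. The dual computation using (\ref{eqn:moduleHcovariance1}) yields the right $B_\st$-module property. The new step is the compatibility $(a\st v)\st b=a\st(v\st b)$: here I would expand both sides, apply the left-action covariance (\ref{eqn:moduleHcovariance2}) on the left-hand side and the right-action covariance (\ref{eqn:moduleHcovariance1}) on the right-hand side, and then invoke the original bimodule commutativity of $V$ (left and right $\cdot$ actions commute) to bring both sides into the same bracketing. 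At that point the three tensor legs acting on $a$, $v$, $b$ match exactly the two sides of the cocycle identity (\ref{ass}), so the two expressions coincide. This bookkeeping across three legs is the main place where care is needed, since one must track which parts of the coproducts of $\of^\al$ and $\of_\al$ land on each argument.

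Finally, for the algebra case $V=E$: by definition $E_\st$ is an $\AAAlg{H^\FF}{A_\st}{B_\st}$-algebra as soon as it is both an $\MMMod{H^\FF}{A_\st}{B_\st}$-module, which is what has just been shown, and an $\AAAlg{H^\FF}{}{}$-algebra, which is exactly Theorem \ref{Theorem1} applied to the product of $E$. Any compatibility between the $\st$-product on $E$ and the $A_\st,B_\st$ actions is inherited from the corresponding compatibility on $E$ by the identical cocycle argument, so no further computation is required.
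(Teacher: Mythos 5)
Your proposal is correct and follows essentially the same route as the paper's own (sketched) proof: the $H^\FF$-covariance of both deformed actions via $\Delta(\xi)\FF^{-1}=\FF^{-1}\Delta^\FF(\xi)$, the module associativity and the $(A_\st,B_\st)$-bimodule compatibility via the cocycle identity (\ref{ass}) combined with the covariance properties (\ref{eqn:moduleHcovariance}) and the commutation of the original left and right actions, and the algebra case reduced to Theorem \ref{Theorem1}. Your explicit treatment of the three-leg bookkeeping for $(a\st v)\st b=a\st(v\st b)$ is exactly what the paper leaves implicit under ``similarly proven.''
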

\begin{proof}
We give a sketch of the proof.
Left $\AA_\st$-module property: 
\begin{flalign}
\nn (a\st b)\st v&=\of^\al\trgl \bigl((\of^\be\trgl a)(\of_\be\trgl
b)\bigr)\cdot(\of_\al\trgl v)=
(\of^\al\trgl a)\cdot \of_\al\trgl \bigl((\of^\be\trgl b)\cdot (\of_\be\trgl
v)\bigr)\\
\label{astbstv}&=a\st (b\st v)~.
\end{flalign}
The right $B_\st$-module and $(A_\st,B_\st)$-bimodule
properties are similarly proven.\newline
Compatibility between the  left $H^\FF$ and the left $A_\st$-action:
\begin{flalign}
\xi\trgl (a\st v)
&=\cdot\circ\bigl(\Delta(\xi) \FF^{-1}\bigr)\trgl (a\otimes v)=\cdot\circ\FF^{-1}\ra\circ\Delta^\FF (\xi)\trgl (a\otimes v)\nn \\
&=(\xi_{1_\FF}\trgl a)\st(\xi_{2_\FF}\trgl v)\,.
\end{flalign}
Compatibility between the  left $H^\FF$ and the right $B_\st$-action
is similarly shown.
In case $V=E$ is an $\AAAlg{H}{A}{B}$-algebra, then $E_\st$ is an $\AAAlg{H^\FF}{A_\st}{B_\st}$-algebra
 because of Theorem \ref{Theorem1}.
\end{proof}
As in  Theorem \ref{Theorem2} we can deform $\MMMod{H}{A}{}$-modules and $\MMMod{H}{}{B}$ -modules into
$\MMMod{H^\FF}{A_\st}{}$-modules and $\MMMod{H^\FF}{}{B_\st}$-modules by restricting 
(\ref{modst}) to (\ref{lmodst}) or (\ref{rmodst}), respectively. We can also (trivially) deform
$\MMMod{H}{}{}$-modules into $\MMMod{H^\FF}{}{}$-modules.

\begin{Example}
Consider $U\gg[[h]]$, the universal 
enveloping algebra over $\bbC[[h]]$ of 
a Lie algebra $\gg$.  Twists $\FF\in U\gg\otimes U\gg_{\,}[[h]]$ are 
(up to equivalence) in one to one correspondence with
skew-symmetric elements $r\in\gg\otimes \gg$ satisfying the classical
Yang-Baxter equation \cite{Drinfeld83}.

Let $\FF$ be a twist of (a Hopf subalgebra $U\gg[[h]]$ of) $U\Xi[[h]]$, the universal
enveloping algebra of vector fields on a smooth
manifold $M$. Then the $\AAAlg{U\Xi[[h]]}{}{}$-algebra
$A=C^\infty(M)[[h]]$ of smooth function over $M$ with values in
$\bbC[[h]]$ and the 
$\MMMod{U\Xi[[h]]}{A}{A}$-modules of one-forms
$\Omega$ and of vector fields $\Xi$ can be deformed into the
noncommutative ones $A_\star$, $\Omega_\star$ and $\Xi_\star$. Also
the $\AAAlg{U\Xi[[h]]}{}{}$-algebras of tensor fields
$(T,\otimes)$, of exterior forms $(\Omega^\bullet,\wedge)$ and the
Lie algebra of vector fields $(\Xi, [~,~])$ can be deformed into the noncommutative algebras
$(T, \otimes_\star)$,  $(\Omega^\bullet, \wedge_\star)$
and the quantum Lie algebra $(\Xi, [~,~]_\star)$. These
deformations lead to a noncommutative gravity theory \cite{Aschieri:2005zs}.
\end{Example}

\begin{Remark}
The examples presented are in the context of formal deformation
quantization. However, if one considers sufficiently 
regular actions of a group $G$, rather than actions of its Lie algebra
$\gg$, then abelian Drinfeld twists (i.e.~Drinfeld twists associated to
an abelian Lie algebra $\gg$) induce  $\star$-products that can be
implemented non-formally and produce deformations $A_\star$ of
$C^\ast$-algebras \cite{Rieffel}. This construction has further  been
generalized to  the case of  nonabelian Drinfeld twists associated to
solvable Lie algebras $\gg$ (with extra structure) \cite{Bieliavsky}. 

Disregarding the topological aspects, the (non-formal) 
noncommutative algebras of polynomial functions on
multiparametric quantum groups associated with (abelian) Drinfeld twists
were introduced in \cite{Reshetikhin}.  Their differential geometry and that
of their homogeneous spaces was studied in \cite{AC} (and references therein).

Non-formal noncommutative geometries {\`a} la Connes related to (abelian) Drinfeld twists
are, besides the noncommutative torus, the noncommutative spheres
\cite{Connes-Landi} and further noncommutative manifolds (so-called isospectral deformations)
considered in \cite{Connes-Landi, Dubois-Violette}.
This was clarified in \cite{Varilly, Sitarz, Dubois-Violette, AB}. 
\end{Remark}

\subsection{The deformation isomorphism $D_\FF$}
Given an algebra $\bbA$ and an algebra homomorphism $\rho : H\rightarrow \bbA$ we can
consider the adjoint action on the algebra $\bbA$, for
all $\xi\in H$ and $P\in \bbA$, $\xi\RA P:=\rho(\xi_1)P\rho(S(\xi_2))$. 
We denote adjoint actions by the symbol $\RA$
in order to stress that these actions are induced  from 
``fundamental''  actions (in this case the left and right actions
$\rho(\xi)P$ and $P\rho(\xi)$ of the algebra $H$ on $\bbA$).
In this subsection we show that twist deformation of the algebra
$\bbA$ according to Theorem \ref{Theorem1} and using the $H$-adjoint
action $\RA$,  gives an algebra  $\bbA_\star$ that  is  isomorphic to $\bbA$.
This feature has been observed for the special case of the Hopf algebra $H$  in 
\cite{Gurevich:1991nr} and it has been further exploited in \cite{Aschieri:2005zs} 
 (see also \cite{book} Section 8.2.3.1). It has been considered in the more general case of
$H$-adjoint actions induced by an
algebra homomorphism $\rho : H\rightarrow \bbA$ in
\cite{Fiore:2008sj} and \cite{Kulish:2010mr}.

In Theorem \ref{isostar} and Theorem \ref{isostar2} we consider
the algebra isomorphism $D_\FF: \bbA_\star \to \bbA$
under slightly more general assumptions than the existence 
of an algebra homomorphism $\rho :H\rightarrow \bbA$. 
In Theorem \ref{isoAAstDAA} we clarify that the algebra isomorphism 
$D_\FF: \bbA_\star \to \bbA$ preserves the 
$\AAAlg{H^\FF}{}{}$-algebra structures of  $\bbA_\star$ and of $\bbA$.
We also show that $D_\FF$ has a categorical interpretation.

\begin{Theorem}\label{isostar}
Let $\bbA$ be an $\AAAlg{H}{}{}$-algebra (not necessarily  associative
or with unit; the $H$-action is denoted by $\RA$)
and  also a right module with respect to the algebra
$(H, \mu)$ (the right action of $(H,\mu)$ on $\bbA$ is simply denoted by juxtaposition), 
with the compatibility conditions, for all $\pP,\qQ\in\bbA$ and $\xi,\zeta\in \UU$,
\begin{subequations}\label{eqn:comp1}
\eqa
(\pP\qQ)\xi&=&\pP(\qQ\xi)~,\label{abxi}\\
(\pP\xi)\qQ &=&\pP(\xi_1\RA \qQ)\xi_2~,\label{comp19}\\
\xi\RA (\pP\zeta)&=&(\xi_1\RA\pP)  (\xi_2\RA\zeta)~, \label{Halgacrac}
\ena
\end{subequations}
where $\xi\RA\zeta=\xi_1 \zeta S(\xi_2)$ is the adjoint action of
$H$ on $H$.
In this case $\bbA$ and $\bbA_\st$ are isomorphic as algebras.
\end{Theorem}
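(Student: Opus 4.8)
The plan is to exhibit an explicit isomorphism $D_\FF:\bbA_\st\to\bbA$ and check the two required properties: that it intertwines the $\st$-product on $\bbA_\st$ with the product of $\bbA$, and that it is a $\bbK$-linear bijection. Guided by \cite{Gurevich:1991nr,Aschieri:2005zs,Fiore:2008sj}, I would set
\eq
D_\FF(P):=(\of^\al\RA P)\,\of_\al~,
\en
that is, first act on $P\in\bbA$ with the adjoint action $\RA$ of the first leg of $\FF^{-1}$, and then right-multiply by its second leg via the $(H,\mu)$-module structure. Since both $\RA$ and the right action are $\bbK$-linear, $D_\FF$ is a well-defined $\bbK$-linear endomorphism of the common underlying $\bbK$-module of $\bbA$ and $\bbA_\st$.

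The first step is to verify the homomorphism property $D_\FF(P\st Q)=D_\FF(P)\,D_\FF(Q)$, where $P\st Q=(\of^\be\RA P)(\of_\be\RA Q)$. On the right-hand side I expand $D_\FF(P)\,D_\FF(Q)=\bigl((\of^\al\RA P)\,\of_\al\bigr)\bigl((\of^\be\RA Q)\,\of_\be\bigr)$ and push the factor $\of_\al$ to the right past $(\of^\be\RA Q)$ using the compatibility condition \eqref{comp19}, then merge the two resulting right actions via \eqref{abxi} and the right $(H,\mu)$-module law; this yields $(\of^\al\RA P)\bigl((\of_{\al_1}\of^\be)\RA Q\bigr)(\of_{\al_2}\of_\be)$. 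On the left-hand side I use that $\RA$ makes $\bbA$ an $\AAAlg{H}{}{}$-algebra, so $\of^\al\RA\bigl((\of^\be\RA P)(\of_\be\RA Q)\bigr)=\bigl(((\of^\al)_1\of^\be)\RA P\bigr)\bigl(((\of^\al)_2\of_\be)\RA Q\bigr)$, and then apply the cocycle identity \eqref{ass} to the triple $(\of^\al)_1\of^\be\otimes(\of^\al)_2\of_\be\otimes\of_\al$, which converts the expression into exactly the same thing. A point worth stressing is that associativity of $\bbA$ is never invoked here: only \eqref{abxi}, \eqref{comp19}, the module-algebra property of $\RA$, and the cocycle \eqref{ass} are used, consistent with the theorem allowing $\bbA$ to be non-associative and non-unital.

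The main obstacle is bijectivity. I would produce an explicit inverse by ``twisting back'' with $\FF$ in place of $\FF^{-1}$, namely
\eq
\bar D_\FF(Q):=(\f^\al\RA Q)\,\f_\al~,
\en
and check $\bar D_\FF\circ D_\FF=\id=D_\FF\circ\bar D_\FF$. In computing $D_\FF\bigl(\bar D_\FF(Q)\bigr)$ the essential new input is the condition \eqref{Halgacrac}, which distributes $\of^\al\RA$ across the mixed product $(\f^\be\RA Q)\,\f_\be$ of an element of $\bbA$ with a factor in $H$, giving $\of^\al\RA\bigl((\f^\be\RA Q)\,\f_\be\bigr)=\bigl(((\of^\al)_1\f^\be)\RA Q\bigr)\bigl((\of^\al)_2\RA\f_\be\bigr)$ with the last $\RA$ the adjoint action of $H$ on itself. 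After appending the trailing $\of_\al$ and expanding this adjoint action, everything collapses once one establishes the purely Hopf-algebraic twist identity
\eq
(\of^\al)_1\,\f^\be\otimes(\of^\al)_2\,\f_\be\,S\bigl((\of^\al)_3\bigr)\,\of_\al=1\otimes1~,
\en
which follows from $\FF\FF^{-1}=1$ together with the antipode and counit axioms and the $2$-cocycle property \eqref{propF1}. This antipode bookkeeping is where the real work lies. Equivalently, one may invoke Remark \ref{rem:dequant} and read $\bar D_\FF$ as the $D$-map associated with the twist $\FF^{-1}$ of $H^\FF$ applied to $\bbA_\st$, which twists back to $\bbA$, checking that the hypotheses \eqref{eqn:comp1} are inherited by $\bbA_\st$; a first-order-in-the-deformation computation already confirms $\bar D_\FF=D_\FF^{-1}$ and gives confidence that the displayed identity is correct. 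With $D_\FF$ thus a bijective algebra homomorphism, $\bbA$ and $\bbA_\st$ are isomorphic as algebras.
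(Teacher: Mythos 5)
Your verification that $D_\FF(P)=(\of^\al\RA P)\,\of_\al$ is an algebra homomorphism $\bbA_\st\to\bbA$ is correct and is essentially the paper's own computation (the proof of Theorem \ref{isostar2}, eq.\ (\ref{DPSTQproof})): your use of (\ref{comp19}) to push $\of_\al$ past $\of^\be\RA Q$ is precisely the paper's intermediate identity $(\of_{\al_1}\of^\be\RA Q)\,\of_{\al_2}=\of_\al\,(\of^\be\RA Q)$, and, as you correctly stress, associativity of $\bbA$ is never used. The gap is in the bijectivity step: your candidate inverse $\bar D_\FF(Q):=(\f^\al\RA Q)\,\f_\al$ is \emph{not} the inverse of $D_\FF$, and the displayed ``purely Hopf-algebraic twist identity'' $(\of^\al)_1\f^\be\otimes(\of^\al)_2\f_\be\,S((\of^\al)_2{}_{{\,}2})\,\of_\al=1\otimes 1$ is false. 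The correct twist-back map is the one in Remark \ref{rem:Dinv}, $D_{\FF^{-1}}(Q)=(\f^\al\RA_\FF Q)\,\f_\al$, built from the \emph{deformed} adjoint action $\RA_\FF$ of (\ref{adjDFFxia}) (i.e.\ with $\Delta^\FF$ and $S^\FF$); your appeal to Remark \ref{rem:dequant} silently replaced $\RA_\FF$ by $\RA$, and these differ whenever $\FF$ is nontrivial.

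To see the failure concretely, set $T_{x\otimes y}(P):=(x\RA P)\,y$, so $D_\FF=T_{\FF^{-1}}$ and $\bar D_\FF=T_\FF$. The assignment $X\mapsto T_X$ is not multiplicative: $T_X\circ T_Y(P)=x_1y_1\,P\,S(y_2)\,y'\,S(x_2)\,x'$, whereas $T_{XY}(P)=x_1y_1\,P\,S(y_2)S(x_2)\,x'y'$, and the factor $y'$ cannot in general be commuted past $S(x_2)x'$. Expanding $\FF=1\otimes 1+hF_1+h^2F_2+O(h^3)$ one finds $\bar D_\FF\circ D_\FF=\id+h^2\bigl(T_{F_1^2}-T_{F_1}\circ T_{F_1}\bigr)+O(h^3)$, which is why your first-order check passes: the obstruction first appears at second order. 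It is genuinely nonzero: take the Jordanian data $F_1=H\otimes E-E\otimes H$ with $[H,E]=E$ and $\bbA=H=U\gg[[h]]$ as in Example \ref{ex:injection}; then the $O(h^2)$ coefficient of your displayed identity (for $F_1=\sum_i a_i\otimes b_i$ with primitive legs) equals $\sum_{i,j}a_ia_j\otimes[b_i,b_j]-\sum_{i,j}a_j\otimes[a_i,b_j]\,b_i$, which evaluates to $E\otimes HE-H\otimes E^2\neq 0$, and correspondingly $\bigl(T_{F_1^2}-T_{F_1}\circ T_{F_1}\bigr)(H)=-E^2-E^2H\neq 0$, so even at the level of maps $\bar D_\FF\circ D_\FF\neq\id$. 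The paper avoids this trap by first simplifying $D_\FF(P)=\f^\ga\,P\,S(\f_\ga)\,\chi^{-1}$ via the cocycle property (eq.\ (\ref{Disinvertible})), from which invertibility is manifest, with $D_\FF^{-1}(P)=\of^\al\,P\,\chi\,S(\of_\al)$ as in (\ref{Xmap}); alternatively your twist-back strategy works verbatim once you use $\RA_\FF$ in place of $\RA$, which is exactly Remark \ref{rem:Dinv}.
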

\begin{proof}
As we shall explain in the text below, this theorem is equivalent to
Theorem \ref{isostar2}.
Therefore the proof follows from the proof of Theorem \ref{isostar2}.
\end{proof}

Notice that $\bbA$ in the hypotheses above is a left module with 
respect to the algebra $(H, \mu)$ by defining, for all $\xi\in H$ and $P\in \bbA$, 
\eq
\xi \pP:= (\xi_1\RA \pP)\,\xi_2~\label{defleftact}.
\en
Condition  (\ref{Halgacrac})  implies that $\bbA$ is an
$(H,\mu)$-bimodule
\eq\label{xiPzetaassoc}
\xi (\pP \zeta)=(\xi_1\RA (\pP\zeta))\xi_2
=((\xi_1\RA \pP) (\xi_2\RA\zeta))\xi_3
=(\xi_1\RA \pP) \,\xi_2\zeta S(\xi_3)\xi_4
=(\xi \pP)\zeta~.
\en
The Hopf algebra action $\RA$ on $\bbA$ is just the adjoint action
with respect to this bimodule structure:
\eq
\xi\RA \pP=\xi_1 \pP S(\xi_2)~.\label{adjxasx}
\en 
Condition (\ref{comp19}) then simply reads 
\eq
(\pP\xi)\qQ=\pP(\xi \qQ)\label{comp192}\\
\en
and together with (\ref{abxi}) and the $\AAAlg{H}{}{}$-algebra
property $\xi\RA (\pP\qQ)= (\xi_1\RA \pP)(\xi_2\RA \qQ)$ we obtain
\eq
\xi(\pP\qQ)=(\xi \pP)\qQ~.\label{xiab}
\en
In case $\bbA$ is unital with $1 \in \bbA$ we also find, for all
$\xi\in \UU$,
\eq\label{xi1AA}
\xi\, 1=(\xi_1\RA 1)\xi_2=1\,\epsi(\xi_1)\xi_2=1\,\xi~.
\en

Vice versa, if $\bbA$ is an algebra and an $(H, \mu)$-bimodule satisfying
(\ref{abxi}), (\ref{comp192}) 
and (\ref{xiab}) (as well as (\ref{xi1AA}) if $\bbA$ is unital), then $\xi\RA \pP:=\xi_1 \pP S(\xi_2)$ defines an
$\AAAlg{H}{}{}$-algebra structure on $\bbA$ that
satisfies (\ref{comp19}) and  (\ref{Halgacrac}).
\sk
Hence,  Theorem \ref{isostar} equivalently reads
\begin{Theorem}\label{isostar2}
Consider a Hopf algebra $H$ and an 
$(H, \mu)$-bimodule $\bbA$ that is also an algebra (not necessarily
associative or with unit). If, for all
$\xi\in H$ and $\pP,\qQ\in \bbA$,  the ``generalized associativity'' conditions
\eq\label{theorem5hp}
(\pP\qQ)\xi=\pP(\qQ\xi)
~,~~
(\pP\xi)\qQ=\pP(\xi \qQ)
~,~~
\xi(\pP\qQ)=(\xi \pP)\qQ~,
\en
and in case $\bbA$ is unital, with $1\in\bbA$, also the condition
\eq
\xi \,1=1\,\xi~,
\en
hold true, 
then the adjoint action (\ref{adjxasx}) structures
$\bbA$ as an $\AAAlg{H}{}{}$-algebra.
Given a twist $\FF$ of the Hopf algebra $\UU$, the twist deformed algebra
$\bbA_\st$ is isomorphic (as an algebra) to $\bbA$ via the map 
\eq\label{eqn:Ddef}
D_\FF:\bbA_\st\to\bbA~,~~P\mapsto D_\FF(\pP):=(\of^\al\RA \pP)\:\of_\al=\of^\al_1 \pP S(\of^\al_2)\of_\al~.
\en
\end{Theorem}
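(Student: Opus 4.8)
The plan is to prove the two assertions separately: first that the adjoint action $\xi\RA\pP=\xi_1\pP S(\xi_2)$ of (\ref{adjxasx}) turns $\bbA$ into an $\AAAlg{H}{}{}$-algebra, and then that the map $D_\FF$ of (\ref{eqn:Ddef}) is a bijective algebra homomorphism $\bbA_\st\to\bbA$. For the first assertion I would check that $\RA$ is a left $H$-module action — this uses only that $\bbA$ is an $(H,\mu)$-bimodule together with $\Delta$ multiplicative, $S$ antimultiplicative and $S(1)=1$, so that $1\RA\pP=\pP$ and $(\xi\zeta)\RA\pP=\xi_1\zeta_1\pP S(\zeta_2)S(\xi_2)=\xi\RA(\zeta\RA\pP)$ — and then verify the Leibniz property $\xi\RA(\pP\qQ)=(\xi_1\RA\pP)(\xi_2\RA\qQ)$. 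The clean way is to show that both sides equal $(\xi_1\pP)(\qQ S(\xi_2))$: the left-hand side reduces to this by (\ref{xiab}) followed by (\ref{abxi}), while the right-hand side $=(\xi_1\pP S(\xi_2))(\xi_3\qQ S(\xi_4))$ collapses to it by sliding $S(\xi_2)$ across the product with (\ref{comp192}), recombining the left actions of $S(\xi_2)$ and $\xi_3$, and applying the antipode axiom $S(\xi_2)\xi_3=\varepsilon(\xi_2)1$ to the two inner legs. In the unital case $\xi\RA 1=\varepsilon(\xi)1$ follows from the hypothesis $\xi\,1=1\,\xi$ and $\xi_1 S(\xi_2)=\varepsilon(\xi)1$. (All of this is essentially the ``vice versa'' direction already recorded in the text preceding the statement.)

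The heart of the proof is that $D_\FF$ is multiplicative. Recalling that the product of $\bbA_\st$ is $\pP\st\qQ=(\of^\be\RA\pP)(\of_\be\RA\qQ)$ (Theorem \ref{Theorem1} with the action $\RA$), I would compute $D_\FF(\pP)\,D_\FF(\qQ)=\big((\of^\al\RA\pP)\of_\al\big)\big((\of^\be\RA\qQ)\of_\be\big)$ and transport all the $H$-factors into the correct slots. First, (\ref{comp192}) turns the right multiplication by $\of_\al$ on the first factor into a left action of $\of_\al$ on the second factor; rewriting that left action with (\ref{defleftact}) as an adjoint action followed by a right multiplication, and using associativity of the right action, yields
\[
D_\FF(\pP)\,D_\FF(\qQ)=(\of^\al\RA\pP)\,\Big(\big((\of_{\al_1}\of^\be)\RA\qQ\big)\,(\of_{\al_2}\of_\be)\Big)\,.
\]
At this point the three legs $\of^\al\otimes\of_{\al_1}\of^\be\otimes\of_{\al_2}\of_\be$ are precisely the right-hand side of the $2$-cocycle identity (\ref{ass}), so I may replace them by $\of^\al_1\of^\be\otimes\of^\al_2\of_\be\otimes\of_\al$. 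Pulling the trailing right multiplication by $\of_\al$ back out with (\ref{abxi}) and recognizing $\of^\al_1\RA(\of^\be\RA\pP)=(\of^\al_1\of^\be)\RA\pP$, etc., the expression becomes $\big[(\of^\al_1\RA\pP')(\of^\al_2\RA\qQ')\big]\of_\al$ with $\pP'=\of^\be\RA\pP$ and $\qQ'=\of_\be\RA\qQ$; the Leibniz property just established collapses the bracket to $\of^\al\RA(\pP'\qQ')$, and since $\pP'\qQ'=\pP\st\qQ$ this is exactly $D_\FF(\pP\st\qQ)$. Unitality $D_\FF(1)=1$ is immediate from $\of^\al\RA 1=\varepsilon(\of^\al)1$ and the normalization $\varepsilon(\of^\al)\of_\al=1$ (the $\FF^{-1}$ analogue of (\ref{2.23})).

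It remains to prove bijectivity, which I regard as the main obstacle. The useful observation is that $D_\FF$ is a conjugation: writing $D_\FF(\pP)=\of^\al_1\,\pP\,\big(S(\of^\al_2)\of_\al\big)$, it has the form $\pP\mapsto v^{(1)}\pP\,v^{(2)}$ for the fixed element $v=\of^\al_1\otimes S(\of^\al_2)\of_\al\in H\otimes H^{\op}$, acting through the left and right $(H,\mu)$-module structures. Since left multiplication is a homomorphism $H\to\End(\bbA)$ and right multiplication an antihomomorphism, the two combine into an algebra homomorphism $H\otimes H^{\op}\to\End(\bbA)$; hence $D_\FF$ is invertible as soon as $v$ is a unit of $H\otimes H^{\op}$, its inverse then being conjugation by $v^{-1}$. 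The remaining task is to exhibit $v^{-1}$: I expect it to be the analogous element $\f^\al_1\otimes S(\f^\al_2)\f_\al$ built from $\FF$ rather than $\FF^{-1}$, with $v^{-1}v=1\otimes 1=vv^{-1}$ following from $\FF\FF^{-1}=1$ together with the cocycle relations (\ref{2.21}), (\ref{ass}) and the antipode axioms. Carrying this out without assuming $S$ invertible or a formal-deformation setting is the delicate step; the conceptual alternative is to invoke Remark \ref{rem:dequant}, applying the already-proven homomorphism statement to the twist $\FF^{-1}$ of $H^\FF$ in order to twist $\bbA_\st$ back to $\bbA$, and then to check that the resulting map inverts $D_\FF$.
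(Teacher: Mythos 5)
Your structural reductions and the homomorphism part are correct and essentially the paper's own argument: the verification that $\RA$ gives an $\AAAlg{H}{}{}$-algebra structure is the ``vice versa'' discussion preceding the theorem, and your multiplicativity computation is the paper's chain (\ref{DPSTQproof}) run in reverse --- your use of (\ref{defleftact}) to trade the left action $\of_\al\,(\of^\be\RA \qQ)$ for $\bigl((\of_{\al_1}\of^\be)\RA \qQ\bigr)\of_{\al_2}$ is exactly the paper's intermediate identity, followed by the same cocycle property (\ref{ass}) and the Leibniz rule. Unitality is handled identically.

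The genuine gap is bijectivity, and it is not merely an unverified step: your conjectured inverse is wrong. The observation that $D_\FF$ is conjugation by $v=\of^\al_1\otimes S(\of^\al_2)\of_\al$ through an algebra map $H\otimes H^{\op}\to\End_\bbK(\bbA)$ is sound, but the candidate $w=\f^\al_1\otimes S(\f^\al_2)\f_\al$ is not $v^{-1}$ in general. Apply $\varepsilon\otimes\id$, which is an algebra map onto $H^{\op}$ and hence carries a two-sided inverse pair to a two-sided inverse pair: $(\varepsilon\otimes\id)(v)=S(\of^\al)\of_\al=\chi^{-1}$, so $(\varepsilon\otimes\id)(v^{-1})$ must equal $\chi=\f^\al S(\f_\al)$, whereas $(\varepsilon\otimes\id)(w)=S(\f^\al)\f_\al$, and $S(\f^\al)\f_\al\neq \f^\al S(\f_\al)$ for a general twist. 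For instance, for the Jordanian twist $\FF=\exp\bigl(\tfrac{1}{2}H\otimes\log(1+hE)\bigr)$ of $U(\mathfrak{b})[[h]]$ with $[H,E]=2E$, the term $-\tfrac{h^2}{4}H\otimes E^2$ of $\FF$ contributes $-\tfrac{h^2}{4}HE^2$ to $\f^\al S(\f_\al)$ but $+\tfrac{h^2}{4}HE^2$ to $S(\f^\al)\f_\al$, so the two elements differ at order $h^2$; since inverses in $H\otimes H^{\op}$ are unique, $w\neq v^{-1}$. (Your guess is correct whenever $\chi=1$, e.g.\ for abelian twists and for the standard small examples, which is why it looks plausible --- the failure is second order in the deformation parameter.) Consequently your claimed derivation ``from $\FF\FF^{-1}=1$ together with the cocycle relations'' cannot go through.

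The paper sidesteps the guess entirely, and its route is the repair you need: insert $1\otimes 1=\FF\FF^{-1}$ and use the cocycle identity (\ref{ass}) plus the antipode axiom to \emph{simplify} $v$ first, obtaining (\ref{Disinvertible}), i.e.\ $D_\FF(P)=\f^\gamma\, P\, S(\f_\gamma)\chi^{-1}$, equivalently $v=\f^\gamma\otimes S(\f_\gamma)\chi^{-1}$. Invertibility is then immediate from $\FF\FF^{-1}=\FF^{-1}\FF=1\otimes 1$ and $\chi\chi^{-1}=\chi^{-1}\chi=1$, with $v^{-1}=\of^\al\otimes\chi S(\of_\al)$, i.e.\ ${D_\FF}^{-1}(P)=\of^\al\, P\,\chi\, S(\of_\al)$ as in (\ref{Xmap}); note that no $S^{-1}$ appears, so your worry about invertibility of the antipode is moot. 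Your fallback via Remark \ref{rem:dequant} is viable in principle, but beware that the twist-back map must be built with the \emph{twisted} structures, $D_{\FF^{-1}}(P)=\f^\al_{1_\FF}\,P\,S^{\FF}(\f^\al_{2_\FF})\f_\al$ --- replacing $\Delta^\FF,S^\FF$ by $\Delta,S$ there is precisely the error in your $w$ --- and verifying $D_{\FF^{-1}}={D_\FF}^{-1}$ (the paper's Remark \ref{rem:Dinv}) again runs through the simplified formula above, so it offers no shortcut around that computation.
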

\begin{proof} 
$D_\FF$ is obviously a $\bbK$-linear map. It is also an algebra homomorphism,
for all $\pP,\qQ\in \bbA$,
\eqa
D_\FF(\pP\st \qQ)
&=&D_\FF\bigl((\of^\be\RA \pP)(\of_\be\RA \qQ)\bigr)\nonumber\\
&=&\Bigl(\of^\al\RA \bigl((\of^\be\RA\pP)(\of_\be \RA \qQ)\bigr)\Bigr)\,\,\of_\al\nonumber\\
&=&(\of^\al_{~_1}\of^\be \RA \pP)\,\,(\of^\al_{~_2}\of_\be \RA \qQ)\,
\,\of_\al
\nonumber\\
&=&(\of^\al \RA \pP)\,\,(\of_{\al_1}\of^\be \RA \qQ)\,\,\of_{\al_2}\,\,\of_\be
\nonumber\\
&=&(\of^\al\RA \pP)\,\,\of_{\al}\,\,(\of^{\be}\RA \qQ)\,\,\of_\be\nonumber\\
&=&D_\FF(\pP)\,D_\FF(\qQ)~,
\label{D alg-homo}\label{DPSTQproof}
\ena
where in the fourth line we used (\ref{ass}) and in the fifth line
we used that 
\begin{flalign}
\nn (\of_{\al_1}\of^\be\RA \qQ)\,\,\of_{\al_2}&=
\of_{\al_1}
\,\,(\of^\be\RA \qQ)\,\,S(\of_{\al_2})\,\,\of_{\al_3}\\
&=
\of_{\al_1}
\,\,(\of^\be\RA \qQ)\,\,\varepsi(\of_{\al_2})=
\of_{\al}\,\,(\of^\be\RA \qQ)~.
\end{flalign}
In order to show that $D_\FF$ is invertible we simplify (\ref{eqn:Ddef})  using (\ref{eqn:twistpropsimp}) as follows
\begin{flalign}
 \nn D_\FF(P) &= \of^\al_1 P S(\of^\al_2)\of_\al = 
\of^\al \f^\gamma P S(\of_{\al_1}\of^\beta \f_\gamma)\of_{\al_2}\of_\beta\\
\label{Disinvertible}&=\of^\al \f^\gamma P S(\f_\gamma) S(\of^\beta)\varepsilon(\of_\al)\of_\beta =
 \f^\gamma P S(\f_\gamma) \chi^{-1}~,
\end{flalign}
where $\chi^{-1} = S(\of^\al)\of_\al$.
Therefore, $D_\FF$ is invertible and we have,
for all $\pP\in \bbA$,
\eq\label{Xmap}
{D_\FF}^{-1}(\pP)=\of^\al\pP\chi S(\of_\al)~,
\en
where $\chi =\f^\be S(\f_\be)$.

Finally, if $\bbA$ is unital,   $D_\FF$ maps the unit of $\bbA_\star$ to the unit
of $\bbA$ because of the normalization property of the twist 
$(\varepsi\otimes \id)\FF^{-1}=1$.
\end{proof}

We introduce the triple notation $\big(\bbA,\mu,{\RA}\big)$ in order
to explicitly write the product and the $H$-action of an
$\AAAlg{H}{}{}$-algebra $\bbA$. Then the $\AAAlg{H^\FF}{}{}$-algebra
$\bbA_\star$ is described by the triple
$\big(\bbA,\mu_\star,{\RA}\big)$.

In the hypotheses of Theorem \ref{isostar2}, the Hopf algebra property (\ref{2.6}) immediately implies that 
the algebra $\bbA$ has an $\AAAlg{H^\FF}{}{}$-algebra structure given by the $H^\FF$-adjoint
action,
for all $\xi\in H^\FF$ and $\pP\in \bbA$,
\eq
\xi\RA_\FF \pP:=\xi_{1_\FF}\,\pP\,S^\FF(\xi_{2_\FF})~. \label{adjDFFxia}
\en   
We denote this 
$\AAAlg{H^\FF}{}{}$-algebra by $\big(\bbA,\mu,{\RA_\FF}\big)$.
We have the following
\begin{Theorem}\label{isoAAstDAA}
The algebra isomorphism 
$D_\FF : \bbA_\st\rightarrow \bbA$ of Theorem \ref{isostar2} is also an isomorphism between the 
$\AAAlg{H^\FF}{}{}$-algebras $\big(\bbA,\mu_\star,{\RA}\big)$
and $\big(\bbA,\mu,{\RA_\FF}\big)$, i.e.~$D_\FF$
intertwines between the $H^\FF$-actions  $\RA$ and $\RA_\FF$, for
all $\xi\in \UU^\FF$ and $P\in \bbA$,
\eq\label{DHFintertwiner}
D_\FF(\xi \RA \pP)=\xi\RA_\FF D_\FF(\pP)~.
\en
\end{Theorem}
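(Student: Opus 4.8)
The plan is to establish (\ref{DHFintertwiner}) by a direct computation that reduces both sides to one common expression built from $\FF\,\Delta(\xi)$. Throughout I would use the simplified form of the deformation map from (\ref{Disinvertible}), $D_\FF(\pP)=\f^\ga\,\pP\,S(\f_\ga)\,\chi^{-1}$, the two adjoint actions $\xi\RA\pP=\xi_1\,\pP\,S(\xi_2)$ and $\xi\RA_\FF\pP=\xi_{1_\FF}\,\pP\,S^\FF(\xi_{2_\FF})$, and the fact that $\bbA$ is an $(H,\mu)$-bimodule, so that left and right $H$-actions on the single element $\pP\in\bbA$ compose and mutually commute. No use of the ``generalized associativity'' conditions (\ref{theorem5hp}) is needed here, since only one $\bbA$-element ever appears.

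First I would expand the left-hand side. Substituting $\xi\RA\pP=\xi_1\,\pP\,S(\xi_2)$ into $D_\FF$, composing the left and right $H$-actions on $\pP$, and using antimultiplicativity of $S$, yields
\[
D_\FF(\xi\RA\pP)=(\f^\ga\,\xi_1)\,\pP\,\bigl(S(\f_\ga\,\xi_2)\,\chi^{-1}\bigr).
\]
The point is that the two $H$-legs assemble into $\f^\ga\,\xi_1\otimes\f_\ga\,\xi_2=\FF\,\Delta(\xi)$ immediately.

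Next I would expand the right-hand side, where the decisive simplification is a cancellation. Inserting $S^\FF(\xi_{2_\FF})=\chi\,S(\xi_{2_\FF})\,\chi^{-1}$ (from (\ref{2.4.3})) next to the simplified $D_\FF(\pP)$ produces the factor $\chi^{-1}\,\chi=1$ between them, so that after composing the right-actions and again using antimultiplicativity one is left with
\[
\xi\RA_\FF D_\FF(\pP)=(\xi_{1_\FF}\,\f^\ga)\,\pP\,\bigl(S(\xi_{2_\FF}\,\f_\ga)\,\chi^{-1}\bigr).
\]
Here the two $H$-legs assemble into $\xi_{1_\FF}\,\f^\ga\otimes\xi_{2_\FF}\,\f_\ga=\Delta^\FF(\xi)\,\FF=\FF\,\Delta(\xi)\,\FF^{-1}\,\FF=\FF\,\Delta(\xi)$, using (\ref{2.4.1}).

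Both sides therefore equal $G^{(1)}\,\pP\,\bigl(S(G^{(2)})\,\chi^{-1}\bigr)$ with $G^{(1)}\otimes G^{(2)}:=\FF\,\Delta(\xi)$, which proves the intertwining identity and hence that $D_\FF$ is an $\AAAlg{H^\FF}{}{}$-algebra isomorphism. The heart of the argument — and the only genuinely nontrivial step — is recognizing that the two a priori different $H$-decorations collapse to the same element $\FF\,\Delta(\xi)$: on the left this is trivial, while on the right it rests on two matched cancellations, namely the $\chi^{-1}\chi$ coming from pairing the twisted antipode $S^\FF$ against $D_\FF$, and the $\FF^{-1}\FF$ sitting inside $\Delta^\FF(\xi)\,\FF$. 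Once these are spotted, everything else is routine bimodule bookkeeping.
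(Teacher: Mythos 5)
Your proof is correct and is essentially the paper's own argument in a meet-in-the-middle arrangement: the paper runs the same computation one-directionally from $D_\FF(\xi\RA P)$ to $\xi\RA_\FF D_\FF(P)$ by inserting $1\otimes 1=\FF^{-1}\FF$, using exactly your two cancellations (the $\chi^{-1}\chi$ against $S^\FF$ and $\Delta^\FF(\xi)\,\FF=\FF\,\Delta(\xi)$) together with the simplified form $D_\FF(P)=\f^\gamma P S(\f_\gamma)\chi^{-1}$ from (\ref{Disinvertible}). Your side remark that only the $(H,\mu)$-bimodule axioms, and not the generalized associativity conditions (\ref{theorem5hp}), enter here is also accurate and consistent with the paper's proof.
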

\begin{proof} Using (\ref{Disinvertible}) we obtain, for all $\xi\in H^\FF$ and $P\in \bbA_\star$,
\eqa
D_\FF(\xi \RA \pP)
&=&\f^\be (\xi \RA \pP) S(\f_\be)\chi^{-1}
=\f^\be\xi_1\pP S(\xi_2)S(\f_\be)\chi^{-1}
=\f^\be\xi_1\of^{\,\ga}\f^\de \pP S(\f_\be\xi_2\of_\ga\f_\de)\chi^{-1}\nn\\
&=&
\xi_{1_\FF}\f^\de \pP S(\f_\de)\chi^{-1}\chi S(\xi_{2_\FF})\chi^{-1}
=\xi_{1_\FF}D_\FF(\pP)
S^\FF(\xi_{2_\FF})\nn\\
&=&
\xi\RA_{\FF}D_\FF(\pP)~,\label{DxitrglP}
\ena
where in the third equality we inserted $1\otimes 1=\FF^{-1}\FF$.
\end{proof}
\begin{Remark}\label{rem:Dinv}
We have discussed in Remark \ref{rem:dequant} that $H^\FF$ admits the twist $\FF^{-1}$ leading to $(H^\FF)^{\FF^{-1}}=H$.
The associated deformation isomorphism $D_{\FF^{-1}}$ is exactly ${D_\FF}^{-1}$ given in (\ref{Xmap}).
This can be shown by using (\ref{Disinvertible}) and a short calculation, for all $P\in\bbA$,
\begin{flalign}
D_{\FF^{-1}}(P)= (\f^\be\RA_\FF P) \f_\be = \of^\be P S^\FF(\of_\be) \chi_\FF^{-1} 
= \of^\be P \chi S(\of_\be)\chi^{-1} \chi= {D_\FF}^{-1}(P)~,
\end{flalign}
where we also have used that $\chi_\FF^{-1} = S^\FF(\f^\be)\f_\be = \chi$.
\end{Remark}
\sk


\begin{Example}
Given an $\AAAlg{H}{}{}$-algebra $A$ (not necessarily associative or with unit) we
consider the crossed product (or smash
product) algebra $A\rtimes \UU$.
By definition the underlying $\bbK$-module structure  is $A\otimes \UU$ and
the product is given by 
$
(a\otimes \xi)(b\otimes \zeta)=a (\xi_1\triangleright b) \otimes\xi_2\zeta
$,
that we simply rewrite as 
\eq
a\xi\,b \zeta=a(\xi_1\triangleright b) \xi_2\zeta~.
\en
The algebra
$A\rtimes \UU$ is an $\AAAlg{H}{}{}$-algebra with the action 
$\xi \RA(a\zeta) :=(\xi_1\trgl a)(\xi_2\RA\zeta)$.
The right $(H, \mu)$-module structure is given by
$(a\xi)\zeta=a(\xi\zeta)$
and the compatibility conditions (\ref{eqn:comp1}) hold true.
Hence the hypotheses of Theorem \ref{isostar} are satisfied.

\begin{Corollary}
Deformation of an $\AAAlg{H}{}{}$-algebra $A$ is the restriction to $A$
of the deformed algebra $(A\rtimes\UU)_\st$ that is isomorphic to  $A\rtimes\UU$
because of Theorem \ref{isostar}.
\end{Corollary}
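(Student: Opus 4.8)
The plan is to realize $A$ as a subalgebra of the crossed product $\AUU$ and to show that Drinfeld twisting is compatible with this embedding, so that $A_\st$ sits inside $(\AUU)_\st$ as the restriction to $A$; the isomorphism $(\AUU)_\st\cong\AUU$ is then nothing but Theorem \ref{isostar} applied to $\bbA=\AUU$, whose hypotheses were verified in the Example above. Concretely, I would embed $A$ into $\AUU$ by $a\mapsto a1$, where $1\in\UU$ is the unit. Since $\Delta(1)=1\otimes1$ and $1\trgl b=b$, one checks $(a1)(b1)=a(1_1\trgl b)1_2=ab\,1$, so $a\mapsto a1$ is an algebra isomorphism of $A$ onto the subalgebra $A\,1\subset\AUU$. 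This works even when $A$ is non-unital, since the construction uses only the unit of $\UU$.

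First I would show that the subspace $A\,1$ is stable under the $H$-adjoint action $\RA$ on $\AUU$ and that the restricted action is precisely the original action $\trgl$ on $A$. Using $\xi\RA(a\zeta)=(\xi_1\trgl a)(\xi_2\RA\zeta)$ together with the adjoint action $\xi\RA1=\xi_1 S(\xi_2)=\varepsi(\xi)1$ of $\UU$ on itself, the counit axiom gives
\eq
\xi\RA(a1)=(\xi_1\trgl a)\,\varepsi(\xi_2)\,1=(\xi\trgl a)\,1~.
\en
Because the deformed product of Theorem \ref{Theorem1} is built solely from the undeformed product and the $H$-action through $\FF^{-1}$, this stability shows that $A\,1$ is a subalgebra of $(\AUU)_\st$ as well.

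The core step is then a direct computation of the $\st$-product on $A\,1$. Applying the formula above twice yields $\of^\al\RA(a1)=(\of^\al\trgl a)1$ and $\of_\al\RA(b1)=(\of_\al\trgl b)1$, and multiplying in $\AUU$ collapses (again via $\Delta(1)=1\otimes1$) to
\eq
(a1)\st(b1)=\bigl((\of^\al\trgl a)(\of_\al\trgl b)\bigr)\,1=(a\st b)\,1~,
\en
which is exactly the deformed product of $A_\st$ from Theorem \ref{Theorem1}. Hence $A_\st$ is recovered as the restriction of $(\AUU)_\st$ to the subalgebra $A\,1$, while $(\AUU)_\st\cong\AUU$ by Theorem \ref{isostar}. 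There is no genuine obstacle here: the only points requiring care are the bookkeeping with the iterated coproduct when evaluating $\xi\RA1$, and the remark that the embedding $a\mapsto a1$ and the subalgebra property survive the non-unital case, since everything rests on the unit of the Hopf algebra $\UU$ rather than that of $A$.
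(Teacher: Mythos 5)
Your proof is correct and follows essentially the same route the paper intends: the Corollary is stated as an immediate consequence of the Example (which verifies the hypotheses of Theorem \ref{isostar} for $A\rtimes \UU$), with the identification of $A$ with the subalgebra $A\otimes 1\subset A\rtimes\UU$ left implicit. Your explicit verifications --- that $\xi\RA(a1)=(\xi\trgl a)1$, so $A\,1$ is $\RA$-stable with the original action, and that $(a1)\st(b1)=(a\st b)1$, together with the remark that only the unit of $\UU$ is needed in the non-unital case --- are exactly the routine steps the paper suppresses.
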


If $M$ is a smooth manifold, $A=C^\infty(M)$ and $\UU=U\Xi$ is the universal enveloping algebra
 of vector fields on $M$, then $A\rtimes\UU$ is the algebra of differential
operators. Up to considering the
extension of  these algebras to the ring $\bbC[[h]]$, we have that the
map 
$D_\FF\,:\,A\rightarrow D_\FF(A)\subset A\rtimes U\Xi$ realizes a
quantization of $A$ in terms of differential operators \cite{Aschieri:2005yw,Wess:2006cm}.
\end{Example}
\sk

\begin{Example}\label{ex:injection}
Given an algebra $\bbA$ that admits an algebra homomorphism
$\rho:\UU\rightarrow \bbA$, then the hypotheses of Theorem \ref{isostar2} immediately hold.
Just define the $(H, \mu)$-bimodule structure of $\bbA$ by, for all
$\xi\in H, P\in \bbA$,  $\xi \pP :=\rho(\xi) \pP$ and $\pP\xi:=\pP \rho(\xi)$.
A particular case is when $\bbA=\UU$ and we consider the identity
homomorphism. Then we recover the (Hopf algebra) isomorphism
$D\,:\,\UU_\st\rightarrow \UU^\FF$ discussed in \cite{Aschieri:2005zs}.
\end{Example}
\sk

\begin{Example}\label{ex:endos}
Given a Hopf algebra $\UU$  and an $\MMMod{H}{}{}$-module $V$ 
we consider the algebra $\End_\bbK(V)$ of $\bbK$-linear maps
($\bbK$-module homomorphisms) from $V$ to $V$.
Since $H$ is a Hopf algebra the left action of $H$ on $V$ lifts to a left
action of $H$ on $\End_\bbK(V)$, defined by, for all $P\in
\End_\bbK(V)$ and $\xi \in H$,
\eq
\xi\btrgl P:=\xi_1\trgl \circ P \circ S(\xi_2)\trgl ~,\label{ex3.13}
\en
where $\circ$ denotes the usual composition of
morphisms and $\xi\ra \in \End_\bbK(V)$ is the endomorphism $v\mapsto \xi\ra v$. 
The algebra $\End_\bbK(V)$ is thus an $\AAAlg{H}{}{}$-algebra, and we denote it 
also by the triple $\big(\End_\bbK(V),\circ,{\RA}\big)$ in order to explicitly refer to the product and the $H$-action. The
algebra homomorphism $H\rightarrow  \End_\bbK(V)$, $\xi\mapsto \xi\trgl$ 
implies (see  Example \ref{ex:injection}) the isomorphism $D_\FF:
\End_\bbK(V)_\st\rightarrow \End_\bbK(V)$. The composition law in
$\End_\bbK(V)_\st$ is given by the $\star$-composition 
$P\circ_\st Q :=(\of^\al\RA P)\circ (\of_\al\RA Q)$, for all $P, Q\in \End_\bbK(V)_\st$.
Because of Theorem \ref{isoAAstDAA} we further obtain that $D_\FF$ is an $\AAAlg{H^\FF}{}{}$-algebra 
isomorphism between $\big(\End_{\bbK}(V),\circ_\star,{\RA}\big)$ and
$\big(\End_\bbK(V),\circ,{\RA_\FF}\big)$. 

Notice that since as algebras $H^\FF=H$, 
 the deformed $\MMMod{H^\FF}{}{}$-module $V_\star$ of Theorem \ref{Theorem2} (with trivial algebras $A=B=\bbK$) 
 is equal to the $\MMMod{H}{}{}$-module $V$ and henceforth we can
 identify the  $\AAAlg{H^\FF}{}{}$-algebras 
$\big(\End_\bbK(V),\circ,{\RA_\FF}\big)$ and 
$\big(\End_\bbK(V_\star),\circ,{\RA_\FF}\big)$.   Thus,  $D_\FF$ is also an isomorphism 
  between the $\AAAlg{H^\FF}{}{}$-algebras
 $\big(\End_{\bbK}(V),\circ_\star,{\RA}\big)$ and $\big(\End_\bbK(V_\star),\circ,{\RA_\FF}\big)$.
\end{Example}
\sk

\subsubsection*{Categorical point of view}
We provide a generalization of Example \ref{ex:endos}. Instead of
studying a fixed $\MMMod{H}{}{}$-module $V$, let
us consider the category $\rep^H$ of representations of $H$. An object in $\rep^H$ is an $\MMMod{H}{}{}$-module $V$ and
a morphism between two objects $V,W$ in $\rep^H$ is a $\bbK$-linear map
$P: V\to W$, i.e.~$P\in\Hom_\bbK(V,W)$.
Notice that we do not assume the map $P$ to be
$H$-equivariant. The composition
of morphisms is the usual composition $\circ$. Given a twist $\FF\in
H\otimes H$ of the Hopf algebra $H$ we can consider the category ${\rep^H}_{\,\star}$.
The objects and morphisms in ${\rep^H}_{\,\star}$
are the same as the objects and morphisms in $\rep^H$, but the
composition is given by the $\star$-composition $\circ_\star$, where
the $H$-action on morphisms is given by the $H$-adjoint action  $\RA$
(cf. (\ref{ex3.13})), canonically obtained lifting the source and
target $H$-actions.
\begin{Theorem}\label{theo:firstcat}
Let $H$ be a Hopf algebra with twist $\FF\in H\otimes H$. Then there is a functor  
from ${\rep^H}_{\,\star}$ to $\rep^{H}$ that is the identity on
objects, and associates
to any morphism $P: V\to W$ 
the morphism 
 \begin{flalign}
 D_\FF(P) : V \to W ~,~~v\mapsto D_\FF(P)(v) = (\of^\alpha\RA P) (\of_\alpha\ra v) = \of^\alpha_1\ra\Big(P\big(S(\of^\alpha_2)\of_\alpha\ra v\big)\Big) ~.
 \end{flalign}
Furthermore, the categories ${\rep^H}_{\,\star}$ and $\rep^{H}$ are equivalent.
\end{Theorem}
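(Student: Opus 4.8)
The plan is to verify that $D_\FF$ as defined gives a genuine functor and then to produce an inverse functor exhibiting the equivalence. First I would check functoriality. On objects $D_\FF$ is the identity, so the only content is that $D_\FF$ respects composition and units. The key identity to establish is
\begin{flalign}
D_\FF(P\circ_\star Q) = D_\FF(P)\circ D_\FF(Q)~,
\end{flalign}
for morphisms $P,Q$ composable in ${\rep^H}_{\,\star}$, together with $D_\FF(\id_V)=\id_V$. Here I would appeal directly to Example \ref{ex:endos}: for a single object $V$ the algebra $\big(\End_\bbK(V),\circ_\star,{\RA}\big)$ is exactly $\bbA_\star$ in the hypotheses of Theorem \ref{isostar2} (via the algebra homomorphism $\xi\mapsto\xi\trgl$), so $D_\FF$ there is already known to be an algebra isomorphism. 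The subtlety is that in the categorical setting $P:V\to W$ and $Q:U\to V$ live in different hom-spaces $\Hom_\bbK(V,W)$ and $\Hom_\bbK(U,V)$, so strictly speaking Theorem \ref{isostar2} does not apply verbatim. The point I would stress is that the calculation proving $D_\FF(P\st Q)=D_\FF(P)\,D_\FF(Q)$ in \eqref{DPSTQproof} never uses that source and target coincide: it only uses the $H$-covariance of the $\btrgl$-action (i.e.~that $\xi\btrgl(P\circ Q)=(\xi_1\btrgl P)\circ(\xi_2\btrgl Q)$, which holds for composable morphisms between arbitrary modules) and the cocycle relation \eqref{ass}. So I would rerun the chain of equalities in \eqref{DPSTQproof} in the categorical setting, noting that every step goes through for composable morphisms between possibly distinct modules. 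Unit preservation follows from normalization $(\varepsi\otimes\id)\FF^{-1}=1$ exactly as in Theorem \ref{isostar2}.

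Next I would construct the inverse functor and prove the equivalence. The natural candidate is the functor from $\rep^H$ to ${\rep^H}_{\,\star}$ that is again the identity on objects and sends a morphism $P$ to ${D_\FF}^{-1}(P)$, where the pointwise formula comes from \eqref{Xmap},
\begin{flalign}
{D_\FF}^{-1}(P)(v) = \of^\alpha\ra\big(P(\chi\,S(\of_\alpha)\ra v)\big)~.
\end{flalign}
By Remark \ref{rem:Dinv} this is the same as $D_{\FF^{-1}}$, the deformation isomorphism associated to twisting $H^\FF$ back to $H$ via $\FF^{-1}$; recognizing it this way lets me reuse the functoriality argument rather than redo it. That this second assignment is functorial from $\rep^H$ to ${\rep^H}_{\,\star}$ is the same computation with the roles of $\circ$ and $\circ_\star$ interchanged (and $\FF$ replaced by $\FF^{-1}$). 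Since $D_\FF$ and ${D_\FF}^{-1}$ are mutually inverse $\bbK$-linear bijections on each hom-space, the two functors are strictly inverse to each other on both objects (both identity) and morphisms, so they furnish an isomorphism of categories, a fortiori an equivalence.

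I expect the main obstacle to be purely bookkeeping rather than conceptual: namely making precise that the algebra-level statement of Theorem \ref{isostar2} upgrades to a statement about a \emph{category} with many objects, where composition pairs up morphisms living in different hom-spaces. The safe route is to observe that all the relevant structure is captured by the $H$-covariant lift $\btrgl$ of the $H$-action to $\Hom_\bbK(V,W)$, which satisfies $\xi\btrgl(P\circ Q)=(\xi_1\btrgl P)\circ(\xi_2\btrgl Q)$ on composable pairs, and that the $\star$-composition $\circ_\star$ is defined by the same twist formula as $\mu_\star$; with these two facts in hand the functoriality computation is formally identical to \eqref{DPSTQproof}. Once functoriality is secured for both $D_\FF$ and ${D_\FF}^{-1}$, the equivalence is immediate because the two functors are inverse bijections on each hom-set and both act as the identity on objects.
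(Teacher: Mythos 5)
Your proposal is correct and follows essentially the same route as the paper: functoriality on units via the normalization property, functoriality on composition by rerunning the chain of equalities from the proof of Theorem \ref{isostar2} (which the paper invokes verbatim with ``we repeat the same passages''), and the inverse functor given on morphisms by $D_{\FF^{-1}}=D_\FF^{-1}$ via Remark \ref{rem:Dinv}. Your extra care in noting that the computation in \eqref{DPSTQproof} only uses the covariance property $\xi\btrgl(P\circ Q)=(\xi_1\btrgl P)\circ(\xi_2\btrgl Q)$ and the cocycle relation \eqref{ass} --- hence applies to composable morphisms between \emph{distinct} modules, where Theorem \ref{isostar2} does not apply verbatim --- is a legitimate refinement of a point the paper leaves implicit, and your observation that one in fact obtains an isomorphism of categories (both functors being identity on objects and mutually inverse on hom-sets) is likewise consistent with the paper's ``left and right inverse'' formulation.
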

\begin{proof}
$D_\FF(P)=(\of^\alpha\RA P) \circ \of_\alpha\ra$ is obviously a $\bbK$-linear map. We also have that 
$D_\FF(\id_V) = (\of^\alpha\RA \id_V)\circ \of_\alpha \ra\,= \id_{V}$, 
because of the normalization condition (\ref{propF2}) of the
twist. In order to show that $D_\FF$
preserves composition of morphisms, i.e.~that for any two composable morphisms
$P$ and $Q$ in ${\rep^H}_{\,\star}$ we have $D_\FF(P\circ_\star Q) =
D_\FF(P)\circ D_\FF(Q)$, we repeat the same passages as in  
the proof of Theorem \ref{isostar2}.

This functor has a left and right inverse given by the functor that is
the identity on objects and that to any morphism $P : V\to W$ in $\rep^{H}$
associates the morphism $D_{\FF^{-1}}(P) = D_\FF^{-1}(P) : V\to
W$ in $\rep^{H}{}_{\,\star}$ (cf.~Remark \ref{rem:Dinv}). Hence the two categories are
equivalent.
\end{proof}
In the end of Example \ref{ex:endos} we have identified the 
$\AAAlg{H^\FF}{}{}$-algebras 
$\big(\End_\bbK(V),\circ,{\RA_\FF}\big)$ and 
$\big(\End_\bbK(V_\star),\circ,{\RA_\FF}\big)$.  Here we can similarly
identify the category $\rep^H$ with the category $\rep^{H^\FF}$ of
 $\MMMod{H^\FF}{}{}$-modules with $\bbK$-linear maps as morphisms. Indeed any 
 $\MMMod{H}{}{}$-module $V$ can be seen as an
 $\MMMod{H^\FF}{}{}$-module (denoted $V_\star$).  
It follows that  $D_\FF$ provides an equivalence between the categories $\rep^H{}_{\,\star}$ and $\rep^{H^\FF}$.
In particular, any morphism $P : V\to  W $ in $\rep^H{}_{\,\star} $ is
mapped to a morphism $D_\FF(P) : V_\star\to W_\star$ in $\rep^{H^\FF}$. 


\section{\label{sec:modhom}Module homomorphisms}
Let $A, B$ be two $\AAAlg{H}{}{}$-algebras and $V,W$ be two
$\MMMod{H}{A}{B}$-modules. In this section we study the properties of 
$\bbK$-linear maps $\Hom_\bbK(V,W)$ and
right $B$-linear maps $\Hom_B(V,W)$,
and their deformations. 
The $H$-action on the modules
$V$ and $W$ lifts to an $H$-adjoint action on these maps; in general
this adjoint action
is nontrivial because the maps are not $H$-equivariant.
To any map $P: V\to W$ we associate a deformed map
$D_\FF(P):V_\star\to W_\star$, where the deformed $\MMMod{H^\FF}{A_\star}{B_\star}$-modules
$V_\star$ and $W_\star$ are obtained according to Theorem \ref{Theorem2}.
We show that this correspondence is a bijection between $\bbK$-linear
maps
 (i.e.~between $\Hom_\bbK(V,W)$ and   $\Hom_\bbK(V_\star,W_\star)$),
and also between right $B$-linear and right $B_\star$-linear maps
(i.e.~between $\Hom_B(V,W)$ and   $\Hom_{B_\star}(V_\star,W_\star)$).

We further clarify the algebraic structures preserved by 
$D_\FF$. In particular we extend the result obtained in Example \ref{ex:endos}
where $D_\FF$ was shown to be an  isomorphism 
between the $\AAAlg{H^\FF}{}{}$-algebras
 $\big(\End_{\bbK}(V),\circ_\star,{\RA}\big)$ and $\big(\End_\bbK(V_\star),\circ,{\RA_\FF}\big)$.

Finally, for later use,  we consider a mirror construction that deforms left $A$-linear maps of
$\MMMod{H}{A}{B}$-modules into left $A_\star$-linear maps of
$\MMMod{H^\FF}{A_\star}{B_\star}$-modules. 
We conclude with a categorical  description of the results obtained.


\subsection{Deformation of  endomorphisms}
In this subsection we study the algebras $\End_\bbK(V)$ and $\End_B(V)$ of
endomorphisms of an $\MMMod{H}{A}{B}$-module $V$, where $A$ and $B$ are $\AAAlg{H}{}{}$-algebras.
In particular we focus  on the canonical $A$-bimodule structure of
$\End_\bbK(V)$ and $\End_B(V)$ induced by the $\AAAlg{H}{}{}$-algebra homomorphism $l: A\to \End_\bbK(V)$ (see Proposition \ref{Hom_K HAMA}).
This structure will also be relevant in the next subsection, 
where we discuss homomorphisms between
different $\MMMod{H}{A}{B}$-modules $V,W$.

  The behaviour of 
the endomorphism algebras $\End_\bbK(V)$ and $\End_B(V)$ under twist
deformation is studied. 
There are two equivalent  deformations of  $\End_B(V)$:
The $\End_B(V)_\star$ deformation is obtained by considering
endomorphisms as {\sl elements} of the algebra  $\End_B(V)$. The 
$\End_{B_\star}(V_\star)$ deformation  is obtained by considering them 
as right $B$-linear {\sl maps} on $V$ and deforming the module
$V$. The equivalence of the two deformations is provided by the map $D_\FF$ from right
$B$-linear to right $B_\st$-linear endomorphisms.

\begin{Proposition}\label{Hom_K HAMA}
Let $A$ be an $\AAAlg{H}{}{}$-algebra and
$V$ be an $\MMMod{H}{A}{}$-module.
Then  the algebra $\End_\bbK(V)$ of $\bbK$-linear maps
from $V$ to $V$ is an $\AAAlg{H}{A}{A}$-algebra
with the $H$-adjoint action, for all $\xi\in H$ and $P\in \End_\bbK(V)$,
\eq
\xi\btrgl P := \xi_1 \trgl\circ P \circ S(\xi_2)\trgl\label{xiadjactP}~,
\en
and the $A$-bimodule structure given by, for all $a\in A$ and $P\in \End_\bbK(V)$,
\begin{subequations}\label{aPLaP}
\eqa
a\cdot P&:=&l_a\circ P~,\\ 
P\cdot a&:=&P\circ l_a~,
\ena
\end{subequations}
where, for all $v\in V$,  $l_a(v) :=a\cdot v$.
\sk
If $B$ is another $\AAAlg{H}{}{}$-algebra and $V$ is also an $\MMMod{H}{A}{B}$-module, then the subalgebra
$\End_B(V)\subset \End_\bbK(V)$ of right $B$-linear endomorphisms of
$V$,  (i.e.~$P\in \End_B(V)$ if, for all $v\in V$ and $b\in B$, $P(v\cdot b)=P(v)\cdot b$)
is an $\AAAlg{H}{A}{A}$-subalgebra
with $H$ and $A$ actions given in  (\ref{xiadjactP}) and (\ref{aPLaP}), respectively.
\end{Proposition}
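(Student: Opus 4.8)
The plan is to verify each required structure directly from the axioms, treating the two claims (the $\AAAlg{H}{A}{A}$-algebra structure on $\End_\bbK(V)$, and the fact that $\End_B(V)$ is a subalgebra closed under these operations) in turn. First I would recall that $\big(\End_\bbK(V),\circ,{\btrgl}\big)$ is already known to be an $\AAAlg{H}{}{}$-algebra from Example \ref{ex:endos}: the $H$-adjoint action \eqref{xiadjactP} satisfies $\xi\btrgl(P\circ Q)=(\xi_1\btrgl P)\circ(\xi_2\btrgl Q)$ and $\xi\btrgl\id=\varepsilon(\xi)\id$, which are precisely the $\AAAlg{H}{}{}$-algebra axioms \eqref{coproductonab}. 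So the genuinely new content is the $A$-bimodule structure \eqref{aPLaP} and its compatibility with both the product $\circ$ and the $H$-action.

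\textbf{The $A$-bimodule structure.} I would first check that $l:A\to\End_\bbK(V)$, $a\mapsto l_a$, is an algebra homomorphism: $l_{ab}=l_a\circ l_b$ and $l_1=\id$ follow immediately from the left $A$-module axioms on $V$. Hence the assignments $a\cdot P:=l_a\circ P$ and $P\cdot a:=P\circ l_a$ define commuting left and right $A$-actions on $\End_\bbK(V)$, associativity on each side being associativity of $\circ$, and the bimodule compatibility $(a\cdot P)\cdot b=a\cdot(P\cdot b)$ reducing to $l_a\circ P\circ l_b=l_a\circ P\circ l_b$. To promote this to an $\AAAlg{H}{A}{A}$-algebra I must verify the $H$-covariance conditions \eqref{eqn:moduleHcovariance}, namely $\xi\btrgl(a\cdot P)=(\xi_1\trgl a)\cdot(\xi_2\btrgl P)$ and the analogous right version. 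The key computation is that the $H$-action on the endomorphism $l_a$ reproduces the Lie-derivative-type rule $\xi\btrgl l_a=l_{\xi\trgl a}$; this is where the $\MMMod{H}{A}{}$-covariance \eqref{eqn:moduleHcovariance2} of $V$ enters. Indeed,
\begin{flalign}
(\xi\btrgl l_a)(v)=\xi_1\trgl\big(a\cdot(S(\xi_2)\trgl v)\big)=(\xi_1\trgl a)\cdot\big(\xi_2 S(\xi_3)\trgl v\big)=(\xi\trgl a)\cdot v~,\nonumber
\end{flalign}
using \eqref{eqn:moduleHcovariance2} and the antipode axiom. Granting this, $\xi\btrgl(a\cdot P)=\xi\btrgl(l_a\circ P)=(\xi_1\btrgl l_a)\circ(\xi_2\btrgl P)=l_{\xi_1\trgl a}\circ(\xi_2\btrgl P)=(\xi_1\trgl a)\cdot(\xi_2\btrgl P)$, with the right-action case identical.

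\textbf{The subalgebra $\End_B(V)$.} Finally I would show $\End_B(V)$ is closed under all three operations. Closure under $\circ$ and containment of $\id$ are immediate since a composite of right $B$-linear maps is right $B$-linear. For the $A$-actions I check that $l_a$ is itself right $B$-linear — this uses the $(A,B)$-bimodule compatibility $(a\cdot v)\cdot b=a\cdot(v\cdot b)$ of $V$ — so $l_a\circ P$ and $P\circ l_a$ remain right $B$-linear whenever $P$ is. The only step requiring real care is closure of $\End_B(V)$ under the $H$-adjoint action $\btrgl$: I must show that if $P$ is right $B$-linear then so is $\xi\btrgl P$. This is the main obstacle, because it couples the adjoint action, the antipode, and the right $\MMMod{H}{}{B}$-covariance \eqref{eqn:moduleHcovariance1} of $V$ simultaneously. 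The computation runs
\begin{flalign}
(\xi\btrgl P)(v\cdot b)&=\xi_1\trgl\Big(P\big(S(\xi_2)\trgl(v\cdot b)\big)\Big)=\xi_1\trgl\Big(P\big((S(\xi_2)_1\trgl v)\cdot(S(\xi_2)_2\trgl b)\big)\Big)\nonumber\\
&=\xi_1\trgl\Big(P\big(S(\xi_3)\trgl v\big)\cdot(S(\xi_2)\trgl b)\Big)=\big(\xi_1\btrgl P\big)(v)\cdot\big(\xi_2 S(\xi_3)\trgl b\big)=(\xi\btrgl P)(v)\cdot b~,\nonumber
\end{flalign}
where I use \eqref{eqn:moduleHcovariance1}, the coproduct-antipode identity $S(\xi)_1\otimes S(\xi)_2=S(\xi_2)\otimes S(\xi_1)$, right $B$-linearity of $P$, and finally the antipode axiom \eqref{2.6}. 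Once this is established, $\End_B(V)$ inherits the full $\AAAlg{H}{A}{A}$-structure as a subalgebra and the proof is complete.
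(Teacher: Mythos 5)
Your proposal is correct and takes essentially the same route as the paper's own proof: the $A$-bimodule structure via the map $l:A\to\End_\bbK(V)$, the identical key computation $\xi\btrgl l_a=l_{\xi\trgl a}$ using the covariance of $V$ and the antipode axiom, the $H$-covariance of the bimodule structure deduced from $\xi\btrgl(P\circ Q)=(\xi_1\btrgl P)\circ(\xi_2\btrgl Q)$, and the same four-leg antipode calculation establishing that $\End_B(V)$ is closed under the adjoint action. The only cosmetic difference is that you cite Example \ref{ex:endos} for the $\AAAlg{H}{}{}$-algebra structure of $\big(\End_\bbK(V),\circ,\btrgl\big)$, whereas the paper states this verification directly inside the proof.
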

\begin{proof}
$\End_\bbK(V)$ is a left $A$-module, for all $a,\tilde a\in A$ and $P\in \End_\bbK(V)$,
\eq
a\cdot(\tilde a \cdot P)=l_a\circ l_{\tilde a}\circ P= l_{a\,\tilde a}\circ P=
(a\,\tilde a)\cdot P~.
\en
Similarly, we have that $\End_\bbK(V)$ is a right $A$-module. It is a
bimodule because $(l_a\circ P) \circ l_{\tilde a}=l_a\circ (P \circ l_{\tilde a})$.

It is straightforward to check that $\End_\bbK(V)$ is an $\AAAlg{H}{}{}$-algebra,
for all $\xi,\zeta\in \UU$ and
$P, Q\in \End_\bbK(V)$,
\begin{subequations}
\eq\xi\btrgl (\zeta\btrgl P)=(\xi\zeta)\btrgl P~,\quad \xi\RA \id_V = \varepsilon(\xi)\,\id_V~,
\en
and
\eq\label{xiPcircQ}
\xi\btrgl
(P\circ Q)=(\xi_1\btrgl P)\circ (\xi_2\btrgl Q)~.
\en
\end{subequations}
We now prove that the algebra homomorphism $l : A\rightarrow \End_\bbK(V)$
given by $a\mapsto l_a$ is also an $\AAAlg{H}{}{}$-algebra
homomorphism, i.e.~for all $a\in A$ and $\xi\in \UU$, $l_{\xi\trgl a}=\xi\btrgl
l_a$. Indeed, for all $v\in V$,
\begin{flalign}\label{4.5prop}
(\xi\btrgl l_a)(v)&=\xi_1\trgl (l_a(S(\xi_2)\trgl v))=\xi_1\trgl
(a\cdot (S(\xi_2)\trgl v))\nn\\
&=\xi_1\trgl a\cdot (\xi_2S(\xi_3)\trgl
v)= (\xi\trgl a)\cdot v=l_{\xi\trgl a}(v)~.
\end{flalign}
Compatibility between the $\UU$-module structure and the  $A$-bimodule
structure, i.e.~for all $\xi\in \UU, a\in A$ and $P\in \End_\bbK(V)$, 
$\xi\btrgl (a\cdot P)=(\xi_1\trgl a)\cdot(\xi_2\btrgl P)$ and $\xi\btrgl (P\cdot a)=
(\xi_1\btrgl P)\cdot(\xi_2\trgl a)$, follows from (\ref{xiPcircQ}) and (\ref{4.5prop}).
\sk
Finally let $V$ be an $\MMMod{H}{A}{B}$-module, then $V$ is in particular an
$(A,B)$-bimodule (i.e.~for all $a\in A, b\in B$ and $v\in V$, we have $ a \cdot (v\cdot b)=(a\cdot v)\cdot b$),
hence $l_a\in \End_B(V)$. Therefore, $a\cdot P
\in \End_B(V)$ and $P\cdot a
\in \End_B(V)$ if $P\in \End_B(V)$. Furthermore, for all $\xi\in
\UU$ and $P\in \End_B(V)$ we have $\xi\btrgl P\in \End_B(V)$, indeed, for all $b\in B$ and $v\in V$,
\eqa
(\xi\btrgl P)(v\cdot b)
&=&\xi_1\trgl \Bigl(P\bigl((S(\xi_3)\trgl v)\cdot(S(\xi_2)\trgl b)\bigr)\Bigr)
\nn\\
&=&\Bigl(\xi_1\trgl \bigl( P(S(\xi_4)\trgl v)\bigr)\Bigr) \cdot \Bigl(\xi_2\trgl
\bigl(S(\xi_3)\trgl b\bigr)\Bigr)\nn\\
&=&(\xi\btrgl P)(v)\cdot b~.
\ena
\end{proof}

Let $\UU$ be a Hopf algebra with twist $\FF\in \UU\otimes \UU$.
Let also $A, B$ be two $\AAAlg{H}{}{}$-algebras
and $V$ an $\MMMod{H}{A}{B}$-module, so that $\End_\bbK(V) $ and $ \End_B(V)$ are $\AAAlg{H}{A}{A}$-algebras.
In order to explicitly write the product,
module structure and $H$-action of these $\AAAlg{H}{A}{A}$-algebras we
use the quadruples
$\big(\End_\bbK(V),\circ,\cdot,{\RA}\big)$ and $\big(\End_B(V),\circ,\cdot,{\RA}\big)$.

We have two deformations of the endomorphisms $\End_\bbK(V)$ and
$\End_B(V)$: 
The first option,
$\End_\bbK(V)_\st$ and $\End_B(V)_\st$,
is to consider the $\AAAlg{H^\FF}{A_\st}{A_\st}$-algebras obtained by applying Theorem
\ref{Theorem2} to $\End_\bbK(V)$ and $\End_B(V)$.
It is characterized by a deformed composition law and a deformed
module structure, for all 
$P,Q \in  \End_\bbK(V)_\st$ and  $a\in A_\st$, 
\eq\label{bimastP}
P\circ_\st
Q:=(\of^\al\btrgl P)\circ (\of_\al\btrgl Q)~~,~~~~
a\st P:=l_a\circ_\st P~~,~~~~
P\st a:= 
P\circ_\st l_a~.
\en
In the quadruple notation  these 
 $\AAAlg{H^\FF}{A_\star}{A_\star}$-algebras are denoted by 
$\big(\End_\bbK(V),\circ_\star,\star,{\RA}\big)$ and  
$\big(\End_B(V),\circ_\star,\star,{\RA}\big)$.
 The second option
is simply to consider the $\bbK$-linear or right $B_\st$-linear endomorphisms of the deformed 
$\MMMod{H^\FF}{A_\star}{B_\star}$-module
$V_\st$.  From Proposition \ref{Hom_K HAMA} we know that
$\,\End_{\bbK}(V_\st)$ and $ \End_{B_\st}(V_\st)$ are $\AAAlg{H^\FF}{A_\st}{A_\st}$-algebras, 
where the product  is the usual composition.
The $A_\star$-bimodule structure is induced by the $\AAAlg{H^\FF}{}{}$-algebra homomorphism 
$l^\star: A_\star \to \End_\bbK(V_\star)$ given by, for all $a\in
A_\star$ and all $v\in V_\star$, $l^\star_a(v):= a\star v$. It explicitly reads, for all $a\in A_\st$ and $P_\st\in \End_\bbK(V_\st)$,
 \eq\label{bimaPst}
a \cdot P_\st:= l^\st_a\circ P_\st~~,~~~~
P_\st \cdot a:= P_\st\circ l^\st_a~.
\en
The $H^\FF$-action is the $H^\FF$-adjoint action given by, for all $\xi\in H^\FF$ and $P_\star\in \End_\bbK(V_\star)$,
\begin{flalign}
\xi\btrgl_\FF P_\star := \xi_{1_\FF}\ra\,\circ\, P_\star\circ S^\FF(\xi_{2_\FF})\ra\,~.
\end{flalign}
In the quadruple notation these $\AAAlg{H^\FF}{A_\star}{A_\star}$-algebras are denoted by 
$\big(\End_\bbK(V_\star),\circ,\cdot,{\RA_\FF}\big)$ 
and  $\big(\End_{B_\star}(V_\star),\circ,\cdot,{\RA_\FF}\big)$.

\begin{Theorem}\label{PDP}
Let $\UU$ be a Hopf algebra with twist $\FF\in \UU\otimes \UU$, and
let $A, B$ be two $\AAAlg{H}{}{}$-algebras and $V$ be an
$\MMMod{H}{A}{B}$-module. The map
\begin{flalign}
\label{DFFEK}
D_\FF ~:~\End_\bbK(V)_\st~~&\longrightarrow~~~~ \End_{\bbK}(V_\st)\nn\\
 P~~~~&\longmapsto~~~~  D_\FF(P):=(\of^\al\btrgl P)\circ \of_\al\trgl~ 
\end{flalign}
is an $\AAAlg{H^\FF}{A_\star}{A_\star}$-algebra isomorphism between
$\big(\End_\bbK(V),\circ_\star,\star,{\RA}\big)$ and $\big(\End_\bbK(V_\star),\circ,\cdot,{\RA_\FF}\big)$.
It restricts to an $\AAAlg{H^\FF}{A_\star}{A_\star}$-algebra isomorphism
\eq
D_\FF ~:~\End_B(V)_\st\longrightarrow \End_{B_\st}(V_\st)~
\en
between 
$\big(\End_B(V),\circ_\star,\star,{\RA}\big)$ and $\big(\End_{B_\star}(V_\star),\circ,\cdot,{\RA_\FF}\big)$.
\end{Theorem}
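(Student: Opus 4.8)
The plan is to split the claim into the three structures that $D_\FF$ must preserve---the product, the $H^\FF$-adjoint action, and the $A_\star$-bimodule structure---and to notice that the first two are already available, so that the genuinely new content is the bimodule compatibility and the restriction to right $B$-linear maps. Forgetting the $A$- and $B$-module structures, $V$ is just an $\MMMod{H}{}{}$-module, so Example \ref{ex:endos} (equivalently Theorems \ref{isostar2} and \ref{isoAAstDAA}) already tells us that $D_\FF:\End_\bbK(V)_\st\to\End_\bbK(V_\st)$ is a $\bbK$-linear bijection, an algebra homomorphism intertwining $\circ_\st$ with $\circ$, and an intertwiner of the $H^\FF$-adjoint actions $\RA$ and $\RA_\FF$. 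It therefore remains only to check that $D_\FF$ respects the $A_\st$-bimodule structures (\ref{bimastP}) and (\ref{bimaPst}), and that it restricts to a bijection between $\End_B(V)_\st$ and $\End_{B_\st}(V_\st)$.

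For the bimodule compatibility I would first establish the single identity $D_\FF(l_a)=l^\st_a$ for all $a\in A$. Using the $H$-covariance $\xi\btrgl l_a=l_{\xi\trgl a}$ of Proposition \ref{Hom_K HAMA} (equation (\ref{4.5prop})) and the deformed action (\ref{lmodst}), one computes, for all $v\in V$,
\[ D_\FF(l_a)(v)=(\of^\al\btrgl l_a)(\of_\al\trgl v)=(\of^\al\trgl a)\cdot(\of_\al\trgl v)=a\st v=l^\st_a(v)~. \]
Left $A_\st$-linearity of $D_\FF$ then follows immediately from the already-established multiplicativity: by the definitions (\ref{bimastP}) and (\ref{bimaPst}), $D_\FF(a\st P)=D_\FF(l_a\circ_\st P)=D_\FF(l_a)\circ D_\FF(P)=l^\st_a\circ D_\FF(P)=a\cdot D_\FF(P)$, and symmetrically $D_\FF(P\st a)=D_\FF(P)\circ l^\st_a=D_\FF(P)\cdot a$. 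This upgrades $D_\FF$ to an $\AAAlg{H^\FF}{A_\st}{A_\st}$-algebra isomorphism on the full endomorphism algebras.

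The main work is the restriction statement. The plan is to show that $P\in\End_B(V)$ implies $D_\FF(P)\in\End_{B_\st}(V_\st)$, i.e.~that $D_\FF(P)(v\st b)=D_\FF(P)(v)\st b$ for all $v\in V_\st$ and $b\in B_\st$. Writing $v\st b=(\of^\be\trgl v)\cdot(\of_\be\trgl b)$ from (\ref{rmodst}) and using the explicit form $D_\FF(P)(w)=\of^\al_1\trgl\big(P(S(\of^\al_2)\of_\al\trgl w)\big)$, I would expand both sides with the right module covariance (\ref{eqn:moduleHcovariance1}) and the antipode, then use the right $B$-linearity $P(w\cdot b')=P(w)\cdot b'$ to pull $P$ through the right action, and finally reorganize the resulting string of Sweedler legs by means of the $2$-cocycle property in the form (\ref{ass}); the normalization (\ref{propF2}) and the antipode axioms collapse the leftover counit factors. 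This bookkeeping mirrors the passages in the proof of Theorem \ref{isostar2}, but now with an extra right $B$-factor to track, and it is the main obstacle, since---unlike the left multiplications $l_a$, which are $H$-covariant---the right multiplications are not $H$-covariant in a way that would let me reduce to the identity $D_\FF(l_a)=l^\st_a$.

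Finally, bijectivity of the restriction follows formally. Applying the identical argument to the twist $\FF^{-1}$, which deforms $H^\FF$ back to $H$ (Remark \ref{rem:dequant}) and for which $D_{\FF^{-1}}=D_\FF^{-1}$ (Remark \ref{rem:Dinv}), shows that $D_\FF^{-1}$ sends right $B_\st$-linear maps on $V_\st$ to right $B$-linear maps on $V$. Hence $D_\FF$ maps $\End_B(V)$ bijectively onto $\End_{B_\st}(V_\st)$, and since $\End_B(V)$ is an $\AAAlg{H}{A}{A}$-subalgebra of $\End_\bbK(V)$ by Proposition \ref{Hom_K HAMA}, the restricted map inherits the $\AAAlg{H^\FF}{A_\st}{A_\st}$-algebra isomorphism property from the unrestricted one.
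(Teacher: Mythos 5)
Your proposal is correct and follows essentially the same route as the paper: reduction to Example \ref{ex:endos} for the algebra isomorphism and $H^\FF$-intertwining, the identity $D_\FF(l_a)=l^\st_a$ for the $A_\star$-bimodule compatibility, a cocycle-property computation for the restriction to right $B$-linear maps, and twisting back with $\FF^{-1}$ (Remark \ref{rem:Dinv}) for bijectivity of that restriction. The only difference is cosmetic: the paper shortens the bookkeeping you sketch for $D_\FF(P)(v\star b)=D_\FF(P)(v)\star b$ by keeping the adjoint-action form and invoking the fact, already established in Proposition \ref{Hom_K HAMA}, that $\xi\RA P$ is again right $B$-linear, rather than fully expanding with the antipode as you propose---both versions close via the cocycle property (\ref{ass}) in the same way.
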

\begin{proof} 
The $\AAAlg{H^\FF}{}{}$-algebra isomorphism (\ref{DFFEK}) is  given by
 the isomorphism $D_\FF ~:~\End_\bbK(V)_\st\to \End_{\bbK}(V)$ discussed
in Example \ref{ex:endos}.

The map $D_\FF$ is an $\AAAlg{H^\FF}{A_\star}{A_\star}$-algebra isomorphism because
(cf.~Theorem \ref{isostar2}, and recall the bimodule structures
(\ref{bimastP}) and (\ref{bimaPst}))
$D_\FF(a\st P)=D_\FF(l_a\circ_\st P)=D_\FF(l_a)\circ D_\FF(P)$, $D_\FF(P\st a) = D_\FF(P)\circ D_\FF(l_a)$ and
$D_\FF(l_a)=l^\st_a$. The last property follows from a short calculation
\eq
D_\FF(l_a)(v)=(\of^\al\btrgl l_a)(\of_\al \trgl v)= l_{\of^\al\ra a}(\of_\al\ra v)=a\star v = l_a^\star(v)~.
\en

In order to prove that $D_\FF$ restricts to an isomorphism 
 between the $\AAAlg{H^\FF}{A_\star}{A_\star}$-subalgebras
$\End_B(V)_\st$
and $\End_{B_\st}(V_\st)$
we  show that for all $P\in \End_B(V)$
we have $D_\FF(P)\in \End_{B_\st}(V_\st)$, and that for all
$P_\st\in\End_{B_\st}(V_\st)$ we have
$D_\FF^{-1}(P_\st)\in\End_{B}(V)$. Because of Remark \ref{rem:Dinv} it is sufficient to prove
the first statement, since the second follows from twisting back with $\FF^{-1}$.
The proof is short, for all
$P\in \End_B(V)$, $v\in V$ and $b\in B$,
\eqa
D_\FF(P)(v\star b)&=&
(\of^\al\btrgl P)(\of_{\al_1}\of^\be\trgl v\cdot
\of_{\al_2}\of_\be\trgl b)\nn\\
&=&(\of^\al_1\of^\be\btrgl P)(\of^\al_2\of_\be\trgl v) \cdot
(\of_\al\trgl b)\nn\\
&=& \of^\al\ra\bigl( D_\FF(P)(v)\bigr)\cdot (\of_\al\ra b) = D_\FF(P)(v)\st b\label{DPva}~,
\ena
where in the second line we  used the twist cocycle property (\ref{ass})
and the fact that $\xi\RA P\in \End_B(V)$, for all $\xi\in H$ and
$P\in \End_B(V)$.
\end{proof}
We call $D_\FF(P): V_\star\to V_\star$ the  deformation of the
endomorphism $P: V\to V$ because $P$ is a map between undeformed modules
and can be seen as an element of the undeformed $\AAAlg{H}{A}{A}$-algebra 
$\End_\bbK(V)$. From this viewpoint
$D_\FF: \End_\bbK(V)\to \End_\bbK(V_\star)$ is a bijection from (the  
 $\AAAlg{H}{A}{A}$-algebra) 
$\End_\bbK(V)$ to (the  $\AAAlg{H^\FF}{A_\star}{A_\star}$-algebra)
$\End_\bbK(V_\star)$. 
Actually, since 
$\End_\bbK(V)$ and $\End_\bbK(V_\star)$ are $\bbK$-modules, it is
an isomorphism of $\bbK$-modules.

\subsubsection*{Left $A$-linear endomorphisms}
We have so far studied the deformations of the algebra $\End_B(V)$ of right $B$-linear
endomorphisms, but we could as well have studied the deformations of
the algebra ${}_A\End(V)$ of left
$A$-linear endomorphisms  of the $\MMMod{H}{A}{B}$-module $ V$.
These deformations are obtained by a ``mirror'' construction.  
A key point is that  there is an 
isomorphism ${}_A\End(V)\simeq \End_{A^\op}(V^\op)$ 
between left $A$-linear endomorphisms of a module $V$ and right
$A^\op$-linear endomorphisms of the opposite module $V^\op$, where, as
we detail below,
$A^\op$ has the opposite product of $A$, and $V^\op$ the opposite
module structure of $V$. Then, twist deformation of 
left $A$-linear endomorphisms in ${}_A\End(V)$ can be studied via 
twist deformation of right $A^\op$-linear endomorphisms in $\End_{A^\op}(V^\op)$. 
\sk

We recall that given a Hopf algebra 
$\UU=(\UU,\mu,\Delta, S,\epsi)$ 
(where with slight abuse of notation we
also denoted by $\UU$ the  $\bbK$-module structure underlying the Hopf
algebra $\UU$) 
we have the Hopf algebras $\UU^\cop$ and $\UU_\op$ if the antipode $S$
is invertible.
Explicitly, $\UU^\cop=(\UU, \mu, \Delta^\cop,
S^{-1},\epsi)$ is the Hopf algebra with the co-opposite coproduct $\Delta^\cop$
defined by, for all $\xi\in \UU$, $\Delta^\cop(\xi):=\xi_{1^\cop}\otimes
\xi_{2^\cop}:=\xi_2\otimes \xi_1\,,$ where $\Delta(\xi)=\xi_1\otimes \xi_2$.
$\UU_\op=(\UU, \mu^\op, \Delta,
S^{-1},\epsi)$ is the Hopf algebra with the opposite product $\mu^\op$
defined by, for all $\xi, \zeta\in \UU$,
$\mu^\op(\xi\otimes\zeta)=\zeta\xi$.
Even if the antipode $S$ is not invertible we have the Hopf algebra
$\UU^\cop_\op=(\UU,\mu^\op,\Delta^\cop, S,\epsi)$.
For a simpler mirror construction we are going to assume 
invertibility of the antipode in what follows. 
Notice that, in particular, quasitriangular Hopf algebras have an invertible antipode.

We observe that for any $\MMMod{}{A}{}$-module $V$ there is an $\MMMod{}{}{A^\op}$-module  $V^\op$. 
As $\bbK$-modules $V=V^\op$,  the algebra
$A^\op$ is the algebra with
opposite product  and its right action
on $V^\op$ is given by $v\cdot^\op a :=a\cdot v$. Similarly, for any $\MMMod{}{}{A}$-module
  $V$ there is an $\MMMod{}{A^\op}{}$-module $V^\op$, and
$(V^\op)^\op=V$.

Moreover, if we have  an $\AAAlg{H}{}{}$-algebra $A$
 then the opposite algebra $A^\op$ is an $\AAAlg{H^\cop}{}{}$-algebra,
where the Hopf algebra action is unchanged, i.e.~$A^\op$ is the $\AAAlg{H^\cop}{}{}$-algebra
$\big(A,\mu^\op,\ra\big)$.
This easily leads to the following
\begin{Lemma}\label{ABVCop}
Let $A, B$ be two $\AAAlg{H}{}{}$-algebras and $V$ be an $\MMMod{H}{A}{B}$-module. Then 
$A^\op, B^\op$ are $\AAAlg{H^\cop}{}{}$-algebras and $V^\op$ is an $\MMMod{H^\cop}{B^\op}{A^\op}$-module,
where the $H^\cop$-actions on $A^\op, B^\op, V^\op$ are the same actions
as the $H$-actions on $A,B,V$.
Similarly if  $E $ is an $\AAAlg{H}{A}{B}$-algebra,
then $E^\op$ is an $\AAAlg{H^\cop}{B^\op}{A^\op}$-algebra. 
\end{Lemma}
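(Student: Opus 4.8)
The plan is to prove all three assertions by direct verification, guided by a single structural observation: in each opposite structure the order of a product (or of a left/right action) is reversed, and this reversal is exactly compensated by the leg-swap in the co-opposite coproduct $\Delta^\cop(\xi)=\xi_2\otimes\xi_1$. Since the underlying $\bbK$-modules and the $H$-actions are left untouched by $(-)^\op$, no new data enters and the whole content is a matter of matching Sweedler legs.

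First I would dispose of $A^\op$ (and, identically, $B^\op$). Here the only nontrivial point is the $\AAAlg{H^\cop}{}{}$-module-algebra property $\xi\trgl(a\cdot^\op b)=(\xi_{1^\cop}\trgl a)\cdot^\op(\xi_{2^\cop}\trgl b)$, with $a\cdot^\op b:=b\,a$. Expanding the left-hand side as $\xi\trgl(b\,a)$ and using that $A$ is an $\AAAlg{H}{}{}$-algebra gives $(\xi_1\trgl b)(\xi_2\trgl a)$; rewriting this in the opposite product as $(\xi_2\trgl a)\cdot^\op(\xi_1\trgl b)$ and reading $\xi_2\otimes\xi_1=\Delta^\cop(\xi)$ yields exactly the claim. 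The normalization $\xi\trgl1=\varepsilon(\xi)\,1$ and the counit are inherited verbatim because $\varepsilon$ is unchanged in $H^\cop$.

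Next I would check that $V^\op$ is an $\MMMod{H^\cop}{B^\op}{A^\op}$-module, using the left $B^\op$-action $b\cdot^\op v:=v\cdot b$ and the right $A^\op$-action $v\cdot^\op a:=a\cdot v$ supplied by the remark preceding the lemma. The bimodule compatibility $(b\cdot^\op v)\cdot^\op a=b\cdot^\op(v\cdot^\op a)$ reduces at once to the $(A,B)$-bimodule law $a\cdot(v\cdot b)=(a\cdot v)\cdot b$ for $V$. The two $H^\cop$-covariance conditions then follow from the two covariance conditions for $V$ by the same swap mechanism: for instance $\xi\trgl(b\cdot^\op v)=\xi\trgl(v\cdot b)=(\xi_1\trgl v)\cdot(\xi_2\trgl b)=(\xi_2\trgl b)\cdot^\op(\xi_1\trgl v)=(\xi_{1^\cop}\trgl b)\cdot^\op(\xi_{2^\cop}\trgl v)$, and symmetrically $\xi\trgl(v\cdot^\op a)=\xi\trgl(a\cdot v)=(\xi_1\trgl a)\cdot(\xi_2\trgl v)=(\xi_2\trgl v)\cdot^\op(\xi_1\trgl a)=(\xi_{1^\cop}\trgl v)\cdot^\op(\xi_{2^\cop}\trgl a)$.

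Finally, for an $\AAAlg{H}{A}{B}$-algebra $E$, I would simply invoke that, by definition, this is the conjunction of being an $\MMMod{H}{A}{B}$-module and an $\AAAlg{H}{}{}$-algebra; the two computations above then show $E^\op$ is at once an $\MMMod{H^\cop}{B^\op}{A^\op}$-module and an $\AAAlg{H^\cop}{}{}$-algebra, hence an $\AAAlg{H^\cop}{B^\op}{A^\op}$-algebra. I expect no genuine obstacle in any of this: every identity is a one-line manipulation, and the only point demanding care is the consistent bookkeeping of $\Delta^\cop$. Worth flagging is that the antipode never enters these verifications; the invertibility of $S$ assumed in the text is needed not for these identities but solely to guarantee that $H^\cop$ is itself a Hopf algebra, so that the target notions $\AAAlg{H^\cop}{}{}$ and $\MMMod{H^\cop}{B^\op}{A^\op}$ are well defined.
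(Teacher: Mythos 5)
Your proof is correct and takes essentially the same route as the paper: the paper verifies only the $\AAAlg{H^\cop}{}{}$-algebra property of $A^\op$ via the identical Sweedler-leg swap $\xi\trgl(\tilde a\,a)=(\xi_1\trgl\tilde a)(\xi_2\trgl a)=(\xi_{1^\cop}\trgl a)\cdot^\op(\xi_{2^\cop}\trgl\tilde a)$ and declares the remaining statements ``similarly proven'', which is exactly what your explicit checks of the bimodule compatibility and the two $H^\cop$-covariance conditions for $V^\op$ supply. Your closing observation that the antipode never enters these verifications, so that the assumed invertibility of $S$ is needed only to make $H^\cop$ a Hopf algebra (and hence the target notions well defined), is also accurate.
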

\begin{proof}
We here show  as an illustrative example that the algebra $A^\op$ is an $\AAAlg{H^\cop}{}{}$-algebra.
For all $\xi\in \UU$ and $a,\tilde a\in A$, 
\eqa
\xi\trgl (\mu^\op(a\otimes \tilde a))&=&\xi\trgl (\tilde a  \, a)
=(\xi_1\trgl\tilde a)\, (\xi_2\trgl a)=\mu^\op\big((\xi_2\trgl a)\otimes (\xi_1\trgl
\tilde a)\big)\nn\\
&=&\mu^\op\big((\xi_{1^\cop}\trgl a)\otimes (\xi_{2^\cop}\trgl \tilde a)\big)~.
\ena
The remaining statements are similarly proven.
\end{proof}

We apply these observations to the algebra of endomorphisms of the
module $V$ and obtain 
\begin{Proposition}\label{ABVAEnd}
Let $A, B$ be two $\AAAlg{H}{}{}$-algebras and  $V$ be an $\MMMod{H}{A}{B}$-module.
Then $\big({}_A\End(V)\big)^\op$ is an $\AAAlg{H}{B}{B}$-algebra,
i.e.~more explicitly,
$\big({}_A\End(V), \circ^\op, \cdot^\op, {\RA^\cop}\big)$ is an $\AAAlg{H}{B}{B}$-algebra,
where the left $H$-action, called $\RA^\cop$ adjoint action, is given by, for all
$\xi\in \UU$ and $P\in {}_A\End(V)$, 
\eq
\xi\btrgl^\cop P
:=\xi_2\trgl\circ P\circ S^{-1}(\xi_1)\trgl~.
\en
The $B$-bimodule structure is given by, 
for all $b\in B$ and $P\in {}_A\End(V)$, \eq b\cdot^\op P
=P\circ
r_b~~,~~~~P\cdot^\op b
=r_b\circ P~,
\en 
where for all $v\in V$, $r_b(v)=v\cdot b$.

\end{Proposition}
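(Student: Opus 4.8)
The plan is to obtain the proposition from Proposition~\ref{Hom_K HAMA} by a ``mirror'' construction, using Lemma~\ref{ABVCop} twice to pass from $H$ to $H^\cop$ and back. The observation that drives everything is a set-level identification: since $v\cdot^\op a = a\cdot v$, a $\bbK$-linear endomorphism $P$ of $V$ is left $A$-linear exactly when the same map, regarded as an endomorphism of $V^\op$, is right $A^\op$-linear. Hence ${}_A\End(V)=\End_{A^\op}(V^\op)$ as $\bbK$-modules, and since composition of maps is the same on both sides, also as algebras under $\circ$.

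First I would invoke Lemma~\ref{ABVCop}: $A^\op,B^\op$ are $\AAAlg{H^\cop}{}{}$-algebras and $V^\op$ is an $\MMMod{H^\cop}{B^\op}{A^\op}$-module, with all actions equal to the original ones. I then apply Proposition~\ref{Hom_K HAMA} to $V^\op$, reading its hypotheses with the Hopf algebra $H^\cop$, with $B^\op$ as the left algebra and $A^\op$ as the right algebra. This yields at once that $\End_{A^\op}(V^\op)$ is an $\AAAlg{H^\cop}{B^\op}{B^\op}$-algebra, and I would spell out its three data. The product is ordinary composition $\circ$. The $H^\cop$-adjoint action is $\xi\btrgl P=\xi_{1^\cop}\trgl\circ P\circ S^{-1}(\xi_{2^\cop})\trgl$, since the antipode of $H^\cop$ is $S^{-1}$; substituting $\Delta^\cop(\xi)=\xi_2\otimes\xi_1$ this becomes $\xi_2\trgl\circ P\circ S^{-1}(\xi_1)\trgl$. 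The $B^\op$-bimodule structure comes from the homomorphism $l^\op\colon B^\op\to\End_\bbK(V^\op)$, $l^\op_b(v)=b\cdot^\op v=v\cdot b=r_b(v)$, so that $l^\op_b=r_b$ and thus $b\cdot P=r_b\circ P$ and $P\cdot b=P\circ r_b$ inside $\End_{A^\op}(V^\op)$.

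Next I would apply Lemma~\ref{ABVCop} in the reverse direction to the $\AAAlg{H^\cop}{B^\op}{B^\op}$-algebra $\End_{A^\op}(V^\op)$: its opposite is an $\AAAlg{(H^\cop)^\cop}{(B^\op)^\op}{(B^\op)^\op}=\AAAlg{H}{B}{B}$-algebra. Passing to the opposite turns $\circ$ into $\circ^\op$; by Lemma~\ref{ABVCop} the underlying $H$-action is unchanged, giving precisely $\xi\btrgl^\cop P=\xi_2\trgl\circ P\circ S^{-1}(\xi_1)\trgl$; and the left and right module structures are interchanged, so the left $B^\op$-action $r_b\circ P$ becomes the right $B$-action $P\cdot^\op b=r_b\circ P$, while the right $B^\op$-action $P\circ r_b$ becomes the left $B$-action $b\cdot^\op P=P\circ r_b$. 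In view of the identification $\big(\End_{A^\op}(V^\op)\big)^\op=\big({}_A\End(V)\big)^\op$, this is exactly the asserted $\AAAlg{H}{B}{B}$-algebra $\big({}_A\End(V),\circ^\op,\cdot^\op,{\RA^\cop}\big)$.

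All the covariance axioms for $V^\op$ and the fact that $l^\op$ is an $\AAAlg{H^\cop}{}{}$-algebra homomorphism are already furnished by Lemma~\ref{ABVCop} and Proposition~\ref{Hom_K HAMA}, so no further computation is needed. The only point demanding care, rather than a genuine obstacle, is the bookkeeping of the co-opposite data: correctly inserting $S^{-1}$ and $\Delta^\cop(\xi)=\xi_2\otimes\xi_1$ into the adjoint action, and checking that each ``op'' swaps left with right, so that the two applications compose to restore the $H$-action to its natural form while converting right $A^\op$-linearity back into left $A$-linearity. This is also where invertibility of $S$, assumed throughout this subsection, enters decisively: it is what makes $H^\cop$ a Hopf algebra, so that Proposition~\ref{Hom_K HAMA} is applicable to $V^\op$ in the first place.
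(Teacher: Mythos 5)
Your proposal is correct and follows essentially the same route as the paper's own proof: pass to $V^\op$ via Lemma~\ref{ABVCop}, apply Proposition~\ref{Hom_K HAMA} over $H^\cop$ to obtain the $\AAAlg{H^\cop}{B^\op}{B^\op}$-algebra $\End_{A^\op}(V^\op)$, take opposites, and use the identity-map isomorphism $\End_{A^\op}(V^\op)\simeq{}_A\End(V)$. Your bookkeeping of $\Delta^\cop$, $S^{-1}$ and the left/right swaps, as well as the remark on invertibility of the antipode, matches the paper's argument.
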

\begin{proof}
The hypothesis implies that
$A^\op, B^\op$ are $\AAAlg{H^\cop}{}{}$-algebras and $V^\op$ is an $\MMMod{H^\cop}{B^\op}{A^\op}$-module.
Hence, we have the  $\AAAlg{H^\cop}{B^\op}{B^\op}$-algebra
$\big(\End_{A^\op}(V^\op) ,\circ,\cdot,{\RA^\cop}\big)$,
where the 
$H^\cop$-adjoint action is
$\,\xi\btrgl^\cop P
=\xi_{1^\cop}\trgl \circ P\circ
S^\cop(\xi_{2^\cop})\trgl\:$ and the $B^\op$-actions are
\eq
P\cdot b=P\circ l_b^{B^\op}=P\circ r_b~~,~~~~b\cdot P= l_b^{B^\op}\circ P=
r_b\circ P~,
\en
 where by definition
$l^{B^\op}:B^\op\to \End_{A^\op}(V^\op)$, with $l^{B^\op}_b(v)=b\cdot^\op v=v\cdot b$. 
Because of Lemma \ref{ABVCop} 
we equivalently have the opposite $\AAAlg{H}{B}{B}$-algebra
$\big(\End_{A^\op}(V^\op),\circ^\op,\cdot^\op,{\RA^\cop}\big)$.
The thesis holds because there is a canonical $\bbK$-module isomorphism 
\eq\label{identityiso}
\End_{A^\op}(V^\op)\simeq
{}_A\End(V)\
\en
given by the identity map. Indeed, if 
$P\in \End_{A^\op}(V^\op)$ then, for all $a\in A$ and $v\in V$,
\begin{flalign}
P(a\cdot v)=P(v\cdot^\op a)=P(v)\cdot^\op a=a\cdot P(v)~,
\end{flalign}
hence $P\in {}_A\End(V)$, and vice versa. Thus,  $\big({}_A\End(V),\circ^\op,\cdot^\op,{\RA^\cop}\big)$ is an
 $\AAAlg{H}{B}{B}$-algebra.
\end{proof}
The appearance in Proposition \ref{ABVAEnd} of the opposite composition product 
$P\circ^\op Q=Q\circ P$ is naturally explained letting the left $A$-linear
endomorphism $P$ act from the right to the left, $(v)\overleftarrow{P}:=P(v)$. Then we
have $\big((v)\overleftarrow{P}\big) \overleftarrow{Q} = Q\big(P(v)\big)= \big(Q\circ P\big)(v) = \big(P\circ^\op Q\big)(v) = (v)\/\overleftarrow{P\circ^\op Q}$.
\sk
\sk
We now use the isomorphism
${}_A\End(V)\simeq \End_{A^\op}(V^\op)$ between left $A$-linear and
right $A^\op$-linear endomorphisms in order to construct  a
deformation map $P\mapsto D^\cop_\FF(P)$  from left $A$-linear to
left $A_\st$-linear endomorphisms. 
The map $D^\cop_\FF$ is induced from a deformation map 
of right $A^\op$-linear endomorphism 
that is constructed following Theorem \ref{PDP}.

Let $\UU$ be a Hopf algebra with twist $\FF\in \UU\otimes \UU$.
Let also  $A, B$ be two $\AAAlg{H}{}{}$-algebras and
$V$ an $\MMMod{H}{A}{B}$-module, so that $\big({}_A\End(V)\big)^\op$
is an $\AAAlg{H}{B}{B}$-algebra, it reads 
$\big({}_A\End(V),\circ^\op,\cdot^\op,{\RA^\cop}\big)$.
Then the canonical constructions of Theorem \ref{Theorem1}
and of Theorem \ref{Theorem2} lead to the deformed $\AAAlg{H^\FF}{}{}$-algebras
 $A_\st , B_\st$,  the deformed $\MMMod{H^\FF}{A_\star}{B_\star}$-module
$V_\st$ and the deformed $\AAAlg{H^\FF}{B_\st}{B_\st}$-algebra
$\big({}_A\End(V)\big)^\op_{~~\st}$.
This latter one explicitly reads 
$\big({}_A\End(V),(\circ^\op)_\star,(\cdot^\op)_\star,{\RA^\cop}\big)$, where $(\circ^\op)_\star$ and $(\cdot^\op)_\star$
are the $\star$-products constructed from $\circ^\op$ and the twist
$\FF$ (respectively $\cdot^\op$ and the twist $\FF$).
For example, for $P,Q\in {}_A\End(V)$, $P(\circ^\op)_\star Q = (\of^\alpha \RA^\cop P)\circ^\op (\of_\alpha\RA^\cop Q)$.

Another deformation of $\big({}_A\End(V)\big)^\op $ is achieved
by applying the construction of Proposition \ref{ABVAEnd} to the
deformed $\AAAlg{H^\FF}{}{}$-algebras 
$A_\st$, $B_\st$ and the deformed $\MMMod{H^\FF}{A_\star}{B_\star}$-module $V_\st$.
We thus obtain  the $\AAAlg{H^\FF}{B_\st}{B_\st}$-algebra
$\big(\,{}_{A_\st}\End(V_\st)\big)^\op$, or more explicitly
$\big({}_{A_\st}\End(V_\st),\circ^\op,\cdot^\op,{\RA_{\,\!\!\FF}\;\!}^\cop\big)$. Notice in 
 particular that  the $B_\st$-bimodule structure 
is given by, for all $b\in B_\st$ and $P_\star \in {}_{A_\st}\End(V_\st)$,  
\eq \label{astDPast}
b \cdot^\op P_\st=P_\st\circ r^\st_b
~~,~~~~P_\st\cdot^\op
b =r^\st_b \circ P_\st~,
\en
where,  for all $v\in V_\star$, $r^\st_b(v)=v\st b$.

\begin{Theorem}\label{PDPL}
Let $H$ be a Hopf algebra with twist $\FF\in H\otimes H$, and let $A,B$ be two $\AAAlg{H}{}{}$-algebras 
and $V$ be an $\MMMod{H}{A}{B}$-module.
The map 
\begin{flalign} \label{eqn:DFcop}
D^\cop_\FF
~:~\big({}_{A}\End(V)\big)^\op_{~~\st}~~~&\longrightarrow ~~~~
\big({}_{A_\st}\End(V_\st)\big)^\op\nn\\
P~~~~&\longmapsto ~~~~ D^\cop_\FF(P):=(\of_\al\btrgl^\cop P)\circ \of^\al\trgl~
\end{flalign} 
is an $\AAAlg{H^\FF}{B_\star}{B_\star}$-algebra isomorphism between the two deformations of left $A$-linear endomorphisms
${\big({}_A\End(V),(\circ^\op)_\star,(\cdot^\op)_\star,{\RA^\cop}\big)}$ and
$\big({}_{A_\star}\End(V_\star),\circ^\op,\cdot^\op,{\RA_{\,\!\!\FF}\;\!}^\cop\big)$.
\end{Theorem}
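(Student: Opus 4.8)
The plan is to deduce this theorem from its ``right--linear'' counterpart, Theorem \ref{PDP}, by running the mirror construction set up in Lemma \ref{ABVCop} and Proposition \ref{ABVAEnd}. Concretely, I would apply Theorem \ref{PDP} not to $(H,A,B,V)$ but to the opposite data $(H^\cop,B^\op,A^\op,V^\op)$, for which $V^\op$ is an $\MMMod{H^\cop}{B^\op}{A^\op}$-module and $\End_{A^\op}(V^\op)$ is the relevant $\AAAlg{H^\cop}{B^\op}{B^\op}$-algebra of right $A^\op$-linear endomorphisms. The one genuinely new input this requires is a twist of $H^\cop$, and I claim that $\FF_{21}=\f_\al\otimes\f^\al$ does the job: applying the flip $\tau_{13}$ of the first and third legs to the $2$-cocycle identity (\ref{2.21}) for $\FF$ turns it term by term into the $2$-cocycle identity for $\FF_{21}$ relative to $\Delta^\cop$, while the normalization (\ref{2.23}) is manifestly symmetric under the flip. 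Thus Theorem \ref{PDP} applies verbatim and produces an algebra isomorphism $D_{\FF_{21}}:\End_{A^\op}(V^\op)_\st\to\End_{(A^\op)_{\FF_{21}}}\big((V^\op)_{\FF_{21}}\big)$, built from $\FF_{21}^{-1}=\of_\al\otimes\of^\al$.

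Next I would verify that twisting commutes with passing to opposites, so that the objects coming out of Theorem \ref{PDP} are exactly the deformed objects named in the statement. At the level of Hopf algebras one has $(H^\cop)^{\FF_{21}}=(H^\FF)^\cop$: the coproducts agree because $\FF_{21}\Delta^\cop(\xi)\FF_{21}^{-1}=\big(\FF\Delta(\xi)\FF^{-1}\big)^\cop$, the algebras and counits are untouched, and the antipodes then coincide by uniqueness of the antipode. At the level of the deformed algebra and module one reads off directly from Theorem \ref{Theorem1} and Theorem \ref{Theorem2} that $a\st' b=(\of^\al\trgl b)(\of_\al\trgl a)=b\st a$, and similarly for the module actions, giving $(A^\op)_{\FF_{21}}=(A_\st)^\op$, $(B^\op)_{\FF_{21}}=(B_\st)^\op$ and $(V^\op)_{\FF_{21}}=(V_\st)^\op$. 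Feeding these identifications back in, $D_{\FF_{21}}$ becomes an isomorphism $\End_{A^\op}(V^\op)_\st\to\End_{(A_\st)^\op}\big((V_\st)^\op\big)$ of $\AAAlg{(H^\FF)^\cop}{(B_\st)^\op}{(B_\st)^\op}$-algebras.

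Finally I would transport this isomorphism along the identity $\bbK$-module identifications $\End_{A^\op}(V^\op)\simeq{}_A\End(V)$ and $\End_{(A_\st)^\op}\big((V_\st)^\op\big)\simeq{}_{A_\st}\End(V_\st)$ of Proposition \ref{ABVAEnd}, and pass to the opposite algebras. Unwinding $\FF_{21}^{-1}=\of_\al\otimes\of^\al$ in the formula of Theorem \ref{PDP} gives $D_{\FF_{21}}(P)=(\of_\al\btrgl^\cop P)\circ\of^\al\trgl$, which is precisely the map $D^\cop_\FF(P)$ of (\ref{eqn:DFcop}); and taking $(-)^\op$ of the deformed composition $\circ_{\star'}$ reproduces $(\circ^\op)_\star$, since both equal $(\of_\al\RA^\cop\,\cdot\,)\circ(\of^\al\RA^\cop\,\cdot\,)$ with the arguments interchanged, and likewise for the bimodule actions and for the target $\RA_\FF^\cop$-structure. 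Hence the two deformations named in the statement are exactly the opposites of the source and target of $D_{\FF_{21}}$, and $D^\cop_\FF$ is the asserted $\AAAlg{H^\FF}{B_\star}{B_\star}$-algebra isomorphism. I expect the main obstacle to be purely bookkeeping: keeping the three simultaneous dualizations --- $\op$ on products and module actions, $\cop$ on $H$, and $\FF\rightsquigarrow\FF_{21}$ on the twist --- consistently aligned so that each deformed structure is matched with the correct opposite of the structure produced by Theorem \ref{PDP}; the two substantive points (that $\FF_{21}$ is a twist of $H^\cop$ and that $(H^\cop)^{\FF_{21}}=(H^\FF)^\cop$) are comparatively short.
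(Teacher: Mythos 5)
Your proposal is correct and follows essentially the same route as the paper's own proof: both apply Theorem \ref{PDP} to the mirrored data $(H^\cop, B^\op, A^\op, V^\op)$ with the twist $\FF^\cop=\FF_{21}$, verify the identifications $(H^\cop)^{\FF_{21}}=(H^\FF)^\cop$, $(A^\op)_{\star^\cop}=(A_\star)^\op$, $(B^\op)_{\star^\cop}=(B_\star)^\op$, $(V^\op)_{\star^\cop}=(V_\star)^\op$, and then transport the resulting isomorphism through $\End_{A^\op}(V^\op)\simeq{}_A\End(V)$ and the passage to opposite algebras. Your explicit check that $\FF_{21}$ satisfies the $2$-cocycle condition for $\Delta^\cop$ (via the flip $\tau_{13}$) is a detail the paper merely asserts, so your write-up is, if anything, slightly more complete.
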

\begin{proof}
From Lemma \ref{ABVCop} we have the $\AAAlg{H^\cop}{}{}$-algebras $A^\op, B^\op$ and the
 $\MMMod{H^\cop}{B^\op}{A^\op}$-module $V^\op$.
Now notice that if $\FF$ is a twist of $H$, then $\FF^\cop:=\FF_{21}$ is a
twist of  $H^\cop$.  From Proposition \ref{Hom_K HAMA} and the Theorems
\ref{Theorem1} and \ref{Theorem2} we then have that
\begin{subequations}
\begin{flalign}\label{4.25a}
 \bigl(\End_{A^\op}(V^\op),\circ_{\star^\cop},{\star^\cop},{\RA^\cop}\bigr)
\end{flalign}
and
\begin{flalign}\label{4.25b}
 \big(\End_{(A^\op)_{\star^\cop}}((V^\op)_{\star^\cop}),\circ,\cdot,{\RA^\cop}_{\FF^\cop} \big)
\end{flalign}
\end{subequations}
are $\AAAlg{(H^\cop)^{\FF^\cop}\!\!}{(B^\op)_{\st^\cop}}{(B^\op)_{\st^\cop}}$-algebras.
Here we denoted by $\st^\cop$ the $\st$-product given by  the twist $\FF^\cop$.
In (\ref{4.25b}) the $(B^\op)_{\st^\cop}$-bimodule structure
is the canonical one obtained from the $\AAAlg{(H^\cop)^{\FF^\cop}}{}{}$-algebra homomorphism $l^{(B^\op)_{\star_\cop}}:
{(B^\op)_{\star_\cop}}\to \End_{(A^\op)_{\star^\cop}}((V^\op)_{\star^\cop})$.
We use the short notation $\big(\End_{A^\op}(V^\op)\big)_{\st^\cop}$ for (\ref{4.25a}) and
$\End_{(A^\op)_{_{\st^\cop}}}\!((V^\op)_{\st^\cop})$ for (\ref{4.25b}).  Now
Theorem \ref{PDP} implies the $\AAAlg{(H^\cop)^{\FF^\cop}\!\!}{(B^\op)_{\st^\cop}}{(B^\op)_{\st^\cop}}$-algebra isomorphism
\begin{flalign}
D_{\FF^\cop}
\,:\,\big(\End_{A^\op}(V^\op)\big)_{\st^\cop}\longrightarrow 
\End_{(A^\op)_{_{\st^\cop}}}\!((V^\op)_{\st^\cop})\label{eqn:Dcoptemp}~,~~
 D_{\FF^{\cop}}(P):=(\of_\al\btrgl^\cop P)\circ \of^\al\trgl~.
\end{flalign}
Observe that
\begin{subequations}
\begin{flalign}
 (A^\op)_{\star^\cop} = (A_\star)^\op~,\quad (B^\op)_{\star^\cop} = (B_\star)^\op~,\quad (V^\op)_{\star^\cop} = (V_\star)^\op~,
\end{flalign}
as well as 
\begin{flalign}
 (H^\cop)^{\FF^\cop} = (H^\FF)^\cop~,\quad {\RA^\cop}_{\FF^\cop} = {\RA_{\,\!\!\FF}\;\!}^{\cop}~.
\end{flalign}
\end{subequations}
It follows that  if we consider an $\AAAlg{H}{A}{B}$-algebra  $E$, so that $E^\op$ is an $\AAAlg{H^\cop}{B^\op}{A^\op}$-algebra,
 then the $\AAAlg{(H^\cop)^{\FF^\cop}}{(B^\op)_{\star^\cop}}{(A^\op)_{\star^\cop}}$-algebra
$(E^\op)_{\star^\cop}$
is equal to the $\AAAlg{(H^\FF)^\cop}{(B_\star)^\op}{(A_\star)^\op}$-algebra
$(E_\star)^\op$.
Using this and the canonical isomorphism
$ \End_{A^\op}(V^\op)\simeq {_A}\End(V)$ 
we find
\begin{subequations}\label{eqn:oppotemp}
\begin{flalign}
  \bigl(\End_{A^\op}(V^\op)\bigr)_{\star^\cop} \simeq \bigl({_A}\End(V)\bigr)_{\star^\cop}
= \Bigr(\bigl({_A}\End(V)\bigr)^{\op}\Bigr)_\star^{~\,~\op}~.
\end{flalign}
 We also have that
\begin{flalign}\label{4.29b}
 \End_{(A^\op)_{\star^\cop}}((V^\op)_{\star^\cop}) = \End_{(A_\star)^\op}((V_\star)^\op)\simeq {_{A_\star}}\End(V_\star)~.
\end{flalign}
\end{subequations}
The bimodule structures in the first equality  in (\ref{4.29b})
are identified via  the identification of the maps
$l^{(B^\op)_{\star_\cop}}\!:{(B^\op)_{\star_\cop}}\to \End_{(A^\op)_{\star^\cop}}((V^\op)_{\star^\cop})\,$
and $\,l^{(B_\star)^\op}\!:
(B_\star)^\op\to \End_{(A_\star)^\op}((V_\star)^\op).$ 

The proof of the theorem follows by noting that the isomorphism
defined
in (\ref{eqn:Dcoptemp}) canonically induces on the opposites of  the modules in (\ref{eqn:oppotemp})
the $\AAAlg{H^\FF}{B_\star}{B_\star}$-algebra  isomorphism
$D^\cop_\FF$ defined in  (\ref{eqn:DFcop}).
\end{proof}
We call $D_\FF^\cop(P): V_\star\to V_\star$ the left deformation of
the endomorphism $P:V\to V$.


\subsection{Deformation of homomorphisms}
Let $H$ be a Hopf algebra,  $A, B$ be two $\AAAlg{H}{}{}$-algebras and
$V$ be an $\MMMod{H}{A}{}$-module. Then, 
as seen in the previous subsection, $\End_\bbK(V)$ is an
$\AAAlg{H}{}{}$-algebra, 
where the $H$-action is given by
the $H$-adjoint action $\RA$.
The algebra structure trivially implies an $\AAAlg{H}{\End_\bbK(V)}{\End_\bbK(V)} $-algebra structure on
$\End_\bbK(V)$.
Now because of the $\AAAlg{H}{}{}$-algebra homomorphism 
$l : A\rightarrow \End_\bbK(V)$
(cf.~proof of Proposition \ref{Hom_K HAMA}) we have that
$\End_\bbK(V)$ is an $\AAAlg{H}{A}{A}$-algebra.

We here consider the $\bbK$-module $\Hom_\bbK(V, W)$ 
of $ \bbK$-linear maps from the $\MMMod{H}{A}{}$-module $V$ to the $\MMMod{H}{A}{}$-module $W$. 
In this case we immediately have an
$\MMMod{H}{\End_\bbK(V)}{\End_\bbK(W)}$-module structure on $\Hom_\bbK(V,W)$.
Also here the $H$-action is given by the $H$-adjoint action $\RA$.
Because of the $\AAAlg{H}{}{}$-algebra homomorphisms $l : A\rightarrow \End_\bbK(V)$ and
$l : A\rightarrow \End_\bbK(W)$ we similarly obtain 
an $\MMMod{H}{A}{A}$-module structure on $\Hom_\bbK(V,W)$, explicitly
$\big(\Hom_\bbK(V,W),\cdot,{\RA}\big)$.
Furthermore, similarly to  Proposition \ref{Hom_K HAMA},
we have that if $V,W$ are $\MMMod{H}{A}{B}$-modules, then the
$\bbK$-submodule $\Hom_B(V,W)$ of right $B$-linear homomorphisms forms
an $\MMMod{H}{A}{A}$-submodule of $\Hom_\bbK(V,W)$. The
$\MMMod{H}{A}{A}$-module structure of  $\Hom_B(V,W)$  in the case $B=A$ will
be a key ingredient in order to study tensor products over $A$  of
homomorphisms, see Section \ref{subsecprodonA}.
\begin{Remark}\label{rem:hwlms}
We later also encounter the situation where $V$ is only an
$\MMMod{H}{}{B}$-module 
while $W$ is an $\MMMod{H}{A}{B}$-module.
In this case we similarly have that $\Hom_\bbK(V,W),\Hom_B(V,W)$ are
$\MMMod{H}{A}{}$-modules,
and $\Hom_\bbK(W,V),\Hom_B(W,V)$ are $\MMMod{H}{}{A}$-modules.
\end{Remark}
\sk

The results of the previous subsection concerning the deformation of
endomorphisms generalize to the case of homomorphisms.
We present only the main theorems and omit the proofs that are  
easily obtained following the corresponding ones for endomorphisms.

\begin{Theorem}\label{PDPHom}
Let $\UU$ be a Hopf algebra with twist $\FF\in \UU\otimes \UU$, and
let  $A, B$ be two $\AAAlg{H}{}{}$-algebras and
$V, W$ be two $\MMMod{H}{A}{B}$-modules. The map  
\begin{flalign}\label{DFFEKHom}
D_\FF ~:~\Hom_\bbK(V, W)_\st~~~&\longrightarrow ~~~~ \Hom_{\bbK}(V_\st,W_\st)\nn\\
P~~~~&\longmapsto ~~~~ D_\FF(P):=(\of^\al\btrgl P)\circ \of_\al\trgl~
\end{flalign}
is an $\MMMod{H^\FF}{A_\star}{A_\star}$-module isomorphism between 
$\big(\Hom_\bbK(V,W),\star,{\RA}\big)$ and
$\big(\Hom_\bbK(V_\star,W_\star),\cdot,{\RA_\FF}\big)$.
It restricts to an $\MMMod{H^\FF}{A_\star}{A_\star}$-module isomorphism
\eq
D_\FF ~:~\Hom_B(V,W)_\st\longrightarrow \Hom_{B_\st}(V_\st,W_\st)~
\en
between $\big(\Hom_B(V,W),\star,{\RA}\big)$ and $\big(\Hom_{B_\star}(V_\star,W_\star),\cdot,{\RA_\FF}\big)$.
\end{Theorem}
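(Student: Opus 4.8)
The plan is to reduce Theorem \ref{PDPHom} to the already-established endomorphism case, Theorem \ref{PDP}, rather than repeating the direct computations. The key observation is that homomorphisms between two modules can be encoded as endomorphisms of their direct sum. Concretely, given $\MMMod{H}{A}{B}$-modules $V$ and $W$, I would form the $\MMMod{H}{A}{B}$-module $V\oplus W$ (with the obvious componentwise $H$-, $A$- and $B$-actions, which one checks satisfy the covariance conditions (\ref{eqn:moduleHcovariance})). Then $\Hom_\bbK(V,W)$ sits inside $\End_\bbK(V\oplus W)$ as the maps that send $V$ into $W$ and annihilate $W$; equivalently, writing $p_V,p_W$ for the projections and $i_V,i_W$ for the inclusions, a homomorphism $P\in\Hom_\bbK(V,W)$ corresponds to $i_W\circ P\circ p_V\in\End_\bbK(V\oplus W)$. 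Because the projections and inclusions are $H$-equivariant and $A$-, $B$-linear, this embedding is compatible with the $H$-adjoint action $\RA$, with the $A$-bimodule structure of Proposition \ref{Hom_K HAMA}, and with right $B$-linearity. One then simply invokes Theorem \ref{PDP} for the module $V\oplus W$ and restricts $D_\FF$ to the corresponding summand.

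Alternatively, and perhaps more transparently, I would verify the statement directly by rerunning the three computations from the proofs of Proposition \ref{Hom_K HAMA} and Theorem \ref{PDP}, observing at each line that nothing used $V=W$. First I would check that $\big(\Hom_\bbK(V,W),\cdot,{\RA}\big)$ is an $\MMMod{H}{A}{A}$-module: the left and right $A$-actions are $a\cdot P=l_a^W\circ P$ and $P\cdot a=P\circ l_a^V$ (where $l_a^V,l_a^W$ are the left-multiplication maps on $V$ and $W$), and the covariance $\xi\btrgl(a\cdot P)=(\xi_1\trgl a)\cdot(\xi_2\btrgl P)$ follows exactly as in (\ref{4.5prop}) using that $l^V,l^W$ are $\AAAlg{H}{}{}$-algebra homomorphisms. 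Next I would show $D_\FF$ is $\bbK$-linear and bijective; bijectivity is immediate from Remark \ref{rem:Dinv}, since twisting back with $\FF^{-1}$ produces the two-sided inverse $D_{\FF^{-1}}=D_\FF^{-1}$.

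The substantive steps are the two module-compatibility claims. I would verify that $D_\FF$ intertwines the left and right $A_\star$-actions, i.e.\ $D_\FF(a\star P)=a\cdot D_\FF(P)$ and $D_\FF(P\star a)=D_\FF(P)\cdot a$, which reduces to the facts $D_\FF(l_a^V)=l^{V_\star}_a=(l^V_a)^\star$ and $D_\FF(l_a^W)=(l^W_a)^\star$ together with the homomorphism property $D_\FF(P\circ_\star Q)=D_\FF(P)\circ D_\FF(Q)$; these are precisely the computations appearing in the proof of Theorem \ref{PDP}, now with source and target modules distinguished. I would then check that $D_\FF$ intertwines the $H^\FF$-adjoint actions $\RA$ and $\RA_\FF$, exactly as in Theorem \ref{isoAAstDAA}, the calculation (\ref{DxitrglP}) being insensitive to whether the endomorphism is replaced by a homomorphism. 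Finally, the restriction to right $B$-linear maps uses the identical argument as (\ref{DPva}): for $P\in\Hom_B(V,W)$ one computes $D_\FF(P)(v\star b)=D_\FF(P)(v)\star b$ via the cocycle property (\ref{ass}) and the fact that $\xi\RA P\in\Hom_B(V,W)$.

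The main obstacle, such as it is, will be bookkeeping rather than conceptual difficulty: one must keep the left-multiplication operators $l^V$ and $l^W$ on the two \emph{different} modules notationally distinct throughout, and confirm that the single $H$-adjoint action $\RA$ of (\ref{xiadjactP}) makes sense on $\Hom_\bbK(V,W)$ by using the $H$-action on the source $V$ through the antipode and on the target $W$ directly, namely $\xi\btrgl P=\xi_1\trgl^W\circ P\circ S(\xi_2)\trgl^V$. Once these conventions are fixed, each required identity is a verbatim transcription of an already-proved endomorphism identity, so I would state the theorem and remark that the proofs of Proposition \ref{Hom_K HAMA}, Theorem \ref{isoAAstDAA} and Theorem \ref{PDP} apply mutatis mutandis, precisely as the authors indicate when they write that the proofs are omitted.
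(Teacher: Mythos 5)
Your proposal is correct, and your second route is precisely the paper's: the authors omit the proof of this theorem, stating that it is ``easily obtained following the corresponding ones for endomorphisms,'' which is exactly your mutatis-mutandis transcription of Proposition \ref{Hom_K HAMA}, Theorem \ref{isoAAstDAA} and Theorem \ref{PDP} with the left-multiplication operators $l^V_a$, $l^W_a$ kept distinct and the adjoint action read as $\xi\btrgl P=\xi_1\trgl\circ P\circ S(\xi_2)\trgl$ with source and target actions distinguished. Your first route, the reduction via the corner embedding $P\mapsto i_W\circ P\circ p_V$ into $\End_\bbK(V\oplus W)$, is a genuinely different argument and it does work: $V\oplus W$ is an $\MMMod{H}{A}{B}$-module with componentwise actions, and since $i_W,p_V$ are $H$-equivariant one has $\xi\btrgl(i_W\circ P\circ p_V)=i_W\circ(\xi\btrgl P)\circ p_V$ and hence $D_\FF(i_W\circ P\circ p_V)=i_W\circ D_\FF(P)\circ p_V$, so $D_\FF$ on $\End_\bbK(V\oplus W)$ preserves the corner and restricts to the claimed map. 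What this buys is that bijectivity, the $A_\star$-bimodule compatibility, the intertwining of $\RA$ with $\RA_\FF$, and the restriction to right $B$-linear maps are all inherited from Theorem \ref{PDP} without redoing a single twist-cocycle computation; the small price is two bookkeeping facts you assert but should display, namely that twist deformation commutes with direct sums, $(V\oplus W)_\star=V_\star\oplus W_\star$ as $\MMMod{H^\FF}{A_\star}{B_\star}$-modules (immediate since the $H$-action and hence the $\star$-actions are componentwise), and the one-line computation above showing $D_\FF$ commutes with the corner embedding. By contrast, the direct transcription route requires rereading three proofs but introduces no auxiliary module; both are complete, and either would serve as a legitimate proof of the theorem.
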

We call $D_\FF(P): V_\star\to W_\star$ the deformation of the
homomorphism $P: V\to W$.

\sk

\subsubsection*{Left $A$-linear homomorphisms}
Left $\AA$-linear homomorphisms  $\big({}_A\Hom(V,W)\big)^\op$, between 
$\MMMod{H}{A}{B}$-modules $V$ and $W$, 
have an $\MMMod{H}{B}{B}$-module structure,
explicitly $\big({}_A\Hom(V,W),\cdot^\op,{\RA^\cop}\big)$,
  where, for all $b\in B$,
$\xi\in \UU$ and $ P\in {}_A\Hom(V,W)$, 
$
\xi\btrgl^\cop P
:=\xi_2\trgl\circ P\circ S^{-1}(\xi_1)\trgl\,
$, $b\cdot^\op P=P\circ
r_b$ and $P\cdot^\op b=r_b\circ P$.
\sk
 Up to isomorphism
there is just one
deformed module of left $\AA$-linear homomorphisms.
\begin{Theorem}\label{PDPLHom}
Let $H$ be a Hopf algebra with twist $\FF\in H\otimes H$, and let  $A, B$ be two $\AAAlg{H}{}{}$-algebras and
$V , W$ be two $\MMMod{H}{A}{B}$-modules.
The map
\begin{flalign}
D^\cop_\FF
~:~\big({}_{A}\Hom(V,W)\big)^\op_{~~\st}~~~&\longrightarrow ~~~~
\big({}_{A_\st}\Hom(V_\st, W_\st)\big)^\op\nn\\
P~~~~~&\longmapsto ~~~~ D^\cop_\FF(P):=(\of_\al\btrgl^\cop P)\circ \of^\al\trgl~
\end{flalign}
is an $\MMMod{H^\FF}{B_\star}{B_\star}$-module isomorphism between the two deformations
of left $A$-linear homomorphisms
$\big({}_A\Hom(V,W),(\cdot^\op)_\star,{\RA^\cop}\big)$ and
$\big({}_{A_\star}\Hom(V_\star,W_\star),\cdot^\op,{\RA_\FF}^\cop\big)$.
\end{Theorem}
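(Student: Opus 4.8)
The plan is to run the same ``mirror'' argument used for left $A$-linear endomorphisms in Theorem \ref{PDPL}, now with two modules $V,W$ in place of a single module and with Theorem \ref{PDPHom} (the right $B$-linear homomorphism deformation) playing the role that Theorem \ref{PDP} played there. The key observation is that a left $A$-linear map $V\to W$ is exactly a right $A^\op$-linear map $V^\op\to W^\op$, so the entire statement transports from the already-established right-linear deformation theorem by passing to opposite algebras, opposite modules, and the opposite twist $\FF^\cop:=\FF_{21}$. Because $V\neq W$ there is no composition product to preserve, so the target is merely a module isomorphism, which makes the argument strictly lighter than that of Theorem \ref{PDPL}.

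First I would set up the opposite data. By Lemma \ref{ABVCop}, $A^\op$ and $B^\op$ are $\AAAlg{H^\cop}{}{}$-algebras and both $V^\op$ and $W^\op$ are $\MMMod{H^\cop}{B^\op}{A^\op}$-modules, and (as in the proof of Theorem \ref{PDPL}) $\FF^\cop$ is a twist of $H^\cop$. Exactly as in Proposition \ref{ABVAEnd}, a $\bbK$-linear map $P$ is left $A$-linear as a map $V\to W$ if and only if it is right $A^\op$-linear as a map $V^\op\to W^\op$; hence the identity map furnishes a canonical $\bbK$-module isomorphism ${}_A\Hom(V,W)\simeq \Hom_{A^\op}(V^\op,W^\op)$, under which the adjoint action $\RA^\cop$ and the $B$-bimodule structure $\cdot^\op$ match the $H^\cop$-adjoint action and the $B^\op$-bimodule structure on the right-linear side. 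I would then apply Theorem \ref{PDPHom} to the Hopf algebra $H^\cop$ with twist $\FF^\cop$, obtaining the $\MMMod{(H^\cop)^{\FF^\cop}}{(B^\op)_{\star^\cop}}{(B^\op)_{\star^\cop}}$-module isomorphism
\[
D_{\FF^\cop}:\big(\Hom_{A^\op}(V^\op,W^\op)\big)_{\star^\cop}\longrightarrow \Hom_{(A^\op)_{\star^\cop}}\!\big((V^\op)_{\star^\cop},(W^\op)_{\star^\cop}\big),
\]
with $D_{\FF^\cop}(P)=(\of_\al\btrgl^\cop P)\circ\of^\al\trgl$, where $\star^\cop$ is the $\star$-product built from $\FF^\cop$ and $(\FF^\cop)^{-1}=\of_\al\otimes\of^\al$; this also supplies linearity and invertibility for free.

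To conclude I would invoke the identifications already recorded in the proof of Theorem \ref{PDPL}: $(A^\op)_{\star^\cop}=(A_\star)^\op$, $(B^\op)_{\star^\cop}=(B_\star)^\op$, $(V^\op)_{\star^\cop}=(V_\star)^\op$, $(W^\op)_{\star^\cop}=(W_\star)^\op$, together with $(H^\cop)^{\FF^\cop}=(H^\FF)^\cop$ and ${\RA^\cop}_{\FF^\cop}={\RA_\FF}^\cop$. These give $\big(\Hom_{A^\op}(V^\op,W^\op)\big)_{\star^\cop}\simeq\big(({}_A\Hom(V,W))^\op\big)_\star^{~\op}$ on the source and $\Hom_{(A^\op)_{\star^\cop}}\!\big((V^\op)_{\star^\cop},(W^\op)_{\star^\cop}\big)=\Hom_{(A_\star)^\op}\!\big((V_\star)^\op,(W_\star)^\op\big)\simeq {}_{A_\star}\Hom(V_\star,W_\star)$ on the target, the latter matching the $B_\star$-bimodule structures through the homomorphisms $l^{(B^\op)_{\star_\cop}}$ and $l^{(B_\star)^\op}$. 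Passing $D_{\FF^\cop}$ to the opposites of these modules then yields precisely the claimed $\MMMod{H^\FF}{B_\star}{B_\star}$-module isomorphism $D^\cop_\FF$, and its formula $D^\cop_\FF(P)=(\of_\al\btrgl^\cop P)\circ\of^\al\trgl$ is read off directly.

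The genuine content is entirely in the consistent bookkeeping of the nested opposite-and-deform identifications, and this is the step I expect to be the main obstacle: one must confirm that deforming with $\FF^\cop$ and then taking the opposite reproduces both the $B_\star$-bimodule action $b\cdot^\op P_\star=P_\star\circ r^\st_b$, $P_\star\cdot^\op b=r^\st_b\circ P_\star$ (the homomorphism analogue of (\ref{astDPast})) and the $H^\FF$-adjoint action ${\RA_\FF}^\cop$ used to present the deformed source and target modules. In other words the real work is checking that ``oppose-then-deform'' and ``deform-then-oppose'' agree at the level of all the auxiliary structures; since each such identification was already verified for the endomorphism case, no new computation beyond this consistency check is required.
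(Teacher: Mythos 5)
Your proposal is correct and takes essentially the same route as the paper: the paper omits this proof, stating explicitly that it is obtained by following the endomorphism case, and your argument is precisely that mirror construction — oppose via Lemma \ref{ABVCop} and the identification ${}_A\Hom(V,W)\simeq\Hom_{A^\op}(V^\op,W^\op)$, deform with the twist $\FF^\cop=\FF_{21}$ of $H^\cop$ via Theorem \ref{PDPHom}, and then use the oppose-then-deform identifications $(A^\op)_{\star^\cop}=(A_\star)^\op$, $(H^\cop)^{\FF^\cop}=(H^\FF)^\cop$, ${\RA^\cop}_{\FF^\cop}={\RA_\FF}^{\:\cop}$ exactly as in the proof of Theorem \ref{PDPL}. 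Your reading of the formula, with $(\FF^\cop)^{-1}=\of_\al\otimes\of^\al$ yielding $D_{\FF^\cop}(P)=(\of_\al\btrgl^\cop P)\circ\of^\al\trgl$, is also the correct bookkeeping.
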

We call $D^\cop_\FF(P):V_\star\to W_\star$ the left deformation of the
homomorphism $P:V\to W$.

\sk

\begin{Example}\label{PDPLex}
Let  $A$ be an $\AAAlg{H}{}{}$-algebra
and $V$ be an $\MMMod{H}{A}{A}$-module. The {\bf dual module of $V$} is
defined by $V^\prime :=\Hom_A(V,A)$. Since $A$ can be regarded as an $\MMMod{H}{A}{A}$-module
we have that  $V^\prime$ is also an $\MMMod{H}{A}{A}$-module.
Let $\FF\in \UU\otimes \UU$ be a twist of $\UU$ and consider the
deformed $\AAAlg{H^\FF}{}{}$-algebra $A_\star $
and the deformed $\MMMod{H^\FF}{A_\star}{A_\star}$-module $V_\st$.
 We have two  possible deformations of the dual module:  
${V^\prime}_{\,\st}=\Hom_A(V,A)_\st$
and ${V_\st}^{\,\prime} =\Hom_{A_\st}(V_\st,A_\st)$. They both are $\MMMod{H^\FF}{A_\st}{A_\st}$-modules.
 Due to Theorem \ref{PDPHom}
there is an $\MMMod{H^\FF}{A_\star}{A_\star}$-module isomorphism
${V^\prime}_{\,\st}\simeq {V_\st}^{\,\prime}$. In words, dualizing the deformed module is (up
to the canonical isomorphism $D_\FF$) equivalent to deforming the dual one. Similar
statements hold true for the left $A$-linear dual
${}^\prime V:=({}_A\Hom(V,A))^\op$ and its deformations according to Theorem \ref{PDPLHom}.
\end{Example}


\subsection{Categorical formulation}
Developing the results of Theorem \ref{theo:firstcat}  we provide a
categorical formulation of the investigations of the present section.
We first define the category $\big(\MMMod{H}{A}{B},\Hom_B,\circ\big)$.
An object in $\big(\MMMod{H}{A}{B},\Hom_B,\circ\big)$ is an $\MMMod{H}{A}{B}$-module $V$
and a morphism between two objects $V,W$ in $\big(\MMMod{H}{A}{B},\Hom_B,\circ\big)$
is a right $B$-linear map $P:V\to W$, i.e.~$P\in \Hom_B(V,W)$. The composition of morphisms 
is the usual composition $\circ$. Given a twist $\FF\in H\otimes H$ of the Hopf algebra
$H$ we then define two other categories, $\big(\MMMod{H}{A}{B},\Hom_B,\circ_\star\big)$
and $\big(\MMMod{H^\FF}{A_\star}{B_\star},\Hom_{B_\star},\circ\big)$.
Objects and morphisms in $\big(\MMMod{H}{A}{B},\Hom_B,\circ_\star\big)$ are the same as
objects and morphisms in $\big(\MMMod{H}{A}{B},\Hom_B,\circ\big)$, but in this category the
composition of morphisms is given by the $\star$-composition $\circ_\star$. An object in
$\big(\MMMod{H^\FF}{A_\star}{B_\star},\Hom_{B_\star},\circ\big)$ is an $\MMMod{H^\FF}{A_\star}{B_\star}$-module
$V_\star$ and a morphism between two objects
$V_\star,W_\star$ in $\big(\MMMod{H^\FF}{A_\star}{B_\star},\Hom_{B_\star},\circ\big)$ is a right
$B_\star$-linear map $P_\star:V_\star\to W_\star$, i.e.~$P_\star\in \Hom_{B_\star}(V_\star,W_\star)$.
The composition of morphisms is the usual composition $\circ$.

\begin{Theorem}\label{functortheoremR}
Let $H$ be a Hopf algebra with twist $\FF\in H\otimes H$. Then there
is a functor
from $\big(\MMMod{H}{A}{B},\Hom_B,\circ_\star\big)$ to $\big(\MMMod{H^\FF}{A_\star}{B_\star},\Hom_{B_\star},\circ\big)$.
It maps any $\MMMod{H}{A}{B}$-module $V$ to  the twist deformed $\MMMod{H^\FF}{A_\star}{B_\star}$-module $V_\star$
(cf.~Theorem \ref{Theorem2}), and any morphism $P\in \Hom_B(V,W)$ to the
morphism $D_\FF(P)\in \Hom_{B_\star}(V_\star,W_\star)$ (cf.~Theorem \ref{PDPHom}).

Furthermore, the categories $\big(\MMMod{H}{A}{B},\Hom_B,\circ_\star\big)$ and
 $\big(\MMMod{H^\FF}{A_\star}{B_\star},\Hom_{B_\star},\circ\big)$ are equivalent.
\end{Theorem}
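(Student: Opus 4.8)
The plan is to verify that the prescribed assignment is a functor and then that it is fully faithful and essentially surjective. On objects the rule $V\mapsto V_\star$ is well defined by Theorem \ref{Theorem2}, and on morphisms the rule $P\mapsto D_\FF(P)$ is well defined by Theorem \ref{PDPHom}, which guarantees that $D_\FF$ carries a right $B$-linear map $V\to W$ to a right $B_\star$-linear map $V_\star\to W_\star$. It then remains to check that $D_\FF$ preserves identities and composition. Preservation of identities, $D_\FF(\id_V)=\id_{V_\star}$, follows at once from the normalization property (\ref{propF2}) of the twist, exactly as in the proof of Theorem \ref{theo:firstcat}.

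The essential point is compatibility with the composition laws, namely $D_\FF(P\circ_\star Q)=D_\FF(P)\circ D_\FF(Q)$ for composable morphisms $Q:V\to W$ and $P:W\to X$, where $P\circ_\star Q=(\of^\al\btrgl P)\circ(\of_\al\btrgl Q)$. This is precisely the content already established in Theorem \ref{theo:firstcat} for arbitrary $\bbK$-linear maps: one expands both sides, moves the $H$-actions through the precomposition/postcomposition structure on homomorphisms, and invokes the cocycle property (\ref{ass}) together with the antipode axioms, repeating verbatim the manipulations that prove multiplicativity of $D_\FF$ in the proof of Theorem \ref{isostar2}. The only bookkeeping novelty here is that $P$ and $Q$ now live in the distinct spaces $\Hom_\bbK(W,X)$ and $\Hom_\bbK(V,W)$ with their respective adjoint actions $\btrgl$; since these are the canonical lifts of the source and target $H$-actions, the computation is unaffected. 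Restricting to right $B$-linear maps is legitimate because Theorem \ref{PDPHom} shows $D_\FF$ maps $\Hom_B$ into $\Hom_{B_\star}$, and because $\circ_\star$ preserves right $B$-linearity. I expect this composition identity to be the main, though essentially routine, obstacle; as a byproduct, associativity of $\circ_\star$ in the source is inherited from associativity of $\circ$ via the bijectivity of $D_\FF$.

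For the equivalence I would argue full faithfulness and essential surjectivity. Full faithfulness is immediate from Theorem \ref{PDPHom}, since $D_\FF:\Hom_B(V,W)\to\Hom_{B_\star}(V_\star,W_\star)$ is an isomorphism of $\bbK$-modules and in particular a bijection on morphism sets. For essential surjectivity I would twist back with $\FF^{-1}$: by Remark \ref{rem:dequant} the Hopf algebra $H^\FF$ admits the twist $\FF^{-1}$ with $(H^\FF)^{\FF^{-1}}=H$, so any $\MMMod{H^\FF}{A_\star}{B_\star}$-module arises as the deformation $V_\star$ of the $\MMMod{H}{A}{B}$-module $V$ obtained by twisting it back with $\FF^{-1}$. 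Being fully faithful and essentially surjective, the functor is an equivalence. Equivalently, one may exhibit the explicit quasi-inverse given by twisting back objects with $\FF^{-1}$ and acting on morphisms by $D_{\FF^{-1}}=D_\FF^{-1}$ (Remark \ref{rem:Dinv}), which is a strict two-sided inverse on both objects and morphisms.
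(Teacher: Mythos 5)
Your proposal is correct and follows essentially the same route as the paper, which simply adapts the proof of Theorem \ref{theo:firstcat}: well-definedness via Theorems \ref{Theorem2} and \ref{PDPHom}, preservation of identities by the normalization property, preservation of composition by the computation of Theorem \ref{isostar2}, and the equivalence via twisting back with $\FF^{-1}$ and $D_{\FF^{-1}}=D_\FF^{-1}$ (Remark \ref{rem:Dinv}). Your additional fully-faithful/essentially-surjective phrasing is a harmless repackaging of the paper's explicit strict inverse functor.
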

\begin{proof}
The proof of this theorem follows similar steps as that of Theorem \ref{theo:firstcat}.
\end{proof}
\sk
\subsubsection*{Left $A$-linear morphisms}
For completeness we state without proof the corresponding theorem for left $A$-linear maps. 

We first define the category $\big(\MMMod{H}{A}{B},{}_A\Hom,\circ^\op\big)$.
An object in $\big(\MMMod{H}{A}{B},{}_A\Hom,\circ^\op\big)$ is an $\MMMod{H}{A}{B}$-module $V$
and a morphism between two objects $V,W$ in $\big(\MMMod{H}{A}{B},{}_A\Hom,\circ^\op\big)$
is a left $A$-linear map $P:V\to W$, i.e.~$P\in {}_A\Hom(V,W)$. The
composition of morphisms $\circ^\op$ is described after Proposition \ref{ABVAEnd}.

Given a twist $\FF\in H\otimes H$ of the Hopf algebra
$H$ we then define two other categories, $\big(\MMMod{H}{A}{B},{}_A\Hom,(\circ^\op)_{\star}\big)$
and $\big(\MMMod{H^\FF}{A_\star}{B_\star},{}_{A_\star}\Hom,\circ^\op\big)$.
Objects and morphisms in $\big(\MMMod{H}{A}{B},{}_A\Hom, (\circ^\op)_{\star}\big)$ are the same as
objects and morphisms in $\big(\MMMod{H}{A}{B},{}_A\Hom,\circ^\op\big)$, but in this category the
composition of morphisms is given by  $(\circ^\op)_{\star}$, i.e.~for two composable morphisms
$P (\circ^\op)_{\star} Q = (\of^\alpha\RA^\cop P)\circ^\op (\of_\alpha\RA^\cop Q)$.
 An object in $\big(\MMMod{H^\FF}{A_\star}{B_\star},{}_{A_\star}\Hom,\circ^\op\big)$ is an 
 $\MMMod{H^\FF}{A_\star}{B_\star}$-module
$V_\star$ and a morphism between two objects
$V_\star,W_\star$ in $\big(\MMMod{H^\FF}{A_\star}{B_\star},{}_{A_\star}\Hom,\circ^\op\big)$ is a left
$A_\star$-linear map $P_\star:V_\star\to W_\star$, i.e.~$P_\star\in {}_{A_\star}\Hom(V_\star,W_\star)$.
The composition of morphisms is $\circ^\op$.
\begin{Theorem}\label{functortheoremL}
Let $H$ be a Hopf algebra with twist $\FF\in H\otimes H$. Then there is functor
from $\big(\MMMod{H}{A}{B},{}_A\Hom, (\circ^\op)_{\star}\big)$ to
 $\big(\MMMod{H^\FF}{A_\star}{B_\star},{}_{A_\star}\Hom,\circ^\op\big)$.
It maps any $\MMMod{H}{A}{B}$-module $V$ to the twist deformed $\MMMod{H^\FF}{A_\star}{B_\star}$-module $V_\star$
(cf.~Theorem \ref{Theorem2}) and  any morphism $P\in {}_A\Hom(V,W)$ to the 
morphism $D^\cop_\FF(P)\in {}_{A_\star}\Hom(V_\star,W_\star)$ (cf.~Theorem \ref{PDPLHom}).

Furthermore, the categories $\big(\MMMod{H}{A}{B},{}_A\Hom,(\circ^\op)_{\star}\big)$ and
 $\big(\MMMod{H^\FF}{A_\star}{B_\star},{}_{A_\star}\Hom,\circ^\op\big)$ are equivalent.
\end{Theorem}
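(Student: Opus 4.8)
The plan is to derive this left $A$-linear statement from its right $B$-linear counterpart, Theorem~\ref{functortheoremR}, by the same mirror construction used to deduce Theorem~\ref{PDPL} from Theorem~\ref{PDP}. First I would pass to opposite structures: by Lemma~\ref{ABVCop} the algebras $A^\op,B^\op$ are $\AAAlg{H^\cop}{}{}$-algebras and $V^\op,W^\op$ are $\MMMod{H^\cop}{B^\op}{A^\op}$-modules, while $\FF^\cop:=\FF_{21}$ is a twist of $H^\cop$. Applying Theorem~\ref{functortheoremR} to this opposite data produces a functor, and an equivalence, between $\big(\MMMod{H^\cop}{B^\op}{A^\op},\Hom_{A^\op},\circ_{\star^\cop}\big)$ and $\big(\MMMod{(H^\cop)^{\FF^\cop}}{(B^\op)_{\star^\cop}}{(A^\op)_{\star^\cop}},\Hom_{(A^\op)_{\star^\cop}},\circ\big)$, whose action on morphisms is $D_{\FF^\cop}$.

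Next I would transport this functor back through the canonical identifications recorded in the proof of Theorem~\ref{PDPL}, namely $(A^\op)_{\star^\cop}=(A_\star)^\op$, $(B^\op)_{\star^\cop}=(B_\star)^\op$, $(V^\op)_{\star^\cop}=(V_\star)^\op$, $(H^\cop)^{\FF^\cop}=(H^\FF)^\cop$, together with the identity-map isomorphisms $\Hom_{A^\op}(V^\op,W^\op)\simeq{}_A\Hom(V,W)$ and $\Hom_{(A_\star)^\op}((V_\star)^\op,(W_\star)^\op)\simeq{}_{A_\star}\Hom(V_\star,W_\star)$. Under these identifications the ordinary composition $\circ$ of $A^\op$-linear maps becomes the opposite composition $\circ^\op$ of $A$-linear maps, so the deformed composition $\circ_{\star^\cop}$ becomes $(\circ^\op)_\star$ while the undeformed one stays $\circ^\op$; moreover the map $D_{\FF^\cop}$ becomes exactly the map $D^\cop_\FF$ of (\ref{eqn:DFcop}), and on objects $V^\op\mapsto(V^\op)_{\star^\cop}$ becomes $V\mapsto V_\star$. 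That $D^\cop_\FF(P)$ indeed lands in ${}_{A_\star}\Hom(V_\star,W_\star)$ is already supplied by Theorem~\ref{PDPLHom}. Assembling these dictionaries turns the functor and equivalence of Theorem~\ref{functortheoremR} into the asserted ones; preservation of identities follows from the normalization property (\ref{propF2}), and the equivalence from the inverse functor built on $D^\cop_{\FF^{-1}}=(D^\cop_\FF)^{-1}$ (cf.~Remark~\ref{rem:Dinv}), exactly as in Theorem~\ref{theo:firstcat}.

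The step I expect to require the most care is the compatibility of compositions under the identity-map isomorphism. One must verify that genuine composition in the opposite category corresponds to $\circ^\op$ on ${}_A\Hom$, and, crucially, that deforming with the flipped twist $\FF^\cop$ together with the co-opposite adjoint action $\btrgl^\cop$ reproduces precisely $(\circ^\op)_\star$ and not some braided variant. Tracking the interplay of $\FF^\cop=\FF_{21}$ with $\btrgl^\cop$ so that $D_{\FF^\cop}$ literally equals $D^\cop_\FF$ on the underlying $\bbK$-linear maps is the only genuinely delicate point; once it is settled, functoriality and the equivalence follow formally, and I would simply invoke the bookkeeping already performed in the proof of Theorem~\ref{PDPL}.
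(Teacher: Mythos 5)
Your proposal is correct and matches the route the paper intends: Theorem \ref{functortheoremL} is stated without proof precisely because it follows from Theorem \ref{functortheoremR} through the opposite-structure dictionary ($\FF^\cop=\FF_{21}$ as a twist of $H^\cop$, the identifications $(A^\op)_{\star^\cop}=(A_\star)^\op$, $(V^\op)_{\star^\cop}=(V_\star)^\op$, $(H^\cop)^{\FF^\cop}=(H^\FF)^\cop$, and the identity-map isomorphism $\Hom_{A^\op}(V^\op,W^\op)\simeq {}_A\Hom(V,W)$) that the paper already assembled in the proof of Theorem \ref{PDPL}. The one point you flag as delicate — that deforming with $\FF^\cop$ and the $\btrgl^\cop$ action gives literally $D_{\FF^\cop}=D^\cop_\FF$ and transports $\circ_{\star^\cop}$ to $(\circ^\op)_\star$ rather than some braided variant — is exactly the content of (\ref{eqn:Dcoptemp}) and the surrounding bookkeeping there, so your reduction is complete.
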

\sk

\begin{Remark}
This is a generalization of the equivalence of categories obtained by Giaquinto and Zhang \cite{GIAQUINTO}.
We can recover their equivalence by choosing the algebra $B=\bbK$ to be trivial and
restricting the class of morphisms to $H$-equivariant (respectively
$H^\FF$-equivariant) and left $A$-linear
 (respectively left $A_\star$-linear) maps.
Then the $(\circ^\op)_\star$ composition equals the $\circ^\op$ composition
and the equivalence is between the categories $\big(\MMMod{H}{A}{},{}_A\Hom,\circ^\op\big)$ and
 $\big(\MMMod{H^\FF}{A_\star}{},{}_{A_\star}\Hom,\circ^\op\big)$. Notice
 also that in this case the functor  acts trivially on morphisms.
We remark  that the more general equivalence of 
 Theorem \ref{functortheoremL} is not between the 
 original category $\big(\MMMod{H}{A}{B},{}_A\Hom,\circ^\op\big)$ and  
 $\big(\MMMod{H^\FF}{A_\star}{B_\star},{}_{A_\star}\Hom,\circ^\op\big)$, but between the original category with deformed composition law and the latter one.
\end{Remark}


\section{\label{sec:tenprod}Tensor product structure and its deformation}
Let $H$ be a Hopf algebra.
The tensor product $V\otimes W$ of two $\MMMod{H}{}{}$-modules
$V,W$ is again an $\MMMod{H}{}{}$-module.
The left $H$-action 
is defined using the coproduct,
for all $\xi\in H$, $v\in V$ and $w\in W$,
\begin{flalign}
\label{eqn:productaction}
\xi\ra (v\otimes w) := (\xi_1\ra v)\otimes (\xi_2\ra w)~,
\end{flalign}
and extended by $\bbK$-linearity to all $V\otimes W$.

We begin this section  studying 
the lift of two $\bbK$-linear maps $P: V\rightarrow \widetilde V$, $Q
: W\rightarrow \widetilde W$ to a tensor product map $V\otimes W\rightarrow
\widetilde  V\otimes \widetilde W$. The issue is that  the
construction has to be compatible with the $H$-action even if 
the maps we consider are  in general only $H$-covariant (not $H$-equivariant).
This requires extra structure on the Hopf algebra $H$.
Indeed, in order to introduce a tensor product of $\bbK$-linear maps between $\MMMod{H}{}{}$-modules
that is compatible with the Hopf algebra action we require a braiding
isomorphism on tensor products of $\MMMod{H}{}{}$-modules. We
therefore consider quasitriangular Hopf algebras.
Next, we study the deformation of the tensor product of $\bbK$-linear
maps and show that the deformation procedure is canonical.

In a later subsection we focus on the
restricted class of quasi-commutative $\AAAlg{H}{}{}$-algebras and
$\MMMod{H}{A}{A}$-modules. The tensor product structure previously
studied induces a tensor product structure over $A$. In particular
the tensor product of right $A$-linear maps
between quasi-commutative $\MMMod{H}{A}{A}$-modules is compatible with the Hopf algebra
action and is again a right $A$-linear map. 
Also the deformation of this tensor product is studied and shown to be canonical.

Finally, we consider the deformation map $D_\RR$, that corresponds to
the twist $\FF=\RR$, where $\RR$ is the universal $\RR$-matrix of
the quasitriangular Hopf algebra $H$.  The map $D_\RR$ provides an
isomorphism between right and left $A$-linear maps
on strong quasi-commutative $\MMMod{H}{A}{A}$-modules.

\subsection{Triangular and quasitriangular Hopf algebras}
A {\bf cocommutative Hopf algebra} is a Hopf algebra where the coproduct is
cocommutative, i.e.,~for all $\xi\in H$,
$\Delta^\cop(\xi)=\Delta(\xi)$ or, using Sweedler's notation,  $\xi_2\otimes\xi_1=\xi_1\otimes\xi_2$. 
\begin{Definition}
A {\bf quasi-cocommutative Hopf algebra $(H,\RR)$} is a Hopf algebra $H$ and an invertible
element $\RR\in H\otimes H$ (called {\bf universal $\RR$-matrix}) such that, for all $\xi\in \UU$,
\begin{flalign}\label{12}
{\Delta}^{\cop}(\xi)=\RR\,\Delta(\xi)\,\RR^{-1}~.
\end{flalign}
The Hopf algebra is {\bf quasitriangular} if moreover
\begin{flalign}\label{13}
(\Delta\otimes \id)\RR =\RR_{13}\RR_{23}~,\quad
(\id\otimes \Delta)\RR =\RR_{13}\RR_{12}~.
\end{flalign}
The quasitriangular Hopf algebra $(H,\RR)$ is {\bf triangular} if 
\begin{flalign}
\RR_{21}=\RR^{-1}~\label{14},
\end{flalign}
where $\RR_{21}=\sigma (\RR)\in \UU\otimes\UU$,
with $\sigma $ the transposition map
$\sigma(\xi\otimes\zeta)=\zeta\otimes\xi$, for all $\xi, \zeta\in H$.
\end{Definition}
\sk

For later use we write the property (\ref{12}) and the inverse of
properties (\ref{13}) using
Sweedler's notation and the notations $\RR=R^\al\otimes R_\al$,
$\RR^{-1}=\oR^\al\otimes\oR_\al$ (sum over $\alpha$ understood). For all $\xi\in H$,
\begin{subequations}
\begin{flalign} 
\xi_2\otimes\xi_1 &=R^\al\xi_1\oR^\be\otimes R_\al\xi_2\oR_\be~,~\\
\oR^\al_{~1}\otimes \oR^{\al}_{~2}\otimes
\oR_\al& =\oR^\al\otimes\oR^\be\otimes\oR_{\be}\oR_{\al}~,\label{Ral1Ral21first}\\
\oR^\al\otimes \oR_{\al_1}\otimes\oR_{\al_2}&=\oR^\al\oR^\be\otimes\oR_{\al}\otimes\oR_{\be}~.~~~
\label{Ral1Ral21second}
\end{flalign}
\end{subequations}
The triangular property reads $R_\al\otimes R^\al=\oR^\al\otimes \oR_\al$.

From (\ref{12}) (with $\xi=R^\al$) and (\ref{13})  it follows that 
quasitriangular $\RR$-matrices satisfy the Yang-Baxter equation
\eq
\RR_{12}\RR_{13}\RR_{23}=\RR_{23}\RR_{13}\RR_{12}~.\label{YBE}
\en
Further standard properties of quasitriangular $\RR$-matrices are 
(see e.g.~\cite{Majid:1996kd}, Lemma 2.1.2)
\begin{subequations}\eqa
&(\epsi\otimes \id)\RR=1~~~~~~,&~~~(\id\otimes \epsi)\RR=1\label{Rnorm}~,\\
&~(S\otimes \id)\RR=\RR^{-1}~~,&~~~(\id\otimes S)\RR^{-1}=\RR~.\label{SidR}
\ena
\end{subequations}
Notice that the properties  (\ref{13}), (\ref{YBE}) and (\ref{Rnorm}) imply that $\RR$ is a twist
element of the Hopf algebra $H$. From property (\ref{12}) it then
follows that $H^\RR=H^\cop$. The Hopf algebra $H^\cop$ is 
quasitriangular with $\RR$-matrix $\RR^\cop=\RR_{21}$.

\sk
Given  two $\MMMod{H}{}{}$-modules $V,W$ we have the tensor
product $\MMMod{H}{}{}$-modules $V\otimes W$ and $ W\otimes V$ (see
(\ref{eqn:productaction})).
There is a natural isomorphism, called {\bf braiding}, between these
two tensor products; it is defined by 
\begin{subequations}
\eqa
\nn \tau_{{\RR}\;W,V} :  W\otimes V&\rightarrow& V\otimes W\\
w\otimes v&\mapsto& \tau_{\RR\; W,V}(w\otimes v)= 
(\oR^\alpha\ra v) \otimes (\oR_\alpha\ra w)~,\label{eqn:Rflipmap}
\ena
\eqa
\nn \tau^{-1}_{{\RR}\;W,V}
:  V\otimes W&\rightarrow& W\otimes V\\
v\otimes w&\mapsto& \tau^{-1}_{\RR\; W,V}(v\otimes w)= 
(\R_\alpha\ra w) \otimes (\R^\alpha\ra w)~.\label{eqn:Rflipmapinv}
\ena
\end{subequations}
and  extended by $\bbK$-linearity to all $W\otimes V$ (and respectively $V\otimes W$).
Quasi-cocommutativity of the coproduct (cf.~(\ref{12})) implies
that $\tau_\RR$ and its inverse $\tau_\RR^{-1}$
(for ease of notation we frequently omit the module indices)
 are $\MMMod{H}{}{}$-module isomorphisms, i.e., for all $\xi\in H, 
v\in V, w\in W,\,$ 
$\xi \ra(\tau_\RR(w\otimes v))=\tau_\RR(\xi\ra(w\otimes v))$, 
$\xi \ra(\tau^{-1}_\RR(v\otimes w))=\tau^{-1}_\RR(\xi\ra(v\otimes w)),$
or equivalently 
\begin{flalign}\label{xitauR}
\xi\RA\tau_\RR = \varepsilon(\xi)\,\tau_\RR~,\quad \xi\RA\tau_\RR^{-1} = \varepsilon(\xi)\,\tau_\RR^{-1}~. 
\end{flalign}
From (\ref{13}) it follows 
that on the triple tensor product $V\otimes
W\otimes Z$ of $\MMMod{H}{}{}$-modules  $V,W,Z$  
we have the braid relations
\begin{subequations}\label{hexagon}
\begin{flalign}
\tau_{\RR\;V\otimes W,Z}&=(\tau_{\RR\;V,Z}\otimes \id_W)\circ 
(\id_V\otimes \tau_{\RR\; W,Z})~,~~\\
\tau_{\RR\;V,W\otimes Z}&=(\id_W\otimes \tau_{\RR\;V,Z})\circ (\tau_{\RR\; V,W}\otimes\id_Z)~.
\end{flalign}
\end{subequations}
The first one, for example, states that flipping an element $z$ to the left of the element
$v\otimes w$ is the same as first flipping $z$ to the left of $w$ and
then the result to the left of $v$.
\begin{Example}
The universal enveloping algebra ${U}\gg$ of a Lie algebra $\gg$ is a 
cocommutative Hopf algebra.
Every cocommutative Hopf algebra $H$ has
a triangular structure given by the  $\RR$-matrix $\RR=1\otimes 1$. 
Let $\FF$ be a twist of this cocommutative Hopf algebra $H$, then the
Hopf algebra $H^\FF$ is triangular with $\RR$-matrix
$\RR^\FF = \FF_{21}\FF^{-1}$. 

More in general, if $(H,\RR)$ is a quasitriangular (triangular) Hopf algebra,
then $(H^\FF,\RR^\FF := \FF_{21}\RR\FF^{-1} )$ is a quasitriangular (triangular) Hopf algebra.
\end{Example}

\subsection{\label{subsec:productmodhom}Tensor product of  $\bbK$-linear maps}\label{5.2subsec}
Given two $\bfK$-linear maps $P : V\to \widetilde{V}$ and $Q : W\to\widetilde{W}$
between $\bfK$-modules, the tensor product map
$P\otimes Q : V\otimes W\to \widetilde{V}\otimes \widetilde{W}$
is the $\bbK$-linear map defined by, for all $v\in V$ and $w\in W$,
\begin{flalign}
\label{eqn:Ktensorhom}
(P\otimes Q)(v\otimes w):=P(v)\otimes Q(w)~,
\end{flalign}
and extended to all $V\otimes W$ by $\bbK$-linearity.
If $\widetilde{P}:\widetilde{V}\to\widehat{V}$ and  $\widetilde{Q}:
\widetilde{W}\to\widehat{W}$ are two further $\bbK$-linear maps,
then we have the composition property
\begin{flalign}
\label{eqn:Ktensorhomcomp}
\bigl(\widetilde{P}\otimes \widetilde{Q}\bigr)\circ\bigl(P\otimes Q  \bigr) = (\widetilde{P}\circ P)\otimes (\widetilde{Q}\circ Q)~.
\end{flalign}

Let now $H$ be a Hopf algebra. We consider $\MMMod{H}{}{}$-modules
$V,W,\widetilde{V},\widetilde{W}$ and the associated tensor product
$\MMMod{H}{}{}$-modules $V\otimes W$ and $\widetilde V\otimes \widetilde W$.
The $\bfK$-modules of $\bbK$-linear maps 
$\Hom_\bfK(V,\widetilde{V})$, $\Hom_\bfK(W,\widetilde{W})$ and 
$\Hom_\bfK(V\otimes W,\widetilde{V}\otimes \widetilde{W})$ are
$\MMMod{H}{}{}$-modules  with the $H$-adjoint action.
We study the action of $\xi\in H$ on the tensor product map (\ref{eqn:Ktensorhom}).
Using (\ref{eqn:productaction})  and (\ref{eqn:Ktensorhomcomp}) we obtain
\begin{flalign}
\nn \xi\RA (P\otimes Q) &= (\xi_1\ra\,\otimes\xi_2\ra\,) \circ (P\otimes Q)\circ (S(\xi_4)\ra\,\otimes S(\xi_3)\ra)\\
\nn &= \bigl( \xi_1\ra\,\circ P\circ S(\xi_4)\ra\, \bigr)\otimes \bigl(\xi_2\ra\,\circ Q\circ S(\xi_3)\ra\,\bigr)\\
 \label{eqn:helpprodhom}&=\bigl( \xi_1\ra\,\circ P\circ S(\xi_3)\ra\, \bigr)\otimes  (\xi_2\RA Q )~.
\end{flalign}
For a non-cocommutative Hopf algebra and 
$\bbK$-linear maps $Q$ that are not $H$-equivariant (i.e., $\xi\RA Q\not=\epsi(\xi)
Q$) this expression  in general
differs from  $(\xi_1\RA P)\otimes (\xi_2\RA Q)$. This shows that
the tensor product of $\bfK$-linear maps (\ref{eqn:Ktensorhom}) is in general incompatible with the
$\MMMod{H}{}{}$-module structure.
This incompatibility can be understood as follows: Considering $\bfK$-linear 
maps as acting from left to right, the ordering on the left hand side of
(\ref{eqn:Ktensorhom})  is $P,Q,v,w$, while the ordering on the right hand side is $P,v,Q,w$,
i.e.~$v$ and $Q$ do not appear properly ordered in the definition (\ref{eqn:Ktensorhom}).
For a quasitriangular Hopf algebra $(H,\RR)$ this problem can be solved by
defining a new tensor product of
$\bfK$-linear maps. 

\begin{Definition}\label{defi:Rtensor}
Let $(H,\RR)$ be a quasitriangular Hopf algebra and $V,W,\widetilde{V},\widetilde{W}$ be $\MMMod{H}{}{}$-modules.
 The {\bf $\RR$-tensor product} of $\bfK$-linear maps is defined by, for
all $P\in\Hom_\bfK(V,\widetilde{V})$ and $Q\in\Hom_\bfK(W,\widetilde{W})$,
\begin{flalign}
\label{eqn:Rtensor}
P\otimes_\RR Q := (P\circ \oR^\alpha\ra\,)\otimes (\oR_\alpha\RA Q)\in \Hom_\bfK(V\otimes W,\widetilde{V}\otimes\widetilde{W})~,
\end{flalign}
where $\otimes$ is defined in (\ref{eqn:Ktensorhom}).
\end{Definition}

This definition is related to the $H$-equivariant maps that appeared in  
\cite{Majid:1996kd}, Corollary 9.3.16, and in 
\cite{Fiore:2008sj}, eq. (38). 
The tensor product perspective we consider in this paper leads
to further investigations (starting from Theorem  \ref{RotimesK}).
\sk

From Definition  \ref{defi:Rtensor} it immediately follows that
\eq\label{POQPcircQ}
P\otimes_\RR Q = (P\otimes \id)\circ (
\oR^\alpha\ra\;\otimes \,\oR_\alpha\RA Q)
=(P\otimes_\RR \id)\circ (\id\otimes_\RR Q)~.
\en
We see that the lift of 
$P\in \Hom_\bfK(V,\widetilde{V})$ to 
$\Hom_\bfK(V\otimes\widetilde W,\widetilde{V}\otimes\widetilde{W})$ is simply
$P\otimes_\RR\id=P\otimes \id$, while the lift of $Q$ is 
\eq
\id\otimes_\RR Q=
\oR^\alpha\ra\;\otimes \,\oR_\alpha\RA Q~.\label{idQRRQ}
\en
Use of the braiding map $\tau_{\RR}$ (cf.~({\ref{eqn:Rflipmap}}))
allows us to rewrite the lift $\id\otimes_\RR Q$ acting on $V\otimes W$ in terms of the lift 
$Q\otimes \id$ acting on $W\otimes V$,
\begin{flalign}
\nn \id \otimes_\RR Q &= \oR^\alpha\ra\;\otimes\; \oR_\alpha\RA Q\\
 \nn &= \oR^\alpha\ra\;\otimes \bigl(\oR_{\alpha_1}\ra\,\circ \,Q\circ S(\oR_{\alpha_{2}})\ra\, \bigr)\\
 \nn &= \oR^\alpha\oR^\beta\ra\;\otimes \bigl(\oR_{\alpha}\ra\,\circ\, Q\circ S(\oR_{\beta})\ra\, \bigr)\\
 \nn &= \oR^\alpha R^\beta\ra\;\otimes \bigl(\oR_{\alpha}\ra\,\circ \,Q\circ R_{\beta}\ra\, \bigr)\\
\label{eqn:Rtensorsimple} &= \tau_{\RR}\circ (Q\otimes \id ) \circ \tau^{-1}_{\RR}~.
\end{flalign}

We now show that the $\RR$-tensor product $\otimes_\RR$ is compatible with the 
$\MMMod{H}{}{}$-module structure, that it is associative and
that it satisfies a braided composition law. 
\begin{Theorem}\label{RotimesK}
Let $(H,\RR)$ be a quasitriangular Hopf algebra and $V,W,Z,\widetilde{V},\widetilde{W},\widetilde{Z},\widehat{V},\widehat{W}$ be
$\MMMod{H}{}{}$-modules.
The $\RR$-tensor product is compatible with the $\MMMod{H}{}{}$-module structure, i.e.,~for all
$\xi\in H$, $P\in\Hom_\bfK(V,\widetilde{V})$ and $Q\in\Hom_\bfK(W,\widetilde{W})$,
\begin{subequations}
\begin{flalign}
\label{eqn:RtensorHmod}
\xi\RA (P\otimes_\RR Q) = (\xi_1\RA P)\otimes_\RR (\xi_2\RA Q)~.
\end{flalign}
Furthermore, the $\RR$-tensor product is associative, i.e.,~for all
$P\in\Hom_\bfK(V,\widetilde{V})$, $Q\in\Hom_\bfK(W,\widetilde{W})$ and $T\in\Hom_\bfK(Z,\widetilde{Z})$,
\begin{flalign}
\label{eqn:Rtensorass}
\bigl(P\otimes_\RR Q\bigr)\otimes_\RR T = P\otimes_\RR \bigl(Q\otimes_\RR T\bigr)~,
\end{flalign}
and satisfies the braided composition law, for all $P\in\Hom_\bfK(V,\widetilde{V})$,  $Q\in\Hom_\bfK(W,\widetilde{W})$,
$\widetilde{P}\in\Hom_{\bfK}(\widetilde{V},\widehat{V})$ and $\widetilde{Q}\in\Hom_\bfK(\widetilde{W},\widehat{W})$,
\begin{flalign}
\label{eqn:Rtensorcirc}
\bigl(\widetilde{P}\otimes_\RR\widetilde{Q}\bigr)\circ \bigl(P\otimes_\RR Q\bigr) = \bigl(\widetilde{P}\circ (\oR^\alpha\RA P)\bigr)
\otimes_\RR\bigl((\oR_\alpha\RA \widetilde{Q})\circ Q\bigr)~.
\end{flalign}
\end{subequations}
\end{Theorem}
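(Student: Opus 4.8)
The plan is to prove the three identities in turn, relying on two structural facts about the $H$-adjoint action $\RA$: it distributes over composition, $\xi\RA(P\circ Q) = (\xi_1\RA P)\circ(\xi_2\RA Q)$ (cf.~(\ref{xiPcircQ})), and the braiding $\tau_\RR$ is $H$-equivariant, $\xi\RA\tau_\RR = \varepsilon(\xi)\tau_\RR$ (cf.~(\ref{xitauR})). The one elementary lemma I would record first is the ``move-through'' identity: for $\xi\in H$ and any $\bbK$-linear $P$ one has $\xi\ra\circ P = (\xi_1\RA P)\circ\xi_2\ra$, which follows immediately from $S(\xi_2)\xi_3 = \varepsilon(\xi_2)1$ applied to the definition of $\RA$. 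Together with Definition \ref{defi:Rtensor} and the factorization (\ref{POQPcircQ}) this is the toolkit for everything below.

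For the module compatibility (\ref{eqn:RtensorHmod}) I would use $P\otimes_\RR Q = (P\otimes\id)\circ(\id\otimes_\RR Q)$ from (\ref{POQPcircQ}). Distributing $\xi\RA$ over this composition, I note that the ordinary tensor lift satisfies $\xi\RA(P\otimes\id) = (\xi\RA P)\otimes\id$ (the $\id$ leg being $H$-equivariant collapses the inner coproduct), while the braiding presentation $\id\otimes_\RR Q = \tau_\RR\circ(Q\otimes\id)\circ\tau_\RR^{-1}$ of (\ref{eqn:Rtensorsimple}), combined with (\ref{xitauR}), gives $\xi\RA(\id\otimes_\RR Q) = \id\otimes_\RR(\xi\RA Q)$. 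Reassembling with (\ref{POQPcircQ}) once more yields $(\xi_1\RA P)\otimes_\RR(\xi_2\RA Q)$.

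The braided composition law (\ref{eqn:Rtensorcirc}) is the main computation and the place where both quasitriangularity identities (\ref{Ral1Ral21first}) and (\ref{Ral1Ral21second}) enter. I would expand both $\RR$-tensor products by Definition \ref{defi:Rtensor} and fuse them with the composition rule (\ref{eqn:Ktensorhomcomp}) for the ordinary $\otimes$, obtaining on the first factor $\widetilde P\circ\oR^\beta\ra\circ P\circ\oR^\gamma\ra$ and on the second factor $(\oR_\beta\RA\widetilde Q)\circ(\oR_\gamma\RA Q)$. Applying the move-through lemma to $\oR^\beta\ra\circ P$ splits off a coproduct $\oR^\beta_1\otimes\oR^\beta_2$, and the three legs $\oR^\beta_1\otimes\oR^\beta_2\otimes\oR_\beta$ are rewritten by (\ref{Ral1Ral21first}); on the target side, expanding the outer $\otimes_\RR$ in the right-hand side and distributing the adjoint action over $(\oR_\alpha\RA\widetilde Q)\circ Q$ produces a coproduct $\oR_{\delta_1}\otimes\oR_{\delta_2}$ reorganized by (\ref{Ral1Ral21second}). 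Matching the resulting $\RR$-leg bookkeeping factor-by-factor (the $P$-adjoint, the source leg, the $\widetilde Q$-adjoint and the $Q$-adjoint slots) identifies the two expressions; this purely combinatorial bookkeeping is the only delicate point I anticipate.

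Finally, associativity (\ref{eqn:Rtensorass}) I would obtain by the same leg-reorganization: expanding $(P\otimes_\RR Q)\otimes_\RR T$ and $P\otimes_\RR(Q\otimes_\RR T)$ via Definition \ref{defi:Rtensor}, using that $\oR\ra$ acts on a tensor product through the coproduct, and then invoking the hexagon relations (\ref{hexagon})---equivalently (\ref{Ral1Ral21first})--(\ref{Ral1Ral21second})---to show that both sides collapse to the same distribution of $\RR$-legs over the three slots. The structural content is that (\ref{hexagon}) is exactly the coherence data making $\otimes_\RR$ associative, so no input beyond quasitriangularity is required.
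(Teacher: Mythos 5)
Your proposal is correct and follows essentially the same route as the paper's proof: the same factorization (\ref{POQPcircQ}) together with $H$-equivariance of $\tau_\RR$ for (\ref{eqn:RtensorHmod}), and the same interplay of the move-through identity $\xi\ra\,\circ P=(\xi_1\RA P)\circ\xi_2\ra\,$ with the $\RR$-matrix identities (\ref{Ral1Ral21first})--(\ref{Ral1Ral21second}) for (\ref{eqn:Rtensorass}) and (\ref{eqn:Rtensorcirc}). The only cosmetic difference is that for the braided composition law the paper first reduces, via (\ref{eqn:Rcircstep1}), to the single exchange identity $(\id\otimes_\RR\widetilde{Q})\circ(P\otimes_\RR\id)=(\oR^\alpha\RA P\otimes_\RR\id)\circ(\id\otimes_\RR\,\oR_\alpha\RA\widetilde{Q})$, whereas you match the two fully expanded sides directly---the underlying leg bookkeeping is identical.
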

\begin{proof}
From  (\ref{eqn:helpprodhom}) we have compatibility between the
$H$-action and the lift $P\mapsto P\otimes \id$, for all
$\xi\in H$,  
\eq\label{xiPid}
\xi\RA(P\otimes \id) = (\xi\RA P)\otimes \id\,.\en
 Compatibility between the $H$-action and  the
lift $Q\mapsto\id\otimes_\RR Q$ follows from  (\ref{xitauR}) and (\ref{xiPid}),
\begin{flalign}\label{xiidQ}
\nn \xi\RA (\id\otimes_\RR Q) &= \xi\RA\bigl(\tau_\RR\circ (Q\otimes \id)\circ \tau_\RR^{-1}\bigr)\\
\nn &=  \tau_\RR \circ \xi \RA (Q\otimes \id)\circ \tau_\RR^{-1}\nn\\ &=\tau_\RR \circ \big((\xi \RA Q)\otimes\, \id\big)\circ \tau_\RR^{-1}\nn\\
&=\id\otimes_\RR\xi\RA Q~.
\end{flalign}
Equation (\ref{eqn:RtensorHmod}) then follows 
from 
$ P\otimes_\RR Q = (P\otimes_\RR \id)\circ (\id\otimes_\RR Q)$ and 
the left $H$-action property 
$
\xi\RA(T\circ  \check T)=\xi_1\RA T\circ \xi_2\RA \check T\,,
$
that holds for any two composable $\bbK$-linear maps $T$ and $\check T$.

We now prove (\ref{eqn:Rtensorass}). The left hand side of (\ref{eqn:Rtensorass}) can be expanded
as follows
\begin{subequations}
\begin{flalign}
 \nn \bigl(P\otimes_\RR Q\bigr)\otimes_\RR T &= 
\Bigl(\bigl(P\otimes_\RR Q\bigr)\circ \bigl(\oR^\alpha_1\ra\,\otimes \oR^\alpha_2\ra\,\bigr)\Bigr)\otimes \Bigl(\oR_\alpha\RA T\Bigr)\\
\nn &= \Bigl(P\circ \oR^\beta\oR^\alpha_1 \ra\,\Bigr)\otimes \Bigl((\oR_\beta\RA Q)\circ \oR^\alpha_2\ra\,\Bigr)\otimes \Bigl(\oR_\alpha\RA T\Bigr)\\
&= \Bigl(P\circ \oR^\beta\oR^\alpha \ra\,\Bigr)\otimes \Bigl((\oR_\beta\RA Q)\circ \oR^\gamma\ra\,\Bigr)\otimes\Bigl( \oR_\gamma\oR_\alpha\RA T\Bigr)~,
\end{flalign}
where in the third line we have used (\ref{Ral1Ral21first}). This expression equals the right hand side of
(\ref{eqn:Rtensorass}), indeed
\begin{flalign}
\nn P\otimes_\RR \bigl(Q\otimes_\RR T\bigr) 
\nn &= \Bigl(P\circ \oR^\alpha\ra\,\Bigr)\otimes \Bigl(\oR_{\alpha_1}\RA Q\otimes_\RR \oR_{\alpha_2}\RA T\Bigr)\\
\nn &= \Bigl(P\circ \oR^\alpha\ra\,\Bigr)\otimes \Bigl((\oR_{\alpha_1}\RA Q)\circ \oR^\gamma\ra\,\Bigr)\otimes \Bigl(\oR_\gamma \oR_{\alpha_2}\RA T\Bigr)\\
 &= \Bigl(P\circ \oR^\beta \oR^\alpha\ra\,\Bigr)\otimes \Bigl((\oR_{\beta}\RA Q)\circ \oR^\gamma\ra\,\Bigr)\otimes \Bigl(\oR_\gamma \oR_{\alpha}\RA T\Bigr)~,
\end{flalign}
\end{subequations}
where in the third line we have used 
(\ref{Ral1Ral21second}).

Finally, we show (\ref{eqn:Rtensorcirc}).
From (\ref{POQPcircQ}) we have
\begin{flalign}
\label{eqn:Rcircstep1}
 \bigl(\widetilde{P}\otimes_\RR\widetilde{Q}\bigr)\circ \bigl(P\otimes_\RR
Q\bigr)=
(\widetilde{P}\otimes_\RR \id)\circ (\id\otimes_\RR \widetilde{Q})
\circ (P\otimes_\RR\id)\circ 
 (\id\otimes_\RR Q)~,
\end{flalign}
and therefore  (\ref{eqn:Rtensorcirc}) is proven if 
\eq 
(\id\otimes_\RR \widetilde{Q})
\circ (P\otimes_\RR\id)=\oR^\al\RA( P\otimes_\RR\id)\,\circ\,
\oR_\al\RA (\id\otimes_\RR \widetilde{Q})
\en
or equivalently 
$ (\id\otimes_\RR \widetilde{Q})
\circ (P\otimes_\RR\id)=(\oR^\al\RA P\,\otimes_\RR\id)\circ
(\id\otimes_\RR \,\oR_\al\RA\widetilde{Q})~.
$
 This last equality
holds true because (use (\ref{idQRRQ}) and  (\ref{Ral1Ral21first})),
\begin{flalign}
  (\id\otimes_\RR \widetilde{Q})
\circ (P\otimes_\RR\id)&=(\oR^\al\ra\,\otimes \,\oR_\al\RA \widetilde Q)\circ (P\otimes \id)\nn\\
&= (\oR^\alpha\ra\circ\, P)\otimes \bigl(\oR_\alpha\RA\widetilde{Q}\bigr)~\nn\\ 
&= \bigl((\oR^\alpha_1\RA P)\circ \oR^\alpha_2\ra\,\bigr)\otimes \bigl(\oR_\alpha\RA\widetilde{Q}\bigr)~\nn\\
\nn &= \bigl((\oR^\alpha\RA P)\circ \oR^\beta\ra\,\bigr)\otimes \bigl(\oR_\beta\oR_\alpha\RA\widetilde{Q}\bigr)~\\
\label{eqn:Rcircstep2}&=(\oR^\alpha\RA P)\otimes_\RR (\oR_\alpha\RA \widetilde{Q})~.
\end{flalign}
\end{proof}

Let us now consider the case where $V,\widetilde{V}$ are $\MMMod{H}{}{}$-modules and
$W,\widetilde{W}$ are $\MMMod{H}{}{A}$-modules, with $A$  an $\AAAlg{H}{}{}$-algebra.
Then we can equip $V\otimes W$ (as well as $\widetilde{V}\otimes \widetilde{W}$) 
with a right $A$-module structure by defining $(v\otimes w)\cdot a := v\otimes(w\cdot a)$,
for all $v\in V$, $w\in W$ and $a\in A$. This right $A$-action is
extended by $\bbK$-linearity to all $V\otimes W$.  Moreover we have that
$V\otimes W$  and $\widetilde{V}\otimes \widetilde{W}$ are  $\MMMod{H}{}{A}$-modules,
where the left $H$-action is given in (\ref{eqn:productaction}).
\begin{Proposition}\label{RALOR}
 Let $(H,\RR)$ be a quasitriangular Hopf algebra, $A$ be an $ \AAAlg{H}{}{}$-algebra, 
 $V,\widetilde{V}$ be two $\MMMod{H}{}{}$-modules
and $W,\widetilde{W}$ be two $\MMMod{H}{}{A}$-modules. Then we have, for all
$P\in\Hom_\bfK(V,\widetilde{V})$ and $Q\in\Hom_A(W,\widetilde{W})$,
\begin{flalign}\label{PQALinMap}
 P\otimes_\RR Q \in\Hom_A(V\otimes W,\widetilde{V}\otimes\widetilde{W})~.
\end{flalign}
\end{Proposition}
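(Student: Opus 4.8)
The plan is to exploit the factorization $P\otimes_\RR Q = (P\otimes \id)\circ (\id\otimes_\RR Q)$ recorded in (\ref{POQPcircQ}) and to prove right $A$-linearity of each of the two factors separately; since a composition of right $A$-linear maps is again right $A$-linear, this yields (\ref{PQALinMap}). Throughout, recall that the right $A$-action on $V\otimes W$ (and on $\widetilde V\otimes \widetilde W$) lives entirely on the second tensor factor, $(v\otimes w)\cdot a = v\otimes (w\cdot a)$.

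First I would dispose of $P\otimes \id = P\otimes_\RR \id$. Here $P$ acts on the first factor $V$, which carries no $A$-module structure, while the identity is applied to the second factor on which $A$ acts. Right $A$-linearity is therefore immediate: $(P\otimes \id)(v\otimes (w\cdot a)) = P(v)\otimes (w\cdot a) = \big(P(v)\otimes w\big)\cdot a$.

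The substantive factor is $\id\otimes_\RR Q = \oR^\alpha\ra\,\otimes\,(\oR_\alpha\RA Q)$. The key observation is that each summand $\oR_\alpha\RA Q$ is \emph{again} a right $A$-linear map $W\to\widetilde W$, i.e.~$\oR_\alpha\RA Q\in \Hom_A(W,\widetilde W)$. This is precisely the stability of right $A$-linear maps under the $H$-adjoint action $\RA$, established in Section \ref{sec:modhom} (the analogue for homomorphisms of the statement $\xi\btrgl P\in \End_B(V)$ proved inside Proposition \ref{Hom_K HAMA}). Granting this, $\id\otimes_\RR Q$ is a sum of maps of the form (a map on $V$) $\otimes$ (a right $A$-linear map on $W$), so it commutes with the right $A$-action, which only touches the $W$-factor:
\[
(\id\otimes_\RR Q)\big(v\otimes (w\cdot a)\big) = (\oR^\alpha\ra v)\otimes\big((\oR_\alpha\RA Q)(w)\cdot a\big) = \Big((\id\otimes_\RR Q)(v\otimes w)\Big)\cdot a~.
\]

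The only point requiring genuine computation — and hence the main obstacle, were one to argue from scratch rather than cite Proposition \ref{Hom_K HAMA} — is the right $A$-linearity of $\oR_\alpha\RA Q$. Unwinding $(\oR_\alpha\RA Q)(w\cdot a) = \oR_{\alpha_1}\ra Q\big(S(\oR_{\alpha_2})\ra (w\cdot a)\big)$, one would use the $H$-covariance (\ref{eqn:moduleHcovariance1}) of the right $A$-module structures of $W$ and $\widetilde W$, the antipode identity $S(\xi)_1\otimes S(\xi)_2 = S(\xi_2)\otimes S(\xi_1)$, the right $A$-linearity of $Q$ itself, and finally the antipode axiom (\ref{2.6}) with the counit property to collapse the inner terms of the iterated coproduct of $\oR_\alpha$; the $a$-dependent factor then reduces to $1\ra a = a$, leaving $\big((\oR_\alpha\RA Q)(w)\big)\cdot a$. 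It is worth remarking that quasitriangularity enters only through the existence of $\RR^{-1}=\oR^\alpha\otimes\oR_\alpha$ defining $\otimes_\RR$; the right $A$-linearity argument is purely Hopf-algebraic and does not invoke the hexagon relations (\ref{13}).
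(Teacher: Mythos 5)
Your proof is correct and, modulo the cosmetic factorization through (\ref{POQPcircQ}), it is essentially the paper's own argument: the paper verifies (\ref{PQALinMap}) by a single direct evaluation of $P\otimes_\RR Q$ on $(v\otimes w)\cdot a$, citing exactly the fact you identify as the crux, namely that $\xi\RA Q\in\Hom_A(W,\widetilde{W})$ for all $\xi\in H$ (the homomorphism analogue of the computation inside Proposition \ref{Hom_K HAMA}, whose unwinding you sketch correctly). Your closing remark is also accurate: neither proof invokes the hexagon relations (\ref{13}); quasitriangularity enters only through the existence of $\RR^{-1}=\oR^\alpha\otimes\oR_\alpha$ in the definition of $\otimes_\RR$.
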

\begin{proof}
For all $a\in A$, $v\in V$ and $w\in W$, 
\begin{flalign}
\nn (P\otimes_\RR Q)\bigl( (v\otimes w)\cdot a \bigr) &=(P\otimes_\RR Q)\bigl( v\otimes (w\cdot a) \bigr) 
= P\bigl(\oR^\alpha\ra v\bigr)\otimes (\oR_\alpha\RA Q)(w\cdot a)\\
\nn &=P\bigl(\oR^\alpha\ra v\bigr)\otimes (\oR_\alpha\RA Q)(w)\cdot a\\
&=\bigl((P\otimes_\RR Q)(v\otimes w)\bigr)\cdot a~,
\end{flalign}
where in the second line we have used that
$\xi\RA Q\in\Hom_A(W,\widetilde{W})$, for all $\xi\in H$.
\end{proof}

\subsection{Deformation}\label{5.3subsec}
We study the twist deformation of  tensor
products of $\MMMod{H}{}{}$-modules 
and $\bbK$-linear maps.
\sk
Since as algebras $H$ and $H^\FF$ coincide, we have that any
$\MMMod{H}{}{}$-module $V$ is equivalently an
$\MMMod{H^\FF}{}{}$-module. 
It is however convenient to distinguish between these two module
structures and hence we denote by $V_\star$ the $\bbK$-module $V$ with the
$\MMMod{H^\FF}{}{}$-module structure. This notation agrees with
that of Theorem \ref{Theorem2}
where we deformed an $\MMMod{H}{A}{B}$-module $V$ into an
$\MMMod{H^\FF}{A_\star}{B_\star}$-module $V_\star$ (just consider
trivial algebras $A=B=\bbK$). 

Given two $\MMMod{H^\FF}{}{}$-modules $V_\star,W_\star$ we denote their tensor product
 by $V_\star\otimes_\star W_\star$. By definition
 $V_\star\otimes_\star W_\star$ equals $V\otimes W$ as $\bbK$-module;
the $\MMMod{H^\FF}{}{}$-module structure is canonically given by
the $H^\FF$-coproduct (cf.  (\ref{eqn:productaction})), 
for all $\xi\in H^\FF$, $v\in V_\star$ and $w\in W_\star$,
 $\xi\ra_\FF (v\otimes_\st w):=\xi_{1_\FF}\ra v\otimes_\st
 \xi_{2_\FF}\ra w$ (and extended by $\bbK$-linearity to all elements in $V_\star\otimes_\star W_\star$).

We now compare the $\MMMod{H^\FF}{}{}$-modules $V_\star\otimes_\star W_\star$ and
$(V\otimes W)_\star$; in this latter the $H^\FF$-action is just the
$H$-action on $V\otimes W$, hence it is obtained using the $H$-coproduct.

It is easy to show that the $\bfK$-linear map
\eqa\label{phiVWVW}
\varphi_{V,W}:=\FF^{-1}\trgl\,: V_\star\otimes_\star W_\star &\to &(V\otimes W)_\star\nn\\
v\otimes_{\st} w&\mapsto &\varphi_{V,W}(v\otimes_{\star} w)
=(\of^\alpha\ra v)  \otimes (\of_\alpha\ra w)
\ena
provides an isomorphism between the $\MMMod{H^\FF}{}{}$-modules
$V_\star\otimes_\st W_\star$ and $(V\otimes W)_\star$.
Indeed it intertwines between the two $H^\FF$-actions,
\begin{flalign} \label{varphixivw}
\varphi_{V,W}\bigl(\xi\ra_\FF(v\otimes_\star w)\bigr) &= \varphi_{V,W}\bigl((\xi_{1_\FF}\ra v)\otimes_\star (\xi_{2_\FF}\ra w)  \bigr)
= (\xi_{1}\of^\alpha\ra v)\otimes (\xi_{2}\of_\alpha\ra w) \nn\\
&= \xi\ra\varphi_{V,W}(v\otimes_\star w)~.
\end{flalign}
The inverse of $\varphi_{V,W}$ is 
\eqa
\varphi^{-1}_{V,W}:=\FF \ra\,: (V\otimes W )_\star &\to &V_\star\otimes_\st W_\star\nn\\
v\otimes w&\mapsto &\varphi^{-1}_{V,W}(v\otimes w)
=(\f^\alpha\ra v)  \otimes_\st ( \f_\alpha\ra w)~.
\ena
Consider now three  $\MMMod{H}{}{}$-modules $V,W,\Omega$. 
The twist cocycle property (\ref{ass}) immediately implies the following
commutative diagram of $\MMMod{H^\FF}{}{}$-module isomorphisms
 \begin{flalign}\label{eqn:higheriotast}
 \xymatrix{
V_\star\otimes_{\star} W_\star\otimes_{\star} \Omega_\star
\ar[d]_-{\id_{V_\st}\otimes_\RR
  \varphi_{W,\Omega}}\ar[rrr]^-{\varphi_{V,W}\otimes_\RR\id_{\Omega_\st}}
& & &(V\otimes W)_\star \otimes_{\star} \Omega_\star
\ar[d]^-{\varphi_{(V\otimes W),\Omega}}\\
V_\star\otimes_{\star} (W\otimes \Omega)_\star \ar[rrr]_-{\varphi_{V,(W\otimes\Omega)}} & & &(V\otimes W\otimes \Omega)_\star
}
\end{flalign}

\sk
Concerning the tensor product of $\bbK$-linear maps, 
on one hand the deformation of $P\otimes_\RR Q \in\Hom_\bbK(V\otimes
W,\widetilde{V}\otimes\widetilde{W})$,
according to Theorem \ref{PDPHom},  is given by
$D_\FF(P\otimes_\RR Q) \in\Hom_\bbK((V\otimes  
W)_\star,(\widetilde{V}\otimes\widetilde{W})_\star)$.
On the other hand, we recall that if $(H,\RR)$ is a quasitriangular Hopf
algebra with twist $\FF\in H\otimes H$,  then
$(H^\FF,\RR^\FF=\FF_{21}\,\RR\,\FF^{-1})$ is a quasitriangular Hopf
algebra. We therefore have the tensor product  $\otimes_{\RR^\FF}$
of $\bbK$-linear maps between $\MMMod{H^\FF}{}{}$-modules.
In particular, the lift of a $\bbK$-linear map $P_\star :V_\star\rightarrow
\widetilde{V}_\star$
to the $\MMMod{H^\FF}{}{}$-module $V_\star\otimes_\st W_\star$ is the $\bbK$-linear map $P_\star\otimes_{\RR^\FF}
\id:=P_\star\otimes_\st \id$ that as usual
is defined by, for all $v\in V_\star, w\in W_\star$, 
\eq
(P_\star\otimes_{\RR^\FF}
\id) (v\otimes_\st w):=(P_\star\otimes_\st \id) (v\otimes_\st
w):=P_\star(v)\otimes_\st w~.\label{TRFID}
\en
\begin{Theorem}\label{theo:promodhomdef}
Let $(H,\RR)$ be a quasitriangular Hopf algebra with twist $\FF\in
H\otimes H$ and  $V,W,\widetilde{V},\widetilde{W}$ be $\MMMod{H}{}{}$-modules.
Then for all $P\in \Hom_\bfK(V,\widetilde{V})$
and $Q\in\Hom_\bfK(W,\widetilde{W})$
the following diagram of $\bbK$-linear maps commutes:
\begin{flalign}\label{eqn:promodhomdef}
 \xymatrix{
V_\star\otimes_\star W_\star \ar[d]_-{\varphi^{}_{V,W}}\ar[rrrr]^-{D_\FF(P)\otimes_{\RR^\FF}D_\FF(Q)} & & & &\widetilde{V}_\star\otimes_\star\widetilde{W}_\star \ar[d]^-{^{}\varphi_{\widetilde{V},\widetilde{W}}}\\
(V\otimes W)_\star \ar[rrrr]_-{D_\FF\bigl((\of^\alpha\RA P)\otimes_\RR (\of_\alpha\RA Q)\bigr)}& & & &(\widetilde{V}\otimes\widetilde{W})_\star
}
\end{flalign}
i.e.
\begin{flalign}
\label{eqn:prodhomdef}
D_\FF\bigl((\of^\alpha\RA P)\otimes_\RR (\of_\alpha\RA Q)\bigr)= \varphi_{\widetilde{V},\widetilde{W}} \circ\big( D_\FF(P)\otimes_{\RR^\FF}D_\FF(Q)\big) \circ \varphi^{-1}_{V,W}~.
\end{flalign}
\end{Theorem}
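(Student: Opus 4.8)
The plan is to reduce the commutativity of (\ref{eqn:promodhomdef}), i.e.\ equation (\ref{eqn:prodhomdef}), to two elementary partial-lift identities and then reassemble them. Using the factorization (\ref{POQPcircQ}) on both tensor products, $P\otimes_\RR Q=(P\otimes\id)\circ(\id\otimes_\RR Q)$ and $D_\FF(P)\otimes_{\RR^\FF}D_\FF(Q)=(D_\FF(P)\otimes_\star\id)\circ(\id\otimes_{\RR^\FF}D_\FF(Q))$, and inserting $\varphi^{-1}_{V,\widetilde W}\circ\varphi_{V,\widetilde W}=\id$ at the intermediate module $V_\star\otimes_\star\widetilde W_\star$ on the right-hand side of (\ref{eqn:prodhomdef}), the problem splits into a $P$-part and a $Q$-part. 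Thus it suffices to prove, for an arbitrary spectator $\MMMod{H}{}{}$-module $U$, the two identities
\[
\varphi_{\widetilde V,U}\circ\big(D_\FF(P)\otimes_\star\id_U\big)\circ\varphi^{-1}_{V,U}=D_\FF(P\otimes\id_U)
\]
and
\[
\varphi_{V,\widetilde W}\circ\big(\id\otimes_{\RR^\FF}D_\FF(Q)\big)\circ\varphi^{-1}_{V,W}=D_\FF(\id\otimes_\RR Q)~.
\]

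First I would establish the left identity (the lift $P\mapsto P\otimes\id$, which involves no braiding since $P\otimes\id=P\otimes_\RR\id$). Evaluating both sides on a generic $v\otimes u$ and using (\ref{xiPid}) in the form $\of^\al\RA(P\otimes\id)=(\of^\al\RA P)\otimes\id$ to rewrite $D_\FF(P\otimes\id)$, equality becomes an identity in $H\otimes H\otimes H$ for the ``outer'', ``inner'' and ``spectator'' legs of the operators acting around $P$. This identity follows from the $2$-cocycle property written as (\ref{ass}), the counit normalization (\ref{2.23}) and the antipode axiom (\ref{2.6}); it is routine but somewhat lengthy bookkeeping.

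The right identity (the lift $Q\mapsto\id\otimes_\RR Q$) I would deduce from the left one. The key geometric input is the compatibility of the two braidings, $\varphi_{V,W}\circ\tau_{\RR^\FF}=\tau_\RR\circ\varphi_{W,V}$, which is a one-line computation from $\RR^\FF=\FF_{21}\,\RR\,\FF^{-1}$: after the flip both composites act on $w\otimes v$ by the same operator $\RR^{-1}\FF^{-1}_{21}$. Writing $\id\otimes_\RR Q=\tau_\RR\circ(Q\otimes\id)\circ\tau_\RR^{-1}$ and its $H^\FF$-analogue by (\ref{eqn:Rtensorsimple}), this compatibility conjugates the left-hand side into $\tau_\RR\circ\big[\varphi_{\widetilde W,V}\circ(D_\FF(Q)\otimes_\star\id_V)\circ\varphi^{-1}_{W,V}\big]\circ\tau_\RR^{-1}$; the bracket is precisely the left identity applied to $Q$, hence equals $D_\FF(Q\otimes\id)$. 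Since $\tau_\RR$ is $H$-equivariant, (\ref{xitauR}) gives $D_\FF(\tau_\RR)=\tau_\RR$ and $\tau_\RR\circ_\star M=\tau_\RR\circ M$, so by functoriality of $D_\FF$ the whole expression equals $D_\FF\big(\tau_\RR\circ(Q\otimes\id)\circ\tau_\RR^{-1}\big)=D_\FF(\id\otimes_\RR Q)$.

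Finally I would recombine. Because $D_\FF$ preserves $\circ_\star$-composition (Theorem \ref{functortheoremR}), the product of the two lifts equals $D_\FF\big((P\otimes\id)\circ_\star(\id\otimes_\RR Q)\big)$, and the $H$-covariance (\ref{eqn:RtensorHmod}) of $\otimes_\RR$ together with (\ref{POQPcircQ}) gives $(P\otimes\id)\circ_\star(\id\otimes_\RR Q)=(\of^\al\RA P)\otimes_\RR(\of_\al\RA Q)$; this produces the bottom arrow of (\ref{eqn:promodhomdef}) and proves (\ref{eqn:prodhomdef}). I expect the main obstacle to be the bookkeeping that glues these pieces: keeping the module labels on every occurrence of $\varphi$ and $\tau_\RR$ correct through the factorization, and consistently distinguishing $\circ$ from $\circ_\star$ so that both $D_\FF$-functoriality and the $H$-equivariance of $\tau_\RR$ may be applied. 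The only genuinely computational step is the $H\otimes H\otimes H$ twist identity underlying the first lift.
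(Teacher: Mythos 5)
Your proposal is correct and follows essentially the same route as the paper's proof: the same factorization via (\ref{POQPcircQ}) into the two partial lifts, the same explicit cocycle computation for $D_\FF(P\otimes\id)=\varphi\circ(D_\FF(P)\otimes_\star\id)\circ\varphi^{-1}$, the same transport of the $Q$-lift through the braiding using the identity $\varphi_{V,W}^{-1}\circ\tau_\RR\circ\varphi_{W,V}=\tau_{\RR^\FF}$ (the paper's (\ref{tauRFmap})) together with $H$-equivariance of $\tau_\RR$, and the same recombination via multiplicativity of $D_\FF$ with respect to $\circ_\star$. The only difference is cosmetic ordering: you establish the $\varphi$--$\tau$ compatibility first and deduce the $Q$-lift identity from the $P$-lift one, whereas the paper computes $D_\FF(\id\otimes_\RR Q)$ directly and identifies $\varphi^{-1}\circ\tau_\RR\circ\varphi$ with the $\RR^\FF$-braiding afterwards.
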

\begin{proof}
Use of (\ref{POQPcircQ}) and compatibility between the $H$-action and
the lifts of $P$ and $Q$ (cf.~(\ref{xiPid}) and (\ref{xiidQ}))
shows that (\ref{eqn:prodhomdef}) is equivalent to
\eq
D_\FF\bigl((P\otimes_\RR \id)\circ_\st (\id\otimes_\RR Q)\bigr)= \varphi_{\widetilde{V},\widetilde{W}} \circ
\big(D_\FF(P)\otimes_{\RR^\FF} \id\big)\circ \big(\id \otimes_{\RR^\FF} D_\FF(Q) \big)\circ \varphi_{V,W}^{-1}~.
\en
Because of the algebra isomorphism
(\ref{D alg-homo}), see also Theorem \ref{theo:firstcat}, the thesis  (\ref{eqn:prodhomdef}) is equivalent to
\eq\label{DFPDFQ}
D_\FF(P\otimes_\RR \id)\circ D_\FF (\id\otimes_\RR Q)= \varphi_{\widetilde{V},\widetilde{W}} \circ
\big(D_\FF(P)\otimes_{\RR^\FF} \id\big)\circ \big(\id \otimes_{\RR^\FF} D_\FF(Q) \big)\circ \varphi_{V,W}^{-1}~.
\en
The deformation of $P\otimes_\RR \id=P\otimes \id: V\otimes
\widetilde{W}\to \widetilde{V}\otimes\widetilde{W}$ (and also of $Q\otimes \id$) can be simplified as follows
\begin{flalign}
\nn D_\FF(P\otimes \id) &= \bigl((\of^\alpha\RA P)\circ \of_{\alpha_1}\ra\,\bigr)\otimes \of_{\alpha_2}\ra\,\\
\nn &=\bigl((\of^\alpha_1\of^\beta\RA P)\circ \of^\alpha_2\of_\beta \f^\gamma \ra\,\bigr)\otimes \of_{\alpha}\f_\gamma\ra\,\\
\nn&= \bigl(\of^\alpha\ra\,\circ D_\FF(P)\circ \f^\gamma\ra\,\bigr)\otimes \of_\alpha \f_\gamma\ra\, \\
&= \varphi_{\widetilde{V},\widetilde{W}}\circ (D_\FF(P)\otimes_\st \id)\circ\varphi^{-1}_{V,\widetilde{W}}~,
\end{flalign}
where in the second line we inserted on the right $\id\otimes
\id=\FF^{-1}\FF\trgl$ and then used  the twist cocycle property (\ref{ass}).
In the last line we used (\ref{TRFID}).

The deformation of 
\begin{flalign}
\id\otimes_\RR Q=\tau_\RR\circ (Q\otimes \id) \circ
\tau_\RR^{-1}=\tau_\RR\circ_\st (Q\otimes \id) \circ_\st \tau_\RR^{-1}
\end{flalign}
(where in the last equality we used
that $\tau_\RR$ and $\tau_\RR^{-1}$ are $H$-equivariant, i.e.~$\xi\RA\tau_\RR =\varepsilon(\xi)\,\tau_\RR$
and $\xi\RA\tau_\RR^{-1} =\varepsilon(\xi)\,\tau_\RR^{-1}$, for all $\xi\in H$)
is given by
\begin{flalign}
D_\FF(\id\otimes_\RR Q)
&=\tau_\RR \circ\varphi_{\widetilde{W},V}\circ  (D_\FF(Q)\otimes_\st \id)\circ\varphi_{W,V}^{-1}\circ\tau_\RR^{-1}\nn\\
&=\varphi_{V,\widetilde{W}} \circ \tau_{\RR^\FF} \circ  (D_\FF(Q)\otimes_\st \id) \circ \tau_{\RR^\FF}^{-1}\circ \varphi_{V,W}^{-1}~,
\end{flalign}
where in the first equality we again used that $\tau_\RR$ is
$H$-equivariant. In the last equality we have defined
$\tau_{\RR^\FF}: \widetilde{W}_\star\otimes_\st V_\star\rightarrow V_\star\otimes_\st \widetilde{W}_\star$ by $\tau_{\RR^\FF}
:= \varphi_{V,\widetilde{W}}^{-1}\circ \tau_\RR
\circ\varphi_{\widetilde{W},V}$
and similarly  $\tau^{-1}_{\RR^\FF}: V_\star\otimes_\st W_\star\to W_\star\otimes_\st V_\star$ 
by $\tau^{-1}_{\RR^\FF}
:= \varphi_{W,{V}}^{-1}\circ \tau^{-1}_\RR
\circ\varphi_{{V},W}$.

Equality (\ref{DFPDFQ}) holds because $\tau_{\RR^\FF} : W_\star\otimes_\st
V_\star\to V_\star\otimes_\st W_\star$ defined by
$\tau_{\RR^\FF}:=\varphi_{V,W}^{-1}\circ \tau_\RR \circ\varphi^{}_{W,V}$ is
easily seen to be the braiding map for the twist deformed tensor product, for all
$v\in V_\star, w\in W_\star$, 
\begin{flalign}\label{tauRFmap}
\tau_{\RR^\FF}(w\otimes_\st v) = (\oR^{\FF\alpha}\ra v) \otimes_\st (\oR^\FF_{\;\alpha}\ra w)~,
\end{flalign}
so that, as in (\ref{eqn:Rtensorsimple}), we have,
$
\tau_{\RR^\FF} \circ  (D_\FF(Q)\otimes_\st \id) \circ \tau_{\RR^\FF}^{-1}=\id
\otimes_{\RR^\FF} D_\FF(Q).
$
\end{proof}
\sk
\begin{Remark}\label{rem:tensorcat}
We provide a  categorical description of the results in subsections
\ref{5.2subsec} and \ref{5.3subsec}, and show that, because of commutativity of the diagrams (\ref{eqn:higheriotast}) and (\ref{eqn:promodhomdef}), the equivalence of the categories $\rep^H{}_{\,\star}$ and
$\rep^{H^\FF}$ proven after Theorem \ref{theo:firstcat} extends to the
tensor product structures that can be considered on these categories.

We recall that an object in $\rep^H$ is an $\MMMod{H}{}{}$-module $V$
and a morphism between two objects $V,W$ in $\rep^H$ is a
$\bbK$-linear map $P\in \Hom_\bbK(V,W)$ (not necessarily $H$-equivariant). 
Let us consider the association $\otimes_\RR :\rep^H\times\rep^H\to \rep^H$ given on 
objects $(V,W)$ by the tensor product $V\otimes W$ and on morphisms
$(P,Q)$ by the $\RR$-tensor product
of $\bbK$-linear maps $P\otimes_\RR Q$ (cf. Definition \ref{defi:Rtensor}).
Because of the braided composition law (\ref{eqn:Rtensorcirc}) this is
not a bifunctor. We say that it is  ``almost'' a bifunctor and refer to $(\rep^H,\otimes_\RR)$ as an ``almost monoidal'' category. 

We similarly have the ``almost monoidal'' category
$(\rep^{H^\FF},\otimes_{\RR^\FF})$, where the association
 $\otimes_{\RR^\FF} :\rep^{H^\FF}\times\rep^{H^\FF}\to \rep^{H^\FF}$
 is defined on objects $(V_\star,W_\star)$ by $V_\star\otimes_\star
 W_\star$ and on morphisms $(P_\star,Q_\star)$ by $P_\star \otimes_{\RR^\FF} Q_\star$. 
Notice that $\bbK$-linear maps between
$\MMMod{H^\FF}{}{}$-modules are denoted with a $\star$-index, like
$P_\star: V_\star\to W_\star$,  because they carry the $H^\FF$-adjoint action, rather than the
$H$-adjoint action carried by $\bbK$-linear maps between
$\MMMod{H}{}{}$-modules $P:V\to W$. This adjoint action enters the 
definition of the tensor product morphism $P_\star \otimes_{\RR^\FF} Q_\star$. 

We also consider the  ``almost monoidal''  category
$({\rep^H}_{\,\star}, {\otimes_{\RR}}_\star )$,
where the association  
${\otimes_{\RR}}_\star : {\rep^{H}}_{\,\star}\times
{\rep^{H}}_{\,\star}\to {\rep^{H}}_{\,\star}$ is defined on 
objects $(V,W)$ by $V\otimes W$ and on morphisms $(P,Q)$ 
by $P{\otimes_\RR}_\star Q = (\of^\alpha\RA P )\otimes_{\RR}
(\of_\alpha\RA Q)$ (the braided composition law reads as
(\ref{eqn:Rtensorcirc}) with composition of morphisms given by  the
$\star$-composition $\circ_\star$, and tensor product of morphisms
given by ${\otimes_\RR}_\star$).

The equivalence of the categories $\rep^H{}_{\,\star}$ and
$\rep^{H^\FF}$ shown after Theorem  \ref{theo:firstcat} extends to an
equivalence of
$({\rep^H}_{\,\star},{\otimes_\RR}_\star)$ and $(\rep^{H^\FF},\otimes_{\RR^\FF})$
as ``almost monoidal'' categories. Indeed, the collection of maps
 $\varphi_{V,W}:V_\star\otimes_\star W_\star \to (V\otimes W)_\star$
 provides a natural isomorphism between the ``almost bifunctors'' $\otimes_{R^\FF}$ and $\otimes_{\RR_\star}$. This is so because the
 $\varphi_{V,W}$ maps satisfy the commutative diagrams
(\ref{eqn:higheriotast}) and (\ref{eqn:promodhomdef}).

Notice that when we restrict to $H$-equivariant morphisms, 
the $\RR$-tensor product reduces to the usual tensor product
 (\ref{eqn:Ktensorhom}) and the	``almost monoidal'' category
 $\big(\rep^H,\otimes_\RR)$ restricts to the monoidal category
 $\big(\rep_\eqv^H,\otimes)$.
 In this case we recover the results of Drinfeld \cite{Drinfeld89} on the equivalence of the monoidal categories
 $\big(\rep_\eqv^H,\otimes)$ and
 $\big(\rep_\eqv^{H^\FF},\otimes_{\star})$.
\end{Remark}

\subsection{Quasi-commutative algebras and bimodules
(tensor product over $A$)\label{subsecprodonA}}
Let $A$ be an $\AAAlg{H}{}{}$-algebra and consider an $\MMMod{H}{}{A}$-module 
$V$ and an $\MMMod{H}{A}{A}$-module $W$.
We have that $V\otimes W$ is an $\MMMod{H}{}{A}$-module, where the module
structure is given by, for all $\xi\in H, v\in V, w\in W, a\in A$,  
$\xi\ra(v\otimes w) = (\xi_1\ra v)\otimes(\xi_2\ra w)$ and $(v\otimes w)\cdot a = v\otimes (w\cdot a)$.
\sk
We now consider $V\otimes_A W$, i.e.,  the tensor product over $A$ of  $V$ and $W$.
We recall that it can be defined as 
the quotient  of the $\bbK$-module $V\otimes W$ via the
$\bbK$-submodule $\mathcal{N}_{V,W}$ generated by the elements 
$v\cdot a\otimes w-v\otimes a\cdot w$, for all 
$a\in A,v\in V, w\in W$.
The image of $v\otimes w$ under the canonical projection $\pi :
V\otimes W\to V\otimes_A W$ is denoted by $v\otimes_A w$. 
Since the $\bbK$-submodule $\mathcal{N}_{V,W}$ is also an $\MMMod{H}{}{A}$-submodule
 of $V\otimes W$,
an $\MMMod{H}{}{A}$-module structure on
$V\otimes_A W$ is canonically induced from the one 
on $V\otimes W$.
Explicitly we have, for all $\xi\in H, v\in V, w\in W, a\in A$,   $\xi\ra(v\otimes_A w) = (\xi_1\ra
v)\otimes_A(\xi_2\ra w)$ and
$(v\otimes_A w)\cdot a = v\otimes_A(w\cdot a)$. 

\sk
Given two right $A$-linear maps 
$P\in\Hom_A(V,\widetilde{V})$,
$Q\in\Hom_A(W,\widetilde{W})$,
where
$V,\widetilde{V}$ are two $\MMMod{H}{}{A}$-modules and $W,\widetilde{W}$ are two $\MMMod{H}{A}{A}$-modules,
following Proposition \ref{RALOR} we can construct the $\RR$-tensor product
 $P\otimes_\RR Q\in \Hom_A(V\otimes W,\widetilde V\otimes\widetilde W)$. 
We study if $P\otimes_\RR Q$ induces a map in $\Hom_A(V\otimes_A W,\widetilde V\otimes_A\widetilde W)$. 
In other words, we study if the $\RR$-tensor product of $\bfK$-linear maps  induces 
an $\RR$-tensor product between right
$A$-linear maps that is compatible with the tensor product of
modules over $A$.  This requires a quasi-commutative structure on  $\MMMod{H}{A}{A}$-modules.

\begin{Definition} 
Let $(H, \RR)$ be a quasitriangular Hopf algebra. 
Then an $\AAAlg{H}{}{}$-algebra $A$  is called
{\bf quasi-commutative} if, for all $a,\tilde a\in A$,
\eq
a\,\tilde a=(\oR^\al\trgl \tilde a) (\oR_\al\trgl  a)~.\label{QCalg}
\en
In this case, an $\MMMod{H}{A}{A}$-module $V$
is called  {\bf quasi-commutative} if, for all $a\in A, v\in V$,
\eq\label{QCrightact}
v\cdot a=(\oR^\al\trgl a)\cdot(\oR_\al\trgl v)~.
\en
\end{Definition}
\sk

It is not difficult to prove that every  $\MMMod{H}{A}{}$-module $V$, with
 $A$ being a quasi-commutative $\AAAlg{H}{}{}$-algebra, is a quasi-commutative $ \MMMod{H}{A}{A}$-module 
 with the right $A$-action defined  by
(\ref{QCrightact}).

The quasi-commutativity conditions (\ref{QCalg}) and
(\ref{QCrightact}) are equivalent to
\eq
\tilde a\, a=(\R_\al\trgl  a) (\R^\al\trgl \tilde a)~,~~a\cdot v=(\R_\al\trgl v)\cdot(\R^\al\trgl a)~.
\en
In the algebra case we can therefore use both $\RR^{-1}$ and $\RR_{21}$ (being the inverses of the two quasitriangular structures
$\RR$ and $\RR_{21}^{-1}$ on $H$) to
commute products of algebra elements. This suggests the following
definition for the case of modules.
\begin{Definition} Let $(H, \RR)$ be a quasitriangular Hopf algebra,
$A$ be a quasi-commutative $\AAAlg{H}{}{}$-algebra and $V$ be a quasi-commutative $\MMMod{H}{A}{A}$-module.
We say that $V$
is {\bf strong quasi-commutative}  if in addition, for all $a\in A, v\in V$,
\eq\label{QCrightactstrong}
v\cdot a=(\R_\al\trgl a)\cdot(\R^\al\trgl v)~.
\en
Equivalently, $V$ is strong
quasi-commutative if it is quasi-commutative with respect to both
quasitriangular structures $\RR$ and $\RR^{-1}_{21}$ on $H$.
\end{Definition} 
\sk
\begin{Example}
The universal enveloping algebra ${U}\gg$ of a Lie algebra $\gg$ is a 
cocommutative Hopf algebra.
Every cocommutative Hopf algebra $H$ has
a triangular structure given by the  $\RR$-matrix $\RR=1\otimes 1$. 
Let $\FF$ be a twist of this cocommutative Hopf algebra $H$, then the
Hopf algebra $H^\FF$ is triangular with $\RR$-matrix
$\RR^\FF=\FF_{21}\FF^{-1}$. 

Commutative $\AAAlg{U\gg}{}{}$-algebras $A$ and commutative
 $\MMMod{U\gg}{A}{A}$-modules $V$ are (strong) quasi-commutative, and so are
 their twist deformations. In particular, the modules and algebras of
 noncommutative gravity \cite{Aschieri:2005zs} are (strong) quasi-commutative.
\end{Example}
\begin{Example}
The twist deformation $H^\FF$ of any quasitriangular (triangular) Hopf algebra $(H, \RR)$
is quasitriangular (triangular) with $\RR$-matrix $\RR^\FF=\FF_{21}\RR\FF^{-1}$.

If $A$ is a quasi-commutative $\AAAlg{H}{}{}$-algebra, then $A_\st$
is a quasi-commutative $\AAAlg{H^\FF}{}{}$-algebra. If $V$ is a (strong) quasi-commutative $\MMMod{H}{A}{A}$-module, then
$V_\st$ is a  (strong) quasi-commutative $\MMMod{H^\FF}{A_\st}{A_\st}$-module. 

For a triangular Hopf algebra we have $\RR=\RR_{21}^{-1}$. Thus, every quasi-commutative
$\MMMod{H}{A}{A}$-module is automatically strong quasi-commutative.
\end{Example}

\begin{Remark}
We presently do not  have a nontrivial example of a proper quasitriangular (i.e.~not triangular)
Hopf algebra $H$ which has a quasi-commutative $\AAAlg{H}{}{}$-algebra.
\end{Remark}

Tensor products over $A$ of (strong) quasi-commutative $\MMMod{H}{A}{A}$-modules are
again (strong) quasi-commutative $\MMMod{H}{A}{A}$-modules.
\begin{Proposition}\label{qc1qc2thenqc12}
Let $(H,\RR)$  be a quasitriangular Hopf algebra, 
$A$ be a quasi-commutative $\AAAlg{H}{}{}$-algebra
and $V,W$ be two quasi-commutative  $\MMMod{H}{A}{A}$-modules.
Then $V\otimes_AW$ is a quasi-commutative $ \MMMod{H}{A}{A}$-module.

If moreover $V$ and $W$ are strong quasi-commutative $ \MMMod{H}{A}{A}$-modules, then $V\otimes_A W$
is also a strong quasi-commutative  $\MMMod{H}{A}{A}$-module.
\end{Proposition}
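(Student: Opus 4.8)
The plan is to equip $V\otimes_A W$ with its canonical $\MMMod{H}{A}{A}$-module structure and then verify the quasi-commutativity identity (\ref{QCrightact}) directly. First I would record the module structure: exactly as for the $\MMMod{H}{}{A}$-module structure discussed in the text above, the submodule $\mathcal N_{V,W}$ is not only a $\bbK$-submodule but also a left $A$-submodule and an $H$-submodule of $V\otimes W$, so that the left action $a\cdot(v\otimes_A w):=(a\cdot v)\otimes_A w$, the right action $(v\otimes_A w)\cdot a:=v\otimes_A(w\cdot a)$ and the $H$-action $\xi\ra(v\otimes_A w)=(\xi_1\ra v)\otimes_A(\xi_2\ra w)$ all descend to the quotient and make $V\otimes_A W$ an $\MMMod{H}{A}{A}$-module satisfying (\ref{eqn:moduleHcovariance}). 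This part is routine bimodule bookkeeping and I would only check that the defining relations of $\otimes_A$ are respected.

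The heart of the proof is the computation of the right-hand side of (\ref{QCrightact}) for $V\otimes_A W$. Expanding the $H$-action through the coproduct and letting the left $A$-factor act on the first tensor slot gives
\begin{align*}
(\oR^\al\trgl a)\cdot\bigl(\oR_\al\trgl(v\otimes_A w)\bigr)
&=\bigl((\oR^\al\trgl a)\cdot(\oR_{\al_1}\trgl v)\bigr)\otimes_A(\oR_{\al_2}\trgl w)\\
&=\bigl((\oR^\al\oR^\be\trgl a)\cdot(\oR_\al\trgl v)\bigr)\otimes_A(\oR_\be\trgl w),
\end{align*}
where the second equality uses the quasitriangular identity (\ref{Ral1Ral21second}) to re-expand $\oR^\al\otimes\oR_{\al_1}\otimes\oR_{\al_2}$. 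Writing $\oR^\al\oR^\be\trgl a=\oR^\al\trgl(\oR^\be\trgl a)$ and applying the quasi-commutativity (\ref{QCrightact}) of $V$ (with $\oR^\be\trgl a$ in place of $a$) collapses the first factor to $v\cdot(\oR^\be\trgl a)$; the defining relation $(v\cdot c)\otimes_A u=v\otimes_A(c\cdot u)$ of the tensor product over $A$ then moves $\oR^\be\trgl a$ across $\otimes_A$, and the quasi-commutativity (\ref{QCrightact}) of $W$ finally yields $v\otimes_A(w\cdot a)=(v\otimes_A w)\cdot a$, which is the left-hand side of (\ref{QCrightact}). Hence $V\otimes_A W$ is quasi-commutative.

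For the strong statement I would not repeat the calculation. The key observation is that $(H,\RR_{21}^{-1})$ is again a quasitriangular Hopf algebra (this follows by transposing the relation (\ref{12}), exactly as one checks that $H^\cop$ is quasitriangular with $\RR$-matrix $\RR_{21}$), and that, by the very definition of strong quasi-commutativity, condition (\ref{QCrightactstrong}) is precisely quasi-commutativity with respect to this second quasitriangular structure. Since the argument of the first part is phrased purely in terms of the abstract $\RR$-matrix and the coproduct identity (\ref{Ral1Ral21second}), it applies verbatim to $(H,\RR_{21}^{-1})$ and shows that $V\otimes_A W$ is also quasi-commutative with respect to $\RR_{21}^{-1}$, i.e.\ strong quasi-commutative.

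I expect the only genuinely delicate point to be the regrouping step: one must apply (\ref{Ral1Ral21second}) in exactly the arrangement that places $a$, $v$, $w$ so that the single-module quasi-commutativity relations become applicable in turn, and one must make sure that each manipulation (in particular the passage $(v\cdot c)\otimes_A u=v\otimes_A(c\cdot u)$) is legitimate on the quotient $V\otimes_A W$ rather than only on $V\otimes W$. Everything else is standard.
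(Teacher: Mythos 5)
Your proof is correct and is essentially the paper's own argument: the paper runs the identical chain of equalities in the opposite direction, starting from $(v\otimes_A w)\cdot a$, applying quasi-commutativity of $W$, moving the scalar across $\otimes_A$, applying quasi-commutativity of $V$, and collapsing with (\ref{Ral1Ral21second}). The strong case is handled the same way in both, by rerunning the computation with the alternative $\RR$-matrix $\RR_{21}^{-1}$.
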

\begin{proof}
We show quasi-commutativity of $V\otimes_A W$. For all $v\in V, w\in W, a\in A$,
\begin{flalign}
\nn v\,\otimes_Aw \cdot a&=v\otimes_A (\oR^\al\ra a)\cdot
(\oR_\al\ra w)=(\oR^\be\oR^\al\ra a)\cdot (\oR_\be\ra
v)\otimes_A(\oR_\al\ra w)\\
& =(\oR^\be\ra a)\cdot (\oR_\be\ra (v\otimes_A w))~.
\end{flalign}
The strong quasi-commutativity of $V\otimes_A W$, in case of $V$ and $W$ strong quasi-commutative,
 is proven by considering the equation above with the alternative $\RR$-matrix
$\RR^{-1}_{21}$.
\end{proof}

\begin{Proposition}\label{Prop5.14}
Let $(H,\RR)$  be a quasitriangular Hopf algebra, 
$A$ be a quasi-commutative $\AAAlg{H}{}{}$-algebra
and $V,W$ be two  quasi-commutative  $\MMMod{H}{A}{A}$-modules. Then
$\Hom_A(V,W)$ and $\big({}_A\Hom(V,W)\big)^\op$
are quasi-commutative $\MMMod{H}{A}{A}$-modules. \vspace{1mm}\newline 
If moreover $V$ and $W$ are strong quasi-commutative $\MMMod{H}{A}{A}$-modules, then so are
$\Hom_A(V,W)$ and $\big({}_A\Hom(V,W)\big)^\op$.
\end{Proposition}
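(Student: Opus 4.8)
The plan is to verify the defining quasi-commutativity relation (\ref{QCrightact}) directly for $\Hom_A(V,W)$, and then to obtain the statement for $\big({}_A\Hom(V,W)\big)^\op$ by a mirror argument. Recall from Proposition \ref{Hom_K HAMA} that $\Hom_A(V,W)$ is an $\MMMod{H}{A}{A}$-module with the $H$-adjoint action $\btrgl$ and with $A$-actions $(a\cdot P)(v)=a\cdot P(v)$ and $(P\cdot a)(v)=P(a\cdot v)$. So I must establish, for all $P\in\Hom_A(V,W)$, $a\in A$ and $v\in V$, the identity $P(a\cdot v)=(\oR^\al\trgl a)\cdot\big((\oR_\al\btrgl P)(v)\big)$, which reduces the module-level relation to an identity in $W$.

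First I would rewrite the left-hand side using the equivalent quasi-commutativity form $a\cdot v=(\R_\al\trgl v)\cdot(\R^\al\trgl a)$ of $V$, and then pull the algebra factor out with the right $A$-linearity of $P$, obtaining $P(\R_\al\trgl v)\cdot(\R^\al\trgl a)$. Next I apply quasi-commutativity of $W$ to move $(\R^\al\trgl a)$ to the left, and use the covariance identity $\xi\trgl P(u)=(\xi_1\btrgl P)(\xi_2\trgl u)$ (valid for any $\bbK$-linear map between $H$-modules) to turn $\oR_\be\trgl P(\cdots)$ into $(\oR_{\be_1}\btrgl P)(\oR_{\be_2}\trgl\cdots)$. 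At this point the expression carries the combination $\oR^\be\otimes\oR_{\be_1}\otimes\oR_{\be_2}$, which I rewrite with the quasitriangular identity (\ref{Ral1Ral21second}) as $\oR^\be\oR^\ga\otimes\oR_\be\otimes\oR_\ga$. The decisive step is then that the residual factors $\oR^\ga\R^\al$ acting on $a$ and $\oR_\ga\R_\al$ acting on $v$ assemble into $\RR^{-1}\RR=1\otimes1$ and collapse, leaving exactly $(\oR^\be\trgl a)\cdot\big((\oR_\be\btrgl P)(v)\big)$. This $\RR^{-1}\RR$ collapse — available only after matching the coproduct legs through (\ref{Ral1Ral21second}) — is the heart of the argument and the step where the bookkeeping of the $\RR$-matrix legs matters most; note that only ordinary quasi-commutativity of $V$ and $W$ is needed. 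For the strong case I would run the identical computation with the second quasitriangular structure $\RR^{-1}_{21}$ in place of $\RR$, exactly as in the proof of Proposition \ref{qc1qc2thenqc12}.

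For $\big({}_A\Hom(V,W)\big)^\op$ I would not repeat the computation but invoke the mirror construction. By Lemma \ref{ABVCop} the opposites $A^\op$ and $V^\op,W^\op$ form an $\AAAlg{H^\cop}{}{}$-algebra and $\MMMod{H^\cop}{A^\op}{A^\op}$-modules, and, exactly as in the endomorphism case (\ref{identityiso}) of Proposition \ref{ABVAEnd}, $\big({}_A\Hom(V,W)\big)^\op$ is identified via the identity map with $\Hom_{A^\op}(V^\op,W^\op)$. I would equip $H^\cop$ with the quasitriangular structure $\RR^{-1}$ (equivalently $\RR_{21}$ in the triangular situation that covers all presently known examples); one checks that $(H^\cop,\RR^{-1})$ is quasitriangular and that, under it, $A^\op$ and $V^\op,W^\op$ are again quasi-commutative, precisely because the defining relation for $V^\op$ over $(H^\cop,\RR^{-1})$ is the equivalent quasi-commutativity form $a\cdot v=(\R_\al\trgl v)\cdot(\R^\al\trgl a)$ of $V$. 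Applying the already-proven first part to this co-opposite data yields quasi-commutativity of $\Hom_{A^\op}(V^\op,W^\op)$, and transporting it back along the identity isomorphism gives the equivalent quasi-commutativity form of $\big({}_A\Hom(V,W)\big)^\op$ over $(H,\RR)$, with the strong case following from the alternative structure. The main obstacle here is organisational rather than computational: selecting the correct quasitriangular structure on $H^\cop$ (namely $\RR^{-1}$, not the $\RR_{21}$ that twists use elsewhere) and correctly matching the interchanged left/right $A^\op$-actions and the $\btrgl^\cop$ action, so that the first part transfers verbatim.
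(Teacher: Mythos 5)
Your proof is correct, and its two halves relate to the paper differently. For $\Hom_A(V,W)$ you follow essentially the paper's own computation --- inverse quasi-commutativity of $V$, right $A$-linearity of $P$, quasi-commutativity of $W$, then the coproduct identity (\ref{Ral1Ral21second}) --- differing only in the final bookkeeping: the paper converts $\R^\al\otimes\R_\al$ into $\oR^\al\otimes S(\oR_\al)$ via $(\id\otimes S)\RR^{-1}=\RR$ and then reads off the adjoint action, whereas you insert the adjoint action through the covariance identity $\xi\trgl P(u)=(\xi_1\btrgl P)(\xi_2\trgl u)$ and cancel the leftover legs by $\oR^\ga\R^\al\otimes\oR_\ga\R_\al=\RR^{-1}\RR=1\otimes 1$; both collapses are valid and of equal length. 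Where you genuinely depart from the paper is the second half: for $\big({}_A\Hom(V,W)\big)^\op$ the paper merely says ``similarly proven'' (a mirror-image direct computation), while you bootstrap the already-proven right-linear case through the opposite/co-opposite machinery that the paper built for deformation purposes (Lemma \ref{ABVCop}, Proposition \ref{ABVAEnd}, the identification $\big({}_A\Hom(V,W)\big)^\op\simeq \Hom_{A^\op}(V^\op,W^\op)$). This transport is sound, and the subtlety you flag is exactly the right one: $(H^\cop,\RR^{-1})$ is quasitriangular (it is the alternative structure $(\RR_{21})_{21}^{-1}$ associated with the paper's $(H^\cop,\RR_{21})$), and under it the quasi-commutativity of $V^\op$, namely $v\cdot^\op a=(\R^\al\trgl a)\cdot^\op(\R_\al\trgl v)$, is precisely the inverse form $a\cdot v=(\R_\al\trgl v)\cdot(\R^\al\trgl a)$ of (\ref{QCrightact}), so ordinary quasi-commutativity of $V,W$ suffices; had you instead taken $\RR_{21}$ on $H^\cop$, quasi-commutativity of $V^\op$ would have amounted to the strong condition (\ref{QCrightactstrong}) on $V$, and the first statement of the proposition would not have transferred. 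Your route thus buys a computation-free second half at the cost of this structural check, while the paper's route costs one more routine computation but no transport; the strong cases are handled identically in both treatments, by rerunning with the alternative quasitriangular structure $\RR_{21}^{-1}$.
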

\begin{proof}
Recall that the $A$-bimodule structure of $\Hom_A(V,W)$
 is defined by, for all $a\in A, P\in
\Hom_A(V,W),$ 
$P\cdot a=P\circ l_a$ and $a\cdot P=l_a\circ P$. We prove that for $A$, $V$ and $W$ quasi-commutative
\eq\label{PaRaRP}
P\cdot a=(\oR^\al\trgl
a)\cdot(\oR_\al\btrgl P)~.
\en
Indeed, for all $v\in V$ we have
\eqa
(P\cdot a)(v)&=&P(a\cdot v)=P\left( (R_\al\trgl v)\cdot (\R^\al\trgl
  a)\right)
=P(R_\al\trgl v)\cdot (\R^\al\trgl a)\nn\\
&=&(\oR^\be\R^\al\trgl a)\cdot \oR_\be\trgl (P(\R_\al\trgl v))\nn\\
&=&(\oR^\be\oR^\al\trgl a)\cdot \oR_\be\trgl (P(S(\oR_\al)\trgl
v))\nn\\
&=&(\oR^\al\trgl a)\cdot \oR_{\al_1}\trgl (P(S(\oR_{\al_2})\trgl
v))\nn\\
&=&(\oR^\al\trgl a)\cdot (\oR_{\al}\btrgl P)(v)~,
\ena
where in the second equality we used that $a\cdot v=(\R_\al\trgl v)\cdot
(\R^\al\trgl a)$, which is equivalent to (\ref{QCrightact}). Then we have
used the $\RR$-matrix properties  (\ref{Ral1Ral21second}) and (\ref{SidR}).
Quasi-commutativity of the $\MMMod{H}{A}{A}$-module $\big({}_A\Hom(V,W)\big)^\op$
 is similarly proven.

The strong quasi-commutativity of $\Hom_A(V,W)$ and $\big({}_A\Hom(V,W)\big)^\op$ in the
case of strong quasi-commutative $\MMMod{H}{A}{A}$-modules  $V$ and
$W$  is similarly proven considering the alternative $\RR$-matrix $\RR_{21}^{-1}$.
\end{proof}

\begin{Remark}\label{EndQC}
A particular case is when $V=W$,  then $\End_A(V)$
 is a quasi-commutative $\MMMod{H}{A}{A}$-module. However 
the $\AAAlg{H}{A}{A}$-algebra structure of $\End_A(V)$
is in general not quasi-commutative. Similarly 
$({}_A\End(V))^\op$ is quasi-commutative as an $\MMMod{H}{A}{A}$-module but in general not
as an $\AAAlg{H}{A}{A}$-algebra.
\end{Remark}

\sk
\begin{Theorem}\label{propo:RtensorAhom}
Let $(H,\RR)$ be a quasitriangular Hopf algebra, $A$ be a quasi-commutative $\AAAlg{H}{}{}$-algebra, 
$V,\widetilde{V}$ be two $\MMMod{H}{}{A}$-modules
and $W,\widetilde{W}$ be two quasi-commutative $\MMMod{H}{A}{A}$-modules. 
For all $P\in\Hom_A(V,\widetilde{V})$ and $Q\in\Hom_A(W,\widetilde{W})$ 
the map $P\otimes_\RR Q \in \Hom_A(V\otimes W,\widetilde{V}\otimes \widetilde{W})$
induces a well-defined right $A$-linear map (denoted by the same symbol) 
$P\otimes_\RR Q: V\otimes_A W\to \widetilde{V}\otimes_A \widetilde{W}$ 
on the quotient modules. Explicitly we have, for all $v\in V$ and $w\in W$,
\begin{flalign}\label{eqn:Rtensorexplicit}
P\otimes_\RR Q(v\otimes_A w) = P\big(\oR^\alpha\ra v\big)\otimes_A \big(\oR_\alpha\RA Q\big)(w)~.
\end{flalign}
\end{Theorem}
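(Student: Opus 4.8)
The plan is to show that the right $A$-linear map $P\otimes_\RR Q$ on $V\otimes W$, which exists by Proposition \ref{RALOR}, descends to the quotient $V\otimes_A W$. By the universal property of the tensor product over $A$, this amounts to checking that $\pi\circ(P\otimes_\RR Q)$ annihilates the $\bbK$-submodule $\mathcal{N}_{V,W}$, where $\pi:\widetilde V\otimes\widetilde W\to\widetilde V\otimes_A\widetilde W$ is the canonical projection. Since $\mathcal{N}_{V,W}$ is generated as a $\bbK$-module by the elements $v\cdot a\otimes w-v\otimes a\cdot w$ and $P\otimes_\RR Q$ is $\bbK$-linear, it suffices to prove, for all $a\in A$, $v\in V$ and $w\in W$,
\begin{flalign}
\pi\bigl((P\otimes_\RR Q)(v\cdot a\otimes w)\bigr)=\pi\bigl((P\otimes_\RR Q)(v\otimes a\cdot w)\bigr)~.
\end{flalign}

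First I would expand the left-hand side using the definition $P\otimes_\RR Q=(P\circ\oR^\al\ra)\otimes(\oR_\al\RA Q)$, the $H$-covariance of the right $A$-action on $V$ (eq.~(\ref{eqn:moduleHcovariance1})) and the right $A$-linearity of $P$, obtaining $P(\oR^\al_{\,1}\ra v)\cdot(\oR^\al_{\,2}\ra a)\otimes(\oR_\al\RA Q)(w)$. Passing through $\pi$ lets me move the factor $\oR^\al_{\,2}\ra a$ across $\otimes_A$, so that this becomes $P(\oR^\al_{\,1}\ra v)\otimes_A(\oR^\al_{\,2}\ra a)\cdot(\oR_\al\RA Q)(w)$ in the quotient. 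I then apply the coproduct identity (\ref{Ral1Ral21first}) to replace $\oR^\al_{\,1}\otimes\oR^\al_{\,2}\otimes\oR_\al$ by $\oR^\al\otimes\oR^\be\otimes\oR_\be\oR_\al$, which separates the slot acting on $v$ from the slots acting on $a$ and on $Q$.

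The key step uses quasi-commutativity. Writing $Q':=\oR_\al\RA Q$, which again lies in $\Hom_A(W,\widetilde W)$ (cf.~Proposition \ref{Hom_K HAMA}), the braided left $A$-linearity established in Proposition \ref{Prop5.14} (equation (\ref{PaRaRP})) gives $(\oR^\be\ra a)\cdot(\oR_\be\RA Q')(w)=Q'(a\cdot w)$. Substituting $\oR_\be\oR_\al\RA Q=\oR_\be\RA Q'$ and then this identity collapses the expression to $\pi\bigl(P(\oR^\al\ra v)\otimes(\oR_\al\RA Q)(a\cdot w)\bigr)$, which is precisely $\pi\bigl((P\otimes_\RR Q)(v\otimes a\cdot w)\bigr)$. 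This proves well-definedness; the explicit formula (\ref{eqn:Rtensorexplicit}) is then immediate from the definition of $\otimes_\RR$, and right $A$-linearity of the induced map follows from Proposition \ref{RALOR} together with the compatibility $(v\otimes_A w)\cdot a=v\otimes_A(w\cdot a)$ of the right action with $\pi$.

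The main obstacle is the quasi-commutativity input: the naive tensor product of right $A$-linear maps does not descend to the quotient, and it is exactly the $\RR$-matrix coproduct identity (\ref{Ral1Ral21first}) combined with the braided left $A$-linearity (\ref{PaRaRP})---which requires $W$ and $\widetilde W$ to be quasi-commutative $\MMMod{H}{A}{A}$-modules---that transports the factor $a$ across the tensor product consistently with the relation defining $\otimes_A$. The remaining ingredients (covariance of the right action, right $A$-linearity of $P$, and the adjoint action preserving $\Hom_A$) are routine.
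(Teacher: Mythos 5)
Your proof is correct and follows essentially the same route as the paper's: both reduce well-definedness to the action on the generators $v\cdot a\otimes w - v\otimes a\cdot w$ of $\mathcal{N}_{V,W}$ and use the same three ingredients, namely $H$-covariance of the module structure together with right $A$-linearity of $P$, the $\RR$-matrix coproduct identity (\ref{Ral1Ral21first}), and the braided left $A$-linearity (\ref{PaRaRP}) of Proposition \ref{Prop5.14} applied to $\oR_\al\RA Q$. The only cosmetic difference is that you show the two images agree after the projection $\pi$, whereas the paper exhibits their difference directly as an element of $\mathcal{N}_{\widetilde{V},\widetilde{W}}$ -- an equivalent formulation.
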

\begin{proof}
Remember that the $\MMMod{H}{}{A}$-module $V\otimes_A W$ was defined as the quotient of the
$\MMMod{H}{}{A}$-module $V\otimes W$ via the $\MMMod{H}{}{A}$-submodule $\mathcal{N}_{V,W}$ generated
by the elements $v\cdot a\otimes w-v\otimes a\cdot w$, for all 
$a\in A,v\in V, w\in W$.
The map $P\otimes_\RR Q :V\otimes W \to \widetilde{V}\otimes \widetilde{W}$ induces a well-defined
map on the quotient modules $V\otimes_A W = V\otimes W/\mathcal{N}_{V,W}$ and 
$\widetilde{V}\otimes_A\widetilde{W} = \widetilde{V}\otimes\widetilde{W}/\mathcal{N}_{\widetilde{V},\widetilde{W}}$,
if the image of $\mathcal{N}_{V,W}$ under $P\otimes_\RR Q$ lies in $\mathcal{N}_{\widetilde{V},\widetilde{W}}$, 
i.e.~$P\otimes_\RR Q \big(\mathcal{N}_{V,W}\big)\subseteq \mathcal{N}_{\widetilde{V},\widetilde{W}}$.
This is indeed the case because, for all $v\in V$, $w\in W$ and $a\in A$,
\begin{flalign}
\nn &P\otimes_\RR Q(v\cdot a\otimes w - v\otimes a\cdot w) \\
\nn &\quad ~=P(\oR^\alpha_1 \ra v\cdot \oR^\alpha_2\ra a)\otimes (\oR_\alpha\RA Q)(w) - P(\oR^\alpha\ra v)\otimes (\oR_\alpha\RA Q)(a\cdot w)\\
\nn & \quad ~=P(\oR^\alpha \ra v)\cdot \oR^\beta\ra a \otimes (\oR_\beta\oR_\alpha\RA Q)(w) - P(\oR^\alpha\ra v)\otimes \oR^\beta \ra a \cdot (\oR_\beta\oR_\alpha\RA Q)( w)\\
& \quad ~\in \mathcal{N}_{\widetilde{V},\widetilde{W}}~,
\end{flalign}
where in the last equality we used Proposition \ref{Prop5.14}.
The right $A$-linearity of the map $P\otimes_\RR Q: V\otimes_A W\to \widetilde{V}\otimes_A \widetilde{W}$
follows from Proposition \ref{RALOR} and the fact that $V\otimes_A W$ (and $\widetilde{V}\otimes_A\widetilde{W}$)
are equipped with the right $A$-module structure canonically induced from $V\otimes W$ 
(and $\widetilde{V}\otimes\widetilde{W}$  respectively).
\end{proof}
Theorem \ref{propo:RtensorAhom} immediately implies that the following definition
  is well-given.
\begin{Definition}\label{defi:RtensorA}
Let $(H,\RR)$ be a quasitriangular Hopf algebra, $A$ be a quasi-commutative $\AAAlg{H}{}{}$-algebra, 
$V,\widetilde{V}$ be two $\MMMod{H}{}{A}$-modules
and $W,\widetilde{W}$ be two quasi-commutative $\MMMod{H}{A}{A}$-modules.
The {\bf $\RR$-tensor product} of right $A$-linear maps is defined by
the $\bbK$-bilinear map
\eqa
 \otimes_\RR:\, \Hom_A(V,\widetilde{V})\times
 \Hom_A(W,\widetilde{W})& \longrightarrow & \Hom_A(V\otimes_A
 W,\widetilde{V}\otimes_A\widetilde{W})\nn\\
(P,Q)&\longmapsto & P\otimes_\RR Q~,
\ena
where $P\otimes_\RR Q$ is explicitly given in (\ref{eqn:Rtensorexplicit}).
\end{Definition}
\sk
\begin{Theorem}\label{RotimesA}
Let $(H,\RR)$ be a quasitriangular Hopf algebra, $A$ be a quasi-commutative $\AAAlg{H}{}{}$-algebra,
 $V,\widetilde V,\widehat V $ be $\MMMod{H}{}{A}$-modules and
 $W,Z,\widetilde{W},\widetilde{Z},\widehat{W}$ be quasi-commutative $\MMMod{H}{A}{A}$-modules.
The $\RR$-tensor product of Definition \ref{defi:RtensorA}
is  compatible with the $\MMMod{H}{}{}$-module structure, i.e.,~for all
$\xi\in H$, $P\in\Hom_A(V,\widetilde{V})$ and $Q\in\Hom_A(W,\widetilde{W})$,
\begin{subequations}
\begin{flalign}
\label{eqn:RtensorHmodA}
\xi\RA (P\otimes_\RR Q) = (\xi_1\RA P)\otimes_\RR (\xi_2\RA Q)~.
\end{flalign}
Furthermore, the $\RR$-tensor product is associative, i.e.,~for all
$P\in\Hom_A(V,\widetilde{V})$, $Q\in\Hom_A(W,\widetilde{W})$ and $T\in\Hom_A(Z,\widetilde{Z})$,
\begin{flalign}
\label{eqn:RtensorassA}
\bigl(P\otimes_\RR Q\bigr)\otimes_\RR T = P\otimes_\RR \bigl(Q\otimes_\RR T\bigr)~,
\end{flalign}
and satisfies the braided composition law, for all $P\in\Hom_A(V,\widetilde{V})$,  $Q\in\Hom_A(W,\widetilde{W})$,
$\widetilde{P}\in\Hom_{A}(\widetilde{V},\widehat{V})$ and $\widetilde{Q}\in\Hom_A(\widetilde{W},\widehat{W})$,
\begin{flalign}
\label{eqn:RtensorcircA}
\bigl(\widetilde{P}\otimes_\RR\widetilde{Q}\bigr)\circ \bigl(P\otimes_\RR Q\bigr) = \bigl(\widetilde{P}\circ (\oR^\alpha\RA P)\bigr)
\otimes_\RR\bigl((\oR_\alpha\RA \widetilde{Q})\circ Q\bigr)~.
\end{flalign}
\end{subequations}
\end{Theorem}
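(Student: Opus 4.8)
The plan is to obtain all three identities for free from their already-proven counterparts for $\bbK$-linear maps in Theorem \ref{RotimesK}, by transporting them along the canonical projection onto the quotient. Recall that $V\otimes_A W$ is the quotient of $V\otimes W$ by the $\MMMod{H}{}{A}$-submodule $\mathcal{N}_{V,W}$, and that the projection $\pi_{V,W}\colon V\otimes W\to V\otimes_A W$ is a surjective $\MMMod{H}{}{A}$-module homomorphism; in particular it is right $A$-linear and $H$-equivariant, so $\pi_{V,W}\circ(\xi\ra) = (\xi\ra)\circ\pi_{V,W}$ for all $\xi\in H$. By construction (Theorem \ref{propo:RtensorAhom}) the right $A$-linear map $P\otimes_\RR Q\colon V\otimes_A W\to\widetilde V\otimes_A\widetilde W$ of Definition \ref{defi:RtensorA} is characterised by the intertwining relation
\[
(P\otimes_\RR Q)\circ\pi_{V,W} = \pi_{\widetilde V,\widetilde W}\circ(P\otimes_\RR Q)~,
\]
where on the left-hand side $P\otimes_\RR Q$ is the quotient map and on the right-hand side it is the $\bbK$-tensor map of Definition \ref{defi:Rtensor}. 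Since $\pi_{V,W}$ is an epimorphism, two maps out of $V\otimes_A W$ agree as soon as they agree after precomposition with $\pi_{V,W}$; this is the mechanism I would use throughout.

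For the $H$-covariance (\ref{eqn:RtensorHmodA}), I would first note that $\xi_1\RA P$ and $\xi_2\RA Q$ are again right $A$-linear (as in Propositions \ref{Hom_K HAMA} and \ref{RALOR}), so the right-hand side of (\ref{eqn:RtensorHmodA}) is a well-defined quotient map. Precomposing $\xi\RA(P\otimes_\RR Q)$ with $\pi_{V,W}$ and using that the $H$-adjoint action on quotient maps is built from the $H$-actions on $V\otimes_A W$ and $\widetilde V\otimes_A\widetilde W$, the equivariance of $\pi$ lets me pull the adjoint action through to the $\bbK$-level map; equation (\ref{eqn:RtensorHmod}) of Theorem \ref{RotimesK} then rewrites it as the $\bbK$-tensor of $\xi_1\RA P$ and $\xi_2\RA Q$, whose descent again intertwines with $\pi$. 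Surjectivity of $\pi_{V,W}$ forces (\ref{eqn:RtensorHmodA}) on $V\otimes_A W$.

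Associativity (\ref{eqn:RtensorassA}) and the braided composition law (\ref{eqn:RtensorcircA}) I would treat identically. For associativity I would first invoke Proposition \ref{qc1qc2thenqc12} to see that $V\otimes_A W$ and $W\otimes_A Z$ are again quasi-commutative $\MMMod{H}{A}{A}$-modules, so that both bracketings of the triple $\RR$-tensor product are defined and land in the common threefold quotient $V\otimes_A W\otimes_A Z$, onto which $V\otimes W\otimes Z$ surjects $H$-equivariantly through either bracketing. Since (\ref{eqn:Rtensorass}) holds for the $\bbK$-maps and both sides descend compatibly with these projections, surjectivity yields (\ref{eqn:RtensorassA}). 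For (\ref{eqn:RtensorcircA}) I would use that composition is compatible with the projection (composable quotient maps compose to the descent of the composite), so that the $\bbK$-level identity (\ref{eqn:Rtensorcirc}) descends verbatim.

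The \emph{only} genuinely delicate points are bookkeeping rather than computation: matching the $H$-adjoint action on maps between the quotient modules with the adjoint action on maps between $V\otimes W$ and $\widetilde V\otimes\widetilde W$ through the equivariance of $\pi$, and, for associativity, checking that the two iterated projections from $V\otimes W\otimes Z$ coincide with the single projection onto $V\otimes_A W\otimes_A Z$ and are surjective $H$-equivariant maps. Once these are in place no calculation beyond Theorem \ref{RotimesK} is required, since every identity is simply the image under $\pi$ of one already established at the level of the $\bbK$-tensor product.
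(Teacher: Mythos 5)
Your proposal is correct and matches the paper's own proof, which likewise deduces all three identities from Theorem \ref{RotimesK} by observing that every map involved is canonically induced on the quotients along the ($H$-equivariant, right $A$-linear) projections $\pi$, with explicit computation on generating elements $v\otimes_A w$ offered only as an alternative. One small bookkeeping slip: $V\otimes_A W$ is merely an $\MMMod{H}{}{A}$-module (since $V$ is), not a quasi-commutative $\MMMod{H}{A}{A}$-module as you assert, but this is harmless because only the quasi-commutativity of $W\otimes_A Z$ (Proposition \ref{qc1qc2thenqc12}) is actually needed for both bracketings in (\ref{eqn:RtensorassA}) to be defined.
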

\begin{proof}
These properties follow immediately from Theorem \ref{RotimesK} because all maps in the present theorem are canonically
induced from the maps in Theorem \ref{RotimesK}.
Alternatively, one can repeat the calculations in the proof
of Theorem  \ref{RotimesK} using the explicit expression (\ref{eqn:Rtensorexplicit})
and acting on generating elements $v\otimes_A w\in V\otimes_A W$ or $v\otimes_A w\otimes_A z\in V\otimes_A W\otimes_A Z$.
\end{proof}


\subsection{Deformation}
We study the twist deformation of the tensor 
product over $A$ of  $\MMMod{H}{}{A}$-modules
with $\MMMod{H}{A}{A}$-modules, 
and of  right $A$-linear maps $P\otimes_\RR Q$.
Given  $\MMMod{H}{}{A}$-modules $V,\widetilde{V}$ and  $\MMMod{H}{A}{A}$-modules $W,\widetilde{W}$,  Theorem \ref{Theorem2} implies  that we have the twist deformed $\MMMod{H^\FF}{}{A_\st}$-modules
$V_\st,\widetilde{V}_\st$
and the twist deformed $\MMMod{H^\FF}{A_\st}{A_\st}$-modules $W_\st,\widetilde{W}_\st$, with $A_\st$ being the
twist deformed $\AAAlg{H^\FF}{}{}$-algebra.
  We can therefore consider 
$V_\st\otimes_{A_\st} W_\st$, that is  
the quotient  of the $\bbK$-module $V_\st\otimes_\st W_\st$ via the $\bbK$-submodule $\mathcal{N}^\star_{V_\st,W_\st}$
 generated by the elements $v\star a\otimes_\st w-v\otimes_\st a\st w$, for all 
$a\in A_\st,v\in V_\st, w\in W_\st$.
The image of $v\otimes_\st w$ under the canonical projection $\pi_\st :
V_\st\otimes_\st W_\st\to V_\st\otimes_{A_\st} W_\st$ is denoted by $v\otimes_{A_\st} w$. 
Since the $\bbK$-submodule
$\mathcal{N}^\st_{V_\st,W_\st}$ is also an $\MMMod{H^\FF}{}{A_\star}$-submodule, an
$\MMMod{H^\FF}{}{A_\star}$-module structure on 
$V_\st\otimes_{A_\st}W_\st$ is canonically induced from the one on $V_\st \otimes_\st W_\st$.
Explicitly we have, for all $\xi\in H^\FF, 
a\in A_\st,v\in V_\st, w\in W_\st$, $(v\otimes_{A_\st}w )\star a = v\otimes_{A_\st}(w\star a)$ and 
$\xi\ra_\FF (v\otimes_{A_\st}w) = (\xi_{1_\FF}\ra v)\otimes_{A_\st}(\xi_{2_\FF}\ra w)$.

\begin{Lemma}\label{lemma4}
Let $A$ be an $\AAAlg{H}{}{}$-algebra, $V$ be  an $\MMMod{H}{}{A}$-module
and $W$ be an $\MMMod{H}{A}{A}$-module.
The $\bfK$-linear map  $\varphi_{V,W}=\FF^{-1}\ra: V_\st\otimes_\st
W_\st\to(V\otimes W)_\st$ defined in (\ref{phiVWVW})
is in this case an $\MMMod{H^\FF}{}{A_\star}$-module isomorphism and induces a 
map on the quotients 
according to the following commutative diagram:
\begin{flalign}\label{eqn:lemma4}
\xymatrix{ 
V_\st\otimes_{\star} W_\st \ar[d]_-{\varphi^{}_{V,W}}\ar[rrrr]^-{\pi_\st} & & & &{V_\st}\otimes_{A_\star}{W_\st} \ar[d]^-{^{}\varphi_{{V_\st},{W_\st}}}\\
(V\otimes W)_\st \ar[rrrr]_-{\pi}& & & &({V}\otimes_A{W})_{\st}
}
\end{flalign}
Explicitly, \eqa\label{varphiUVA}
\varphi_{V_\st,W_\st}=\FF^{-1}\trgl\,: V_\st\otimes_{A_\star} W_\st &\to &\,(V\otimes_A W)_\st\nn\\
~v\otimes_{A_\st} w~~&\mapsto &\varphi_{V_\st,W_\st}(v\otimes_{A_\star} w)
=(\of^\alpha\ra v)  \otimes_A (\of_\alpha\ra w)~.
\ena
This map is an isomorphism between the $\MMMod{H^\FF}{}{A_\star}$-modules
$V_\st\otimes_{A_\st} W_\st$ and $(V\otimes_A W)_\st$.
\end{Lemma}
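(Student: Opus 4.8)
The plan is to establish Lemma \ref{lemma4} by first verifying that $\varphi_{V,W}=\FF^{-1}\trgl$ is an $\MMMod{H^\FF}{}{A_\star}$-module isomorphism (not merely an $\MMMod{H^\FF}{}{}$-module isomorphism as already shown in (\ref{phiVWVW})--(\ref{varphixivw})), then checking that it descends to the quotients, and finally confirming the induced map $\varphi_{V_\st,W_\st}$ is the claimed isomorphism. First I would verify the extra right $A_\star$-linearity of $\varphi_{V,W}$: since on $V_\star\otimes_\star W_\star$ the right action is $(v\otimes_\star w)\star a = v\otimes_\st (w\star a)$, while on $(V\otimes W)_\st$ it is the $\star$-action built from the $A$-action $(v\otimes w)\cdot a = v\otimes(w\cdot a)$, a short computation using the explicit form $\varphi_{V,W}(v\otimes_\star w)=(\of^\alpha\ra v)\otimes(\of_\alpha\ra w)$ together with $w\star a = (\of^\beta\ra w)\cdot(\of_\beta\ra a)$ and the cocycle identity (\ref{ass}) should show $\varphi_{V,W}\big((v\otimes_\star w)\star a\big) = \varphi_{V,W}(v\otimes_\star w)\star a$.

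Next I would show the diagram (\ref{eqn:lemma4}) commutes, i.e. that $\varphi_{V,W}$ maps the defining submodule of the $\star$-quotient into the defining submodule of the ordinary quotient. Concretely, $V_\st\otimes_{A_\st}W_\st$ is the quotient of $V_\st\otimes_\st W_\st$ by the $\bbK$-submodule $\mathcal{N}^\star_{V_\st,W_\st}$ generated by $v\star a\otimes_\st w - v\otimes_\st a\st w$, whereas $(V\otimes_A W)_\st$ is the $\star$-version of $V\otimes W/\mathcal{N}_{V,W}$. I would apply $\varphi_{V,W}$ to a generator $v\star a\otimes_\st w - v\otimes_\st a\st w$ and, expanding both $v\star a$ and $a\st w$ via $\FF^{-1}$ and using the cocycle property (\ref{ass}), show that the image lands in $\mathcal{N}_{V,W}$ (as a $\bbK$-submodule of $V\otimes W$, hence its image determines the $\star$-quotient). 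Since $\varphi_{V,W}$ is a $\bbK$-module isomorphism carrying $\mathcal{N}^\star_{V_\st,W_\st}$ onto $\mathcal{N}_{V,W}$, it induces the $\bbK$-module isomorphism $\varphi_{V_\st,W_\st}$ on the quotients; its explicit form (\ref{varphiUVA}) follows by tracking representatives through the projections $\pi_\st$ and $\pi$, with $v\otimes_{A_\st}w\mapsto (\of^\alpha\ra v)\otimes_A(\of_\alpha\ra w)$.

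Finally I would check that the induced map $\varphi_{V_\st,W_\st}$ intertwines the $H^\FF$-actions and the right $A_\star$-actions on $V_\st\otimes_{A_\st}W_\st$ and $(V\otimes_A W)_\st$. Both of these properties descend automatically from the corresponding properties of $\varphi_{V,W}$ on the unquotiented modules (the $H^\FF$-intertwining being exactly (\ref{varphixivw}) and the $A_\star$-intertwining being the first step above), because the projections $\pi_\st$, $\pi$ are themselves $\MMMod{H^\FF}{}{A_\star}$-module maps and the diagram (\ref{eqn:lemma4}) commutes. Invertibility is inherited from the invertibility of $\varphi_{V,W}$ via $\varphi^{-1}_{V,W}=\FF\ra$, which likewise descends to the quotients.

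The main obstacle I anticipate is the bookkeeping in showing that $\varphi_{V,W}$ sends the $\star$-relation submodule into the ordinary relation submodule. The subtlety is that $v\star a$ and $a\st w$ are each nontrivially deformed by $\FF^{-1}$, so a generator of $\mathcal{N}^\star$ does not map term-by-term to a single generator of $\mathcal{N}$; one must reassociate the nested twist factors using the $2$-cocycle identity (\ref{ass}) so that the two $\FF^{-1}$-expansions recombine correctly into an element of the form $\tilde v\cdot \tilde a\otimes\tilde w - \tilde v\otimes \tilde a\cdot\tilde w$. Getting this reassociation right — and confirming it is precisely the cocycle condition and not some stronger requirement — is the crux; everything else is a routine consequence of results already established for $\varphi_{V,W}$ in (\ref{phiVWVW})--(\ref{varphixivw}).
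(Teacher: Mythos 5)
Your proposal is correct and takes essentially the same approach as the paper's proof: right $A_\star$-linearity of $\varphi_{V,W}$ and the inclusion $\varphi_{V,W}\big(\mathcal{N}^\st_{V_\st,W_\st}\big)\subseteq \mathcal{N}_{V,W}$ are both obtained by a single application of the cocycle property (\ref{ass}), after which the explicit induced map, its module-homomorphism properties, and invertibility via $\varphi^{-1}=\FF\trgl$ descend to the quotients exactly as you describe. The reassociation you flag as the crux is indeed handled in the paper by precisely the $2$-cocycle identity and nothing stronger.
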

\begin{proof}
The map $\varphi_{V,W}:V_\st\otimes_\st W_\st \to
(V\otimes W)_\st$ was already shown to be an $\MMMod{H^\FF}{}{}$-module homomorphism in
(\ref{varphixivw}).
It is an $\MMMod{H^\FF}{}{A_\star}$-module  homomorphism because, for all $v\in V_\st$, $w\in W_\st$ and $a\in A_\st$,
\eqa
\nn\varphi_{V,W}(v\otimes_\star w\st a)&=&
\of^\alpha \ra v\otimes (\of_{\alpha_1}\of^\beta\ra w)\cdot
(\of_{\alpha_2}\of_\beta\ra a)\\
 &=&\of^{\alpha}{_1} \of^\be\ra v\otimes (\of_{\alpha_2}\of_\beta\ra w)\cdot
(\of_{\alpha}\ra a)\nn\\ &=&\varphi_{V,W}(v\otimes_\st w)\st a~,
\ena
where we used the twist cocycle property (\ref{ass}). 
It canonically induces the map $\varphi_{V_\st, W_\st}$ in
(\ref{eqn:lemma4}) 
because it maps $\mathcal{N}^\st_{V_\st,W_\st}$ 
into  $\mathcal{N}_{V,W}$, i.e.~$\varphi_{V,W}\big(\mathcal{N}^\st_{V_\st,W_\st}\big)\subseteq \mathcal{N}_{V,W}$.
Indeed, for all $v\in V$, $w\in W$ and $a\in A$,
\begin{flalign}
\nn &\varphi_{V,W}(v\star a \otimes_\star w - v\otimes_\st a\star w) \\
&\quad~
=(\of^\alpha_1\of^\beta\ra v)\cdot (\of^\alpha_2\of_\beta\ra a)\otimes \of_\alpha\ra w - \of^\alpha \ra v\otimes (\of_{\alpha_1}\of^\beta\ra a)\cdot (\of_{\alpha_2}\of_\beta\ra w) \in \mathcal{N}_{V,W}~,
\end{flalign}
because of the twist cocycle property (\ref{ass}). The induced map $\varphi_{V_\st,W_\st}$ is an
$\MMMod{H^\FF}{}{A_\star}$-module 
homomorphism because so is $\varphi_{V,W}$, and $\mathcal{N}^\st_{V_\st,W_\st}$,
$(\mathcal{N}_{V,W})_\st$ are $\MMMod{H^\FF}{}{A_\star}$-submodules. (The 
$\MMMod{H^\FF}{}{A_\star}$-module $(\mathcal{N}_{V,W})_\st$ is
obtained by applying Theorem \ref{Theorem2} 
to the $\MMMod{H}{}{A}$-module $\mathcal{N}_{V,W}$. 
The corresponding projection $D_\FF(\pi)=\pi$ is 
$H^\FF$-equivariant and  right $A_\st$-linear.)

 Finally, the map (\ref{varphiUVA}) is obviously invertible with inverse
$\varphi^{-1}_{V_\st,W_\st}:=\FF\trgl$.
\end{proof}
\begin{Remark}
If in the above lemma 
$V$ is an $\MMMod{H}{A}{A}$-module, then
the map $\varphi_{V,W} : V_\st\otimes_\st
W_\st\to(V\otimes W)_\st$ is an $\MMMod{H^\FF}{A_\star}{A_\star}$-module isomorphism.
Furthermore $V_\st\otimes_{A_\st}W_\st$ is an $\MMMod{H^\FF}{A_\st}{A_\st}$-module
with left $A_\st$-module structure explicitly given by, for all $a\in A_\st, v\in V_\st,
w\in W_\st, a\st(v\otimes_{A_\st}w)=(a\st v)\otimes_{A_\st}w$.
In this case the induced map  $\varphi_{V_\st,W_\st}: V_\st\otimes_{A_\st}
W_\st\to(V\otimes_A W)_\st$ is also an $\MMMod{H^\FF}{A_\star}{A_\star}$-module isomorphism.
\end{Remark}
\sk
Let now $A$, $W$ and $\widetilde{W}$ be also quasi-commutative. Then
the commutative diagram (\ref{eqn:promodhomdef}) induces a commutative
diagram on the corresponding quotient modules.

\begin{Theorem}\label{theo:promodhomdefA}
Let $(H,\RR)$ be a quasitriangular Hopf algebra with twist $\FF\in
H\otimes H$,  $A$ be a quasi-commutative $\AAAlg{H}{}{}$-algebra, 
$V,\widetilde{V}$ be two $ \MMMod{H}{}{A}$-modules and
$W,\widetilde{W}$ be two quasi-commutative $\MMMod{H}{A}{A}$-modules. 
Then for all $P\in \Hom_A(V,\widetilde{V})$
and $Q\in\Hom_A(W,\widetilde{W})$
the following diagram of right $A_\st$-module homomorphisms commutes:
\begin{flalign}\label{eqn:promodhomdefA}
\xymatrix{
V_\st\otimes_{A_\star} W_\st \ar[d]_-{\varphi^{}_{V_\st,W_\st}}\ar[rrrr]^-{D_\FF(P)\otimes_{\RR^\FF}D_\FF(Q)} & & & &\widetilde{V_\st}\otimes_{A_\star}\widetilde{W_\st} \ar[d]^-{^{}\varphi_{\widetilde{V_\st},\widetilde{W_\st}}}\\
(V\otimes_A W)_\st \ar[rrrr]_-{D_\FF\bigl((\of^\alpha\RA P)\otimes_\RR (\of_\alpha\RA Q)\bigr)}& & & &(\widetilde{V}\otimes_A\widetilde{W})_{\st}
}
\end{flalign}
\end{Theorem}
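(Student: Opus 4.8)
The plan is to deduce this tensor-product-over-$A$ statement from its already established module-level counterpart, Theorem \ref{theo:promodhomdef} (the commutative diagram \eqref{eqn:promodhomdef}), by passing to the quotients. The key observation is that \emph{every} arrow occurring in \eqref{eqn:promodhomdefA} is the map induced on the quotient modules by the corresponding arrow of \eqref{eqn:promodhomdef}: the top arrow $D_\FF(P)\otimes_{\RR^\FF}D_\FF(Q)$ is the descent to $V_\st\otimes_{A_\st}W_\st$ of the $\RR^\FF$-tensor product on $V_\st\otimes_\st W_\st$ (well defined by Theorem \ref{propo:RtensorAhom} and Definition \ref{defi:RtensorA} applied to the quasitriangular Hopf algebra $(H^\FF,\RR^\FF)$), the bottom arrow is the descent of $D_\FF\bigl((\of^\alpha\RA P)\otimes_\RR(\of_\alpha\RA Q)\bigr)$, and the two vertical arrows are precisely the isomorphisms $\varphi_{V_\st,W_\st}$ and $\varphi_{\widetilde{V_\st},\widetilde{W_\st}}$ produced in Lemma \ref{lemma4}.

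First I would record the three compatibilities relating the two levels, writing $\pi_\st$ and $\pi$ for the canonical projections onto the respective quotients (and using the same symbols for the projections associated with $\widetilde V,\widetilde W$, as in Lemma \ref{lemma4}). These are: (i) the intertwining relation $\varphi_{V_\st,W_\st}\circ\pi_\st=\pi\circ\varphi_{V,W}$ of Lemma \ref{lemma4} (diagram \eqref{eqn:lemma4}), together with its tilded version; (ii) the defining relation of the descended top arrow, stating that precomposing its quotient form with $\pi_\st$ equals postcomposing the $\st$-level map with the projection; and (iii) the analogous defining relation for the bottom arrow, which relies on the fact, noted in the proof of Lemma \ref{lemma4}, that $D_\FF(\pi)=\pi$ is $H^\FF$-equivariant and right $A_\st$-linear.

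The proof is then a short diagram chase exploiting surjectivity of $\pi_\st$. Precomposing the desired identity with $\pi_\st$ and using (i), (ii), (iii) together with \eqref{eqn:prodhomdef}, I would compute
\begin{align*}
\varphi_{\widetilde{V_\st},\widetilde{W_\st}}\circ\bigl(D_\FF(P)\otimes_{\RR^\FF}D_\FF(Q)\bigr)\circ\pi_\st
&=\pi\circ\varphi_{\widetilde{V},\widetilde{W}}\circ\bigl(D_\FF(P)\otimes_{\RR^\FF}D_\FF(Q)\bigr)\\
&=\pi\circ D_\FF\bigl((\of^\alpha\RA P)\otimes_\RR(\of_\alpha\RA Q)\bigr)\circ\varphi_{V,W}\\
&=D_\FF\bigl((\of^\alpha\RA P)\otimes_\RR(\of_\alpha\RA Q)\bigr)\circ\varphi_{V_\st,W_\st}\circ\pi_\st~,
\end{align*}
where the middle equality is exactly \eqref{eqn:prodhomdef}. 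Since $\pi_\st$ is surjective, the two composites out of $V_\st\otimes_\st W_\st$ agreeing forces equality of the two maps out of $V_\st\otimes_{A_\st}W_\st$, which is the commutativity \eqref{eqn:promodhomdefA}.

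The only genuine work, and hence the main obstacle, is bookkeeping rather than computation: one must verify that each arrow of \eqref{eqn:promodhomdefA} really is well defined on the quotient and is a right $A_\st$-module homomorphism, so that the descent relations (ii) and (iii) make sense. This well-definedness is guaranteed by Theorem \ref{propo:RtensorAhom} and Definition \ref{defi:RtensorA} (for the $\RR$- and $\RR^\FF$-tensor products of right $A$-linear maps) and by Theorem \ref{PDPHom} (that $D_\FF$ carries right $A$-linear maps to right $A_\st$-linear maps); quasi-commutativity enters only through these already-cited results, after which the argument is purely formal. An equivalent, slightly more computational route would bypass the chase entirely and instead apply the explicit formulas \eqref{eqn:Rtensorexplicit} and \eqref{varphiUVA} to a generating element $v\otimes_{A_\st}w$, reproducing \eqref{eqn:prodhomdef} term by term on representatives.
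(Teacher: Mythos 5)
Your proposal is correct and takes essentially the same route as the paper: the paper's proof likewise notes that the horizontal arrows are well defined by Theorems \ref{propo:RtensorAhom} and \ref{PDPHom}, that the vertical arrows are the isomorphisms of Lemma \ref{lemma4}, and that commutativity of (\ref{eqn:promodhomdefA}) follows because all four maps are canonically induced on the quotients by the maps of the already-proven diagram (\ref{eqn:promodhomdef}). Your explicit diagram chase using surjectivity of $\pi_\st$ simply spells out what the paper compresses into the statement that the maps are canonically induced.
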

\begin{proof}
Theorems \ref{propo:RtensorAhom} and \ref{PDPHom} imply that the horizontal arrows in
the diagram are well-defined right $A_\st$-module homomorphisms. Lemma
\ref{lemma4} states that the vertical arrows are $\MMMod{H^\FF}{}{A_\star}$-module isomorphisms. 
The commutativity of the diagram (\ref{eqn:promodhomdefA})  follows from
the commutativity of the diagram
(\ref{eqn:promodhomdef}) because the maps in (\ref{eqn:promodhomdefA}) 
are all canonically induced
by the maps in (\ref{eqn:promodhomdef}), (cf. (\ref{eqn:lemma4})).
\end{proof}
\begin{Remark}
Theorem
\ref{theo:promodhomdefA} can be interpreted as providing an
equivalence of categories that have a tensor product over $A$ structure.
Indeed, let us consider the category
$\big({\MMMod{H}{A}{A}}^{\mathrm{qc}},\Hom_A,\circ\big)$, where
objects are  quasi-commutative $\MMMod{H}{A}{A}$-modules, morphisms
are right $A$-linear maps and their composition is the usual
composition. We equip this category with an ``almost monoidal'' structure $\otimes_{\RR}$ given on objects
$(V,W)$ by the tensor product over $A$, $V\otimes_A W$, and on morphisms $(P,Q)$ by $P\otimes_\RR Q$.
Lemma \ref{lemma4} implies that the isomorphisms
$\varphi_{V_\star,W_\star}:V_\star\otimes_{A_\star}W_\star\to
(V\otimes_A W)_\star$ satisfy a commutative diagram as in
(\ref{eqn:higheriotast}). Then Theorem
\ref{theo:promodhomdefA} implies that
the equivalence of categories found in Theorem \ref{functortheoremR} extends, as in Remark  \ref{rem:tensorcat},
to an equivalence of the corresponding ``almost monoidal'' categories
$\big({\MMMod{H}{A}{A}}^{\mathrm{qc}},\Hom_A,\circ_\star ,{\otimes_\RR}_\star\big)$ 
and $\big({\MMMod{H^\FF}{A_\star}{A_\star}}^{\mathrm{qc}_\star},\Hom_{A_\star},\circ ,\otimes_{\RR^\FF}\big)$.
\end{Remark}


\subsection{From right to left $A$-linear homomorphisms}

Left $A$-linear homomorphisms and endomorphisms of commutative $A$-bimodules over a commutative algebra 
$A$ are automatically also right $A$-linear and vice versa. If the Hopf algebra $(H, \RR)$ is quasitriangular, 
$A$ is a quasi-commutative $\AAAlg{H}{}{}$-algebra and the $\MMMod{H}{A}{A}$-modules are strong quasi-commutative, we
similarly have an isomorphism between right and left  $A$-linear
homomorphisms and endomorphisms. This can be shown by deforming
$\AAAlg{H}{}{}$-algebras and $\MMMod{H}{A}{A}$-modules 
with the  twist $\FF=\RR$.
\begin{Lemma}\label{HAVop}
Let $(H,\RR)$ be a quasitriangular Hopf algebra with twist $\FF=\RR$,
$A$ be a quasi-commutative $\AAAlg{H}{}{}$-algebra and $V$ be a  strong
quasi-commutative $\MMMod{H}{A}{A}$-module.
Then $\UU^\RR=\UU^\cop$, $A_{\st_\RR}=A^\op$ and
$V_{\st_\RR}=V^\op$.
Here $\st_\RR$ denotes the deformation associated with the twist $\FF=\RR$.
\end{Lemma}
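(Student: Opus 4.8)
The plan is to verify each of the three claimed equalities directly from the twist-deformation formulas of Theorems \ref{Theorem1} and \ref{Theorem2} specialized to $\FF=\RR$, and to match the resulting products and actions term by term against the opposite structures supplied by Lemma \ref{ABVCop}. Since $\RR$ is a twist element of $\UU$ (a consequence of (\ref{13}), (\ref{YBE}) and (\ref{Rnorm})), all constructions of the previous sections apply verbatim with $\of^\al\otimes\of_\al$ replaced by $\oR^\al\otimes\oR_\al$, so the computations will be short.

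First I would dispose of $\UU^\RR=\UU^\cop$. By Theorem \ref{TwistedHopfAlg} the Hopf algebra $\UU^\RR$ has the same underlying algebra $(\UU,\mu)$ and counit $\epsi$ as $\UU$, and coproduct $\Delta^\RR(\xi)=\RR\,\Delta(\xi)\,\RR^{-1}$, which by quasi-cocommutativity (\ref{12}) is exactly $\Delta^\cop(\xi)$. Thus $\UU^\RR$ and $\UU^\cop$ agree as bialgebras, and since the antipode of a Hopf algebra is uniquely fixed by its bialgebra structure (and $S$ is invertible for quasitriangular $H$), their antipodes coincide, giving $S^\RR=S^{-1}$ and hence $\UU^\RR=\UU^\cop$; this was already remarked in the text following (\ref{SidR}). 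For the algebra, Theorem \ref{Theorem1} gives $a\st_\RR b=(\oR^\al\trgl a)\,(\oR_\al\trgl b)$, which by the quasi-commutativity (\ref{QCalg}) of $A$ applied with its two arguments interchanged equals $b\,a=\mu^\op(a\otimes b)$. Since the $\UU^\RR=\UU^\cop$-action on $A_{\st_\RR}$ is the unchanged $\UU$-action, which is also the action carried by $A^\op$ (Lemma \ref{ABVCop}), I obtain $A_{\st_\RR}=A^\op$ as $\AAAlg{H^\cop}{}{}$-algebras.

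Finally I would match the two $A^\op$-actions on $V$. The deformed left action (\ref{lmodst}) reads $a\st_\RR v=(\oR^\al\trgl a)\cdot(\oR_\al\trgl v)$, which is precisely $v\cdot a$ by the module quasi-commutativity (\ref{QCrightact}), i.e.~the left $A^\op$-action $a\cdot^\op v:=v\cdot a$ on $V^\op$. For the right action, (\ref{rmodst}) gives $v\st_\RR a=(\oR^\al\trgl v)\cdot(\oR_\al\trgl a)$, and I must identify this with the right $A^\op$-action $v\cdot^\op a:=a\cdot v$, i.e.~prove $a\cdot v=(\oR^\al\trgl v)\cdot(\oR_\al\trgl a)$. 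This single identity is the crux of the argument and the only place where \emph{strong} quasi-commutativity is genuinely used: it is the equivalent reformulation of (\ref{QCrightactstrong}), obtained exactly as the equivalent form of (\ref{QCrightact}) displayed in the text but now for the quasitriangular structure $\RR_{21}^{-1}$, i.e.~by substituting $v\mapsto\oR^\al\trgl v$, $a\mapsto\oR_\al\trgl a$ into (\ref{QCrightactstrong}) and simplifying with $\RR_{21}\,\RR_{21}^{-1}=1\otimes1$ and $\RR_{21}^{-1}=\oR_\al\otimes\oR^\al$. As the $\UU^\RR=\UU^\cop$-action on $V_{\st_\RR}$ is again the unchanged $\UU$-action carried by $V^\op$, all three module structures agree and $V_{\st_\RR}=V^\op$. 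I would therefore isolate the strong-quasi-commutativity reformulation as the one preliminary computation, the remaining matchings being immediate from the definitions.
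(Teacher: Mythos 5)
Your proposal is correct and takes essentially the same route as the paper's proof: $\Delta^\RR=\Delta^\cop$ by quasi-cocommutativity (\ref{12}) together with uniqueness of the antipode, the product and left-action matchings via quasi-commutativity (\ref{QCalg}), (\ref{QCrightact}), and the inverse form of strong quasi-commutativity (\ref{QCrightactstrong}) used exactly once, for the right action $v\st_\RR a=a\cdot v$. Your explicit derivation of that inverse form (substituting into (\ref{QCrightactstrong}) and simplifying with $\RR_{21}\RR_{21}^{-1}=1\otimes 1$) merely spells out a step the paper leaves implicit.
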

\begin{proof}
The bialgebras $H^\RR$ and $H^\cop$ are the same because the $\RR$-twisted
coproduct is the coopposite coproduct (cf. (\ref{12})). Uniqueness of
the antipode implies that they are the same Hopf algebra.
 
$A_{\st_{\RR_{}}}=A^\op$ as algebras (and hence as $\AAAlg{H^\cop}{}{}$-algebras) because, for all $a, \tilde a\in A$,
$a\st_\RR\tilde a=(\oR^\al\trgl a)\,(\oR_\al\trgl\tilde a)=\tilde a\,
a=\mu^\op(a\otimes \tilde a)$.

Similarly $V_{\st_{\RR_{}}}=V^\op$ as 
$A^\op$-bimodules (and hence as $\MMMod{H^\cop}{A^\op}{A^\op}$-modules)  because,  for all $a\in A, v\in V$, 
\begin{subequations}
\begin{flalign}
 a\st_{\RR_{}} v&=\oR^\al(a)\cdot \oR_\al(v)=v\cdot a=a\cdot^\op v~,\\
v\st_\RR a&=\oR^\al(v)\cdot \oR_\al(a)^{}=a\cdot v=v\cdot^\op a~,
\end{flalign}
\end{subequations}
where in the first equality we have used the quasi-commutativity condition (\ref{QCrightact}) and in the second equality 
 the inverse of the strong quasi-commutativity condition (\ref{QCrightactstrong}).  
\end{proof}
\begin{Remark}\label{rem:otherRop}
Lemma \ref{HAVop} holds true also if we use the alternative $\RR$-matrix $\RR_{21}^{-1}$ as a twist, 
i.e.~if we use $\FF=\RR_{21}^{-1}$.
\end{Remark}
\sk
\begin{Theorem}\label{theo:qclriso} 
Let $(H,\RR)$ be a quasitriangular Hopf algebra with twist $\FF=\RR$,
 $A$ be a quasi-commutative $\AAAlg{H}{}{}$-algebra and $V, W$ be two
strong quasi-commutative $\MMMod{H}{A}{A}$-modules. Then there is an isomorphism
(that with slight abuse of notation we denote)
\begin{flalign}
D_\RR ~:~\big({{\End_A(V)}_{\,\st_\RR}}\big)^\op~~~&\longrightarrow~~~~
\big({}_A\End(V)\big)^\op \nn\\
P~~~~&\longmapsto~~~~ D_\RR(P):=(\oR^\al\btrgl P)\circ \oR_\al\trgl~
\end{flalign} 
between the $\AAAlg{H}{A}{A}$-algebras 
$\big(\End_A(V), {\circ_{\star_\RR}}^\op,{\star_\RR}^\op,{\RA}\big)$
and 
$\big({}_A\End(V),\circ^\op,\cdot^\op,{\RA^\cop}\big)$.

Similarly there is an isomorphism (denoted by the same symbol)
\begin{flalign}
D_\RR ~:~{\Hom_A(V,W)}~~~&\longrightarrow~~~~
\big({}_A\Hom(V,W)\big)^\op \nn\\
P~~~~&\longmapsto ~~~~ D_\RR(P):=(\oR^\al\btrgl P)\circ
\oR_\al\trgl~\label{righttoleftiso}
\end{flalign}
between the $\MMMod{H}{A}{A}$-modules
$\big(\Hom_A(V,W),\cdot,{\RA}\big)$ and
$\big({}_A\Hom(V,W),\cdot^\op,{\RA^\cop}\big)$.
\end{Theorem}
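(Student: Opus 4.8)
The plan is to derive both isomorphisms as the special case $\FF=\RR$ of the general deformation isomorphisms already established in Theorem \ref{PDP} (for endomorphisms) and Theorem \ref{PDPHom} (for homomorphisms), and then to reinterpret the deformed objects by means of Lemma \ref{HAVop}. Recall that $\RR$ is itself a twist of $H$, with $\FF^{-1}=\RR^{-1}=\oR^\al\otimes\oR_\al$, so the general formula $D_\FF(P)=(\of^\al\btrgl P)\circ\of_\al\trgl$ specializes exactly to $D_\RR(P)=(\oR^\al\btrgl P)\circ\oR_\al\trgl$. Thus no new map is introduced, and the content of the theorem is entirely the identification of the source and target structures.

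For the endomorphism assertion I would apply Theorem \ref{PDP} with $B=A$ and twist $\FF=\RR$, which yields the $\AAAlg{H^\RR}{A_{\st_\RR}}{A_{\st_\RR}}$-algebra isomorphism $D_\RR:\End_A(V)_{\st_\RR}\to\End_{A_{\st_\RR}}(V_{\st_\RR})$. By Lemma \ref{HAVop} one has $H^\RR=H^\cop$, $A_{\st_\RR}=A^\op$ and $V_{\st_\RR}=V^\op$, so the target is the $\AAAlg{H^\cop}{A^\op}{A^\op}$-algebra $\End_{A^\op}(V^\op)$. Passing to opposite structures (Lemma \ref{ABVCop}) converts this into an isomorphism of $\AAAlg{H}{A}{A}$-algebras, and the canonical identification $\End_{A^\op}(V^\op)\simeq{}_A\End(V)$ of Proposition \ref{ABVAEnd} presents the target as $\big({}_A\End(V)\big)^\op=\big({}_A\End(V),\circ^\op,\cdot^\op,\RA^\cop\big)$, while the source becomes $\big(\End_A(V)_{\st_\RR}\big)^\op=\big(\End_A(V),{\circ_{\st_\RR}}^\op,{\st_\RR}^\op,\RA\big)$. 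It is worth noting that, while the module $\End_A(V)=\Hom_A(V,V)$ is strong quasi-commutative by Proposition \ref{Prop5.14} (so that, by Lemma \ref{HAVop} applied to it, ${\st_\RR}^\op$ coincides with the undeformed bimodule action), $\End_A(V)$ is \emph{not} quasi-commutative as an algebra (Remark \ref{EndQC}); this is precisely why the deformed composition ${\circ_{\st_\RR}}^\op$ genuinely persists and is not replaced by $\circ^\op$.

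The homomorphism assertion is handled in the same manner, using Theorem \ref{PDPHom} with $\FF=\RR$ to produce the $\MMMod{H^\RR}{A_{\st_\RR}}{A_{\st_\RR}}$-module isomorphism $D_\RR:\Hom_A(V,W)_{\st_\RR}\to\Hom_{A_{\st_\RR}}(V_{\st_\RR},W_{\st_\RR})$, identifying the target through Lemma \ref{HAVop} with $\Hom_{A^\op}(V^\op,W^\op)\simeq{}_A\Hom(V,W)$ (the homomorphism analogue of Proposition \ref{ABVAEnd}), and taking opposites to land in $\big({}_A\Hom(V,W)\big)^\op$. The additional ingredient here is that $\Hom_A(V,W)$ is itself a strong quasi-commutative $\MMMod{H}{A}{A}$-module by Proposition \ref{Prop5.14}; applying Lemma \ref{HAVop} to this very module gives $\Hom_A(V,W)_{\st_\RR}=\big(\Hom_A(V,W)\big)^\op$, so that after taking opposites the source collapses to the undeformed module $\big(\Hom_A(V,W),\cdot,\RA\big)$, exactly as in the statement. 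That both maps are isomorphisms is automatic, since passing to opposite structures leaves the underlying $\bbK$-linear maps unchanged and $D_\RR$ was already invertible in Theorems \ref{PDP} and \ref{PDPHom}.

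The only delicate point is the bookkeeping of these opposite constructions, and in particular the verification that the $H^\RR$-adjoint action produced by the deformation agrees, under $H^\RR=H^\cop$, with the $\RA^\cop$ action of Proposition \ref{ABVAEnd}. This is where I expect to spend the most care: using $\Delta^\RR=\Delta^\cop$ and $S^\RR=S^{-1}$ one finds $\xi\btrgl_\RR P=\xi_2\ra\circ P\circ S^{-1}(\xi_1)\ra=\xi\btrgl^\cop P$, which is exactly the required match. Once this identification of actions, together with the identifications of the quasi-commutative bimodule structures, is in place, both isomorphisms follow immediately from the already proven Theorems \ref{PDP} and \ref{PDPHom} with no further computation.
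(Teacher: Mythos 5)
Your proposal is correct and follows essentially the same route as the paper's own proof: specialize the deformation isomorphisms of Theorems \ref{PDP} and \ref{PDPHom} to the twist $\FF=\RR$, identify $H^\RR=H^\cop$, $A_{\st_\RR}=A^\op$, $V_{\st_\RR}=V^\op$ via Lemma \ref{HAVop} (whence $\btrgl_\RR=\btrgl^\cop$), pass through the canonical identification $\End_{A^\op}(V^\op)\simeq{}_A\End(V)$ and its homomorphism analogue, take opposites with Lemma \ref{ABVCop}, and finally apply Lemma \ref{HAVop} to the strong quasi-commutative module $\Hom_A(V,W)$ itself (Proposition \ref{Prop5.14}) so that $\big(\Hom_A(V,W)_{\st_\RR}\big)^\op$ collapses to the undeformed $\big(\Hom_A(V,W),\cdot,\RA\big)$. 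Your side remark that ${\st_\RR}^\op$ on $\End_A(V)$ already equals the undeformed bimodule action while ${\circ_{\st_\RR}}^\op$ persists because $\End_A(V)$ fails to be quasi-commutative as an algebra (Remark \ref{EndQC}) is a correct refinement consistent with the paper.
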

\begin{proof}
From  $H^\RR=H^\cop$ we have equality of the corresponding adjoint
actions $\btrgl_\RR=\btrgl^\cop$. Then 
from $A_{\st_{\RR_{}}}=A^\op$ and $V_{\st_{\RR_{}}}=V^\op$ we have
$\End_{A_{\st_\RR}}(V_{\st_\RR})=\End_{A^\op}(V^\op)$, i.e., more explicitly, 
$(\End_{A_{\st_\RR}}(V_{\st_\RR}), \circ, \cdot, \RA_\RR)=
\big(\End_{A^\op}(V^\op),\circ,\cdot,{\RA^\cop}\big)$ as $\AAAlg{H^\cop}{A^\op}{A^\op}$-algebras,
  where the $A^\op$-bimodule  structure is
the usual one obtained with the left multiplication map, that in this
case is $l^{A^\op}:A^\op\to \End_{A^\op}(V^\op)$.
Recalling the canonical isomorphism $\End_{A^\op}(V^\op)\simeq
{}_A\End(V)$ (cf.  (\ref{identityiso})) we therefore have 
$
(\End_{A_{\st_\RR}}(V_{\st_\RR}), \circ, \cdot, \RA_\RR)\simeq
\big({}_A\End(V),\circ,\cdot,{\RA^\cop}\big)
$.
Use of Lemma \ref{ABVCop} leads to the $\AAAlg{H}{A}{A}$-algebra
isomorphism 
$
\big(\End_{A_{\st_\RR}}(V_{\st_\RR})\big)^\op\simeq({}_A\End(V))^\op 
$.
The isomorphism
$D_\RR  \,:~\End_A(V)_{\,\st_\RR}
\to
\End_{A_{\st_\RR}}(V_{\st_\RR})$
of Theorem \ref{PDP} between the $\AAAlg{H^\RR}{A_{\star_\RR}}{A_{\star_\RR}}$-algebras
$\big(\End_A(V),\circ_{\star_\RR},\star_\RR,{\RA}\big)$
and
$\big(\End_{A_{\star_\RR}}(V_{\star_\RR}),\circ,\cdot,{\RA_\RR}\big)$
then induces the $\AAAlg{H}{A}{A}$-algebra isomorphism 
\begin{flalign}
D_\RR  \,:~\big(\End_A(V)_{\,\st_\RR}\big)^\op\longrightarrow
({}_A\End(V))^\op~
\end{flalign}
between 
$\big(\End_A(V),{\circ_{\star_\RR}}^\op,{\star_\RR}^\op,{\RA}\big)$
and
${\big({}_A\End(V),\circ^\op,\cdot^\op,{\RA^\cop}\big)}$.

The construction of the isomorphism $D_\RR$ for homomorphisms is
similar and leads to an $\MMMod{H}{A}{A}$-module isomorphism
\begin{flalign}
D_\RR  \,:~\big(\Hom_A(V,W)_{\,\st_\RR}\big)^\op\longrightarrow
({}_A\Hom(V,W))^\op ~.
\end{flalign}
We conclude the proof by showing that as $\MMMod{H}{A}{A}$-modules
\begin{flalign}
\big(\Hom_A(V,W)_{\,\st_\RR}\big)^\op=\Hom_A(V,W)~.
 \end{flalign}
We apply Lemma \ref{HAVop} to the strong quasi-commutative $\MMMod{H}{A}{A}$-module
$\Hom_A(V,W)$, and obtain 
$\Hom_A(V,W)_{\st_\RR}=\Hom_A(V,W)^\op$. Therefore,
$\big(\Hom_A(V,W)_{\st_\RR}\big)^\op={\Hom_A(V,W)^\op\,}^\op ={\Hom_A(V,W)}$.
\end{proof}
In Theorems  \ref{PDPHom}  and \ref{PDPLHom} we defined the
deformation maps $D_\FF$ and $D^\cop_\FF$
between  homomorphisms of
$\MMMod{H^\FF}{A_\star}{A_\star}$-modules (see also the corresponding
theorems for endomorphisms). In Theorem
\ref{theo:qclriso} with slight abuse of notation we  have still denoted
by $D_\RR$ the  deformation map that is now  between homomorphisms of
$\MMMod{H}{A}{A}$-modules (rather than
$\MMMod{H^\RR}{A_{\star_\RR}}{A_{\star_\RR}}$-modules). 
\sk
The right to left isomorphism is compatible with twist deformation.
\begin{Theorem}\label{theo:rldefcomp}
Let $(H,\RR)$ be a quasitriangular Hopf algebra with twist $\FF\in H\otimes H$,
 $A$ be a quasi-commutative $\AAAlg{H}{}{}$-algebra and $V, W$ be two
strong quasi-commutative $\MMMod{H}{A}{A}$-modules. Then the following diagram of 
$\MMMod{H^\FF}{A_\star}{A_\star}$-module isomorphisms commutes:
\begin{flalign}\label{eqn:rldefcomp}
\xymatrix{
\Hom_{A}(V,W)_\star\ar[d]_-{D_\RR} \ar[rr]^-{D_\FF} & & \Hom_{A_\star}(V_\star,W_\star)\ar[d]^-{D_{\RR^\FF}}\\
\big({}_A\Hom(V,W)\big)^\op_{~~\star} \ar[rr]^-{D_{\FF}^\cop}& & \big({}_{A_\star}\Hom(V_\star,W_\star)\big)^\op
}
\end{flalign}
\end{Theorem}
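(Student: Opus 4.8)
The plan is to exploit that all four arrows in (\ref{eqn:rldefcomp}) are $\bbK$-linear bijections of the \emph{same} underlying $\bbK$-modules, each given by the uniform conjugation formula of Theorems \ref{PDPHom}, \ref{PDPLHom} and \ref{theo:qclriso}, so that commutativity reduces to an equality of two $\bbK$-linear maps sending a homomorphism $P\colon V\to W$ to a homomorphism $V\to W$. First I would record that each arrow is a \emph{plain} deformation map attached to a single twist. $D_\FF$ and $D_\RR$ are the deformation maps of Theorems \ref{PDPHom} and \ref{theo:qclriso} for the twists $\FF$ and $\RR$ of $\UU$; by the proof of Theorem \ref{PDPL}, $D^\cop_\FF$ is the deformation map $D_{\FF^\cop}$ for the twist $\FF^\cop=\FF_{21}$ of $\UU^\cop$; and, since twist deformation preserves (strong) quasi-commutativity, $D_{\RR^\FF}$ is the right-to-left isomorphism of Theorem \ref{theo:qclriso} applied to the quasitriangular Hopf algebra $(\UU^\FF,\RR^\FF)$, i.e.~the deformation map for the twist $\RR^\FF=\FF_{21}\RR\FF^{-1}$ of $\UU^\FF$ acting on the strong quasi-commutative modules $V_\st,W_\st$ over the quasi-commutative algebra $A_\st$.

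The key step is a composition law for these maps: if $\FF$ is a twist of $\UU$ and $\mathcal{G}=g^\al\otimes g_\al$ is a twist of $\UU^\FF$ (with $\mathcal{G}^{-1}=\bar g^\al\otimes\bar g_\al$), then $\mathcal{G}\FF$ is a twist of $\UU$, one has $(\UU^\FF)^{\mathcal{G}}=\UU^{\mathcal{G}\FF}$, and $D_{\mathcal{G}}^{\UU^\FF}\circ D_\FF^{\UU}=D_{\mathcal{G}\FF}^{\UU}$. To prove this I would use the simplified form (\ref{Disinvertible}) of the deformation map, which for homomorphisms reads $D_\FF(P)=\f^\ga\trgl\circ P\circ\bigl(S(\f_\ga)\chi_\FF^{-1}\bigr)\trgl$ (the left factor acting on the target $W$, the right factor on the source $V$, so the identity is insensitive to $V\neq W$). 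Composing the analogous expression $D_{\mathcal{G}}(Q)=g^\de\trgl\circ Q\circ\bigl(S^\FF(g_\de)\chi_{\mathcal{G}}^{-1}\bigr)\trgl$ on $\UU^\FF$ with $D_\FF$ and moving $S^\FF(g_\de)$ past $\chi_\FF^{-1}$ via $S^\FF=\chi_\FF S(\cdot)\chi_\FF^{-1}$ (cf.~(\ref{2.4.3})), the two conjugations collapse into the single one attached to $\mathcal{G}\FF$, provided $\chi_{\mathcal{G}\FF}=\chi_{\mathcal{G}}\chi_\FF$; and indeed
\begin{flalign}
\chi_{\mathcal{G}}\chi_\FF=g^\be\chi_\FF S(g_\be)=g^\be\f^\al S(\f_\al)S(g_\be)=\chi_{\mathcal{G}\FF}~.
\end{flalign}

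With this in hand the two routes become two applications of the composition law. Across the top and down the right, $D_{\RR^\FF}^{\UU^\FF}\circ D_\FF^{\UU}=D_{\RR^\FF\FF}^{\UU}$, and $\RR^\FF\FF=\FF_{21}\RR\FF^{-1}\FF=\FF_{21}\RR$. Down the left and across the bottom, one twists $\UU$ first by $\RR$ (so $\UU^\RR=\UU^\cop$, cf.~(\ref{12})) and then by $\FF^\cop=\FF_{21}$, giving $D_{\FF^\cop}^{\UU^\cop}\circ D_\RR^{\UU}=D_{\FF^\cop\RR}^{\UU}=D_{\FF_{21}\RR}^{\UU}$. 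Both composites therefore equal the single deformation map $D_{\FF_{21}\RR}^{\UU}$ for the twist $\FF_{21}\RR$ of $\UU$, whose target Hopf algebra is $(\UU^\FF)^{\RR^\FF}=(\UU^\cop)^{\FF_{21}}=(\UU^\FF)^\cop=\UU^{\FF_{21}\RR}$; hence the diagram commutes.

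The main obstacle is bookkeeping rather than ideas: one must check that the two routes $\UU\to\UU^\FF\to(\UU^\FF)^\cop$ and $\UU\to\UU^\cop\to(\UU^\cop)^{\FF_{21}}$ produce literally the same source and target $\bbK$-modules, so that the asserted equality is one of maps between fixed objects. This is where the op-identifications enter: by Lemma \ref{HAVop} on $\UU$ and in its $\UU^\FF$-avatar one has $A_{\st_\RR}=A^\op$, $V_{\st_\RR}=V^\op$ and correspondingly $(A_\st)^\op$, $(V_\st)^\op$, and these collapse the op- and $\st_\RR$-decorations so that $\big({}_{A_\st}\Hom(V_\st,W_\st)\big)^\op$ is reached by both paths with the same module structure. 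A fully explicit alternative, avoiding the composition law, is to expand both composites directly into the form $X\trgl\circ P\circ Y\trgl$ and to verify the resulting identity of conjugators in $\UU\otimes\UU$ from $\RR^\FF=\FF_{21}\RR\FF^{-1}$ together with the cocycle property (\ref{ass}); this is longer but entirely mechanical.
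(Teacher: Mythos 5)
Your proposal is correct and follows essentially the same route as the paper's own proof: both reduce commutativity of the diagram to the composition law $D_{\FF^\prime\!\FF}=D_{\FF^\prime}\circ D_\FF$ together with $\RR^\FF\FF=\FF_{21}\RR=\FF^\cop\RR$, after identifying $D^\cop_\FF$ with $D_{\FF^\cop}$ on $H^\cop=H^\RR$ and $D_{\RR^\FF}$ with the Theorem \ref{theo:qclriso} map for $(H^\FF,\RR^\FF)$. The only divergence is in how the composition law is verified: the paper does it in three lines using the intertwining property $D_\FF(\xi\RA P)=\xi\RA_\FF D_\FF(P)$ of Theorem \ref{isoAAstDAA}, whereas you re-derive it from the simplified form (\ref{Disinvertible}) via the (correct) multiplicativity $\chi_{\mathcal{G}\FF}=\chi_{\mathcal{G}}\,\chi_\FF$ --- a valid, slightly longer computational variant of the same lemma.
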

\begin{proof}
$H$-equivariance of $D_\RR$  in  (\ref{righttoleftiso}) 
implies that 
$D_\RR:
\Hom_A(V,W)_\star\to \big({}_A\Hom(V,W)\big)^\op_{~~\star} $
is an $\MMMod{H^\FF}{A_\star}{A_\star}$-module  isomorphism ($D_\RR=D_\FF(D_\RR)=D^\cop_\FF(D_\RR)$).

Since all maps in the diagram are $\MMMod{H^\FF}{A_\star}{A_\star}$-module isomorphisms, its
commutativity is proven if we prove the equality 
$D_{\RR^\FF} = D_\FF^\cop \circ D_\RR\circ D_\FF^{-1} $ as
$\bbK$-linear maps.
This immediately follows from  $\RR^\FF = \FF_{21}\,\RR\,\FF^{-1}$  and
the equality
$D_{\FF^\prime\!\FF}=D_{\FF^\prime}\circ D_\FF$, where $\FF$ is a twist of $H$, $\FF'$
is a twist of $H^\FF$, and hence, as it is easily seen, $\FF^{\;\!\prime}\!\FF$ is a
twist of $H$.  We obtain, for all $P\in \Hom_A(V,W)$,
\begin{flalign}
\nn D_{\FF^\prime\!\FF}(P)&=(\of^\al\overline{\f^{\!\prime}}^\be\!\RA P)\circ
\of_\al\overline{\f^{\!\prime}}_\beta\ra=D_\FF(\overline{\f^{\!\prime}}^\be\!\RA P)\circ
\overline{\f^{\!\prime}}_\beta\ra\\
&=\big(\overline{\f^{\!\prime}}^\be\!\RA_\FF
D_\FF(P)\big)\circ\overline{\f^{\!\prime}}_\beta\ra=D_{\FF^\prime}(D_\FF(P))~,
\end{flalign}
where we  used that $D_{\FF}$ intertwines  between the $\RA$
and $\RA_\FF$ action.
\end{proof}

\begin{Example}
 Let $(H,\RR)$ be a quasitriangular Hopf algebra,  $A$ be a quasi-commutative  $\AAAlg{H}{}{}$-algebra
 and $V$ be a strong quasi-commutative $\MMMod{H}{A}{A}$-module.
 Then by Theorem \ref{theo:qclriso} there is an $\MMMod{H}{A}{A}$-module
 isomorphism between the right dual $V^\prime = \Hom_A(V,A)$
and the left dual $^\prime V = \bigl({_A}\Hom(V,A)\bigr)^\op$.
\end{Example}



\section{\label{sec:connections}Connections}
We review the notion of connections on $\MMMod{}{}{A}$-modules and $\MMMod{}{A}{}$-modules.
For $\AAAlg{H}{}{}$-algebras $A$ equipped with a suitable $H$-covariant differential
calculus and $\MMMod{H}{}{A}$-modules (and also
$\MMMod{H}{A}{}$-modules or 
$\MMMod{H}{A}{A}$-modules) we prove that
there is a bijective correspondence between connections on the undeformed and 
deformed modules.
It is an isomorphism between the undeformed and deformed affine spaces
of connections.
We then investigate the problem of constructing connections on tensor products of $\MMMod{H}{A}{A}$-modules
from connections on the individual factors. Assuming,
as  in the study of tensor product module homomorphisms, quasi-commutativity of the $\AAAlg{H}{}{}$-algebras and
$\MMMod{H}{A}{A}$-modules, 
we define a sum of arbitrary (i.e.~not necessarily $H$-equivariant) connections which yields a connection
on the tensor product module. The twist deformation of this sum is investigated in detail. 
As in the case of module homomorphisms we can use the twist $\FF=\RR$ in order to identify
right with left connections on strong quasi-commutative $\MMMod{H}{A}{A}$-modules. Finally, the construction
and twist deformation of connections on the dual module is studied and
shown to be canonical. An extension 
of connections to the tensor algebra of a module and its dual is given.

\subsection{\label{subsec:conbas}Connections on right and left modules}
We briefly review the notion of a connection on an $\MMMod{}{}{A}$-module  or $\MMMod{}{A}{}$-module, 
see e.g.~\cite{Madore:2000aq,DuViLecture} for an introduction.

\sk
\begin{Definition}
Let $A$ be an algebra. 
A {\bf differential calculus} $\bigl(\Omega^\bullet,\wedge,\dif\bigr)$
over $A$ (or an {\bf{$\mathbb{N}^0$-differential graded algebra}}) is an $\mathbb{N}^0$-graded algebra 
$\bigl(\Omega^\bullet \!= \!\bigoplus_{n\geq0}
\Omega^n,\wedge\bigr)$, where $\Omega^0=A$ has degree
zero, together with a
$\bfK$-linear map $\dif:\Omega^\bullet \to \Omega^\bullet$ of degree
one, satisfying $\dif\circ\dif=0$ and the graded Leibniz rule
\begin{flalign}
 \dif(\omega\wedge\omega^\prime) = (\dif\omega)\wedge\omega^\prime + (-1)^{\deg(\omega)}\,\omega\wedge(\dif\omega^\prime)~,
\end{flalign}
for all $\omega,\omega^\prime\in\Omega^\bullet$ with $\omega$ of
homogeneous degree. 
\end{Definition}
\sk
The differential $\dif$ and the product $\wedge$ give rise to
$\bfK$-linear maps (denoted 
by the same symbols) 
$\dif:\Omega^n\to\Omega^{n+1}$ and $\wedge:\Omega^n\otimes \Omega^m\to\Omega^{n+m}$.
Note that in the hypotheses above the $\bfK$-modules $\Omega^n$ are 
$\MMMod{}{A}{A}$-modules.
As in commutative differential geometry  we call $\Omega^n$ the module of
$n$-forms, notice however that our wedge product $\wedge$ is not
necessarily graded commutative. 
We also assume that any $1$-form $\theta\in \Omega :=\Omega^1$ can be written as
$\theta=\sum_i a_i\dif b_i$, with $a_i,b_i\in A$, i.e.~that
exact $1$-forms generate $\Omega$ as an $\MMMod{}{A}{}$-module.

\begin{Example}\label{DGAEX}
 Let $M$ be a $D$-dimensional smooth (second countable) manifold and let $A=C^\infty(M)$ be the smooth 
and complex (or real) valued functions on $M$.
The exterior algebra of differential forms $\bigl(\Omega^\bullet\!=\!\bigoplus_{n\geq 0}\Omega^n,\wedge\bigr)$ 
is an $\mathbb{N}^0$-graded algebra over $\bbC$ (or $\bbR$),
where $\Omega^0=A$ and $\Omega^{n}=0$, for all $n>D$. 
The exterior differential $\dif$ is a differential on $\bigl(\Omega^\bullet,\wedge\bigr)$,
leading to the de Rham differential calculus $\bigl(\Omega^\bullet,\wedge,\dif\bigr)$.
In this special case $\Omega^\bullet$ is graded commutative.

 Another example is given by the twist deformed differential calculus
$\bigl(\Omega^\bullet[[h]],\wedge_\star,\dif\bigr)$ \cite{Aschieri:2005zs}.
There, the algebra
$\bigl(\Omega^\bullet[[h]],\wedge_\star\bigr)$ over
$\bfK=\bbC[[h]]$ is graded quasi-commutative, i.e., for all $\omega,\omega^\prime\in
\Omega^\bullet$ of homogeneous degree,
\begin{flalign}
 \omega\wedge_\star \omega^\prime = 
(-1)^{\deg(\omega)\,\deg(\omega^\prime)} ~(\oR^\alpha\ra \omega^\prime)\wedge_\star (\oR_\alpha\ra \omega)~.
\end{flalign}
\end{Example}
\sk

\begin{Definition}\label{defi:connection}
 Let $A$ be an algebra and $\bigl(\Omega^\bullet,\wedge,\dif\bigr)$
be a differential calculus over $A$.
A {\bf connection on an $\MMMod{}{}{A}$-module} $V$ is a $\bfK$-linear map $\dd:V\to V\otimes_A\Omega$, satisfying
the right Leibniz rule, for all $v\in V$ and $a\in A$,
\begin{flalign}
\label{eqn:rightcon}
 \dd(v\cdot a) = (\dd v)\cdot a + v\otimes_A \dif a~.
\end{flalign}
Similarly, a {\bf connection on an $\MMMod{}{A}{}$-module} $V$ is a $\bfK$-linear map 
$\dd:V\to \Omega\otimes_A V$, satisfying the left Leibniz rule, for all $v\in V$ and $a\in A$,
\begin{flalign}
\label{eqn:leftcon}
\dd(a\cdot v) = a\cdot (\dd v)  + \dif a\otimes_A v~.
\end{flalign}
In case $V$ is an $\MMMod{}{A}{A}$-module we say that a $\bfK$-linear map
$\dd:V\to V\otimes_A\Omega$ is a {\bf right connection} on $V$  if (\ref{eqn:rightcon})
is satisfied. Similarly, we say that a $\bfK$-linear map $\dd: V\to \Omega\otimes_A V$
is a {\bf left connection} on $V$ if (\ref{eqn:leftcon}) is satisfied.
\end{Definition}
 \sk
 
We denote by $\Con_A(V)$ the set of all connections on an $\MMMod{}{}{A}$-module  $V$
and by ${_A}\Con(V)$ the set of all connections on an $\MMMod{}{A}{}$-module $V$.
We also denote by $\Con_A(V)$ and ${_A}\Con(V)$, respectively, the set of all right and left
connections on an $\MMMod{}{A}{A}$-module $V$.
Note that given any connection $\dd \in \Con_A(V)$ and any right $A$-linear map
$P\in\Hom_A(V,V\otimes_A\Omega)$, then $\widetilde \dd = \dd+ P \in\Con_A(V)$ is again a connection. Indeed, for all $a\in A$
and $v\in V$,
\begin{flalign}
 \widetilde \dd(v\cdot a) = \dd(v\cdot a) + P(v\cdot a) = (\dd v)\cdot a + P(v)\cdot a + v\otimes_A\dif a
= (\widetilde \dd v)\cdot a + v\otimes_A \dif a~.
\end{flalign}
The  action $\dd\mapsto \dd +P$ is free and transitive and
hence $\Con_A(V)$ is an affine space over the $\bbK$-module $\Hom_A(V,V\otimes_A\Omega)$.
Similarly, the space of left connections ${_A}\Con(V)$ is an affine space over 
the $\bbK$-module $_A\Hom(V,\Omega\otimes_A V)$. 


\subsection{Deformation of connections}
We now consider differential calculi that are also $H$-covariant
and study the twist deformation of connections on $\MMMod{H}{A}{}$-modules and
$\MMMod{H}{}{A}$-modules.

Let $H$ be a Hopf algebra and $A$ be an $\AAAlg{H}{}{}$-algebra.
Let further $\bigl(\Omega^\bullet,\wedge,\dif\bigr)$ be a {\bf left $H$-covariant differential calculus} over $A$,
i.e.,$\;\Omega^\bullet$ is an $\AAAlg{H}{}{}$-algebra,
the $H$-action $\ra$ is degree preserving and
the  differential is $H$-equivariant, for all $\xi\in H$ and $\omega\in \Omega^\bullet$,
\begin{flalign}
\label{eqn:equivar}
 \xi\ra (\dif\omega) = \dif(\xi\ra \omega)~.
\end{flalign}
Since the $H$-action is degree preserving, $\Omega^n$ are $\MMMod{H}{A}{A}$-modules, for all $n\in\mathbb{N}^0$.

We now show that a left $H$-covariant
differential calculus can be deformed to yield a left $H^\FF$-covariant differential calculus.
\begin{Proposition}\label{lem:dcdef}
Let $H$ be a Hopf algebra with twist $\FF\in H\otimes H$, $A$ be an $\AAAlg{H}{}{}$-algebra
and $\bigl(\Omega^\bullet,\wedge,\dif\bigr)$ be a left $H$-covariant differential calculus over $A$.
Then $\bigl(\Omega^\bullet,\wedge_\star,\dif\bigr)$ is a left $H^\FF$-covariant differential calculus
over the $\AAAlg{H^\FF}{}{}$-algebra $A_\star$.
\end{Proposition}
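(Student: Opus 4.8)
The plan is to reduce every clause of the definition of a left $H^\FF$-covariant differential calculus to a property already in hand. First I would invoke Theorem \ref{Theorem1} applied to the $\AAAlg{H}{}{}$-algebra $(\Omega^\bullet,\wedge)$: it produces the associative unital $\AAAlg{H^\FF}{}{}$-algebra $(\Omega^\bullet,\wedge_\star)$ with $\omega\wedge_\star\omega'=(\of^\al\ra\omega)\wedge(\of_\al\ra\omega')$ and with the $H^\FF$-action equal to the original $H$-action. Since by hypothesis this action is degree preserving, the factors $\of^\al\ra\omega$ and $\of_\al\ra\omega'$ keep the degrees of $\omega$ and $\omega'$, so $\wedge_\star$ restricts to $\Omega^n\otimes\Omega^m\to\Omega^{n+m}$ and $(\Omega^\bullet,\wedge_\star)$ is again $\mathbb{N}^0$-graded with $\Omega^0=A_\star$. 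The differential $\dif$ is literally the same $\bbK$-linear map, so it still has degree one and still satisfies $\dif\circ\dif=0$; likewise $H^\FF$-equivariance of $\dif$ is nothing but the hypothesis (\ref{eqn:equivar}), because the $H^\FF$-action coincides with the $H$-action. Thus the only two points requiring a computation are the graded Leibniz rule for $\wedge_\star$ and the generation of $\Omega$ by exact forms over $A_\star$.

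For the graded Leibniz rule I would compute directly, using the ordinary graded Leibniz rule for $\wedge$, the fact that $\of^\al\ra\,$ preserves degree, and the $H$-equivariance (\ref{eqn:equivar}) of $\dif$:
\begin{align*}
\dif(\omega\wedge_\star\omega')
&=\dif\big((\of^\al\ra\omega)\wedge(\of_\al\ra\omega')\big)\\
&=\dif(\of^\al\ra\omega)\wedge(\of_\al\ra\omega')+(-1)^{\deg(\omega)}(\of^\al\ra\omega)\wedge\dif(\of_\al\ra\omega')\\
&=(\of^\al\ra\dif\omega)\wedge(\of_\al\ra\omega')+(-1)^{\deg(\omega)}(\of^\al\ra\omega)\wedge(\of_\al\ra\dif\omega')\\
&=(\dif\omega)\wedge_\star\omega'+(-1)^{\deg(\omega)}\,\omega\wedge_\star(\dif\omega')~.
\end{align*}
The exponent is $\deg(\omega)$ precisely because $\of^\al\ra\omega$ has the same degree as $\omega$, which is where degree preservation of the action is used.

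The main obstacle, and the only genuinely new input, is showing that exact $1$-forms generate $\Omega$ as a left $A_\star$-module once we know they generate it as a left $A$-module. Here the plan is to express the undeformed left action in terms of the $\star$-action. Using the deformed module structure of Theorem \ref{Theorem2}, namely $a\star\omega=(\of^\al\ra a)\cdot(\of_\al\ra\omega)$, together with $\FF^{-1}\FF=1\otimes1$ written as $\of^\al\f^\be\otimes\of_\al\f_\be=1\otimes1$, I would verify the identity $a\cdot\omega=(\f^\be\ra a)\star(\f_\be\ra\omega)$ for all $a\in A$ and $\omega\in\Omega$, the point being that $\of^\al\ra(\f^\be\ra a)=(\of^\al\f^\be)\ra a$. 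Applying this to $\omega=\dif b$ and using the $H$-equivariance of $\dif$ gives $a\cdot\dif b=(\f^\be\ra a)\wedge_\star\dif(\f_\be\ra b)$, where I use that for a $0$-form $c$ the $\star$-action $c\star\theta$ on a $1$-form equals $c\wedge_\star\theta$. Consequently, if $\theta=\sum_i a_i\,\dif b_i$ is any undeformed presentation of a $1$-form, then $\theta=\sum_i(\f^\be\ra a_i)\wedge_\star\dif(\f_\be\ra b_i)$ exhibits $\theta$ as a left $A_\star$-combination of exact forms. This establishes the required generation property and completes the verification that $(\Omega^\bullet,\wedge_\star,\dif)$ is a left $H^\FF$-covariant differential calculus over $A_\star$.
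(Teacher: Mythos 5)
Your proof is correct and follows essentially the same route as the paper's: invoke Theorem \ref{Theorem1} to obtain the $\AAAlg{H^\FF}{}{}$-algebra $(\Omega^\bullet,\wedge_\star)$, note that the grading survives with $(\Omega^0,\wedge_\star)=A_\star$, and use $H$-equivariance of $\dif$ to conclude it remains a differential. The paper's proof is only a three-line sketch, so your explicit graded Leibniz computation and, in particular, your check that exact $1$-forms still generate $\Omega$ over $A_\star$ (via $a\cdot\dif b=(\f^\be\ra a)\wedge_\star\dif(\f_\be\ra b)$, using $\FF^{-1}\FF=1\otimes 1$) merely fill in details --- the latter being a standing assumption on differential calculi that the paper's proof silently passes over.
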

\begin{proof}
 By Theorem \ref{Theorem1} $\bigl(\Omega^\bullet,\wedge_\star\bigr)$ is an $\AAAlg{H^\FF}{}{}$-algebra.
It is $\mathbb{N}^0$-graded and we have $(\Omega^0,\wedge_\star) = A_\star$.
Due to the $H$-equivariance of the differential, $\dif$ is also a differential on $\bigl(\Omega^\bullet,\wedge_\star\bigr)$.
\end{proof}
Notice that due to $H$-equivariance of the differential $\dif$ the de Rham complex is invariant 
under twist deformation.

\subsubsection*{Right modules}
Let $V$ be an $\MMMod{H}{}{A}$-module.
Since $\Con_A(V)\subseteq \Hom_\bfK(V,V\otimes_A\Omega)$ we can act with the $H$-adjoint 
action $\RA$ on $\dd\in \Con_A(V)$, for all $\xi\in H$,
\begin{flalign}
\label{eqn:conadjoint}
 \xi\RA\dd := \xi_1\ra\,\circ \dd \circ S(\xi_2)\ra\,~.
\end{flalign}
The element $\xi\RA\dd\in \Hom_\bfK(V,V\otimes_A\Omega)$ satisfies,
for all $v\in V$ and $a\in A$,
\begin{flalign}
 \nn(\xi\RA\dd) (v\cdot a) &= \xi_1\ra\bigl(\dd(S(\xi_2)_1\ra v\cdot S(\xi_2)_2\ra a)\bigr)\\
\nn&= \xi_1\ra\Bigl(\bigl(\dd (S(\xi_3)\ra v)\bigr) \cdot (S(\xi_2)\ra a) + (S(\xi_3)\ra v)\otimes_A \dif (S(\xi_2)\ra a)\Bigr)\\
\label{eqn:Hcon}&=\bigl((\xi\RA \dd) (v)\bigr)\cdot a + \varepsilon(\xi)\,v\otimes_A \dif a~.
\end{flalign}
In particular we see that if $\varepsilon(\xi)=0$ then
$\xi\RA \dd \in\Hom_A(V,V\otimes_A \Omega)$, while if  $\varepsilon(\xi)=1$ then $\xi\RA\dd\in\Con_A(V)$.
\sk
We now show that given a twist $\FF\in H\otimes H$ of the Hopf algebra
$H$, then there is an isomorphism
$\Con_A(V)\simeq \Con_{A_\star}(V_\star)$, 
between connections on the 
undeformed $\MMMod{H}{}{A}$-module $V$ and on the deformed $\MMMod{H^\FF}{}{A_\st}$-module
$V_\star$.

We first observe that by composing any $\bbK$-linear map
$V_\star
\to (V\otimes_A \Omega)_\star$
with the $\MMMod{H^\FF}{}{A_\star}$-module isomorphism 
\begin{flalign}\label{varphiagain}
\varphi^{-1}~:~(V\otimes_A\Omega)_\star \longrightarrow V_\star\otimes_{A_\star}\Omega_\star
\end{flalign}
studied in Lemma \ref{lemma4}  (for a simpler notation we drop the module indices on $\varphi$)
 we obtain the $\MMMod{H^\FF}{}{}$-module  isomorphism (that with
abuse of notation we still denote by $\varphi^{-1}$)
\begin{flalign}
\label{isoHomtens} 
\varphi^{-1} ~:~
\Hom_\bbK(V_\star,\, (V\otimes_A\Omega)_\star) \longrightarrow
\Hom_{\bbK}(V_\star,\,V_\star\otimes_{A_\star}\Omega_\star)  ~.
\end{flalign}
Composition of the deformation map 
$D_\FF ~:~\Hom_\bbK(V, \,V\otimes_{A}\Omega)_\star\longrightarrow
\Hom_{\bbK}(V_\star,\, (V\otimes_A\Omega)_\star)$ with
 this isomorphism gives the  $\MMMod{H^\FF}{}{}$-module isomorphism
\begin{flalign}\label{DtildeFHOM}
\widetilde{D}_\FF :=\varphi^{-1}\!\circ\! D_\FF~:~
\Hom_\bbK(V, V\otimes_A \Omega)_\star\longrightarrow
\Hom_{\bbK}(V_\star, V_\star\otimes_{A_\star} \Omega_\star)~.
\end{flalign}

Next we define $\Con_A(V)_\star$ to be the same set of connections as 
$\Con_A(V)$, but with affine space structure over the $\MMMod{H^\FF}{}{}$-module
$\Hom_{A}(V, V\otimes_A\Omega)_\star$ rather than over the $\MMMod{H}{}{}$-module
$\Hom_{A}(V, V\otimes_A\Omega)$ (we recall that 
they coincide as $\bbK$-modules).

\begin{Theorem}\label{theo:condef}
Let $H$ be a Hopf algebra with twist $\FF\in H\otimes H$, $A$ be an
$\AAAlg{H}{}{}$-algebra, $V$ be an $\MMMod{H}{}{A}$-module
and $\bigl(\Omega^\bullet,\wedge,\dif\bigr)$ be a left $H$-covariant differential calculus over $A$. The $\MMMod{H^\FF}{}{}$-module isomorphism (\ref{DtildeFHOM})
restricts to the $\MMMod{H^\FF}{}{}$-module   isomorphism
\begin{flalign}
\nn \widetilde{D}_\FF~:~
\Hom_A(V, V\otimes_A \Omega)_\star&\longrightarrow
\Hom_{A_\star}(V_\star, V_\star\otimes_{A_\star} \Omega_\star)~\\
P~&\longmapsto\varphi^{-1} \circ (\of^\al\btrgl P)\circ \of_\al\ra\,~\label{tildeDFHOMA}
\end{flalign}
and to the affine space  isomorphism 
\begin{flalign}
\nn\widetilde{D}_\FF ~:~
\Con_A(V)_\star&\longrightarrow
\Con_{A_\star}(V_\star)~\\
\label{tildeDFHOMAproof}\dd~&\longmapsto\varphi^{-1} \circ (\of^\al\btrgl \dd)\circ \of_\al\ra\,~,
\end{flalign}
where
$\Con_A(V)_\star$ and $\Con_{A_\star}(V_\star)$ are
respectively affine spaces over the isomorphic $\MMMod{H^\FF}{}{}$-modules 
$\Hom_A(V, V\otimes_A \Omega)_\st$ and 
$\Hom_{A_\star}(V_\star, V_\star\otimes_{A_\star} \Omega_\star)$.
\end{Theorem}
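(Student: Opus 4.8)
The plan is to obtain the statement as a corollary of Theorem \ref{PDPHom} and Lemma \ref{lemma4}, with the only genuinely new input being the treatment of the inhomogeneous Leibniz term. For the first assertion (restriction to right $A$-linear maps), I would argue as follows. Since $V$ and $V\otimes_A\Omega$ are $\MMMod{H}{}{A}$-modules, Remark \ref{rem:hwlms} makes $\Hom_A(V,V\otimes_A\Omega)$ an $\MMMod{H}{}{}$-module under the adjoint action $\RA$, and Theorem \ref{PDPHom} supplies the $\MMMod{H^\FF}{}{}$-module isomorphism $D_\FF:\Hom_A(V,V\otimes_A\Omega)_\star\to\Hom_{A_\star}(V_\star,(V\otimes_A\Omega)_\star)$. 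By Lemma \ref{lemma4}, $\varphi^{-1}:(V\otimes_A\Omega)_\star\to V_\star\otimes_{A_\star}\Omega_\star$ is an $\MMMod{H^\FF}{}{A_\star}$-module isomorphism, so post-composition with it preserves right $A_\star$-linearity and, being $H^\FF$-equivariant, intertwines the adjoint actions. Hence $\widetilde D_\FF=\varphi^{-1}\circ D_\FF$ restricts to the isomorphism (\ref{tildeDFHOMA}), the explicit formula inherited from that of $D_\FF$.

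The substantive part is showing that $\widetilde D_\FF$ carries a connection $\dd\in\Con_A(V)$ to a map satisfying the deformed right Leibniz rule. Because $\varphi^{-1}$ is right $A_\star$-linear and $H^\FF$-equivariant, applying $\varphi$ reduces the required identity $\widetilde D_\FF(\dd)(v\st a)=\widetilde D_\FF(\dd)(v)\st a+v\otimes_{A_\star}\dif a$ to
\[
D_\FF(\dd)(v\st a)=D_\FF(\dd)(v)\st a+(\of^\al\ra v)\otimes_A\dif(\of_\al\ra a),
\]
where the last term is $\varphi(v\otimes_{A_\star}\dif a)$, computed from (\ref{varphiUVA}) and the $H$-equivariance (\ref{eqn:equivar}) of $\dif$. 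I would expand $D_\FF(\dd)=(\of^\al\btrgl\dd)\circ\of_\al\ra$, use the $H$-covariance (\ref{eqn:moduleHcovariance1}) of the right action to push $\of_\al\ra$ through $v\st a$, then invoke the twisted Leibniz identity (\ref{eqn:Hcon}) for $\xi\RA\dd$, and finally reorganize the two resulting terms with the cocycle relation (\ref{ass}) and the normalization (\ref{propF2}). The homogeneous term reproduces verbatim the right $A$-linear computation already used for Theorem \ref{PDPHom}; the inhomogeneous term survives precisely because the counit in (\ref{eqn:Hcon}) selects a single copy of $\dif$, and $H$-equivariance of $\dif$ then recasts $\dif(\of_\al\ra a)$ in the correct $\otimes_{A_\star}$-form.

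Granting well-definedness into $\Con_{A_\star}(V_\star)$, the affine-space compatibility is immediate: $\widetilde D_\FF$ is $\bbK$-linear on the ambient $\Hom_\bbK$, and by the first part it maps the difference module $\Hom_A(V,V\otimes_A\Omega)$ isomorphically onto $\Hom_{A_\star}(V_\star,V_\star\otimes_{A_\star}\Omega_\star)$, so $\widetilde D_\FF(\dd+P)=\widetilde D_\FF(\dd)+\widetilde D_\FF(P)$. Bijectivity follows by twisting back: by Remark \ref{rem:Dinv} the inverse of $\widetilde D_\FF$ is $\widetilde D_{\FF^{-1}}$, and applying the well-definedness result to the data $(H^\FF,\FF^{-1},A_\star,V_\star)$ shows it carries $\Con_{A_\star}(V_\star)$ back into $\Con_A(V)$. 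This yields the affine space isomorphism (\ref{tildeDFHOMAproof}).

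I expect the main obstacle to be the bookkeeping in the deformed Leibniz computation — specifically, splitting the coproduct of $\of^\al$ simultaneously across the adjoint action and the two tensor factors and applying the cocycle identity (\ref{ass}) in just the right place, so that the $\dif$-term is neither duplicated nor annihilated while the homogeneous part collapses to $D_\FF(\dd)(v)\st a$.
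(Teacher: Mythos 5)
Your proposal is correct and follows essentially the same route as the paper's own proof: the restriction statement via Lemma \ref{lemma4} and the $D_\FF$-isomorphism of Theorem \ref{PDPHom}, the deformed Leibniz computation reduced through $\varphi$ and carried out with (\ref{eqn:Hcon}), the cocycle property (\ref{ass}), the normalization (\ref{propF2}) and $H$-equivariance of $\dif$ (\ref{eqn:equivar}), bijectivity by twisting back with $\FF^{-1}$, and the affine statement from $\bbK$-linearity. The step you flagged as the main obstacle is exactly the computation (\ref{Dnabvsta}) in the paper, handled precisely as you describe.
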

\begin{proof}
From Lemma \ref{lemma4}  we know that the map (\ref{varphiagain})
is an $\MMMod{H^\FF}{}{A_\star}$-module  isomorphism. It then follows that the $\MMMod{H^\FF}{}{}$-module isomorphism
$\varphi^{-1}$ in  (\ref{isoHomtens})  restricts  to an $\MMMod{H^\FF}{}{}$-module isomorphism
(still denoted by $\varphi^{-1}$),
\begin{flalign}
\varphi^{-1} ~:~ 
\Hom_{{A_\star}} (V_\star,\, (V\otimes_A\Omega)_\star) \longrightarrow
\Hom_{{A_\star}} (V_\star, \,V_\star\otimes_{A_\star}\Omega_\star) ~.
\end{flalign}
Henceforth, as in the case of the isomorphism $D_\FF$ (cf. Theorem \ref{PDP}),  the isomorphism 
$\widetilde{D}{_\FF}$ in (\ref{DtildeFHOM}) restricts
to the $\MMMod{H^\FF}{}{}$-module isomorphism (still denoted $\widetilde{D}_\FF$) 
\begin{flalign}
\widetilde{D}_\FF
~:~
\Hom_{A}(V, V\otimes_A \Omega)_\star\longrightarrow
\Hom_{A_\star}(V_\star, V_\star\otimes_{A_\star} \Omega_\star)~.
\end{flalign}

The proof that  $\widetilde{D}_\FF$ in (\ref{tildeDFHOMAproof})
maps connections into connections is similar to the proof
of the right $A_\star$-linearity of the map $D_\FF(P)$, when $P$ is a right
$A$-linear map (cf.~Theorem \ref{PDP}):
Let $\dd\in\Con_A(V)_\star$, then, for all $v\in V$ and $a\in A$,
\begin{flalign}
\nn D_\FF(\dd)(v\star a) 
&= (\of^\alpha\RA\dd)\Bigl((\of_{\alpha_1}\of^\beta\ra
v)\cdot (\of_{\alpha_2}\of_\beta\ra a)\Bigr)\\
\nn &=\bigl((\of^\alpha\RA\dd)(\of_{\alpha_1}\of^\beta\ra v)\bigr) \cdot (\of_{\alpha_2}\of_\beta\ra a)
+\varepsilon(\of^\alpha)\,(\of_{\alpha_1}\of^\beta\ra v) \otimes_A \dif(\of_{\alpha_2}\of_\beta\ra a)~\\
\nn &=\bigl((\of^\alpha_1\of^\beta\RA\dd)(\of^{\alpha}_2\of_\beta\ra v)\bigr) \cdot (\of_{\alpha}\ra a)
+(\of^\beta\ra v) \otimes_A \dif(\of_\beta\ra a)\\
&= D_\FF(\dd)(v)\star a +(\of^\beta\ra v) \otimes_A (\of_\beta\ra \dif a)~.\label{Dnabvsta}
\end{flalign}
In the second line we have used (\ref{eqn:Hcon}),
in the third line the twist cocycle property (\ref{ass}) and in the last line (\ref{eqn:equivar}) and (\ref{eqn:conadjoint}).
Applying $\varphi^{-1}$ we obtain
\begin{flalign}
 \widetilde{D}_\FF(\nabla)(v\star a) = \widetilde{D}_\FF(\dd)(v)\star a +v\otimes_{A_\star} \dif a~.
\end{flalign}
The property $\widetilde{D}_\FF^{-1}(\dd_\star)\in\Con_A(V)_\star$,
for all $\dd_\star\in\Con_{A_\star}(V_\star)$, follows from twisting
back  $H^\FF$ and all its modules to the original undeformed structures
via the twist $\mathcal{F}^{-1}$.

Finally $\widetilde{D}_\FF$ is an affine space isomorphism because its
$\bbK$-linearity
implies that, for all
$\dd\in \Con_A(V)_\star$, $P\in \Hom_A(V,\,V\otimes_A\Omega)_\star$,
 $\,\widetilde{D}_\FF(\dd+P)= \widetilde{D}_\FF(\dd)+\widetilde{D}_\FF(P)\in \Con_{A_\star}(V_\star)$.
\end{proof}

If we forget the $\MMMod{H}{}{}$-module and $\MMMod{H^\FF}{}{}$-module structures,
the $\bbK$-modules $\Hom_{A}(V, V\otimes_A\Omega)$  
and $\Hom_{A}(V, V\otimes_A\Omega)_\star$  coincide, and henceforth $Con_A(V)$ and
$Con_A(V)_\star$ coincide as affine spaces.
Theorem \ref{theo:condef} then implies the 
isomorphism $\Con_A(V)\simeq \Con_{A_\star}(V_\star)$ between the
affine space of connections $\Con_A(V)$ over 
the $\bbK$-module $\Hom_{A}(V, V\otimes_A\Omega)$ 
and the affine space of connections $\Con_{A_\star}(V_\star)$ over
the $\bbK$-module $\Hom_{A_\star}(V_\star,V_\star\otimes_{A_\star}\Omega_\star)$.

\subsubsection*{Left modules}
As in Theorem \ref{theo:condef} we have an isomorphism
${}_A\Con(V)\simeq{}_{A_\star}\Con(V_\star)$ between the affine spaces
of connections on an $\MMMod{H}{A}{}$-module
$V$ and on the  deformed 
$\MMMod{H^\FF}{A_\star}{}$-module $V_\star$.
In this case we consider the $\MMMod{H^\FF}{A_\star}{}$-module isomorphism
$
\varphi^{-1}:(\Omega \otimes_A V)_\star \longrightarrow
\Omega_\star \otimes_{A_\star}V_\star
$
and its lift
$
\varphi^{-1} : \big(\Hom_\bbK(V_\star,\, (\Omega \otimes_A V)_\star)\big)^\op \longrightarrow
\big(\Hom_{\bbK}(V_\star,\,\Omega_\star \otimes_{A_\star}V_\star)  \big)^\op
$. Using also the $D_\FF^\cop$ isomorphism of Theorem \ref{PDPLHom} and
denoting by ${}_A\Con(V)_\star$ the set of connections
${}_A\Con_{A}(V)$ when seen as an affine space over the $\MMMod{H^\FF}{}{}$-module $
\big({}_A\Hom(V,V\otimes_A\Omega)\big)^\op_{~~\star} $, rather than over the $\MMMod{H}{}{}$-module  $
\big({}_A\Hom(V,V\otimes_A\Omega)\big)^\op$,
we obtain
\begin{Theorem}\label{theo:condefleft}
 Let $H$ be a Hopf algebra with twist $\FF\in H\otimes H$, $A$ be an $\AAAlg{H}{}{}$-algebra, $V$
 be an $\MMMod{H}{A}{}$-module and
$\bigl(\Omega^\bullet,\wedge,\dif\bigr)$ be a left $H$-covariant differential calculus 
over $A_{\:\!}$.
Then the $\MMMod{H^\FF}{}{}$-module isomorphism 
\begin{flalign}
\nn \widetilde{D}^\cop_\FF :=\varphi^{-1}\!\circ\! D^\cop_\FF~:~
\big(\Hom_\bbK(V, \Omega \otimes_A V)\big)^\op_{~~\star}&\longrightarrow
\big(\Hom_{\bbK}(V_\star, \Omega_\star\otimes_{A_\star} V_\star)\big)^\op~\\
P~&\longmapsto\varphi^{-1} \circ (\of_\al\btrgl^\cop P)\circ \of^\al\ra\,~
\end{flalign}
restricts to the $\MMMod{H^\FF}{}{}$-module isomorphism 
\begin{flalign}
\widetilde{D}^\cop_\FF~:~
\big({}_A\Hom(V, \Omega \otimes_A V)\big)^\op_{~~\star}\longrightarrow
\big({}_{A_\star}\Hom(V_\star, \Omega_\star \otimes_{A_\star} V_\star)\big)^\op~\label{tildeDFHOMAleft}
\end{flalign}
and to the affine space  isomorphism 
\begin{flalign}
\widetilde{D}^\cop_\FF ~:~
{}_A\Con(V)_\star\longrightarrow
{}_{A_\star}\Con(V_\star)~,
\end{flalign}
where
${}_A\Con(V)_\star$ and ${}_{A_\star}\Con(V_\star)$ are
respectively affine spaces over the isomorphic $\MMMod{H^\FF}{}{}$-modules
$\big({}_A\Hom(V, \Omega\otimes_A V)\big)^\op_{~~\star}$ and 
$\big({}_{A_\star}\Hom(V_\star, \Omega_\star\otimes_{A_\star} V_\star)\big)^\op$.
\end{Theorem}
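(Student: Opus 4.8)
The plan is to obtain this statement by the same ``mirror'' argument that produced Theorem \ref{PDPLHom} from Theorem \ref{PDPHom}, now applied to connections rather than to homomorphisms. The conceptual core is that a left connection on the $\MMMod{H}{A}{}$-module $V$ is literally the same $\bbK$-linear map as a right connection on the opposite module, so that Theorem \ref{theo:condef} can be invoked for the opposite data. Concretely, by Lemma \ref{ABVCop} the algebra $A^\op$ is an $\AAAlg{H^\cop}{}{}$-algebra and $V^\op$ is an $\MMMod{H^\cop}{}{A^\op}$-module. I would first check that $\bigl((\Omega^\bullet)^\op,\wedge^\op,\dif\bigr)$ is a left $H^\cop$-covariant differential calculus over $A^\op$: the grading, the $H$-action and the differential are unchanged, so the $H^\cop$-equivariance of $\dif$ is exactly (\ref{eqn:equivar}); the only point requiring care is the graded Leibniz rule for $\wedge^\op$, which is a routine sign check. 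Under these identifications the left Leibniz rule (\ref{eqn:leftcon}) for $\dd:V\to\Omega\otimes_A V$ becomes the right Leibniz rule (\ref{eqn:rightcon}) for $\dd:V^\op\to V^\op\otimes_{A^\op}\Omega^\op$, giving a bijection between the affine spaces ${}_A\Con(V)$ and $\Con_{A^\op}(V^\op)$.

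With this in place I would apply Theorem \ref{theo:condef} to the Hopf algebra $H^\cop$ with twist $\FF^\cop=\FF_{21}$ (a twist of $H^\cop$, as recorded in the proof of Theorem \ref{PDPL}), the algebra $A^\op$, the module $V^\op$ and the calculus $(\Omega^\bullet)^\op$, obtaining the affine-space isomorphism $\widetilde{D}_{\FF^\cop}:\Con_{A^\op}(V^\op)_{\star^\cop}\to\Con_{(A^\op)_{\star^\cop}}((V^\op)_{\star^\cop})$ together with its restriction to right $A^\op$-linear maps. I would then translate through the identities already established in the proof of Theorem \ref{PDPL}, namely $(H^\cop)^{\FF^\cop}=(H^\FF)^\cop$, $(A^\op)_{\star^\cop}=(A_\star)^\op$, $(V^\op)_{\star^\cop}=(V_\star)^\op$ and ${\RA^\cop}_{\FF^\cop}={\RA_\FF}^\cop$, supplemented by the corresponding identification of the codomain $\bigl((\Omega\otimes_A V)^\op\bigr)_{\star^\cop}=\bigl((\Omega\otimes_A V)_\star\bigr)^\op$ and the matching of the two $\varphi^{-1}$ maps of Lemma \ref{lemma4}. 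Taking opposites of the right-module isomorphism then reproduces exactly $\widetilde{D}^\cop_\FF=\varphi^{-1}\circ D^\cop_\FF$, with $D^\cop_\FF$ the map of Theorem \ref{PDPLHom}; the restriction from $\bbK$-linear maps to left $A_\star$-linear maps, and thence to left connections, is the opposite of the corresponding restriction in Theorem \ref{theo:condef}, and the affine structure survives because $\widetilde{D}^\cop_\FF$ is $\bbK$-linear.

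As a cross-check, and because it is the genuinely new content hidden inside the reduction, I would also verify the connection property directly, following the proof of Theorem \ref{theo:condef}. The key lemma is the left-handed analogue of (\ref{eqn:Hcon}): writing the adjoint action as $\xi\RA^\cop\dd=\xi_2\ra\,\circ\dd\circ S^{-1}(\xi_1)\ra\,$, one shows
\[
(\xi\RA^\cop\dd)(a\cdot v)=a\cdot\bigl((\xi\RA^\cop\dd)(v)\bigr)+\varepsilon(\xi)\,\dif a\otimes_A v~,
\]
using (\ref{eqn:leftcon}) and (\ref{eqn:equivar}). Substituting this into $D^\cop_\FF(\dd)=(\of_\al\btrgl^\cop\dd)\circ\of^\al\ra$ and invoking the cocycle identity (\ref{ass}) together with the $H$-equivariance of $\dif$ yields the deformed left Leibniz rule; postcomposing with the isomorphism $\varphi^{-1}:(\Omega\otimes_A V)_\star\to\Omega_\star\otimes_{A_\star}V_\star$ of Lemma \ref{lemma4} then lands in ${}_{A_\star}\Con(V_\star)$. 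The main obstacle I anticipate is not computational but organizational: keeping the several layers of $\op$, $\cop$ and the transposed twist $\FF^\cop=\FF_{21}$ mutually consistent, and confirming that the induced map on the opposite tensor-product modules is precisely $\varphi^{-1}\circ(\of_\al\btrgl^\cop P)\circ\of^\al\ra$ rather than a variant with the Sweedler indices of $\FF$ swapped. This is the step where a stray $1\leftrightarrow2$ transposition is most likely to slip in, so I would confirm the explicit formula by direct substitution at the end.
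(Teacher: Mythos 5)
Your proposal is correct and follows essentially the same route as the paper: Theorem \ref{theo:condefleft} is obtained there precisely as the mirror of Theorem \ref{theo:condef}, by combining the $D^\cop_\FF$ isomorphism of Theorem \ref{PDPLHom} (itself established through the op/cop reduction with transposed twist $\FF^\cop=\FF_{21}$ that you invoke, cf.~the proof of Theorem \ref{PDPL}) with the $\varphi^{-1}$ isomorphism of Lemma \ref{lemma4}, and your direct cross-check via the left-handed analogue of (\ref{eqn:Hcon}) is exactly the computation that the paper's phrase ``as in Theorem \ref{theo:condef}'' points to. Your two anticipated trouble spots are in fact harmless: only $\Omega^1$ and $\dif$ restricted to $A$ enter Definition \ref{defi:connection}, so the Leibniz rule transfers to $(A^\op,\Omega^\op)$ verbatim and the Koszul-sign issue in the opposite of $\Omega^\bullet$ only concerns degrees irrelevant here, while the cocycle identity (\ref{ass}) arranges the Sweedler legs so that the induced map is precisely $\varphi^{-1}\circ(\of_\al\btrgl^\cop P)\circ\of^\al\ra$, with no stray $1\leftrightarrow 2$ transposition.
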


\subsubsection*{Bimodules}
Let $V$ be an $\MMMod{H}{A}{A}$-module, then $\Hom_A(V,V\otimes_A\Omega)$ and
$\big({}_A\Hom(V,\Omega\otimes_A V)\big)^\op$ are $\MMMod{H}{A}{A}$-modules.
Consequently, $\Con_A(V)_\star$, $\Con_{A_\star}(V_\star)$, ${}_A\Con(V)_\star$ and ${}_{A_\star}\Con(V_\star)$
are affine spaces over the $\MMMod{H^\FF}{A_\star}{A_\star}$-modules
$\Hom_A(V,V\otimes_A\Omega)_\star$, $\Hom_{A_\star}(V_\star,V_\star\otimes_{A_\star}\Omega_\star)$,
$\big({}_A\Hom(V,\Omega\otimes_A V)\big)^\op_{~~\star}$ 
and $\big({}_{A_\star}\Hom(V_\star,\Omega_\star\otimes_{A_\star} V_\star)\big)^\op$, respectively.
The isomorphisms (\ref{tildeDFHOMA}) and (\ref{tildeDFHOMAleft}) are $\MMMod{H^\FF}{A_\star}{A_\star}$-module
isomorphisms, and hence $\widetilde{D}_\FF:\Con_A(V)_\star\to \Con_{A_\star}(V_\star)$ (cf.~Theorem \ref{theo:condef})
 and $\widetilde{D}_\FF^\cop:{}_A\Con(V)_\star\to {}_{A_\star}\Con(V_\star)$
 (cf.~Theorem \ref{theo:condefleft}) are affine space isomorphisms compatible with these extra 
 $\MMMod{H^\FF}{A_\star}{A_\star}$-module structures.


\subsection{Connections on tensor product modules (sum of connections)}
We now investigate the extension of connections to tensor product
modules. We mention that
this is relevant in the formulation of noncommutative gravity and gauge
theories, since it provides a construction principle for connections
on deformed tensor fields in terms of a fundamental connection on 
deformed vector fields, and a minimal coupling prescription for
noncommutative fields with different charges.

The other ingredient in order to construct connections on arbitrary
covariant and contravariant tensor fields is the extension of a
connection on a module to the dual module. 
This will be discussed later in Subsection \ref{sec:leftrightcon}.

\sk
Let $(H,\RR)$ be a quasitriangular Hopf algebra.
A  left $H$-covariant differential calculus 
$\bigl(\Omega^\bullet,\wedge,\dif\bigr)$  over an $\AAAlg{H}{}{}$-algebra $A$ is called 
{\bf graded quasi-commutative} if the $\AAAlg{H}{}{}$-algebra $\Omega^\bullet$ is
graded quasi-commutative, i.e., for all $\omega,\omega^\prime\in \Omega^\bullet$ of
homogeneous degree,
\eq\label{gradedqc}
\omega\wedge\omega^\prime=(-1)^{\deg(\omega)\deg(\omega^\prime)} (\oR^\al\ra\omega^\prime)\wedge
(\oR_\al\ra \omega)~.
\en
Notice that graded quasi-commutativity of the algebra
$(\Omega^\bullet,\wedge)$ in particular implies quasi-commutativity of the $\AAAlg{H}{}{}$-algebra $A$ and 
strong quasi-commutativity of the $\MMMod{H}{A}{A}$-module
of one-forms $\Omega$ (and also of all other $\MMMod{H}{A}{A}$-modules of $n$-forms $\Omega^n$, $n\geq 1$). 

Vice versa, it can be  shown that a left $H$-covariant differential calculus 
$\bigl(\Omega^\bullet,\wedge,\dif\bigr)$ over a quasi-commutative
$\AAAlg{H}{}{}$-algebra $A$ is graded quasi-commutative if the $\MMMod{H}{A}{A}$-module of one-forms $\Omega$ is 
quasi-commutative and generates $\Omega^n$ for all $n> 1$.\footnote{ 
Hint: 1) Show that $\Omega$ is strong quasi-commutative. 
2) Show that (\ref{gradedqc}) holds for $\omega=\dif a$ and $\omega^\prime=\dif b$ with
$a,b\in A$, then extend the result to arbitrary $\omega,\omega^\prime\in \Omega$.
3) Show that if (\ref{gradedqc})  holds for $\omega,\omega^\prime\in\Omega^\bullet$ with 
$\deg(\omega),\deg(\omega^\prime)\leq n$, then it also holds for 
$\omega,\omega^\prime\in\Omega^\bullet$ with 
$\deg(\omega),\deg(\omega^\prime)\leq n+1$.
}
 \sk
Examples of  graded quasi-commutative left $H$-covariant differential calculi
are presented in  Example \ref{DGAEX}.
\begin{Lemma}\label{nablaaw}
Let $A$ be a quasi-commutative $\AAAlg{H}{}{}$-algebra, $W$ be a quasi-commutative $\MMMod{H}{A}{A}$-module and 
$\Omega$ be a strong quasi-commutative $\MMMod{H}{A}{A}$-module. 
Then the braiding map  $\tau_\RR: W\otimes\Omega\rightarrow
\Omega\otimes W$ 
(defined in (\ref{eqn:Rflipmap})) canonically induces an $\MMMod{H}{A}{A}$-module
isomorphism
on the quotient (that with slight abuse of notation we still call $\tau_\RR$),
\begin{flalign}
\tau_\RR: W\otimes_A\Omega\longrightarrow \Omega\otimes_A W ~.
\end{flalign}
Explicitly, for all $w\in W$ and $\theta\in \Omega$,
\begin{flalign}\label{tauRAexplicit}
\tau_\RR(w\otimes_A\theta) =(\oR^\alpha\ra \theta) \otimes_A (\oR_\alpha\ra w)~.
\end{flalign}

We call the isomorphism $\tau_\RR$ a braiding map because it braids
the strong quasi-commutative $\MMMod{H}{A}{A}$-module
$\Omega$ with the quasi-commutative $\MMMod{H}{A}{A}$-modules
$V,W$,
\eq\label{hexagonA1}
\tau_{\RR\;V\otimes_A W,\Omega}=(\tau_{\RR\;V,\Omega}\otimes_\RR \id_W)\circ 
(\id_V\otimes_\RR \tau_{\RR\; W,\Omega})~.
\en
Furthermore, 
\eq\label{hexagonA2}
\tau_{\RR\;V,W\otimes_A \Omega}=(\id_W\otimes_\RR \tau_{\RR\;V,\Omega})\circ (\tau_{\RR\; V,W}\otimes_\RR\id_\Omega)~,
\en
if also the $\MMMod{H}{A}{A}$-module $W$ is strong quasi-commutative.
\end{Lemma}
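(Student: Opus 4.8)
The plan is to prove the statement in three stages: well-definedness of the induced map together with the explicit formula (\ref{tauRAexplicit}), the $\MMMod{H}{A}{A}$-module isomorphism property, and finally the two braid relations (\ref{hexagonA1}) and (\ref{hexagonA2}).

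First I would show that the braiding $\tau_{\RR\;W,\Omega}:W\otimes\Omega\to\Omega\otimes W$ of (\ref{eqn:Rflipmap}) descends to the quotients. Recall that $W\otimes_A\Omega = W\otimes\Omega/\mathcal{N}_{W,\Omega}$ and $\Omega\otimes_A W = \Omega\otimes W/\mathcal{N}_{\Omega,W}$, with $\mathcal{N}_{W,\Omega}$ generated by the elements $w\cdot a\otimes\theta - w\otimes a\cdot\theta$ and $\mathcal{N}_{\Omega,W}$ by $\theta\cdot a\otimes w - \theta\otimes a\cdot w$. Since $\tau_\RR$ and its inverse are $\MMMod{H}{}{}$-module isomorphisms and $\mathcal{N}_{W,\Omega}$, $\mathcal{N}_{\Omega,W}$ are $\MMMod{H}{}{}$-submodules, it suffices to verify the containment $\tau_\RR(\mathcal{N}_{W,\Omega})\subseteq\mathcal{N}_{\Omega,W}$; the induced map is then automatically an $\MMMod{H}{}{}$-module isomorphism with inverse induced by $\tau_\RR^{-1}$, and the explicit formula (\ref{tauRAexplicit}) is immediate from the definition of $\tau_\RR$ composed with the projection. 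The containment is the technical core: I would compute $\tau_\RR(w\cdot a\otimes\theta)$ and $\tau_\RR(w\otimes a\cdot\theta)$ separately, distributing the $\RR$-matrix action across each product by module $H$-covariance (\ref{eqn:moduleHcovariance}), reorganising the coproducts of the $\oR$-legs via (\ref{Ral1Ral21first}) and (\ref{Ral1Ral21second}), and then using quasi-commutativity (\ref{QCrightact}) of $W$ (to turn the right $W$-action into a left one in the first expression) and strong quasi-commutativity (\ref{QCrightactstrong}) of $\Omega$ (to turn the left $\Omega$-action into a right one in the second) so that both representatives agree modulo $\mathcal{N}_{\Omega,W}$. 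The Yang--Baxter relation, encoded in these coproduct identities together with strong quasi-commutativity, is precisely what forces the two expressions to coincide.

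Next I would check that the descended map is $A$-linear, hence an $\MMMod{H}{A}{A}$-module isomorphism; $H$-equivariance is inherited from $\tau_\RR$. For right $A$-linearity one starts from $\tau_\RR\big((w\otimes_A\theta)\cdot a\big)=(\oR^\al\ra(\theta\cdot a))\otimes_A(\oR_\al\ra w)$, distributes by (\ref{eqn:moduleHcovariance}) and (\ref{Ral1Ral21first}), moves the algebra element across the $\otimes_A$, and uses quasi-commutativity of $W$ to land on $(\oR^\al\ra\theta)\otimes_A(\oR_\al\ra w)\cdot a=\tau_\RR(w\otimes_A\theta)\cdot a$. Left $A$-linearity is dual and uses the $\oR$-form of strong quasi-commutativity of $\Omega$. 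This computation is essentially a restatement of the well-definedness argument, so I would present the two together.

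Finally, the braid relations (\ref{hexagonA1}) and (\ref{hexagonA2}) follow by descending the module-level identities (\ref{hexagon}) to the tensor products over $A$. All maps on the right-hand sides are well defined on the $\otimes_A$ quotients: the factors $\tau_{\RR\;V,\Omega}$ and $\tau_{\RR\;W,\Omega}$ descend by the first part of this lemma, while the $\RR$-tensor products $\otimes_\RR$ descend by Theorem \ref{propo:RtensorAhom} and Definition \ref{defi:RtensorA}. Since (\ref{hexagon}) holds on $V\otimes W\otimes\Omega$ and the canonical projections intertwine the module-level and quotient-level maps, (\ref{hexagonA1}) holds on $V\otimes_A W\otimes_A\Omega$. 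For (\ref{hexagonA2}) the map $\tau_{\RR\;V,W}:V\otimes_A W\to W\otimes_A V$ must itself descend, and by the first part of the lemma this requires the second factor $W$ to be strong quasi-commutative, which is exactly the extra hypothesis imposed. I expect the well-definedness computation of the first stage to be the main obstacle, as it is the only step where the full strength of the quasi-commutativity hypotheses and the $\RR$-matrix identities must be combined; the remaining steps are formal once it is in place.
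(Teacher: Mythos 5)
Your proposal is correct and follows essentially the same route as the paper's proof: descent of $\tau_\RR$ to the quotients via the containment $\tau_\RR(\mathcal{N}_{W,\Omega})\subseteq\mathcal{N}_{\Omega,W}$ (which, as you note, combines quasi-commutativity of $W$, strong quasi-commutativity of $\Omega$ and the Yang--Baxter/coproduct identities for $\RR^{-1}$), the explicit formula (\ref{tauRAexplicit}), $\MMMod{H}{A}{A}$-linearity of the induced map, and the braid relations obtained by descending (\ref{hexagon}), using $H$-equivariance of all the maps involved so that $\otimes_\RR$ may be replaced by $\otimes$. One point to tighten: the inverse is not ``automatic'' from that single containment --- since $\tau_\RR(\mathcal{N}_{W,\Omega})\subseteq\mathcal{N}_{\Omega,W}$ alone does not give injectivity of the induced map, you must separately verify, as the paper does by ``a similar argument'', that $\tau_\RR^{-1}$ maps $\mathcal{N}_{\Omega,W}$ into $\mathcal{N}_{W,\Omega}$ (the symmetric computation, using the $\RR_{21}$-forms $a\cdot w=(\R_\al\ra w)\cdot(\R^\al\ra a)$ and $\theta\cdot a=(\R_\al\ra a)\cdot(\R^\al\ra\theta)$ of the quasi-commutativity conditions), after which the two induced maps compose to the identities.
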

\begin{proof} 
Since  $\tau_\RR:W\otimes \Omega \to\Omega\otimes W$ satisfies
$\tau_\RR\big(\mathcal{N}_{W,\Omega}\big)\subseteq
\mathcal{N}_{\Omega,W}$ the induced map 
$\tau_\RR:W\otimes_A \Omega
\to\Omega\otimes_A W$
is well-defined.
Since  $\tau_\RR:W\otimes \Omega \to\Omega\otimes W$  is an $\MMMod{H}{}{}$-module
 homomorphism and
$\mathcal{N}_{W,\Omega}\subset W\otimes \Omega$,
$\mathcal{N}_{\Omega,W}\subset \Omega\otimes W$ are  $\MMMod{H}{}{}$-submodules,
also  the induced map $\tau_\RR:W\otimes_A \Omega
\to\Omega\otimes_A W$ is an  $\MMMod{H}{}{}$-module homomorphism.
It is an $\MMMod{H}{A}{A}$-module homomorphism as it can be easily checked 
 using (\ref{tauRAexplicit}),  quasi-commutativity of $W$ and strong quasi-commutativity of $\Omega$.

Consider now the map
$
\tau_\RR^{-1} : \Omega\otimes_A W \rightarrow W\otimes_A \Omega$,
which is canonically induced by $\tau_\RR^{-1}: \Omega \otimes W \to W\otimes \Omega$.
Explicitly, for all $w\in W$ and $\theta\in \Omega$,
$\tau_\RR^{-1}(\theta\otimes_A w)=(\R_\alpha\ra w)\otimes_A (\R^\alpha\ra \theta)$.
With a similar argument as above, one shows that $\tau_\RR^{-1}$  is a well-defined $\MMMod{H}{A}{A}$-module
homomorphism.
The easily proven equalities $\tau_\RR\circ \tau_\RR^{-1}=\id_{\Omega\otimes_AW}$ and
  $\tau_\RR^{-1}\circ\tau_\RR=\id_{W\otimes_A\Omega}$ show that
  $\tau_\RR:W\otimes_A\Omega\rightarrow \Omega\otimes_A W$ is an
 $\MMMod{H}{A}{A}$-module isomorphism.

The braiding properties (\ref{hexagonA1}) and (\ref{hexagonA2}) can be
written with the $\RR$-tensor product $\otimes_\RR$ replaced by the
usual tensor product because all maps are $H$-equivariant.
Then these properties follow from the $\RR$-matrix properties (\ref{13}).
\end{proof}

\begin{Proposition}\label{twistedleft connection}
Let $(H,\RR)$ be a quasitriangular Hopf algebra,
$A$ be a quasi-commutative $\AAAlg{H}{}{}$-algebra,  $W$ be a quasi-commutative $\MMMod{H}{A}{A}$-module and
 $\bigl(\Omega^\bullet,\wedge,\dif\bigr)$ be a graded
 quasi-commutative left $H$-covariant differential calculus over $A$. 
Then any right connection $\dd\in\Con_A(V)$ is also a quasi-left connection in the sense that
 we have the braided left Leibniz rule, for all $a\in A$ and $w\in W$, 
\eq
\nabla(a\cdot w)=(\oR^\al\ra a)\cdot (\oR_\al\RA
\nabla)(w)+(\R_\al\ra w)\otimes_A(\R^\al\ra \dif a)~.
\en

 \noi More generally, for all $\xi\in H$, $a\in A$ and $w\in W$,
\eq
(\xi\RA \nabla)(a\cdot w)=(\oR^\al\ra a)\cdot (\oR_\al\xi\RA\nabla)(w)+\epsi(\xi)(\R_\al\ra w)\otimes_A(\R^\al\ra \dif a)~.
\en
\end{Proposition}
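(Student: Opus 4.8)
The plan is to prove the more general formula first, since the stated braided left Leibniz rule is just the special case $\xi = 1$ (where $\varepsilon(1)=1$ and the adjoint action $1 \RA \nabla = \nabla$). So I would focus on establishing, for all $\xi \in H$, $a\in A$ and $w\in W$,
\eq\label{plangen}
(\xi\RA \nabla)(a\cdot w)=(\oR^\al\ra a)\cdot (\oR_\al\xi\RA\nabla)(w)+\epsi(\xi)(\R_\al\ra w)\otimes_A(\R^\al\ra \dif a)~.
\en
The natural starting point is the definition $(\xi\RA\nabla) = \xi_1\ra\,\circ\,\nabla\,\circ\,S(\xi_2)\ra$ from (\ref{eqn:conadjoint}), applied to $a\cdot w$. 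First I would push the $S(\xi_2)\ra$ through the left action using the module $H$-covariance (\ref{eqn:moduleHcovariance2}), writing $S(\xi_2)\ra(a\cdot w) = (S(\xi_2)_1\ra a)\cdot(S(\xi_2)_2\ra w)$, and then use the antipode coproduct identity $S(\xi)_1\otimes S(\xi)_2 = S(\xi_2)\otimes S(\xi_1)$ to re-express this as $(S(\xi_3)\ra a)\cdot(S(\xi_2)\ra w)$.

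Next I would apply the right Leibniz rule (\ref{eqn:rightcon}) of $\nabla$ — but here is the key point where quasi-commutativity enters: $\nabla$ satisfies the \emph{right} Leibniz rule, so to differentiate a \emph{left} product $a\cdot w$ I must first commute $a$ past $w$. This is exactly where strong quasi-commutativity of the module $W$ (equivalently of $\Omega$; recall that for a graded quasi-commutative calculus $\Omega$ and all $\Omega^n$ are strong quasi-commutative) is needed. The cleanest route is to rewrite $(S(\xi_3)\ra a)\cdot(S(\xi_2)\ra w)$ using the quasi-commutativity condition (\ref{QCrightact}) as a right product $(\,\cdot\,)\cdot(\,\cdot\,)$, apply the right Leibniz rule to get a $\nabla(\cdots)\cdot(\cdots)$ term plus a $(\cdots)\otimes_A \dif(\cdots)$ term, then reassemble with $\xi_1\ra$ on the outside, using the $H$-equivariance (\ref{eqn:moduleHcovariance}) of the right action, the $H$-equivariance of $\dif$ (\ref{eqn:equivar}), and the $\RR$-matrix relations (\ref{12})–(\ref{13}), (\ref{Ral1Ral21first})–(\ref{Ral1Ral21second}) to move the factors into the final form. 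The $\varepsilon(\xi)$ prefactor on the differential term arises because the $\dif a$-contribution carries no surviving legs of $\xi$ after the Hopf-algebra cancellation $\xi_1 S(\xi_2)=\varepsilon(\xi)1$, whereas the $\nabla$-contribution retains a factor of $\xi$ that gets absorbed into the combined adjoint action $\oR_\al\xi\RA\nabla$.

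The main obstacle I anticipate is the bookkeeping of the $\RR$-matrix and Sweedler indices: several instances of $\RR$, $\RR^{-1}$ and their co-opposite versions ($R_\al\otimes R^\al$ vs.\ $\oR^\al\otimes\oR_\al$) will appear, and one must carefully track which quasi-commutativity relation (the $\oR$-version (\ref{QCrightact}) or the $\R$-version (\ref{QCrightactstrong})) is invoked at each step — this is precisely why \emph{strong} quasi-commutativity, i.e.\ compatibility with both $\RR$ and $\RR_{21}^{-1}$, is required rather than plain quasi-commutativity. I would organize the computation as a single chain of equalities in an \texttt{eqnarray} or \texttt{align}, flagging at each line which property (module covariance, Leibniz, quasi-commutativity, or which $\RR$-matrix identity) is used, and verify at the end that setting $\xi=1$ collapses the general identity to the stated braided left Leibniz rule. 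A useful sanity check is the undeformed/cocommutative case $\RR=1\otimes 1$, where the formula must reduce to the ordinary left Leibniz rule $\nabla(a\cdot w)=a\cdot\nabla(w)+w\otimes_A\dif a$ (up to the trivial flip), confirming the placement of the braiding factors.
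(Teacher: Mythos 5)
There are two genuine problems in your plan, one about hypotheses and one about the mechanics of your ``general $\xi$ first'' computation. First, your claim that \emph{strong} quasi-commutativity of $W$ is what is needed (and your parenthetical that this is ``equivalently'' strong quasi-commutativity of $\Omega$) is incorrect: the proposition assumes only plain quasi-commutativity of $W$, and that suffices. The rewriting you need, $a\cdot w=(\R_\al\ra w)\cdot(\R^\al\ra a)$, is \emph{equivalent} to the plain condition (\ref{QCrightact}) (multiply by $\RR^{-1}\RR=1\otimes 1$; the paper states this equivalence explicitly just before the definition of strong quasi-commutativity), whereas the strong condition (\ref{QCrightactstrong}) is a different identity, with $\RR_{21}^{-1}$ in place of $\RR$, that never enters here. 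The paper's proof of the $\xi=1$ case is exactly three moves: rewrite $a\cdot w$ via this equivalent form of plain quasi-commutativity, apply the right Leibniz rule (\ref{eqn:rightcon}), then use plain quasi-commutativity of the tensor product module $W\otimes_A\Omega$ (Proposition \ref{qc1qc2thenqc12}) to bring $\R^\al\ra a$ back to the left, and finally $(\id\otimes S)\RR^{-1}=\RR$ from (\ref{SidR}) together with (\ref{Ral1Ral21second}) to assemble $\oR_\be\RA\nabla$. Strong quasi-commutativity genuinely matters elsewhere in the paper (e.g.~Theorem \ref{theo:qclriso} and the right-to-left constructions), but not in this proposition; your conflation of $W$ with $\Omega$ here would wrongly strengthen the hypotheses.

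Second, your proposed order of operations --- unpack $(\xi\RA\nabla)=\xi_1\ra\circ\nabla\circ S(\xi_2)\ra$ first, push $S(\xi_2)$ through $a\cdot w$, then braid and apply the Leibniz rule --- does not produce the $\epsi(\xi)$ factor by the mechanism you describe. After your steps the $\dif a$-term carries the legs
\begin{equation*}
\xi_1\R_\al S(\xi_4)\otimes \xi_2\R^\al S(\xi_3)=\Delta(\xi_1)\,\RR_{21}\,\Delta(S(\xi_2))=\RR_{21}\bigl(\xi_2 S(\xi_4)\otimes \xi_1 S(\xi_3)\bigr)~,
\end{equation*}
using $\Delta(\eta)\,\RR_{21}=\RR_{21}\,\Delta^{\cop}(\eta)$; and the crossed element $\xi_2 S(\xi_4)\otimes \xi_1 S(\xi_3)$ is \emph{not} $\epsi(\xi)\,1\otimes 1$ in a non-cocommutative Hopf algebra (the antipode axiom $\xi_1 S(\xi_2)=\epsi(\xi)1$ only collapses adjacent Sweedler legs, and quasitriangularity leaves them crossed). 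So the $\epsi(\xi)$ cannot arise termwise as you assert; it can only emerge after recombination with cross-terms from the $\nabla$-part, which your sketch does not address --- following your plan literally, the computation would not close. The correct and much cleaner route is the reverse of yours: prove the $\xi=1$ case directly as above, and then obtain the general identity by running the \emph{same} three-line computation with $\nabla$ replaced by $\xi\RA\nabla$, whose modified right Leibniz rule (\ref{eqn:Hcon}), namely $(\xi\RA\nabla)(v\cdot a)=\bigl((\xi\RA\nabla)(v)\bigr)\cdot a+\epsi(\xi)\,v\otimes_A\dif a$, already carries the $\epsi(\xi)$ prefactor; the first term then reassembles via $\oR_\be\RA(\xi\RA\nabla)=(\oR_\be\,\xi)\RA\nabla$. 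Your sanity check ($\RR=1\otimes 1$ reduces everything to the ordinary left Leibniz rule) is fine, but the core of your derivation needs to be reorganized along these lines.
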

\begin{proof} For all $a\in A$ and $w\in W$,
\eqa
\nabla(a\cdot w)&=&\nabla\big((\R_\al\ra w)\cdot(\R^\al\ra
a)\big)=\nabla(\R_\al \ra w)\cdot (\R^\al\ra a) + \R_\al\ra
w \otimes_A \dif (\R^\al\ra a)\nn\\
&=&(\oR^\be\R^\al\ra a)\cdot \oR_\be\ra(\nabla(\R_\al\ra
w))+\R_\al\ra w\otimes_A\R^\al\ra \dif a\nn\\
&=&(\oR^\be\oR^\al\ra a)\cdot \oR_\be\ra(\nabla(S(\oR_\al)\ra
w))+\R_\al\ra w \otimes_A\R^\al\ra \dif a\nn\\
&=&(\oR^\be\ra a)\cdot (\oR_\be\RA \nabla)( w)+ \R_\al\ra w \otimes_A\R^\al\ra \dif a~,
\ena
where in the second line we have used that the $\MMMod{H}{A}{A}$-module $W\otimes_A
\Omega$ is quasi-commutative (since both $W$ and $\Omega$ are,
cf. Proposition \ref{qc1qc2thenqc12}),  and in the third line the
property (\ref{SidR}) of the $\RR$-matrix.

The more general expression for $\xi\RA\nabla$ follows from (\ref{eqn:Hcon}).
\end{proof}

\begin{Theorem}
\label{theo:conplus}
 Let $(H,\RR)$ be a quasitriangular Hopf algebra,
  $A$ be a quasi-commutative $\AAAlg{H}{}{}$-algebra,
   $W$ be a quasi-commutative $\MMMod{H}{A}{A}$-module and
 $\bigl(\Omega^\bullet,\wedge,\dif\bigr)$ be a graded
 quasi-commutative left $H$-covariant differential calculus over $A$.
 For any $\MMMod{H}{}{A}$-module $V$ and any $\dd_V\in \Con_A(V)$,
 $\dd_W\in\Con_A(W)$ we consider the $\bfK$-linear map
$\dd_V {\;\widehat{\oplus}_\RR\,} \dd_W \,:\, V\otimes W\to V\otimes_A
W\otimes_A\Omega$ defined by
\begin{flalign}\label{SIGMA}
 \dd_V{\;\widehat{\oplus}_\RR\,} \dd_W:=\tau^{-1}_{\RR\,23}\circ\pi\circ(\dd_V\otimes_\RR\id) 
+\pi\circ( \id\otimes_\RR\dd_W) ~,
\end{flalign}
where the $\RR$-tensor product of $\bfK$-linear maps was defined in
(\ref{eqn:Rtensor}), and where
$\tau^{-1}_{\RR\,23}=\id\otimes_\RR\tau_\RR^{-1}$ is the inverse braiding map
acting on the second and third entry of the tensor product
$V\otimes_A\Omega\otimes_A W$ (the $\RR$-tensor product in
$\id\otimes_\RR \tau^{-1}_\RR$ is defined in Theorem \ref{propo:RtensorAhom}).

This map induces a 
connection on the quotient module $ V\otimes_A W$,
\eq \label{eqn:tensorcon}
\dd_V \oplus_\RR \dd_W : V\otimes_A W\to V\otimes_A W\otimes_A\Omega~,
\en
defined by, for all $v\in V, w\in W$, $\,(\dd_V \oplus_\RR \dd_W)
(v\otimes_A w):=(\dd_V{\;\widehat{\oplus}_\RR\,} \dd_W)(v\otimes w)$, i.e.,
\eq
\,(\dd_V \oplus_\RR \dd_W)
(v\otimes_A w)=
 \tau^{-1}_{\RR\,23}\bigl(\dd_V (v)\otimes_A w\bigr)
+ (\oR^\be\ra v)\otimes_A (\oR_\be\RA\dd_W)(w)~, 
\en
and extended by $\bbK$-linearity to all $V\otimes_AW$.
\end{Theorem}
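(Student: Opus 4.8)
The statement asserts two things, and I would prove them in turn. First, that the $\bfK$-linear map $\dd_V\,\widehat{\oplus}_\RR\,\dd_W:V\otimes W\to V\otimes_A W\otimes_A\Omega$ of (\ref{SIGMA}) factors through the projection $\pi:V\otimes W\to V\otimes_A W$, which amounts to the equality $(\dd_V\,\widehat{\oplus}_\RR\,\dd_W)(v\cdot a\otimes w)=(\dd_V\,\widehat{\oplus}_\RR\,\dd_W)(v\otimes a\cdot w)$ for all $a\in A$, $v\in V$, $w\in W$. Second, that the induced map $\dd_V\oplus_\RR\dd_W$ obeys the right Leibniz rule (\ref{eqn:rightcon}). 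Since $\bfK$-linearity is evident from (\ref{SIGMA}), these exhaust the content of the theorem.

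For the factorization I would treat the two summands of (\ref{SIGMA}) separately and show that their ``defects'' on the generator $v\cdot a\otimes w-v\otimes a\cdot w$ of $\mathcal{N}_{V,W}$ are opposite. In the first summand I use $\dd_V\otimes_\RR\id=\dd_V\otimes\id$ (cf.\ (\ref{POQPcircQ})) together with the right Leibniz rule for $\dd_V$, which produces a term $(\dd_V(v)\cdot a)\otimes_A w$ that absorbs into $\dd_V(v)\otimes_A(a\cdot w)$ through the middle tensor relation, and an anomalous term $\tau^{-1}_{\RR\,23}\bigl((v\otimes_A\dif a)\otimes_A w\bigr)=v\otimes_A(\R_\al\ra w)\otimes_A(\R^\al\ra\dif a)$, where I evaluate $\tau_\RR^{-1}$ on the quotient via Lemma \ref{nablaaw}. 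In the second summand I expand $(\id\otimes_\RR\dd_W)(v\otimes a\cdot w)$ using (\ref{idQRRQ}) and the generalized braided left Leibniz rule of Proposition \ref{twistedleft connection} applied to $\oR_\al\RA\dd_W$; the principal parts of the two values then coincide precisely by the $\RR$-matrix coproduct identity (\ref{Ral1Ral21first}), while the anomalous part equals $-v\otimes_A(\R_\be\ra w)\otimes_A(\R^\be\ra\dif a)$ after using $(\id\otimes\epsi)\RR^{-1}=1$. The two anomalies cancel, yielding the factorization.

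For the Leibniz rule I would compute $(\dd_V\oplus_\RR\dd_W)\bigl(v\otimes_A(w\cdot a)\bigr)$ directly from the explicit formula, recalling $(v\otimes_A w)\cdot a=v\otimes_A(w\cdot a)$. The first summand $\tau^{-1}_{\RR\,23}\bigl(\dd_V(v)\otimes_A(w\cdot a)\bigr)$ is purely right $A$-linear, because $\tau_\RR^{-1}$ is an $\MMMod{H}{A}{A}$-module map (Lemma \ref{nablaaw}) and the right $A$-action passes to the final $\Omega$-slot, so it reproduces $\bigl[\tau^{-1}_{\RR\,23}(\dd_V(v)\otimes_A w)\bigr]\cdot a$. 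The second summand is handled by (\ref{eqn:Hcon}) with $\xi=\oR_\be$: its principal part gives $\bigl((\oR^\be\ra v)\otimes_A(\oR_\be\RA\dd_W)(w)\bigr)\cdot a$, while its $\epsi$-part yields $\epsi(\oR_\be)\,(\oR^\be\ra v)\otimes_A w\otimes_A\dif a=(v\otimes_A w)\otimes_A\dif a$. Summing, $(\dd_V\oplus_\RR\dd_W)\bigl((v\otimes_A w)\cdot a\bigr)=\bigl((\dd_V\oplus_\RR\dd_W)(v\otimes_A w)\bigr)\cdot a+(v\otimes_A w)\otimes_A\dif a$, which is (\ref{eqn:rightcon}).

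The main obstacle is the factorization, and within it the matching of the principal parts of the second summand: this is where the full strength of the hypotheses enters. One needs $\Omega$ strong quasi-commutative and $W$ quasi-commutative both so that $\tau_\RR^{-1}$ descends to the quotients (Lemma \ref{nablaaw}) and so that Proposition \ref{twistedleft connection} applies to $\dd_W$, and one needs the coproduct identity (\ref{Ral1Ral21first}) to reshuffle $\oR^\al_1\otimes\oR^\al_2\otimes\oR_\al$ into $\oR^\al\otimes\oR^\be\otimes\oR_\be\oR_\al$ so that the $v$-, $a$- and $\dd_W$-legs line up correctly. Once these are in place, the cancellation of the two $\dif a$-anomalies and the emergence of the Leibniz term are purely bookkeeping consequences of the $\RR$-matrix normalization.
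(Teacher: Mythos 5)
Your proposal is correct and follows essentially the same route as the paper's proof: well-definedness via the kernel inclusion using the right Leibniz rule for $\dd_V$, the $\RR$-matrix identity (\ref{Ral1Ral21first}), the braided left Leibniz rule of Proposition \ref{twistedleft connection}, and the normalization of $\RR^{-1}$, followed by the right Leibniz rule via (\ref{eqn:Hcon}) and the module-map property of $\tau_\RR^{-1}$ from Lemma \ref{nablaaw}. Your reorganization of the well-definedness step as a cancellation of the two summands' $\dif a$-anomalies is just a rearrangement of the paper's single chain of equalities, with the same ingredients entering at the same points.
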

\begin{proof}
The definition of $\dd_V{\;\widehat{\oplus}_\RR\,}\dd_W$ slightly simplifies if we observe that $\dd_V\otimes_\RR\id =\dd_V\otimes \id$,
$\id\otimes_\RR\tau_\RR^{-1}=\id\otimes\tau_\RR^{-1}$,
since $\id$ and $\tau_\RR^{-1}$ are  $H$-equivariant.
We first prove that the $\bfK$-linear map $\dd_V \oplus_\RR \dd_W$ is
well-defined by showing that ${\rm ker}(\pi)\subseteq {\rm{ker}(\dd_V{\;\widehat{\oplus}_\RR\,}\dd_W)}$.
In order to simplify the notation in the proof we denote
both connections $\dd_V$ and $\dd_W$ by $\dd$.
For all $v\in V$, $w\in W$ and $a\in A$,
\begin{flalign}
\dd{\;\widehat{\oplus}_\RR\,}\dd(v\cdot a\otimes w)&= \tau^{-1}_{\RR\,23}\bigl(\dd (v\cdot a)\otimes_A w\bigr)
+ (\oR^\be\ra (v\cdot a))\otimes_A (\oR_\be\RA\dd)(w) \nn\\
\nn &= \tau^{-1}_{\RR\,23}\bigl((\dd v)\otimes_A a\cdot w + v\otimes_A \dif a\otimes_A w\bigr) \\
\nn & \qquad \qquad \qquad ~~~~~~~~~~~+ (\oR^\be\ra v) \otimes_A (\oR^\al\ra a)\cdot (\oR_\al \oR_\be\RA\dd)(w)\\
\nn&=\tau^{-1}_{\RR\,23}\bigl((\dd v)\otimes_A a\cdot w\bigr)
+ (\oR^\alpha\ra v) \otimes_A (\oR_\alpha\RA\dd)(a\cdot w)\\
&= \dd {\;\widehat{\oplus}_\RR\,}  \dd
(v\otimes a\cdot w)~,
\end{flalign}
where in the second line we used the right Leibniz rule (\ref{eqn:rightcon}) and the property 
(\ref{Ral1Ral21first})  of the $\RR$-matrix. In the third line we used
Proposition \ref{twistedleft connection} and the
normalization property of the $\RR$-matrix (\ref{Rnorm}).

The $\bfK$-linear map  $\dd_V \oplus_\RR \dd_W$ defined in
(\ref{eqn:tensorcon}) is a connection because
it satisfies the right Leibniz rule (\ref{eqn:rightcon}), for all $v\in V$, $w\in W$ and $a\in A$,
\begin{flalign}
\nn (\dd\oplus_\RR\dd)(v\otimes_A w\cdot a) &= \tau^{-1}_{\RR\,23}\bigl((\dd v)\otimes_A w\cdot a\bigr)
+ (\oR^\alpha\ra v)\otimes_A (\oR_\alpha\RA\dd)(w\cdot a)\\
\nn&=(\dd\oplus_\RR\dd)(v\otimes_A w)\cdot a + \varepsilon(\oR_\alpha)~(\oR^\alpha\ra v)\otimes_A w\otimes_A \dif a\\
&=(\dd\oplus_\RR\dd)(v\otimes_A w)\cdot a + v\otimes_A w\otimes_A \dif a~,
\end{flalign}
where in the second line we have used that $\tau_\RR$ is an $\MMMod{H}{A}{A}$-module isomorphism and (\ref{eqn:Hcon}),
and in the last line the normalization property of the $\RR$-matrix
(\ref{Rnorm}). 
\end{proof}

The sum of connections is compatible with the Hopf algebra action, for
all $\xi\in H$,
\begin{flalign} 
\label{xiVW}
\xi\RA (\dd_V\oplus_\RR\dd_W) = (\xi\RA\dd_V) \oplus_\RR (\xi\RA\dd_W)~.
\end{flalign}
This property easily follows recalling property
(\ref{eqn:RtensorHmod}) and observing that all the maps in (\ref{SIGMA}), but the
connections $\dd_V$ and $\dd_W$, are $H$-equivariant. 
\sk

We end this subsection showing  that the sum  of connections 
is associative.
\begin{Theorem}\label{theo:tensorcon}
  Let $(H,\RR)$ be a quasitriangular Hopf algebra,
  $A$ be a quasi-commutative $\AAAlg{H}{}{}$-algebra, 
  $W,Z$ be two quasi-commutative $\MMMod{H}{A}{A}$-modules and
  $\bigl(\Omega^\bullet,\wedge,\dif\bigr)$ be a graded quasi-commutative left $H$-covariant differential calculus over $A$.
  Then for any $\MMMod{H}{}{A}$-module $V$, $\dd_V\in \Con_A(V)$, $\dd_W\in\Con_A(W)$ 
  and $\dd_Z\in\Con_A(Z)$,
\begin{flalign}\label{eqn:tensorconass}
 \bigl(\dd_V\oplus_\RR \dd_W\bigr)\oplus_\RR \dd_Z = \dd_V\oplus_\RR\bigl(\dd_W\oplus_\RR\dd_Z\bigr)~.
\end{flalign}
\end{Theorem}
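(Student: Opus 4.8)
The plan is to prove the equality of the two $\bbK$-linear maps $V\otimes_A W\otimes_A Z\to V\otimes_A W\otimes_A Z\otimes_A\Omega$ by evaluating both sides on a generating element $v\otimes_A w\otimes_A z$, which suffices because the elementary tensors span $V\otimes_A W\otimes_A Z$ as a $\bbK$-module and both maps are $\bbK$-linear. First I would apply the explicit formula of Theorem \ref{theo:conplus} to the outer sum in $\bigl(\dd_V\oplus_\RR\dd_W\bigr)\oplus_\RR\dd_Z$, treating $U:=V\otimes_A W$ as a single quasi-commutative $\MMMod{H}{A}{A}$-module with connection $\dd_U:=\dd_V\oplus_\RR\dd_W$, and then substitute the explicit expression for $\dd_U(v\otimes_A w)$. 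The same computation is carried out on the right hand side with $\dd_W\oplus_\RR\dd_Z$ playing the role of the inner connection. Each side then splits into exactly three summands, distinguished by whether the differential acts on the $V$, the $W$, or the $Z$ factor, and I would match the two sides summand by summand.

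The summand in which the differential lands on the rightmost factor $Z$ already carries the one-form in the last tensor slot, so no braiding is needed there; matching it uses only the associativity (\ref{eqn:RtensorassA}) of the $\RR$-tensor product together with the coproduct property (\ref{Ral1Ral21first}) of the $\RR$-matrix to reconcile the two nested adjoint actions of the form $\oR_\al\RA\dd_Z$ produced by the iterated sums. For the summand acting on the middle factor $W$, the differential output of $\dd_W$ must be transported past the single factor $Z$; on both sides this is effected by one application of the braiding isomorphism $\tau_\RR: W\otimes_A\Omega\to\Omega\otimes_A W$ of Lemma \ref{nablaaw}, and equality again follows after invoking (\ref{Ral1Ral21first}) to redistribute the $\RR$-matrix legs.

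The delicate summand is the one in which the differential acts on the leftmost factor $V$: its output $\Omega$ must be transported to the far right past both $W$ and $Z$. On the left hand side this happens in a single step, as a braiding of $\Omega$ past the combined factor $W\otimes_A Z$ (the $\tau^{-1}_{\RR\,34}$ produced by the outer sum composed with the $\tau^{-1}_\RR$ already present in $\dd_U$), whereas on the right hand side it is realized as two successive braidings, first past $W$ and then past $Z$. Reconciling these is precisely the content of the hexagon relation (\ref{hexagonA1}) (equivalently (\ref{hexagonA2})) for $\tau_\RR$ on the quotient modules, and this is the step I expect to be the main obstacle: one must verify that the $\RR$-matrix legs spread over the two intermediate braidings on the right recombine, via (\ref{Ral1Ral21first}) and (\ref{Ral1Ral21second}), into the single leg appearing on the left, all while keeping the $H$-adjoint actions on $\dd_W$ and $\dd_Z$ mutually consistent. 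Once the three summands are matched, the identity (\ref{eqn:tensorconass}) follows. Alternatively, one could avoid working on generators altogether by lifting everything to the honest tensor products $V\otimes W\otimes Z$ and deducing the statement from the associativity of $\otimes_\RR$ in Theorem \ref{RotimesK} and the braid relations (\ref{hexagon}), exactly in the spirit of the proof of Theorem \ref{RotimesA}.
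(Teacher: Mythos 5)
Your proposal is correct and takes essentially the same route as the paper: the paper likewise first sketches the abstract argument via associativity of $\otimes_\RR$ and the braid relations, and then gives exactly your explicit computation on generators $v\otimes_A w\otimes_A z$, matching the three summands using the relation $\tau^{-1}_{\RR\,2(34)}=\tau^{-1}_{\RR\,34}\circ\tau^{-1}_{\RR\,23}$ (a consequence of (\ref{hexagonA1})) together with the compatibility $\xi\RA(\dd_V\oplus_\RR\dd_W)=(\xi\RA\dd_V)\oplus_\RR(\xi\RA\dd_W)$ and the $\RR$-matrix identities (\ref{Ral1Ral21first}), (\ref{Ral1Ral21second}). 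One cosmetic slip: in the delicate summand you have the sides swapped --- the two successive braidings occur on the left-hand side $(\dd_V\oplus_\RR\dd_W)\oplus_\RR\dd_Z$ and the single combined braiding $\tau^{-1}_{\RR\,2(34)}$ on the right-hand side --- but this does not affect the argument.
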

\begin{proof}
It is convenient to denote by $\tau_{\RR\,i~i+1}^{-1} = \id\otimes\dots\otimes\tau_\RR^{-1}\otimes\dots\otimes\id$ the inverse
braiding map acting on the $i$-th and $(i+1)$-th entry of a tensor product.
The inverse braiding map exchanging the $i$-th entry with the $(i+1)$-th and $(i+2)$-th entry is denoted by
$\tau_{\RR\,i~(i+1~i+2)}^{-1}$ and similarly $\tau_{\RR\,(i~i+1)~i+2}^{-1}$ is the inverse braiding map exchanging
the $i$-th and $(i+1)$-th entry with the $(i+2)$-th entry.
For example, $\tau_{\RR\,(12)3}^{-1}(a\otimes b\otimes c) = (\R_\alpha\ra c) \otimes \R^\alpha\ra (a\otimes b)$
and $\tau_{\RR\,1(23)}^{-1}(a\otimes b \otimes c) = \R_\alpha\ra(b\otimes c)\otimes(\R^\alpha\ra a)$.

Associativity of  $\oplus_\RR$ is implied by associativity of
${\;\widehat{\oplus}}_\RR\,$. The latter follows from the associativity of
$\otimes_\RR$, 
the $H$-equivariance of the $\id, \tau_\RR^{-1}$ and $\pi$ maps, the composition
property (\ref{eqn:Rtensorcirc}),
the composition of projections property 
$\pi\circ (\id\otimes_\RR\pi)=\pi\circ (\pi\otimes_\RR\id)$, and the
braid relation (\ref{hexagonA1}). 

We here present a less abstract proof by evaluating
(\ref{eqn:tensorconass}) on $V\otimes_A W\otimes_A Z$.
Due to $\bbK$-linearity it is
enough to show associativity on elements $v\otimes_A w\otimes_A z\in
V\otimes_A W\otimes_A Z$. We denote all connections by $\nabla$ in order to simplify the notation and find
\eqa
\nn ((\dd\!\!\!\! &\oplus_\RR&\!\!\!\!\dd)\oplus_\RR \dd) (v\otimes_A\otimes w\otimes_A z) \nn\\[0.4em]
&=&\tau^{-1}_{\RR\,34}\Big( (\dd\oplus_\RR\dd)(v\otimes_A w)\otimes_A z\Big) \,
+ \oR^\al\ra(v\otimes_A w)\otimes_A (\oR_\al\RA \dd)(z)\nn\\
\nn&=&\tau^{-1}_{\RR\,34}\Big(\tau^{-1}_{\RR\,23} (\dd(v)\otimes_A
w)\otimes_A z + (\oR^\al\ra v)\otimes_A(\oR_\al\RA\dd)(w)\otimes_A z\Big) 
\nn\\[-0.2em] & &~~~~~~~~~~~~~~~~~~~~~~~~~~~~~~~~~~~~~~~~~\;+ (\oR^\al\ra
v)\otimes_A (\oR^\be\ra w)\otimes_A (\oR_\be\oR_\al\RA \dd)(z)\nn\\[0.5em]
\nn &=&\tau^{-1}_{\RR\,34}\Big( \tau^{-1}_{\RR\,23}(\dd(v)\otimes_A w\otimes_A
z) \Big)\,+ (\oR^\al\ra v)\otimes_A\tau^{-1}_{\RR\,23}\big(  (\oR_\al\RA
\dd)(w)\otimes_A z\big)
\nn\\ & &~~~~~~~~~~~~~~~~~~~~~~~~~~~~~~~~~~~~~~~~~~+
 (\oR^\al\ra v)\otimes_A (\oR^\be\ra w)\otimes_A (\oR_\be\oR_\al\RA \dd)(z)\nn\\[0.5em]
\nn &=&\tau^{-1}_{\RR\,2 (34)} (\dd(v)\otimes_A w\otimes_A z) +
(\oR^\al\ra v)\otimes_A (\oR_a\RA (\dd\oplus_\RR\dd)) (w\otimes_A z)\nn\\[0.5em]
&=& (\dd\oplus_\RR(\dd\oplus_\RR\dd)) (v\otimes_A\otimes_A w\otimes_A z) ~,
\ena
where in the fourth line we used 
$\tau^{-1}_{\RR\,2(34)} =
\tau^{-1}_{\RR\,34}\circ\tau^{-1}_{\RR\,23}$, which follows from 
the braid relation (\ref{hexagonA1}), and 
property (\ref{xiVW}).
\end{proof}

\sk
\subsection{Deformation of the sum of connections}
We study the deformation of the sum of two connections.
We need a preliminary

\begin{Lemma}\label{inductions}
Let $(H,\RR)$ be a quasitriangular Hopf algebra with twist $\FF\in H\otimes H$,
 $A$ be an $\AAAlg{H}{}{}$-algebra, $V$ be an $\MMMod{H}{}{A}$-module and $W,\Omega$ be two 
 $\MMMod{H}{A}{A}$-modules.
The commutative diagram (\ref{eqn:higheriotast}) induces the
commutative diagram (\ref{diagram}), where the inner diagonal map $\varphi_{V_\st,W_\st,\Omega_\st}:
V_\star\otimes_{A_\star} W_\star\otimes_{A_\star} \Omega_\star
\to (V\otimes_A W\otimes_A \Omega)_\star$, that is
defined as the composition
$\varphi_{V_\st,W_\st,\Omega_\st}:=\varphi_{V_\st,(W\otimes_A\Omega)_\st}\circ\big(
  \id_{V_\st}\otimes_{\RR^\FF}\varphi_{W_\st,\Omega_\st}\big)$, is an $\MMMod{H^\FF}{}{A_\star}$-module  isomorphism.

\begin{sidewaystable}
\begin{flalign}
\!\!\!\!\xymatrix{
V_\st\otimes_\st W_\st\otimes_\st \Omega_\st
\ar[ddddd]^-{\varphi_{V,W}\otimes_{_{\RR^{_\FF}}} \id_{\Omega_\st}} \ar[rrrrrr]^-{\id_{V_\st}\otimes_{_{\RR^{_\FF}}}\varphi_{W,\Omega}} \ar[dr]^-{\pi_{\st_{23}}}& & & & & &
V_\st\otimes_\st (W \otimes \Omega)_\st \ar[dl]_-{\pi_{23}} \ar[ddddd]^-{\varphi_{V,W\otimes\Omega}}\\
& V_\st\otimes_\st (W_\st\otimes_{A_\st} \Omega_\st)
\ar[rrrr]^-{\id_{V_\st}\otimes_{_{\RR^{_\FF}}}\varphi_{W_\st,\Omega_\st}}
\ar[dr]^{\pi_{\st}}
& & & &
V_\st\otimes_\st (W \otimes_A \Omega)_\st \ar[dl]_{\pi_\st} \ar[ddd]^-{\varphi_{V,W\otimes_A\Omega}}&\\
& &V_\st\otimes_{A_\st} W_\st\otimes_{A_\st} \Omega_\st
\ar[rr]^-{\id_{V_\st}\otimes_{_{\RR^{_{\FF}}}}   \varphi_{W_\st,\Omega_\st}}
\ar[d]_-{\varphi_{V_\st,W_\st}\otimes_{_{\RR^{_{\FF}}}}\id_{\Omega_\st}}
\ar[drr]_-{\varphi_{V_\st,W_\st,\Omega_\st}}& &
\,\,V_\st\otimes_{A_\st}(W\otimes_A\Omega)_\st\,\,\ar[d]^-{\varphi_{V_\st,(W\otimes_A\Omega)_\st}}& &\\
& & (V\otimes_A W)_\st\otimes_{A_\st}\Omega_\st
\ar[rr]_-{\varphi_{(V\otimes_A W)_\st,\Omega_\st}}  & & (V\otimes_A W \otimes_A\Omega)_\st & &\\
& (V\otimes W)_\st\otimes_{A_\st}\Omega_\st 
\ar[ru]^{\pi_{12}}& & & & (V\otimes (W\otimes_A\Omega))_\st \ar[lu]_{\pi}& \\
(V\otimes W)_\st\otimes_\st \Omega_\st \ar[rrrrrr]_-{\varphi_{V\otimes W,\Omega}} \ar[ru]^-{\pi_{\st}}& & & & & & (V\otimes W\otimes \Omega)_\st \ar[lu]_-{\pi_{23}}
}\nn
\end{flalign}
\sk
\eq\label{diagram}
\en
\end{sidewaystable}
\end{Lemma}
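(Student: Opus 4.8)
The plan is to read the large diagram~(\ref{diagram}) as a prism whose outer (back) face is exactly the already-established commutative square~(\ref{eqn:higheriotast}) at the level of tensor products over $\bbK$, and whose inner (front) face is the image of that square under the quotient projections. I would first dispose of the isomorphism claim, which is essentially formal. The map $\varphi_{V_\st,(W\otimes_A\Omega)_\st}$ is an $\MMMod{H^\FF}{}{A_\star}$-module isomorphism by Lemma~\ref{lemma4} applied with second tensor factor $W\otimes_A\Omega$, and $\varphi_{W_\st,\Omega_\st}$ is an $\MMMod{H^\FF}{A_\star}{A_\star}$-module isomorphism by Lemma~\ref{lemma4} together with the Remark following it (since $W$ is a bimodule). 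The lift $\id_{V_\st}\otimes_{\RR^\FF}\varphi_{W_\st,\Omega_\st}$ is then an isomorphism as well, because by~(\ref{eqn:Rtensorsimple}) (applied to $H^\FF$) it equals $\tau_{\RR^\FF}\circ(\varphi_{W_\st,\Omega_\st}\otimes_\st\id)\circ\tau_{\RR^\FF}^{-1}$, i.e.~a conjugate of the isomorphism $\varphi_{W_\st,\Omega_\st}\otimes_\st\id$ by the braiding isomorphism $\tau_{\RR^\FF}$. Hence $\varphi_{V_\st,W_\st,\Omega_\st}$, being a composite of two $\MMMod{H^\FF}{}{A_\star}$-module isomorphisms, is one.

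For commutativity I would then argue cell by cell. The back face is~(\ref{eqn:higheriotast}), which commutes by the twist cocycle property~(\ref{ass}). The trapezoidal faces joining the $\otimes_\st$-level to the $\otimes_{A_\star}$-level are instances of the naturality square~(\ref{eqn:lemma4}) of Lemma~\ref{lemma4}: for each pair of modules occurring as tensor factors, the relevant $\varphi=\FF^{-1}\ra$ intertwines the projection $\pi_\st$ over $A_\star$ with the projection $\pi$ over $A$, precisely because $\varphi$ maps the submodule $\mathcal{N}^\st$ into $\mathcal{N}$ as verified in the proof of Lemma~\ref{lemma4}. The remaining triangular cells close by the mutual compatibility of projections, $\pi\circ(\id\otimes_{\RR^\FF}\pi)=\pi\circ(\pi\otimes_{\RR^\FF}\id)$ (the identity already used in the proof of Theorem~\ref{theo:tensorcon}), together with the fact that all projections are $H^\FF$-equivariant and right $A_\star$-linear, so that the arrows they induce on the quotients are genuine $\MMMod{H^\FF}{}{A_\star}$-module homomorphisms. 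Once each small cell is seen to commute, pasting them yields commutativity of~(\ref{diagram}), since every arrow of the inner diagram is uniquely characterised as the map induced on quotients by the corresponding arrow of~(\ref{eqn:higheriotast}).

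The technical heart — and the step I expect to cost the most care — is the well-definedness of the inner arrows on the quotient modules, in particular that the $\bbK$-linear map $\id_{V_\st}\otimes_{\RR^\FF}\varphi_{W_\st,\Omega_\st}$ descends from $V_\st\otimes_\st(W_\st\otimes_{A_\star}\Omega_\star)$ to the tensor product over $A_\star$, $V_\st\otimes_{A_\st}(W_\st\otimes_{A_\star}\Omega_\star)$. This is the over-$A_\star$ descent of an $\RR^\FF$-tensor product of right $A_\star$-linear maps, governed by Theorem~\ref{propo:RtensorAhom}: one must check that the images of the generators $v\star a\otimes_\st \mu-v\otimes_\st a\star\mu$ land in the appropriate submodule $\mathcal{N}^\st$, and this is exactly the point at which one uses that the bimodule $W_\st\otimes_{A_\star}\Omega_\star$ inherits quasi-commutativity from its factors through Proposition~\ref{qc1qc2thenqc12}. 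Establishing these descents first, and only afterwards reading off the commutativity of the front face from that of the back face~(\ref{eqn:higheriotast}), seems to me the cleanest route; all the genuinely new content sits in this well-definedness, the rest being a diagram chase that reduces to Lemma~\ref{lemma4} and the cocycle identity~(\ref{ass}).
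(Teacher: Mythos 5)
Your overall architecture is exactly the paper's: read (\ref{diagram}) as a prism whose outer face is the already-proven square (\ref{eqn:higheriotast}), whose trapezoidal faces are the defining squares of the induced maps on quotients (as in (\ref{eqn:lemma4})), and then paste, using surjectivity of the projections, to get commutativity of all inner cells and to transport the isomorphism property from the outer $\varphi$-maps to the induced ones. However, the step you single out as the ``technical heart'' is wrong as stated. Lemma \ref{inductions} does \emph{not} assume that $A$ is quasi-commutative, nor that $W$ and $\Omega$ are quasi-commutative $\MMMod{H}{A}{A}$-modules; consequently neither Theorem \ref{propo:RtensorAhom} nor Proposition \ref{qc1qc2thenqc12} is available here. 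Those results govern the descent to $\otimes_A$ of $\RR$-tensor products of genuinely non-equivariant right $A$-linear maps, where quasi-commutativity is indispensable (it enters through the braided left linearity of Proposition \ref{Prop5.14}). If the descent really required it, the lemma as stated would be false under its own hypotheses.

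The correct mechanism is much more elementary, and it is what the paper's proof tacitly uses: every map being descended is $H^\FF$-equivariant. In particular, by the Remark following Lemma \ref{lemma4} (applicable because $W$ is an $\MMMod{H}{A}{A}$-module), $\varphi_{W_\st,\Omega_\st}$ is an $\MMMod{H^\FF}{A_\star}{A_\star}$-module isomorphism, so the $\RR^\FF$-decoration is inert, $\id_{V_\st}\otimes_{\RR^\FF}\varphi_{W_\st,\Omega_\st}=\id_{V_\st}\otimes\varphi_{W_\st,\Omega_\st}$, and on generators one checks directly, using the \emph{left} $A_\star$-linearity of $\varphi_{W_\st,\Omega_\st}$, that $(v\star a)\otimes_\st\mu - v\otimes_\st (a\star\mu)$ is mapped to $(v\star a)\otimes_\st\varphi_{W_\st,\Omega_\st}(\mu) - v\otimes_\st a\star\varphi_{W_\st,\Omega_\st}(\mu)\in\mathcal{N}^\st$; no quasi-commutativity enters anywhere. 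For the same reason your conjugation argument via (\ref{eqn:Rtensorsimple}), $\tau_{\RR^\FF}\circ(\varphi_{W_\st,\Omega_\st}\otimes_\st\id)\circ\tau_{\RR^\FF}^{-1}$, should be dropped: it is superfluous (equivariance gives the inverse $\id\otimes\varphi_{W_\st,\Omega_\st}^{-1}$ at once), and it is actually unavailable at the $\otimes_{A_\star}$ level, since the braiding only descends to tensor products over the algebra under the (strong) quasi-commutativity hypotheses of Lemma \ref{nablaaw}. With these corrections your pasting argument coincides with the paper's proof.
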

\begin{proof}
The diagram (\ref{diagram}) is composed of the commutative trapezia
defining the induced maps $\varphi$ on the quotients. The outer
rectangle is the commutative diagram (\ref{eqn:higheriotast}); its
commutativity implies that all subdiagrams of (\ref{diagram}) are commutative.
Since all maps in the diagram are $\MMMod{H^\FF}{}{A_\star}$-module 
homomorphisms, so is $\varphi_{V_\st,W_\st,\Omega_\st}$. Since the
$\varphi$-maps in the outer rectangle are isomorphisms, so are all the
induced $\varphi$-maps, hence in particular $\varphi_{V_\st,W_\st,\Omega_\st}$.
\end{proof}

\sk
The deformation of the sum of connections (\ref{eqn:tensorcon}), 
$D_\FF(\dd_V\oplus_\RR\dd_W):(V\otimes_A W)_\star \to
(V\otimes_A W\otimes_A\Omega)_\star$, where $D_\FF$ is the deformation isomorphism
discussed in Theorem \ref{PDPHom}, is by construction a $\bfK$-linear
map. We show that up to $\varphi$-isomorphisms it is a connection.

\begin{Theorem}\label{theo:prodcondef}
  Let $(H,\RR)$ be a quasitriangular Hopf algebra with twist $\FF\in H\otimes H$,
$A$ be a quasi-commutative $\AAAlg{H}{}{}$-algebra, 
$W$ be a quasi-commutative $\MMMod{H}{A}{A}$-module and
 $\bigl(\Omega^\bullet,\wedge,\dif\bigr)$ be a graded quasi-commutative left $H$-covariant differential 
 calculus over $A$.
Then for any $\MMMod{H}{}{A}$-module  $V$,
$\dd_V\in \Con_A(V)$ and $\dd_W\in\Con_A(W)$ the following diagram commutes
\begin{flalign}\label{eqn:quantprodcon}
\xymatrix{
V_\star \otimes_{A_\star} W_\star \ar[rrrr]^-{\widetilde{D}_\FF(\dd_V)\oplus_{\RR^\FF} \widetilde{D}_\FF(\dd_W)} \ar[d]_-{\varphi_{V_\st,W_\st}}& & & & V_\star \otimes_{A_\star} W_\star \otimes_{A_\star} \Omega_\star \ar[d]^-{\varphi_{V_\st,W_\st,\Omega_\st}}\\
(V\otimes_A W)_\star \ar[rrrr]_-{D_\FF(\dd_V\oplus_\RR\dd_W)} & & & & (V\otimes_A W\otimes_A\Omega)_\star 
} 
\end{flalign}
\end{Theorem}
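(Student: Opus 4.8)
The plan is to reduce the statement to the compatibility of twist deformation with the $\RR$-tensor product already established in Theorem~\ref{theo:promodhomdef} and, at the level of triple tensor products, in Lemma~\ref{inductions}, by decomposing the sum of connections into elementary building blocks. Applying Theorem~\ref{theo:conplus} to the quasitriangular Hopf algebra $(H^\FF,\RR^\FF)$, the quasi-commutative $\AAAlg{H^\FF}{}{}$-algebra $A_\st$, the quasi-commutative module $W_\st$ and the graded quasi-commutative calculus $(\Omega^\bullet,\wedge_\st,\dif)$ furnished by Proposition~\ref{lem:dcdef}, the right-hand connection $\widetilde D_\FF(\dd_V)\oplus_{\RR^\FF}\widetilde D_\FF(\dd_W)$ is induced on the quotient by the analogue of (\ref{SIGMA}), namely $\tau^{-1}_{\RR^\FF\,23}\circ\pi_\st\circ(\widetilde D_\FF(\dd_V)\otimes_{\RR^\FF}\id)+\pi_\st\circ(\id\otimes_{\RR^\FF}\widetilde D_\FF(\dd_W))$, while $\dd_V\oplus_\RR\dd_W$ is induced by (\ref{SIGMA}) itself. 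As $D_\FF$ is $\bbK$-linear, it distributes over this sum; thus it suffices to verify the commutativity of (\ref{eqn:quantprodcon}) separately for the two summands, working first with the $\widehat{\oplus}_\RR$-maps on $V\otimes W$ and descending to the quotient modules only at the end through the projections and Lemmas~\ref{lemma4} and \ref{inductions}.

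For each summand the maps involved are of three types: the $\RR$-tensor products $\dd_V\otimes_\RR\id$ and $\id\otimes_\RR\dd_W$, the projection $\pi$, and the braiding $\tau^{-1}_{\RR\,23}=\id\otimes_\RR\tau^{-1}_\RR$. First I would convert every ordinary composition $\circ$ into the $\star$-composition $\circ_\st$: since $\pi$ and $\tau^{-1}_\RR$ are $H$-equivariant, i.e.~$\xi\RA\pi=\varepsilon(\xi)\,\pi$ and $\xi\RA\tau^{-1}_\RR=\varepsilon(\xi)\,\tau^{-1}_\RR$, composing with them commutes with the $H$-adjoint action, so that $\circ$ and $\circ_\st$ coincide whenever one factor is such a map, exactly as in the reduction carried out in the proof of Theorem~\ref{theo:promodhomdef}. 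The composition-preserving property $D_\FF(P\circ_\st Q)=D_\FF(P)\circ D_\FF(Q)$ (cf.~Theorem~\ref{theo:firstcat} and (\ref{DPSTQproof})) then lets me push $D_\FF$ through, giving $D_\FF(\pi)=\pi$, $D_\FF(\tau^{-1}_\RR)=\tau^{-1}_\RR$ together with the deformed tensor products $D_\FF(\dd_V\otimes_\RR\id)$ and $D_\FF(\id\otimes_\RR\dd_W)$. For the latter two, Theorem~\ref{theo:promodhomdef} in the form (\ref{eqn:prodhomdef}) identifies them, up to conjugation by the isomorphisms $\varphi$, with $\widetilde D_\FF(\dd_V)\otimes_{\RR^\FF}\id$ and $\id\otimes_{\RR^\FF}\widetilde D_\FF(\dd_W)$; the same $\varphi$-conjugation (cf.~the computation defining $\tau_{\RR^\FF}$ in the proof of Theorem~\ref{theo:promodhomdef}) turns $\tau^{-1}_\RR$ into the braiding $\tau^{-1}_{\RR^\FF}$ of the deformed tensor product and, as noted in the proof of Lemma~\ref{lemma4}, turns $\pi$ into $\pi_\st$.

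It then remains to fit the various $\varphi$-isomorphisms together, and this is where the triple tensor product enters: one must check that the $\varphi$-maps attached to $V\otimes_A\Omega\otimes_A W$ and to $V\otimes_A W\otimes_A\Omega$ combine, through the commutative faces of the big diagram (\ref{diagram}) of Lemma~\ref{inductions}, so that the isomorphism produced on the right of (\ref{eqn:quantprodcon}) is precisely the triple-tensor isomorphism $\varphi_{V_\st,W_\st,\Omega_\st}$. Descending both $\dd_V\widehat{\oplus}_\RR\dd_W$ and its deformation to the quotient modules via $\pi$ and $\pi_\st$---legitimate because both maps descend by Theorem~\ref{theo:conplus}---then yields the commutativity of (\ref{eqn:quantprodcon}). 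I expect the main obstacle to be exactly this bookkeeping of the middle-slot braiding $\tau^{-1}_{\RR\,23}$: one has to verify that its $\varphi$-conjugate equals $\tau^{-1}_{\RR^\FF\,23}$ on $V_\st\otimes_{A_\st}\Omega_\st\otimes_{A_\st}W_\st$, which genuinely uses the triple-tensor sub-diagrams of (\ref{diagram}) and not merely the double-tensor diagram (\ref{eqn:promodhomdefA}). Should the diagram chase prove cumbersome, an entirely explicit fallback is to evaluate both composites in (\ref{eqn:quantprodcon}) on a generator $v\otimes_{A_\st}w$, using the explicit sum formula from Theorem~\ref{theo:conplus}, the expression (\ref{tildeDFHOMAproof}) for $\widetilde D_\FF$ and the explicit maps (\ref{varphiUVA}); the claim then reduces to an identity among iterated coproducts of $\FF^{-1}$ and the $\RR$-matrix that follows from the cocycle property (\ref{ass}) and the relations (\ref{Ral1Ral21first})--(\ref{Ral1Ral21second}).
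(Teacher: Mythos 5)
Your proposal is correct and follows essentially the same route as the paper's proof: you reduce to the $\widehat{\oplus}_\RR$-level maps on $V\otimes W$, distribute $D_\FF$ over the sum using $\bbK$-linearity and the $H$-equivariance of $\pi$, $\id$ and $\tau_\RR^{-1}$, invoke Theorem \ref{theo:promodhomdef} for the two $\RR$-tensor factors, handle the $\varphi$-bookkeeping through the sub-diagrams of (\ref{diagram}) in Lemma \ref{inductions}, and descend to the quotient modules at the end exactly as the paper does. In particular you correctly single out the genuinely delicate step, namely that the $\varphi$-conjugate of the middle-slot braiding $\tau^{-1}_{\RR\,23}$ is $\tau^{-1}_{\RR^\FF\,23}$ together with the identity $\varphi_{V_\star,\Omega_\star,W_\star}\circ\pi_\star=\pi\circ\varphi_{V\otimes_A\Omega,W}\circ\bigl(\varphi_{V_\star,\Omega_\star}\otimes_{\RR^\FF}\id\bigr)$, which is precisely where the paper's proof uses the triple-tensor diagram rather than the double-tensor diagram (\ref{eqn:promodhomdefA}).
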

\begin{proof}
We recall  that the sum of connections $\dd_V\oplus_\RR \dd_W$ is canonically induced from the
$\bbK$-linear map $\dd_V\,\widehat\oplus_\RR\, \dd_W : V\otimes
W\to V\otimes_A W\otimes_A\Omega$ defined in  (\ref{SIGMA}). 
We therefore first  show commutativity of the following diagram
\begin{flalign}\label{eqn:quantprodconhat}
\xymatrix{
V_\star \otimes_{\star} W_\star \ar[rrrr]^-{\widetilde{D}_\FF(\dd_V)\widehat\oplus_{\RR^\FF} \widetilde{D}_\FF(\dd_W)} \ar[d]_-{\varphi_{V,W}}& & & & V_\star \otimes_{A_\star} W_\star \otimes_{A_\star} \Omega_\star \ar[d]^-{\varphi_{V_\st,W_\st,\Omega_\st}}\\
(V\otimes W)_\star \ar[rrrr]_-{D_\FF(\dd_V\widehat\oplus_\RR\dd_W)} & & & & (V\otimes_A W\otimes_A\Omega)_\star 
} 
\end{flalign}
To simplify the notation we denote all connections by $\dd$. 
 Using $\bfK$-linearity
of $D_\FF$ and that $\tau_\RR^{-1}$, $\id$ and $\pi$ are $H$-equivariant, we obtain
\begin{flalign}
\nn D_\FF(\dd\,\widehat{\oplus}_\RR\dd) &= D_\FF\bigl(\tau^{-1}_{\RR\,23}\circ\pi\circ  (\dd\otimes_\RR \id) + \pi\circ \id\otimes_\RR \dd\bigr)\\
 &= \tau^{-1}_{\RR\,23}\circ\pi\circ  D_\FF(\dd\otimes_\RR \id) + \pi\circ D_\FF(\id\otimes_\RR\dd)~.
\end{flalign}
Theorem \ref{theo:promodhomdef} and $H$-equivariance of $\id$ implies
\begin{flalign}
\nn D_\FF(\dd\,\widehat{\oplus}_\RR\dd) &=  \tau^{-1}_{\RR\,23}\circ \pi\circ \varphi_{V\otimes_A\Omega,W} \circ \bigl(D_\FF(\dd)\otimes_{\RR^\FF}\id\bigr)\circ \varphi_{V,W}^{-1}\\
\nn  &\qquad\qquad\qquad~~~~~ +
\pi\circ \varphi_{V,W\otimes_A\Omega}\circ \bigl(\id\otimes_{\RR^\FF}D_\FF(\dd)\bigr)\circ \varphi_{V,W}^{-1} \\
\nn &= \tau^{-1}_{\RR\,23}\circ \pi\circ \varphi_{V\otimes_A\Omega,W} \circ (\varphi_{V_\st,\Omega_\st}\otimes_{\RR^\FF}\id)\circ \bigl(\widetilde{D}_\FF(\dd)\otimes_{\RR^\FF}\id\bigr)\circ \varphi_{V,W}^{-1}\\
\nn & \qquad\qquad\qquad~~~~~  + \pi\circ \varphi_{V,W\otimes_A\Omega}\circ (\id\otimes_{\RR^\FF}\varphi_{W_\st,\Omega_\st})\circ \bigl(\id\otimes_{\RR^\FF}\widetilde{D}_\FF(\dd)\bigr)\circ \varphi_{V,W}^{-1}\\
\label{eqn:quantprodcontmp} & =\varphi_{V_\st,W_\st,\Omega_\st}\circ \Big(\tau^{-1}_{\RR^\FF\,23}\circ \pi_\st \circ \bigl(\widetilde{D}_\FF(\dd)\otimes_{\RR^\FF}\id\bigr) + \pi_\st \circ \bigl(\id\otimes_{\RR^\FF}\widetilde{D}_\FF(\dd)\bigr)\Big)\circ \varphi_{V,W}^{-1}~,
\end{flalign}
where in the last equality we have used: For the second addend commutativity of the
sub-diagram
{\footnotesize
\begin{flalign*}
\xymatrix{
{\color{black}{ V_\st\otimes_\st (W_\st\otimes_{A_\st} \Omega_\st)}}
\ar[rrrr]^-{\id_{V_\st}\otimes_{_{\RR^{_\FF}}}\varphi_{W_\st,\Omega_\st}}
\ar[dr]^{\pi_{\st}}
& & & &
\color{black}{V_\st\otimes_\st (W \otimes_A \Omega)_\st}  \ar[ddd]^-{\varphi_{V,W\otimes_A\Omega}}\\
 &\color{black}{V_\st\otimes_{A_\st} W_\st\otimes_{A_\st} \Omega_\st}
\ar[drr]_-{\varphi_{V_\st,W_\st,\Omega_\st}}& &
\color{white}{\,\,V_\st\otimes_{A_\st}(W\otimes_A\Omega)_\st\,\,}
& \\
 & \color{white}{(V\otimes_A W)\st\otimes_{A_\st}\Omega_\st}
 & &\color{black}{(V\otimes_A W \otimes_A\Omega)_\st} & \\
& & & & \color{black}{(V\otimes (W\otimes_A\Omega))_\st}\ar[lu]_{\pi} 
}
\end{flalign*}
}

\noi of the diagram (\ref{diagram}). 
For the first addend  we have used 
$\tau^{-1}_{\RR^\FF\,23}=(\id\otimes_{\RR^\FF} \varphi^{-1}
_{W_\st, \Omega_\st})\circ \tau^{-1}_{\RR\,{23}}\circ (\id\otimes_{\RR^\FF}
  \varphi_{\Omega_\st,W_\st})$ (recall (\ref{tauRFmap}) and the text above
  (\ref{tauRFmap})),
$\varphi_{V_\st, (W\otimes_A\Omega)_\st}\circ
  \tau^{-1}_{\RR\,23}=\tau^{-1}_{\RR\, 23}\circ\varphi_{V_\st,
    (\Omega\otimes_A W)_\st}$ (because of $H$-equivariance of  $\tau^{-1}_{\RR\,23}$), and 
$\varphi_{V_\star,\Omega_\star,W_\star}\circ \pi_\star=\pi \circ \varphi_{V\otimes_A \Omega,W} \circ \big(\varphi_{V_\star,\Omega_\star}\otimes_{\RR^\FF}\id \big)  
$. This last equality follows from a similar diagram as in
  Lemma \ref{inductions}. (Hint: start with the outer diagram drawn in (\ref{diagram})
  and use in the upper left corner the projection $\pi_{\star_{12}}$ instead
  of $\pi_{\star_{23}}$.)
\sk

Equation (\ref{eqn:quantprodcontmp}) shows commutativity of the diagram (\ref{eqn:quantprodconhat}).
Since the map $\varphi_{V,W}$ canonically induces on the quotient
$V_\st\otimes_{A_\st}W_\st$ the
map $\varphi_{V_\st, W_\st}$ (see (\ref{eqn:lemma4})), and since
 the map $ D_\FF(\dd\,\widehat{\oplus}_\RR\dd)  $ canonically
induces on the quotient $(V\otimes_A W)_\st$  the map
$D_\FF(\dd\,{\oplus}_\RR\dd) $ (in fact, the actions $\RA$ and $\ra$ present in $D_\FF$
are canonically induced on the quotient),
the commutative diagram (\ref{eqn:quantprodconhat}) induces the commutative diagram 
(\ref{eqn:quantprodcon}).
\end{proof}


\subsection{\label{sec:leftrightcon}From right to left connections}
Let $(H,\RR)$ be a quasitriangular Hopf algebra and $A$ be a  quasi-commutative $\AAAlg{H}{}{}$-algebra.
We showed in Theorem \ref{theo:qclriso} that the map $D_\RR$, i.e.~the deformation isomorphism
with twist $\FF=\RR$, provides an isomorphism between right and left 
$A$-linear maps of strong quasi-commutative $\MMMod{H}{A}{A}$-modules.
The aim of this subsection is to prove a similar statement for right and left connections
on a strong quasi-commutative $\MMMod{H}{A}{A}$-module $V$.

\sk
We notice that due to Theorem \ref{theo:qclriso} we have an $\MMMod{H}{A}{A}$-module isomorphism
\begin{flalign}
D_\RR : \Hom_\bbK(V,V\otimes_A\Omega) \to \big(\Hom_\bbK(V,V\otimes_A\Omega)\big)^\op~,~~P\mapsto D_\RR(P) = (\oR^\alpha\RA P)\circ \oR_\alpha\ra
\end{flalign}
between $\big(\Hom_\bbK(V,V\otimes_A\Omega),\cdot,{\RA}\big)$
and $\big(\Hom_\bbK(V,V\otimes_A\Omega),\cdot^\op,{\RA^\cop}\big)$, which restricts to an $\MMMod{H}{A}{A}$-module
 isomorphism (denoted by the same symbol)
\begin{flalign}
D_\RR :\Hom_A(V,V\otimes_A\Omega) \to \big({}_A\Hom(V,V\otimes_A\Omega)\big)^\op~.
\end{flalign}
Composing this map with the inverse braiding map $\tau_\RR^{-1} : V\otimes_A \Omega \to \Omega\otimes_A V$
we obtain another $\MMMod{H}{A}{A}$-module isomorphism
\begin{flalign}\label{leftrightconnectioniso}
\widetilde{D}_\RR : \Hom_\bbK(V,V\otimes_A\Omega) \to \big(\Hom_\bbK(V,\Omega\otimes_A V)\big)^\op~,~~P\mapsto \widetilde{D}_\RR(P) = \tau_\RR^{-1}\circ D_\RR(P)~,
\end{flalign}
which also restricts to an $\MMMod{H}{A}{A}$-module isomorphism (denoted by the same symbol)
\begin{flalign}\label{leftrighthomomorphismiso}
\widetilde{D}_\RR :\Hom_A(V,V\otimes_A\Omega) \to \big({}_A\Hom(V,\Omega \otimes_A V)\big)^\op~.
\end{flalign}

\begin{Theorem}\label{theo:leftrightconiso}
  Let $(H,\RR)$ be a quasitriangular Hopf algebra, $A$ be a quasi-commutative $\AAAlg{H}{}{}$-algebra,
   $V$ be a strong quasi-commutative $\MMMod{H}{A}{A}$-module and 
 $\bigl(\Omega^\bullet,\wedge,\mathrm{d}\bigr)$ be a graded 
quasi-commutative left $H$-covariant differential calculus over $A$.
Then the $\MMMod{H}{A}{A}$-module isomorphism (\ref{leftrightconnectioniso}) restricts to 
the affine space isomorphism
\begin{flalign}
 \widetilde{D}_\RR: \Con_A(V)\to {_A}\Con(V)~,\quad \dd \mapsto \widetilde{D}_\RR(\dd) =\tau_\RR^{-1} \circ  \,(\oR^\alpha\RA\dd)\circ \oR_\alpha\ra~,
\end{flalign}
where $\Con_A(V)$ and ${}_A\Con(V)$ are respectively affine spaces over the isomorphic $\MMMod{H}{A}{A}$-modules
 (cf.~(\ref{leftrighthomomorphismiso})) $\Hom_A(V,V\otimes_A\Omega)$
and $\big({}_A\Hom(V,\Omega \otimes_A V)\big)^\op$.
\end{Theorem}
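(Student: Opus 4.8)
The plan is to reduce the whole statement to a single Leibniz-rule check. The $\MMMod{H}{A}{A}$-module isomorphism $\widetilde{D}_\RR$ of (\ref{leftrighthomomorphismiso}) is already established, so $\widetilde{D}_\RR$ is in particular a $\bbK$-linear bijection $\Hom_A(V,V\otimes_A\Omega)\to\big({}_A\Hom(V,\Omega\otimes_A V)\big)^\op$. Since $\Con_A(V)$ and ${}_A\Con(V)$ are affine spaces over the underlying $\bbK$-modules of these two Hom-modules, once I know that $\widetilde{D}_\RR$ and its inverse interchange right with left connections, compatibility with the affine structures is automatic from $\bbK$-linearity: for $\dd\in\Con_A(V)$ and $P\in\Hom_A(V,V\otimes_A\Omega)$ one has $\widetilde{D}_\RR(\dd+P)=\widetilde{D}_\RR(\dd)+\widetilde{D}_\RR(P)$, with the two summands lying in ${}_A\Con(V)$ and ${}_A\Hom(V,\Omega\otimes_A V)$ respectively. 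Thus the entire content is the assertion that $\widetilde{D}_\RR(\dd)$ obeys the left Leibniz rule (\ref{eqn:leftcon}).

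For this I would exploit that $\widetilde{D}_\RR=\tau_\RR^{-1}\circ D_\RR$ with $D_\RR(\dd)=(\oR^\al\RA\dd)\circ\oR_\al\ra$ being exactly the deformation map $D_\FF(\dd)$ for the twist $\FF=\RR$, so that $\FF^{-1}=\oR^\al\otimes\oR_\al$. Hence the right Leibniz computation already carried out inside the proof of Theorem \ref{theo:condef}, namely (\ref{Dnabvsta}), applies verbatim and, specialized to $\FF=\RR$, gives for all $a\in A$, $v\in V$
\[
D_\RR(\dd)(v\st_\RR a)=D_\RR(\dd)(v)\st_\RR a+(\oR^\be\ra v)\otimes_A(\oR_\be\ra\dif a)~.
\]
Now Lemma \ref{HAVop} identifies $A_{\st_\RR}=A^\op$ and $V_{\st_\RR}=V^\op$, and moreover $V\otimes_A\Omega$ is strong quasi-commutative by Proposition \ref{qc1qc2thenqc12} (using that $\Omega$ is strong quasi-commutative, which follows from graded quasi-commutativity of the calculus). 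Rewriting the $\st_\RR$-actions as the corresponding opposite actions, i.e.\ $v\st_\RR a=a\cdot v$ on $V$ (strong quasi-commutativity, cf.\ (\ref{QCrightactstrong})) and likewise on $V\otimes_A\Omega$, the identity turns into the braided relation
\[
D_\RR(\dd)(a\cdot v)=a\cdot\big(D_\RR(\dd)(v)\big)+(\oR^\be\ra v)\otimes_A(\oR_\be\ra\dif a)~,
\]
with $a$ acting on the left $V$-factor of $V\otimes_A\Omega$.

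The final step is to apply the braiding $\tau_\RR^{-1}\colon V\otimes_A\Omega\to\Omega\otimes_A V$, which is an $\MMMod{H}{A}{A}$-module homomorphism by Lemma \ref{nablaaw}. Its left $A$-linearity turns the first term into $a\cdot\widetilde{D}_\RR(\dd)(v)$, while on the second term the explicit formula $\tau_\RR^{-1}(v'\otimes_A\theta)=(\R_\al\ra\theta)\otimes_A(\R^\al\ra v')$ produces $(\R_\al\oR_\be\ra\dif a)\otimes_A(\R^\al\oR^\be\ra v)$; the two sets of $\RR$-legs collapse because $\sum\R_\al\oR_\be\otimes\R^\al\oR^\be=\RR_{21}\,\RR_{21}^{-1}=1\otimes 1$, leaving precisely $\dif a\otimes_A v$. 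Hence $\widetilde{D}_\RR(\dd)(a\cdot v)=a\cdot\widetilde{D}_\RR(\dd)(v)+\dif a\otimes_A v$, the left Leibniz rule, so $\widetilde{D}_\RR(\dd)\in{}_A\Con(V)$. Since every manipulation above is an isomorphism (the module iso $\widetilde{D}_\RR$, the identifications of Lemma \ref{HAVop}, the braiding $\tau_\RR^{-1}$, and the $\RR$-collapse), the chain is in fact an equivalence, so $\widetilde{D}_\RR^{-1}$ carries left connections back to right connections — which one may also see directly by the mirror computation with the alternative quasitriangular structure $\RR_{21}^{-1}$, for which Lemma \ref{HAVop} still holds (Remark \ref{rem:otherRop}). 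Together with $\bbK$-linear bijectivity this yields the asserted affine space isomorphism. I expect the main obstacle to be purely bookkeeping: tracking the $\st_\RR$-actions against the opposite left/right $A$-actions and keeping the two families of $\RR$-matrix legs correctly ordered through $\tau_\RR^{-1}$, so that the second Leibniz term genuinely collapses to $\dif a\otimes_A v$ rather than a residual braided expression.
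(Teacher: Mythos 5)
Your proposal is correct and follows essentially the same route as the paper's own proof: specialize the deformed Leibniz identity (\ref{Dnabvsta}) to the twist $\FF=\RR$, use Lemma \ref{HAVop} (with strong quasi-commutativity of $V$ and of $V\otimes_A\Omega$, via Proposition \ref{qc1qc2thenqc12}) to rewrite the $\star_\RR$-actions as opposite actions, apply the left $A$-linear isomorphism $\tau_\RR^{-1}$, and obtain the left Leibniz rule from the collapse $\RR_{21}\RR_{21}^{-1}=1\otimes 1$, with $\bbK$-linearity giving compatibility with the affine structures. The only cosmetic differences are that you make the $\RR$-leg collapse explicit and argue invertibility via reversibility of the chain (or the mirror computation with $\RR_{21}^{-1}$, cf.~Remark \ref{rem:otherRop}), whereas the paper records the explicit inverse $\widetilde{D}_\RR^{-1}=\tau_\RR\circ D_\RR^{-1}$ using Remark \ref{rem:Dinv}.
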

\begin{proof}
 By Lemma \ref{HAVop} we have $A_{\star_\RR} = A^\op$, $\Omega_{\star_\RR} = \Omega^{\op}$ and $V_{\star_\RR}=V^\op$.
 These equalities and the equality  (\ref{Dnabvsta}) (see Theorem \ref{theo:condef}) imply, for all $\dd\in\Con_A(V)$, $a\in A$ and $v\in V$,
\begin{flalign}
 \nn D_\RR(\dd)(a\cdot v) = D_\RR(\dd)(v\star_{\RR} a) &=
 D_\RR(\dd)(v)\star_{\RR} a + 
(\oR^\alpha\trgl v)\otimes_A(\oR_\alpha\trgl\mathrm{d} a)~\\
&= a\cdot D_\RR(\dd)(v) + (\oR^\alpha\trgl v)\otimes_A(\oR_\alpha\trgl\mathrm{d} a)~.
\end{flalign}
Applying the $\MMMod{H}{A}{A}$-module isomorphism $\tau_\RR^{-1}$ we find
\begin{flalign}
 \widetilde{D}_\RR(\dd)(a\cdot v) = a\cdot \widetilde{D}_\RR(\dd)(v) + \mathrm{d}a\otimes_A v~.
\end{flalign}
Thus $\widetilde{D}_\RR(\dd)\in{_A}\Con(V)$ is a left connection on $V$. The map $\widetilde{D}_\RR$ is invertible via
\begin{flalign}
 \widetilde{D}_\RR^{-1}: {_A}\Con(V)\to \Con_A(V)~,\quad \dd \mapsto \widetilde{D}_\RR^{-1}(\dd) =
 \tau_\RR\circ \bigl(D_\RR^{-1}(\dd)\bigr)~,
\end{flalign}
where $D_\RR^{-1}(\dd) 
=(\R^\alpha\RA^\cop
\dd)\circ\R_\alpha\ra\;$,
recall $\RA^\cop=\RA_\RR$, and Remark \ref{rem:Dinv}.

Finally, $\widetilde{D}_\RR$ is an affine space isomorphism because, for all $\dd\in \Con_A(V)$
and $P\in \Hom_A(V,V\otimes_A\Omega)$, $\widetilde{D}_\RR(\dd + P) = \widetilde{D}_\RR(\dd) + \widetilde{D}_\RR(P)$,
where $\widetilde{D}_\RR(P) \in \big({}_A\Hom(V,\Omega\otimes_A
V)\big)^\op$ is the image of the right $A$-linear map $P$ under the $\MMMod{H}{A}{A}$-module 
 isomorphism (\ref{leftrighthomomorphismiso}).
\end{proof}
\begin{Remark}
Similarly to Theorem \ref{theo:rldefcomp} one can show that 
the right to left isomorphism for connections is compatible with twist deformation:
Let $\FF\in H\otimes H$ be a twist of the quasitriangular Hopf algebra $(H,\RR)$.
 Then, due to $H$-equivariance of the $\varphi$ and $\tau_\RR$ maps, the following diagram of isomorphisms of affine
spaces over $\MMMod{H^\FF}{A_\star}{A_\star}$-modules commutes
\begin{flalign}
\xymatrix{
\ar[d]_-{\widetilde{D}_\RR}\Con_A(V)_\star \ar[rr]^-{\widetilde{D}_\FF} & & \Con_{A_\star}(V_\star)\ar[d]^-{\widetilde{D}_{\RR^\FF}}\\
{}_A\Con(V)_\star \ar[rr]^-{\widetilde{D}_\FF^\cop}& & {}_{A_\star}\Con(V_\star)
}
\end{flalign}
\end{Remark}


\subsection{Connections induced on dual modules}
A connection $\dd$ on a finitely generated and projective $\MMMod{}{}{A}$-module $V$
induces in a canonical way a connection
$\dd^\prime$
on the dual $\MMMod{}{A}{}$-module 
$V^\prime=\Hom_A(V,A)$.
 In the beginning of this subsection 
we review in detail this construction.
It  will be needed in the remaining part of the subsection,
when we consider connections $\dd$ on $\MMMod{H}{}{A}$-modules
(and $\MMMod{H}{A}{A}$-modules), the associated  dual connections
$\dd'$, and their twist deformation. 
\sk

There are various equivalent definitions of  finitely generated and projective modules over a ring, see e.g.~the monograph
\cite{LamBook}. For our purposes it is enough to use their  convenient 
characterization in terms of a pair of dual
bases (this is the so-called  {\it Dual Basis Lemma}).
\begin{Lemma}
 Let $A$ be an algebra. An $\MMMod{}{}{A}$-module $V$ is
finitely generated and projective if and only if there exists a family of elements
$\lbrace v_i\in V:i=1,\dots,n\rbrace$ and right $A$-linear maps
$\lbrace v_i^\prime\in V^\prime:=\Hom_A(V,A):i=1,\dots,n \rbrace$
with $n\in\mathbb{N}$, such that for any $v\in V$ we have
\begin{flalign}
 v=\sum\limits_{i=1}^n v_i\cdot v_i^\prime(v)~.
\end{flalign}

An $\MMMod{}{A}{}$-module $V$ is
finitely generated and projective if and only if there exists a family of elements
$\lbrace v_i\in V:i=1,\dots,n\rbrace$ and left $A$-linear maps $\lbrace v^\prime_i \in V^\prime
:={_A}\Hom(V,A):i=1,\dots,n \rbrace$
with $n\in\mathbb{N}$, such that for any $v\in V$ we have
\begin{flalign}
 v=\sum\limits_{i=1}^n v^\prime_i(v)\cdot v_i~.
\end{flalign}
\end{Lemma}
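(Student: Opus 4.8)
The statement to prove is the \emph{Dual Basis Lemma}: an $\MMMod{}{}{A}$-module $V$ is finitely generated and projective if and only if there is a finite family $\{v_i\}_{i=1}^n\subset V$ and $\{v_i^\prime\}_{i=1}^n\subset V^\prime=\Hom_A(V,A)$ such that $v=\sum_i v_i\cdot v_i^\prime(v)$ for all $v\in V$ (together with the mirror statement for left modules). The plan is to prove both implications by exhibiting the relevant splitting of a free module. Throughout, \emph{finitely generated projective} will mean that $V$ is a direct summand of a finitely generated free module $A^n$, i.e.~there exist $\MMMod{}{}{A}$-module maps $\iota:V\to A^n$ and $p:A^n\to V$ with $p\circ\iota=\id_V$; I would take this as the working definition of projectivity (it is the standard one, equivalent to the lifting property) and deduce the dual basis characterization from it.

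First I would prove the implication ``projective $\Rightarrow$ dual basis''. Let $\{e_i\}_{i=1}^n$ be the standard free generators of $A^n$, so any $x\in A^n$ is uniquely $x=\sum_i e_i\cdot \pi_i(x)$, where $\pi_i:A^n\to A$ are the coordinate right $A$-linear maps. Given the splitting $p\circ\iota=\id_V$, I would set
\begin{flalign}
v_i := p(e_i)\in V~,\qquad v_i^\prime := \pi_i\circ \iota\in\Hom_A(V,A)~.\nn
\end{flalign}
Each $v_i^\prime$ is right $A$-linear as a composite of right $A$-linear maps, so $v_i^\prime\in V^\prime$. Then for all $v\in V$,
\begin{flalign}
\sum_{i=1}^n v_i\cdot v_i^\prime(v) = \sum_{i=1}^n p(e_i)\cdot \pi_i(\iota(v)) = p\Big(\sum_{i=1}^n e_i\cdot \pi_i(\iota(v))\Big) = p(\iota(v)) = v~,\nn
\end{flalign}
using right $A$-linearity of $p$ in the middle step and the free expansion of $\iota(v)\in A^n$. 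This gives the desired dual basis.

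For the converse, ``dual basis $\Rightarrow$ projective'', I would use the data $\{v_i\}$, $\{v_i^\prime\}$ to build an explicit splitting. Define $p:A^n\to V$ by $p(x)=\sum_i v_i\cdot \pi_i(x)$ (right $A$-linear, since it is a right $A$-linear combination of the coordinate maps) and $\iota:V\to A^n$ by $\iota(v)=\sum_i e_i\cdot v_i^\prime(v)$, which is right $A$-linear because each $v_i^\prime$ is. Then $(p\circ\iota)(v)=\sum_i v_i\cdot v_i^\prime(v)=v$ by hypothesis, so $p\circ\iota=\id_V$ and $V$ is a direct summand of $A^n$, hence finitely generated and projective. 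The left-module statement is proved by the mirror argument: one replaces $A^n$ (right free) by the left free module, uses left $A$-linear coordinate maps, and writes the reconstruction as $v=\sum_i v_i^\prime(v)\cdot v_i$ with $v_i^\prime\in{}_A\Hom(V,A)$; every step dualizes verbatim. The only real point requiring care—and the step I expect to be the main (mild) obstacle—is bookkeeping the placement of the scalars: in a right module one must keep $A$-coefficients on the correct side of $v_i$ and verify right $A$-linearity of $p$ and $\iota$ with the convention $p(e_i\cdot a)=p(e_i)\cdot a$, and correspondingly put $v_i^\prime(v)$ to the \emph{right} of $v_i$, whereas in the left-module case the coefficients sit on the left; confusing these sides is the typical source of error, but no genuine difficulty arises.
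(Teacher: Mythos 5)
Your proof is correct and is the standard argument for the Dual Basis Lemma: extract $v_i=p(e_i)$ and $v_i^\prime=\pi_i\circ\iota$ from a splitting $p\circ\iota=\id_V$ of a finite free module, and conversely assemble the splitting from the dual basis data, with the handedness bookkeeping handled properly in both the right- and left-module cases. The paper itself gives no proof of this lemma---it states it as a known characterization and cites the monograph \cite{LamBook}---and your argument is essentially the textbook proof found there, so there is nothing to reconcile.
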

The set $\lbrace v_i,v^\prime_i:i=1,\dots,n \rbrace$ is loosely referred to ``pair of dual bases'' for $V$,
even though $\lbrace v_i\rbrace$ is just a generating set of $V$ and not necessarily a basis.
\sk

The dual $V^\prime$ of a finitely generated and projective module $V$ is also finitely
generated and projective, moreover the dual of the dual is canonically
identified with the original module $V$. We state  these properties without proof since they
can be found in standard textbooks, e.g.~\cite{LamBook}.
\begin{Proposition}\label{propo:fgpmodprop}
 Let $V$ be a finitely generated and projective $\MMMod{}{}{A}$-module with
a pair of dual bases $\lbrace v_i,v_i^\prime:i=1,\dots,n \rbrace$. For any $v\in V$ let
$v^{\prime\prime} \in V^{\prime\prime}:= {_A}\Hom(V^\prime,A)$ be defined by
$v^{\prime\prime}(v^\prime) := v^\prime(v)$, for all $v^\prime \in V^\prime$. We have
\begin{enumerate}
 \item $\lbrace v_i^\prime,v_i^{\prime\prime}:i=1,\dots,n\rbrace$ is a pair of dual bases for $V^\prime$
\item $V^\prime$ is a finitely generated and projective $\MMMod{}{A}{}$-module
\item The canonical map $V\to V^{\prime\prime}\,,~v\mapsto v^{\prime\prime}$ is an isomorphism
of $\MMMod{}{}{A}$-modules
\end{enumerate}
\end{Proposition}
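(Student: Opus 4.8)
The plan is to work directly with the pair of dual bases, since the Dual Basis Lemma reduces every assertion to an explicit finite-sum manipulation. First I would fix the module structures involved: $V^\prime = \Hom_A(V,A)$ is a left $A$-module via $(a\cdot v^\prime)(v) := a\,\bigl(v^\prime(v)\bigr)$, right $A$-linearity of $a\cdot v^\prime$ following from associativity in $A$; consequently $V^{\prime\prime} = {}_A\Hom(V^\prime,A)$ is a right $A$-module via $(v^{\prime\prime}\cdot a)(v^\prime) := \bigl(v^{\prime\prime}(v^\prime)\bigr)\,a$. A quick check shows that $v^{\prime\prime}(v^\prime) := v^\prime(v)$ really lands in $V^{\prime\prime}$, i.e.~$v^{\prime\prime}$ is left $A$-linear since $v^{\prime\prime}(a\cdot v^\prime) = (a\cdot v^\prime)(v) = a\,v^\prime(v) = a\cdot v^{\prime\prime}(v^\prime)$, and that $v\mapsto v^{\prime\prime}$ is right $A$-linear, because $(v\cdot a)^{\prime\prime}(v^\prime) = v^\prime(v\cdot a) = v^\prime(v)\,a = (v^{\prime\prime}\cdot a)(v^\prime)$.

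For item (i), I would start from the defining identity $v=\sum_i v_i\cdot v_i^\prime(v)$ and apply an arbitrary $v^\prime\in V^\prime$, using right $A$-linearity of $v^\prime$ to obtain $v^\prime(v)=\sum_i v^\prime(v_i)\,v_i^\prime(v) = \bigl(\sum_i v^\prime(v_i)\cdot v_i^\prime\bigr)(v)$. Since this holds for all $v\in V$, it gives the expansion $v^\prime=\sum_i v_i^{\prime\prime}(v^\prime)\cdot v_i^\prime$ in the left $A$-module $V^\prime$, recalling $v_i^{\prime\prime}(v^\prime)=v^\prime(v_i)$. This is precisely the statement that $\lbrace v_i^\prime,v_i^{\prime\prime}\rbrace$ is a pair of dual bases for $V^\prime$. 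Item (ii) is then immediate from the Dual Basis Lemma applied to the left $A$-module $V^\prime$.

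For item (iii), injectivity of $v\mapsto v^{\prime\prime}$ is clear: if $v^\prime(v)=0$ for all $v^\prime$, then in particular $v_i^\prime(v)=0$, whence $v=\sum_i v_i\cdot v_i^\prime(v)=0$. For surjectivity, given $\phi\in V^{\prime\prime}={}_A\Hom(V^\prime,A)$ I would produce a preimage explicitly by setting $v:=\sum_i v_i\cdot \phi(v_i^\prime)\in V$. Evaluating the associated $v^{\prime\prime}$ on an arbitrary $v^\prime$ and using right $A$-linearity of $v^\prime$ gives $v^{\prime\prime}(v^\prime)=v^\prime(v)=\sum_i v^\prime(v_i)\,\phi(v_i^\prime)$; on the other hand, feeding the item-(i) expansion $v^\prime=\sum_i v^\prime(v_i)\cdot v_i^\prime$ into $\phi$ and invoking its left $A$-linearity yields $\phi(v^\prime)=\sum_i v^\prime(v_i)\,\phi(v_i^\prime)$, so $v^{\prime\prime}=\phi$.

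The individual computations are all routine; the only real care needed is bookkeeping, since the isomorphism $V\simeq V^{\prime\prime}$ interlocks the left $A$-module structure of $V^\prime$ with the right $A$-module structures of $V$ and $A$. The surjectivity step is the one place where both the dual-basis expansion of item (i) and the left $A$-linearity of $\phi$ must be applied together and in the correct order, and I expect getting this combination right—rather than any single estimate—to be the main point.
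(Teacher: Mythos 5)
Your proof is correct, and all the handedness bookkeeping (left $A$-module structure on $V^\prime$, right $A$-module structure on $V^{\prime\prime}$, right $A$-linearity of $v\mapsto v^{\prime\prime}$) checks out. The paper itself states this proposition without proof, deferring to standard textbooks (Lam), and your argument is exactly the standard dual-basis computation those references give — in particular the explicit preimage $v=\sum_{i=1}^n v_i\cdot\phi(v_i^\prime)$ for surjectivity, combined with the item-(i) expansion and left $A$-linearity of $\phi$, is the canonical route — so there is nothing to add.
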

\sk

We now show that for  finitely generated and projective $\MMMod{}{}{A}$-modules $V$ there is an isomorphism 
$\Hom_A(V,\Omega)\simeq \Omega\otimes_A V^\prime$. It is this property
that  allows us to construct connections on $V^\prime$ by a canonical dualization procedure.
\sk
\begin{Proposition}\label{propo:projhomiso}
 Let $V$ be a finitely generated and projective $\MMMod{}{}{A}$-module and let $\Omega$ be any $\MMMod{}{A}{A}$-module. 
Then there exists an $\MMMod{}{A}{}$-module isomorphism (evaluation map)
\eq\label{iotadef}
\iota : \Omega\otimes_A V^\prime \to \Hom_A(V,\Omega)
\en defined by,
for all $v\in V$, $\omega\in \Omega$ and $v^\prime\in V^\prime$,
$\bigl(\iota(\omega\otimes_A v^\prime)\bigr)(v):= \omega\cdot v^\prime(v)$.

 If in addition $V$ is an $\MMMod{H}{}{A}$-module and $\Omega$ is an $\MMMod{H}{A}{A}$-module,
  then $\iota$ is an $\MMMod{H}{A}{}$-module
 isomorphism.
\end{Proposition}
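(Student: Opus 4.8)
The plan is to verify directly that the stated formula defines a well-defined left $A$-linear bijection, to construct its inverse from a pair of dual bases, and finally to establish $H$-equivariance by a coproduct computation. First I would check that $\iota$ is well-defined on the balanced tensor product $\Omega\otimes_A V^\prime$. The assignment $(\omega,v^\prime)\mapsto [v\mapsto \omega\cdot v^\prime(v)]$ is $\bbK$-bilinear and $A$-balanced, since for $a\in A$ one has $(\omega\cdot a)\cdot v^\prime(v)=\omega\cdot(a\,v^\prime(v))=\omega\cdot (a\cdot v^\prime)(v)$, using associativity of the right $\Omega$-action and the definition $(a\cdot v^\prime)(v)=a\,v^\prime(v)$ of the left $A$-action on $V^\prime=\Hom_A(V,A)$. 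Hence it descends to $\Omega\otimes_A V^\prime$. Moreover each $\iota(\omega\otimes_A v^\prime)$ is right $A$-linear, because $\omega\cdot v^\prime(v\cdot b)=\omega\cdot(v^\prime(v)\,b)=(\omega\cdot v^\prime(v))\cdot b$, using right $A$-linearity of $v^\prime$ and the right $A$-module structure of $\Omega$. Left $A$-linearity of $\iota$ then follows from $\Omega$ being an $A$-bimodule: $(a\cdot\omega)\cdot v^\prime(v)=a\cdot(\omega\cdot v^\prime(v))$, so $\iota(a\cdot(\omega\otimes_A v^\prime))=a\cdot\iota(\omega\otimes_A v^\prime)$.

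For invertibility I would use finite generation and projectivity of $V$. Fixing a pair of dual bases $\lbrace v_i,v_i^\prime\rbrace$, I define $\iota^{-1}:\Hom_A(V,\Omega)\to\Omega\otimes_A V^\prime$ by $P\mapsto \sum_i P(v_i)\otimes_A v_i^\prime$. That $\iota\circ\iota^{-1}=\id$ is immediate from right $A$-linearity of $P$ and the dual basis relation: $(\iota(\sum_i P(v_i)\otimes_A v_i^\prime))(v)=\sum_i P(v_i)\cdot v_i^\prime(v)=\sum_i P(v_i\cdot v_i^\prime(v))=P(\sum_i v_i\cdot v_i^\prime(v))=P(v)$. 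For $\iota^{-1}\circ\iota=\id$ I would move scalars across $\otimes_A$ and invoke Proposition \ref{propo:fgpmodprop}: with $P=\iota(\omega\otimes_A v^\prime)$ one gets $\sum_i (\omega\cdot v^\prime(v_i))\otimes_A v_i^\prime=\omega\otimes_A(\sum_i v^\prime(v_i)\cdot v_i^\prime)=\omega\otimes_A v^\prime$, where the last equality is exactly the dual basis expansion of $v^\prime\in V^\prime$ with respect to the dual basis $\lbrace v_i^\prime,v_i^{\prime\prime}\rbrace$ of $V^\prime$ provided by Proposition \ref{propo:fgpmodprop}(1), together with $v_i^{\prime\prime}(v^\prime)=v^\prime(v_i)$.

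For the covariant case I would first record the relevant $H$-actions: on $\Omega\otimes_A V^\prime$ the coproduct action $\xi\ra(\omega\otimes_A v^\prime)=(\xi_1\ra\omega)\otimes_A(\xi_2\RA v^\prime)$, where the adjoint action on $V^\prime$ reads $(\xi\RA v^\prime)(v)=\xi_1\ra(v^\prime(S(\xi_2)\ra v))$; and on $\Hom_A(V,\Omega)$ the adjoint action $(\xi\RA P)(v)=\xi_1\ra(P(S(\xi_2)\ra v))$. Evaluating $\iota(\xi\ra(\omega\otimes_A v^\prime))$ at $v$ and expanding the double coproduct via coassociativity gives $(\xi_1\ra\omega)\cdot(\xi_2\ra v^\prime(S(\xi_3)\ra v))$. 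Applying the right-action covariance (\ref{eqn:moduleHcovariance1}) in the form $(\eta_1\ra\omega)\cdot(\eta_2\ra a)=\eta\ra(\omega\cdot a)$ collapses the first two legs, yielding $\xi_1\ra(\omega\cdot v^\prime(S(\xi_2)\ra v))$, which is precisely $(\xi\RA\iota(\omega\otimes_A v^\prime))(v)$. Together with the left $A$-linearity already shown, this proves $\iota$ is an $\MMMod{H}{A}{}$-module isomorphism.

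I expect the main bookkeeping obstacle to be the last step, where the triple iterated coproduct must be resolved correctly by coassociativity before the bimodule covariance axiom can be applied; the structural crux, by contrast, is the appeal to Proposition \ref{propo:fgpmodprop} for the dual basis expansion of $V^\prime$ used in establishing $\iota^{-1}\circ\iota=\id$.
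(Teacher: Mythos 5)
Your proof is correct and takes essentially the same route as the paper's: the same evaluation formula, the same dual-basis inverse $\iota^{-1}(P)=\sum_{i}P(v_i)\otimes_A v_i^\prime$, and the same coassociativity-plus-covariance computation (via (\ref{eqn:moduleHcovariance1})) for $H$-equivariance. The only difference is that you spell out steps the paper leaves implicit — the $A$-balancedness of the defining assignment, the right $A$-linearity of $\iota(\omega\otimes_A v^\prime)$, and the verification of $\iota^{-1}\circ\iota=\id$ via the dual bases $\lbrace v_i^\prime, v_i^{\prime\prime}\rbrace$ of Proposition \ref{propo:fgpmodprop} — and all of these are carried out correctly.
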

\begin{proof}
The map $\iota$ is an $\MMMod{}{A}{}$-module homomorphism, 
for all $a\in A$, $\omega\in \Omega$, $v^\prime\in V^\prime$ and $v\in V$,
\begin{flalign}
\bigl(\iota(a\cdot \omega\otimes_A v^\prime)\bigr)(v) = a\cdot \omega \cdot v^\prime(v) =  
a\cdot \bigl(\iota(\omega\otimes_A v^\prime)\bigr)(v) = \bigl(a\cdot \iota(\omega\otimes_A v^\prime)\bigr)(v)~.
\end{flalign}
Using a pair of dual bases $\{v_i,v^\prime_i : i=1,\ldots n\}$ for $V$, we
can invert the map $\iota$ via
\begin{flalign}\label{eqn:projhomiso}
\iota^{-1}:\Hom_A(V,\Omega)\to \Omega\otimes_A V^\prime~,\quad P\mapsto \iota^{-1}(P)
 = \sum\limits_{i=1}^n P(v_i)\otimes_A v_i^\prime~. 
\end{flalign}

Let now $V$ be an $\MMMod{H}{}{A}$-module and $\Omega$ be an $\MMMod{H}{A}{A}$-module, 
then $V^\prime,\,\Omega\otimes_A V^\prime,\,\Hom_A(V,\Omega)$ are $\MMMod{H}{A}{}$-modules.
 The isomorphism $\iota$ respects the $\MMMod{H}{}{}$-module structure,
since for all $\xi\in H$, $\omega\in \Omega$, $v^\prime\in V^\prime$ and $v\in V$,
\begin{flalign}
 \nn \bigl(\iota(\xi\ra (\omega\otimes_A v^\prime))\bigr)(v) &= (\xi_1\ra \omega)\cdot (\xi_2\RA v^\prime)(v) 
= (\xi_1\ra \omega)\cdot \bigl(\xi_2\ra v^\prime(S(\xi_3)\ra v)\bigr)\\
&= \xi_1\ra\bigl(\omega\cdot v^\prime(S(\xi_2)\ra v)\bigr) = \bigl(\xi\RA \bigl(\iota(\omega\otimes_A v^\prime)\bigr)\bigr)(v)~.
\end{flalign}
\end{proof}

In order to construct the dual connection $\dd^\prime\in {}_A\Con(V^\prime)$ of
a connection $\dd\in\Con_A(V)$  we first investigate a certain
dualization of $\bbK$-linear maps and $A$-linear maps (like the
difference of two connections on $V$). 
Let $A$ be an algebra and let $\bigl(\Omega^\bullet,\wedge,\dif\bigr)$
be a differential calculus over $A$.
Let further $V$ be an $\MMMod{}{}{A}$-module and consider the $\bbK$-module of $\bbK$-linear
maps $\Hom_\bbK(V,V\otimes_A \Omega)$. We can associate to every $P\in \Hom_\bbK(V,V\otimes_A \Omega)$
a $P^\Hom \in {}_{A}\Hom(V^\prime,\Hom_\bbK(V,\Omega))$
by defining, for all $v^\prime\in V^\prime$ and $v\in V$,
\begin{subequations}\label{eqn:Phom2}
\begin{flalign}
P^\Hom(v^\prime)(v) := - (v^\prime\otimes \id)P(v)~,
\end{flalign}
i.e., \begin{flalign}\label{eqn:Phom}
P^\Hom(v^\prime) := -\wedge\circ (v^\prime\otimes \id)\circ P~.
\end{flalign}
\end{subequations}
In (\ref{eqn:Phom2}) $v^\prime\otimes \id : V\otimes_A\Omega \to A\otimes_A \Omega$ denotes the right $A$-linear map
defined by, for all $v\in V$ and $\omega\in\Omega$, $(v^\prime\otimes \id)(v\otimes_A\omega) = v^\prime(v)\otimes_A\omega$
and $\wedge: A\otimes_A\Omega \to \Omega$ is the $\MMMod{}{A}{A}$-module isomorphism induced from
the product in $\Omega^\bullet$,  i.e.~$\wedge(a\otimes_A \omega) = a\wedge \omega = a\cdot \omega$.
The map $P^\Hom$ is left $A$-linear, for all $a\in A$ and $v^\prime\in V^\prime$,
\begin{flalign}
\nn P^\Hom(a\cdot v^\prime) &= -\wedge\circ \,( a\cdot v^\prime \otimes \id) \circ P 
= -\wedge \circ\, l_a \circ (v^\prime\otimes \id)\circ P \\
&= -_{\,} l_a\circ  \wedge  \circ (v^\prime\otimes \id)\circ P  = a\cdot P^\Hom(v^\prime)~.
\end{flalign}

The association of $P^\Hom$ to $P$ can be regarded as a $\bbK$-linear map
\begin{flalign}\label{eqn:Phommap}
{}^\Hom : \Hom_\bbK(V,V\otimes_A \Omega) \to \big({}_A\Hom(V^\prime,\Hom_\bbK(V,\Omega))\big)^\op~,~~P\mapsto P^\Hom~.
\end{flalign}
Furthermore, if $P\in\Hom_A(V,V\otimes_A\Omega)$ then
$P^\Hom \in\big( {}_A\Hom(V^\prime, \Hom_A(V,\Omega))\big)^\op$, since
for all $v^\prime\in V^\prime$,
 all maps in (\ref{eqn:Phom}) are right $A$-linear. Thus, (\ref{eqn:Phommap}) restricts to a $\bbK$-linear map
 \begin{flalign}\label{eqn:Phommaprestr}
{}^\Hom : \Hom_A(V,V\otimes_A \Omega) \to \big( {}_A\Hom(V^\prime,\Hom_A(V,\Omega))\big)^\op~.
 \end{flalign}
We now  explain the notation $({~\;~})^\op$ used in these two last equations. Later we are going to consider
an additional $\MMMod{H}{}{}$-module structure on $V$ and $\Omega$, and hence also 
on $V^\prime$, $\Hom_\bbK(V,\Omega)$
and ${}_A\Hom(V^\prime,\Hom_\bbK(V,\Omega))$.
There are two different induced $\MMMod{H}{}{}$-module structures on a $\bbK$-module of $\bbK$-linear maps
between two $\MMMod{H}{}{}$-modules, they are given by the adjoint actions $\RA$ and $\RA^\cop$. We denote by 
$\big({}_{A}\Hom(V^\prime,\Hom_\bbK(V,\Omega))\big)^\op$ the
$\MMMod{H}{}{}$-module with the $\RA^\cop$ adjoint action, which is the relevant one
 for left $A$-linear maps,
cf. Proposition \ref{ABVAEnd}. 

In order to avoid confusion when we later enrich the theory with $\MMMod{H}{}{}$-module structures,
 we already write in this part $\big({}_{A}\Hom(V^\prime,\Hom_\bbK(V,\Omega))\big)^\op$ rather than 
${}_{A}\Hom(V^\prime,\Hom_\bbK(V,\Omega))$,
even if as $\bbK$-modules they coincide.
\sk 
Notice that for a finitely generated and projective $\MMMod{}{}{A}$-module $V$ we can use
 the $\MMMod{}{A}{}$-module isomorphism $\iota^{-1}$ of Proposition \ref{propo:projhomiso}  to define the
 $\bbK$-linear map
  \begin{flalign}\label{eqn:primemaphom}
{}^\prime  : \Hom_A(V,V\otimes_A \Omega) \to \big( {}_A\Hom(V^\prime,\Omega\otimes_A V^\prime)\big)^\op~,~~P\mapsto P^\prime =  \iota^{-1}\circ P^\Hom~.
 \end{flalign}
We denoted this map by a prime because, as we will see later, 
it  maps the difference of two connections on $V$ in the
difference of their dual connections on $V^\prime$.
\sk 
 
 Let now $\nabla\in\Con_A(V)\subset \Hom_\bbK(V,V\otimes_A\Omega)$, then
 $\nabla^\Hom$ is by construction a left $A$-linear map and thus can not be a candidate for a
  connection on the  dual module $V^\prime$.
 This is why we associate to $\nabla\in \Con_A(V)$ the $\bbK$-linear map
 \begin{flalign}
\widehat{\nabla} : V^\prime \to\Hom_A(V,\Omega) ~,~~ v^\prime \mapsto \widehat{\nabla}(v^\prime) = \dif \circ v^\prime + \nabla^\Hom(v^\prime)~.
 \end{flalign}
 We observe that, for all $v^\prime\in V^\prime$, we have the right $A$-linearity property of
 $\widehat{\nabla}(v^\prime)$, for all $a\in A$ and $v\in V$,
\begin{flalign}
\nn\widehat{\dd}(v^\prime)(v\cdot a) &= \dif\bigl(v^\prime(v\cdot a)\bigr) -  (v^\prime\otimes\id)\dd(v\cdot a)\\
\nn&=\dif (v^\prime(v))\cdot a + v^\prime(v)\cdot \dif a - (v^\prime\otimes\id)\dd (v )\cdot a - (v^\prime\otimes\id)(v\otimes_A \dif a)\\
&=\big(\widehat{\dd}(v^\prime)(v)\big) \cdot a~.
\end{flalign}
Hence we have constructed a map
\begin{flalign}\label{eqn:hatmap}
\widehat{\,~}\, : \Con_A(V) \to \big( \Hom_\bbK(V^\prime,\Hom_A(V,\Omega)) \big)^\op~,~~\nabla\mapsto \widehat{\nabla} ~.
\end{flalign}
With the help of the $\MMMod{}{A}{}$-module  isomorphism $\iota^{-1}$ of
Proposition \ref{propo:projhomiso} we also  define
the map (with slight abuse of notation we denote it with the same
prime symbol used 
on homomorphisms in (\ref{eqn:primemaphom}))
\begin{flalign}\label{eqn:primemapcon}
{}^\prime: \Con_A(V) \to \big(\Hom_\bbK(V^\prime,\Omega\otimes_A V^\prime)\big)^\op~,~~\nabla \mapsto \nabla^\prime = \iota^{-1}\circ \widehat{\nabla}~.
\end{flalign}
Using (\ref{eqn:projhomiso}), the map $\dd^\prime$ acting on an
arbitrary $v^\prime\in V^\prime$ explicitly  reads 
\begin{flalign}
\label{eqn:nabprimeexplicit}
 \dd^\prime(v^\prime) =\iota^{-1}\big(\;\!\widehat{\dd}(v^\prime)\big) 
= \sum\limits_{i=1}^n\left(\dif (v^\prime(v_i)) - (v^\prime\otimes\id)\dd (v_i)\right) \otimes_Av_i^\prime ~.
\end{flalign}

We now show that $\dd^\prime$ is a connection on $V^\prime$.
\begin{Theorem}\label{theo:dualconn}
Let $V$ be a finitely generated and projective $\MMMod{}{}{A}$-module and let
 $\bigl(\Omega^\bullet,\wedge,\dif\bigr)$ be a  differential 
 calculus over $A$.
The $~^\prime$ maps in (\ref{eqn:primemapcon}) and in (\ref{eqn:primemaphom}) provide an affine space isomorphism
\begin{flalign}\label{eqn:conprimetheo}
{}^\prime: \Con_A(V) \to {}_A\Con(V^\prime)~,~~\nabla\mapsto \nabla^\prime~,
\end{flalign}
where $\Con_A(V)$ and ${}_A\Con(V^\prime)$ are affine spaces
over the isomorphic $\bbK$-modules $\Hom_A(V,V\otimes_A\Omega)$ and $\big({}_A\Hom(V^\prime,\Omega\otimes_A V^\prime)\big)^\op$, 
respectively.
\end{Theorem}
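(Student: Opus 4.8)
The plan is to establish the statement in three stages: first showing that for each $\dd \in \Con_A(V)$ the associated map $\dd^\prime$ is genuinely a left connection on $V^\prime$; second, that the assignment $\dd \mapsto \dd^\prime$ is an affine space morphism over the module isomorphism $^\prime$ of (\ref{eqn:Phommaprestr}); and third, that it is a bijection. First I would verify the left Leibniz rule (\ref{eqn:leftcon}) for $\dd^\prime$. Using the explicit formula (\ref{eqn:nabprimeexplicit}) and evaluating on $a \cdot v^\prime$ with $a \in A$, $v^\prime \in V^\prime$, I would compute $\widehat{\dd}(a\cdot v^\prime)(v) = \dif\bigl((a\cdot v^\prime)(v)\bigr) - (a\cdot v^\prime \otimes \id)\dd(v)$. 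Since $(a\cdot v^\prime)(v) = a\, v^\prime(v)$, the graded Leibniz rule for $\dif$ produces $\dif a\, \cdot v^\prime(v) + a\cdot \dif(v^\prime(v))$, while the second term yields $a\cdot(v^\prime\otimes\id)\dd(v)$; recombining gives $\widehat{\dd}(a\cdot v^\prime) = a\cdot \widehat{\dd}(v^\prime) + (\dif a)\cdot v^\prime(\,\cdot\,)$. Applying the $\MMMod{}{A}{}$-module isomorphism $\iota^{-1}$ of Proposition \ref{propo:projhomiso} and identifying the extra term via $\iota^{-1}$ as $\dif a \otimes_A v^\prime$ then delivers exactly (\ref{eqn:leftcon}), confirming $\dd^\prime \in {}_A\Con(V^\prime)$.

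Next I would handle the affine structure. The key observation is that for any two connections $\dd, \widetilde\dd \in \Con_A(V)$ their difference $P = \dd - \widetilde\dd$ lies in $\Hom_A(V,V\otimes_A\Omega)$, and the $\dif\circ v^\prime$ term in $\widehat{\dd}$ is independent of the connection. Hence $\widehat{\dd} - \widehat{\widetilde\dd} = \dd^\Hom - \widetilde\dd^{\,\Hom} = P^\Hom$ as elements of $\bigl({}_A\Hom(V^\prime,\Hom_A(V,\Omega))\bigr)^\op$, so that $\dd^\prime - \widetilde\dd^{\,\prime} = \iota^{-1}\circ P^\Hom = P^\prime$. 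This shows precisely that the map (\ref{eqn:conprimetheo}) is an affine map over the $\bbK$-linear map $^\prime$ of (\ref{eqn:primemaphom}) restricted as in (\ref{eqn:primemaprestr}), i.e.~$(\dd + P)^\prime = \dd^\prime + P^\prime$ for $P \in \Hom_A(V,V\otimes_A\Omega)$.

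It then remains to prove bijectivity, and this I expect to be the main obstacle. The affine-map property reduces the question to showing that the underlying linear map $^\prime : \Hom_A(V,V\otimes_A\Omega) \to \bigl({}_A\Hom(V^\prime,\Omega\otimes_A V^\prime)\bigr)^\op$ is an isomorphism of $\bbK$-modules. Since $^\prime = \iota^{-1}\circ{}^\Hom$ and $\iota^{-1}$ is already an isomorphism by Proposition \ref{propo:projhomiso}, the task is to show that $^\Hom$ in (\ref{eqn:Phommaprestr}) is bijective. Here the dual basis lemma and Proposition \ref{propo:fgpmodprop} are essential: exploiting the canonical identification $V \simeq V^{\prime\prime}$ and the pair of dual bases $\{v_i, v_i^\prime\}$, I would write down an explicit inverse sending $Q \in \bigl({}_A\Hom(V^\prime,\Hom_A(V,\Omega))\bigr)^\op$ to the right $A$-linear map $v \mapsto -\sum_i v_i \otimes_A \bigl(\text{the }\Omega\text{-component of } Q(v_i^\prime)(v)\bigr)$, and check that this is two-sided inverse to $^\Hom$ using the reconstruction identity $v = \sum_i v_i\cdot v_i^\prime(v)$. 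The delicate points will be keeping track of the opposite-module conventions and the sign, and verifying that the candidate inverse indeed lands in $\Hom_A(V,V\otimes_A\Omega)$; once $^\Hom$ is shown bijective, composing with $\iota^{-1}$ gives the claimed affine space isomorphism and the theorem follows.
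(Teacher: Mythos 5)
Your proposal is correct and follows essentially the same three-step strategy as the paper's proof: verify the left Leibniz rule for $\dd^\prime$, deduce the affine property from $\bbK$-linearity of $\,{}^\Hom$, and obtain bijectivity from the explicit dual-basis inverse $T\mapsto \bigl(v\mapsto -\sum_i v_i\otimes_A T(v^\prime_i)(v)\bigr)$ of $\,{}^\Hom$, which coincides exactly with the paper's formula (your worry about ``landing in'' $\Hom_A(V,V\otimes_A\Omega)$ is settled at once by right $A$-linearity of each $T(v_i^\prime)$). The only cosmetic difference is in the Leibniz check, which you do basis-free by recognizing the extra term as $\iota(\dif a\otimes_A v^\prime)$ and applying $\iota^{-1}$ from Proposition \ref{propo:projhomiso}, whereas the paper expands $\dd^\prime(a\cdot v^\prime)$ in the dual bases via (\ref{eqn:nabprimeexplicit}) and invokes the double-dual reconstruction of Proposition \ref{propo:fgpmodprop} — the same content, organized slightly more cleanly in your version.
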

\begin{proof}
We first have to show that, for all $\nabla\in \Con_A(V)$, $\nabla^\prime$ defined by   (\ref{eqn:primemapcon})
is a connection on the $\MMMod{}{A}{}$-module $V^\prime$.
Using (\ref{eqn:nabprimeexplicit}) and left $A$-linearity of
$\dd^\Hom$ we have, for all $a\in A$ and $v^\prime\in V^\prime$,
\begin{flalign}
 \nn \dd^\prime(a\cdot v^\prime) &= \sum\limits_{i=1}^n \Bigl(\dif(a\,v^\prime(v_i)) - a\cdot \bigl((v^\prime\otimes\id)\dd (v_i)\bigr)\Bigr)\otimes_A v_i^\prime\\
\nn &=\sum\limits_{i=1}^n \Bigl(\dif a \cdot v^\prime(v_i) + a\cdot \dif( v^\prime(v_i)) - a\cdot \bigl((v^\prime\otimes\id)\dd (v_i)\bigr)\Bigr)\otimes_A v_i^\prime\\
\nn&=\dif a\otimes_A \sum\limits_{i=1}^n v_i^{\prime\prime}(v^\prime)\cdot v_i^\prime + a\cdot \dd^\prime(v^\prime) \\
&= \dif a\otimes_A v^\prime + a\cdot \dd^\prime(v^\prime)~,
\end{flalign}
where in the last two steps we have used also Proposition  \ref{propo:fgpmodprop}.

Next we show that the $~^\prime$ maps in  (\ref{eqn:conprimetheo}) and 
(\ref{eqn:primemaphom}) are an affine space map, 
for all $\nabla\in \Con_A(V)$, $P\in\Hom_A(V,V\otimes_A\Omega)$ and
$v^\prime\in V^\prime$,
\begin{flalign}
\nn (\nabla+P)^\prime(v^\prime) &= \iota^{-1}\big(\dif\circ v^\prime + (\nabla+P)^\Hom(v^\prime)\big) \\
& = 
\iota^{-1}\big(\dif\circ v^\prime + \nabla^\Hom(v^\prime) + P^\Hom(v^\prime)\big)  = (\nabla^\prime + P^\prime )(v^\prime)~,
\end{flalign}
where we have used that ${~}^\Hom$ is $\bbK$-linear.

Finally, the  $~^\prime$ maps in  (\ref{eqn:conprimetheo}) and 
(\ref{eqn:primemaphom}) are isomorphisms because the map $\,{}^\Hom\,$
in (\ref{eqn:Phommap}) is invertible when the $\MMMod{}{}{A}$-module
$V$ is finitely
generated and projective. Explicitly, for all 
$T\in \big({}_A\Hom(V^\prime,\Hom_\bbK(V,\Omega))\big)^\op$
and $v\in V$,  $\,T^{\,\Hom^{-1\!}}(v)=-\sum_{i=1}^n v_i\otimes_A T(v^\prime_i)(v)$,
where $\{v_i,v^\prime_i : i =1,\ldots,
n\}$ is a pair of dual bases for $V$. 
\end{proof}
\sk
\subsubsection*{Deformation of dual connections}
The above detailed preliminary discussion on dual connections is
helpful in order to understand their twist deformation.
In this case we have a Hopf algebra $H$ that acts on 
all the modules encountered in the beginning of this subsection.
We study these actions, in particular the  maps
${}^{\Hom}\,$, $\iota$, and their composition
${}^{\prime}$, are $H$-equivariant.
We then twist deform the
modules and consider the corresponding maps ${}^{\Hom_\star}\,$, 
$\iota_\star$, and their composition ${\,}^{\prime_\star}$. 
Their properties imply that deformation and dualization are
compatible operations: given a connection $\dd$, the dual of the 
twist deformed connection $\widetilde{D}_\FF(\dd)$ is equal to the 
twist deformation of the dual connection $\dd^\prime$.

\sk
Let $H$ be a Hopf algebra,   
$A$ be an $\AAAlg{H}{}{}$-algebra, $V$ an $\MMMod{H}{}{A}$-module and
$(\Omega^\bullet,\wedge,\dif)$ a left $H$-covariant
differential calculus over $A$. 
We recall that in this case $V\otimes_A\Omega$ is an $\MMMod{H}{}{A}$-module and
  $V^\prime,\,\Omega\otimes_A V^\prime,\,\Hom_\bbK(V,\Omega)$ are $\MMMod{H}{A}{}$-modules.
Also the modules appearing in (\ref{eqn:Phommap}) are $\MMMod{H}{}{}$-modules, in detail:
The $H$-action on $\Hom_\bbK(V,V\otimes_A\Omega)$ is given by the
$H$-adjoint action $\RA$ 
(obtained lifting the $H$-actions $\ra$ on $V$
and  $V\otimes_A\Omega$). 
The $H$-action on $\big({}_A\Hom(V^\prime,\Hom_\bbK(V,\Omega))\big)^\op$ is the
$\RA^\cop$ adjoint action, as required by the left
$A$-linearity of these homomorphisms (cf. Proposition
\ref{ABVAEnd}). It is obtained via a lift of the $H$-actions on $V^\prime$ and
$\Hom_\bbK(V,\Omega)$, these latter are themselves adjoint actions $\RA$,
obtained lifting the $H$-actions $\ra$ on  $V$ and $\Omega$.

Throughout this subsection we assume $V$ to be finitely generated and
projective as an $\MMMod{}{}{A}$-module. As proven in the end of
Theorem \ref{theo:dualconn}, the $\bbK$-linear map $\,{}^\Hom\,$ is then invertible.

\begin{Lemma}\label{lem:equivarianceHommap}
The $\bbK$-linear map $~^\Hom\,$ in (\ref{eqn:Phommap}) is an $\MMMod{H}{}{}$-module isomorphism, i.e.~for all $P\in\Hom_\bbK(V,V\otimes_A\Omega)$ and
all $\xi\in H$, $(\xi\RA P)^\Hom = \xi\RA^\cop P^\Hom$.
\end{Lemma}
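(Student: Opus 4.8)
The statement asserts the intertwining property $(\xi\RA P)^\Hom=\xi\RA^\cop P^\Hom$ for the map $^\Hom$ defined in (\ref{eqn:Phommap}). Both sides lie in $\big({}_A\Hom(V^\prime,\Hom_\bbK(V,\Omega))\big)^\op$, i.e.~they send $v^\prime\in V^\prime$ to a $\bbK$-linear map $V\to\Omega$, so the plan is to prove the equality by evaluating both sides on an arbitrary $v^\prime\in V^\prime$ and then on an arbitrary $v\in V$, and to check that the two resulting elements of $\Omega$ coincide. The only inputs needed are: the defining formula $P^\Hom(v^\prime)=-\wedge\circ(v^\prime\otimes\id)\circ P$; the adjoint action $\xi\RA P=\xi_1\ra\circ P\circ S(\xi_2)\ra$ on $\Hom_\bbK(V,V\otimes_A\Omega)$ and the analogous $\RA$-actions on $V^\prime$ and on $\Hom_\bbK(V,\Omega)$; the $\RA^\cop$-action $\xi\RA^\cop(-)=\xi_2\RA\circ(-)\circ S^{-1}(\xi_1)\RA$ read off from Proposition \ref{ABVAEnd}; the $\MMMod{H}{A}{A}$-module axiom $\xi\ra(a\cdot\omega)=(\xi_1\ra a)\cdot(\xi_2\ra\omega)$ for $\Omega$ together with the $H$-equivariance of $\wedge$; and the antipode identities, in particular the $S^{-1}$-version $\Delta\circ S^{-1}=(S^{-1}\otimes S^{-1})\circ\Delta^\cop$ of $S(\xi_1)\otimes S(\xi_2)=S(\xi)_2\otimes S(\xi)_1$.

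For the left hand side I would move $\xi_1\ra$ across the tensor product using the $H$-module structure of $V\otimes_A\Omega$ and the equivariance of $\wedge$, obtaining, with $P(S(\xi_3)\ra v)=p^{(1)}\otimes_A p^{(2)}$,
\begin{flalign*}
(\xi\RA P)^\Hom(v^\prime)(v)=-\,v^\prime(\xi_1\ra p^{(1)})\cdot(\xi_2\ra p^{(2)})~.
\end{flalign*}
For the right hand side I would unfold $\xi\RA^\cop P^\Hom=\xi_2\RA\circ P^\Hom\circ S^{-1}(\xi_1)\RA$, expand the outer $\RA$ on $\Hom_\bbK(V,\Omega)$ and the inner $\RA$ on $V^\prime$, use $\Delta\circ S^{-1}=(S^{-1}\otimes S^{-1})\circ\Delta^\cop$ to resolve the coproduct of $S^{-1}(\xi_1)$, and then use the module axiom for $\Omega$ to pull the remaining factors of $\xi$ through the left $A$-multiplication. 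This produces an expression in a five-fold coproduct of $\xi$ containing the combination $\xi_3\,S^{-1}(\xi_2)$ acting on the $A$-valued pairing $v^\prime(\xi_1\ra p^{(1)})$. Applying the antipode identity $\xi_2\,S^{-1}(\xi_1)=\varepsilon(\xi)\,1$ (the antipode axiom for $H^\cop$) to this adjacent pair collapses it through the counit, and after relabelling the surviving coproduct legs the right hand side reduces to exactly the expression displayed above for the left hand side. Since $^\Hom$ was already shown to be invertible in the proof of Theorem \ref{theo:dualconn}, the intertwining identity upgrades it to an $\MMMod{H}{}{}$-module isomorphism.

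The routine part is purely the definitions and the module axioms; the delicate part, and the main obstacle, is the Sweedler-index bookkeeping across the repeated coproduct expansions together with getting the antipode contraction to land on the right pair of legs. Here the passage from $\RA$ to $\RA^\cop$ is essential rather than incidental: it is precisely the co-opposite coproduct in the $\RA^\cop$-action, combined with $\Delta\circ S^{-1}=(S^{-1}\otimes S^{-1})\circ\Delta^\cop$, that arranges $\xi_2$ and $\xi_3$ into the consecutive product $\xi_3\,S^{-1}(\xi_2)$ on which the antipode axiom can be applied. Had one instead tried to intertwine the $\RA$-actions on both sides, the indices would fail to match; the appearance of $\op$ and $\cop$ in the target of $^\Hom$ is exactly what makes the computation close.
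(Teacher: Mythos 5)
Your proof is correct and takes essentially the same route as the paper: expand the $\RA^\cop$-action on $P^\Hom$, use $H$-equivariance of $\wedge$ (equivalently the $\MMMod{H}{A}{A}$-module axiom for $\Omega$), contract via the $H^\cop$-antipode identity $\xi_2\, S^{-1}(\xi_1)=\varepsilon(\xi)1$, and import invertibility of $\,{}^\Hom$ from the end of the proof of Theorem \ref{theo:dualconn}. The paper's only difference is that it works at the operator level, distributing the adjoint action over the composite $\wedge\circ(v^\prime\otimes\id)\circ P$ so that the contraction $\bigl(\xi_2 S^{-1}(\xi_1)\bigr)\RA v^\prime=\varepsilon(\xi)\,v^\prime$ happens in a single line, thereby avoiding the five-fold Sweedler bookkeeping you carry out elementwise.
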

\begin{proof}
Recalling definition
(\ref{eqn:Phom}) we obtain, for all $v^\prime\in V^\prime$ 
\begin{flalign}
\nn \big(\xi\RA^\cop P^\Hom\big)(v^\prime) &= \xi_2\RA\Big(P^\Hom\big(S^{-1}(\xi_1)\RA v^\prime \big)\Big)\\
\nn &= -\xi_2\RA\Big( \wedge\circ  \big(S^{-1}(\xi_1)\RA v^\prime\otimes \id\big) \circ P\Big)\\
\nn &=- \wedge\circ  \big( (\xi_2 S^{-1}(\xi_1))\RA v^\prime\otimes \id\big) \circ \xi_3\RA P \\
 &=- \wedge\circ  (v^\prime\otimes \id) \circ \xi\RA P = (\xi\RA P)^\Hom~.
\end{flalign}
In line three we used that $\wedge$ is $H$-equivariant.
\end{proof}
\begin{Corollary}\label{cor:equivarianceprimehatmaps}
Since $\iota$ is an $\MMMod{H}{A}{}$-module isomorphism (cf.~Proposition
\ref{propo:projhomiso}) 
it follows that the ~$^\prime$~map (see (\ref{eqn:primemaphom})) is also an $\MMMod{H}{}{}$-module isomorphism.
\end{Corollary}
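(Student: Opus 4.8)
The plan is to combine Lemma \ref{lem:equivarianceHommap} with the $\MMMod{H}{A}{}$-module isomorphism property of $\iota$ established in Proposition \ref{propo:projhomiso}. Recall that the ${}^\prime$ map of (\ref{eqn:primemaphom}) is defined as the composition $P\mapsto P^\prime = \iota^{-1}\circ P^\Hom$, so it suffices to track the $H$-module structures through this composition. The key observation is that both constituent maps are $\MMMod{H}{}{}$-module isomorphisms for the appropriate adjoint actions, and composition of equivariant isomorphisms is again an equivariant isomorphism.

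First I would record the precise module structures. By Lemma \ref{lem:equivarianceHommap} the map $~^\Hom$ intertwines the $\RA$ action on $\Hom_\bbK(V,V\otimes_A\Omega)$ with the $\RA^\cop$ action on $\big({}_A\Hom(V^\prime,\Hom_\bbK(V,\Omega))\big)^\op$. By Proposition \ref{propo:projhomiso}, $\iota : \Omega\otimes_A V^\prime \to \Hom_A(V,\Omega)$ is an $\MMMod{H}{A}{}$-module isomorphism, hence in particular an $\MMMod{H}{}{}$-module isomorphism; therefore so is its inverse $\iota^{-1}$. The only subtlety is that $\iota$ itself is $\RA$-equivariant, while in the composition it is postcomposed on the target of $~^\Hom$, which carries the $\RA^\cop$ structure. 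I would note that the $\iota$ isomorphism lifts to the opposite modules $\big(\Omega\otimes_A V^\prime\big)^\op$ and $\big(\Hom_A(V,\Omega)\big)^\op$ with the $\RA^\cop$ adjoint action, simply because taking opposites and replacing $\RA$ by $\RA^\cop$ is functorial and preserves isomorphisms (this is the same mechanism used in Lemma \ref{ABVCop}).

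With both pieces identified as $\RA^\cop$-equivariant isomorphisms, the conclusion follows: for all $P\in\Hom_A(V,V\otimes_A\Omega)$ and $\xi\in H$,
\begin{flalign}
(\xi\RA P)^\prime = \iota^{-1}\circ (\xi\RA P)^\Hom = \iota^{-1}\circ \big(\xi\RA^\cop P^\Hom\big) = \xi\RA^\cop\big(\iota^{-1}\circ P^\Hom\big) = \xi\RA^\cop P^\prime~,
\end{flalign}
so ${}^\prime$ intertwines the $\RA$ and $\RA^\cop$ actions and is an $\MMMod{H}{}{}$-module isomorphism. I expect the only genuine point requiring care — rather than a real obstacle — to be the bookkeeping of \emph{which} adjoint action ($\RA$ versus $\RA^\cop$) sits on each module, and confirming that $\iota^{-1}$ respects the $\RA^\cop$ structure on the opposite modules; once that is stated explicitly, the corollary is an immediate formal consequence of the two preceding results.
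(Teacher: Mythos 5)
Your proof is correct and is exactly the argument the paper intends: the $^\prime$ map of (\ref{eqn:primemaphom}) is the composition $\iota^{-1}\circ{}^{\Hom}$, Lemma \ref{lem:equivarianceHommap} handles ${}^{\Hom}$, and the $H$-equivariance of $\iota^{-1}$ from Proposition \ref{propo:projhomiso} makes post-composition with $\iota^{-1}$ commute with the $\RA^\cop$ adjoint action --- precisely the mechanism the paper later records explicitly in (\ref{TiotaT92}). One cosmetic caveat: your phrase about lifting $\iota$ to ``opposite modules $(\Omega\otimes_A V^\prime)^\op$ and $(\Hom_A(V,\Omega))^\op$'' misplaces the $(\,\cdot\,)^\op$ decoration, which in the paper's conventions attaches to the hom-spaces of left $A$-linear maps and not to their targets, but your displayed equality chain already supplies the correct justification, since its middle step uses only $\iota^{-1}\circ\,\xi\RA = \xi\ra\,\circ\,\iota^{-1}$.
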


We now consider a twist $\FF\in H\otimes H$ and deform the modules
relevant  in the study of dual connections. In particular, we 
have the $\MMMod{H^\FF}{A_\star}{}$-module isomorphisms
$D_\FF: V^\prime{\,}_\star\to  {V_\star\,}^\prime$ and
$D_\FF : \Hom_\bbK(V,\Omega)_\star, \to \Hom_\bbK(V_\star,\Omega_\star)$
that intertwine between the $\RA$ actions of $H^\FF$ on 
$\Hom_\bbK(V,\Omega)_\star$ and $V^\prime{\,}_\star$, and the  $\RA_\FF$
actions of $H^\FF$ on $ {V_\star\,}^\prime$ and
$\Hom_\bbK(V_\star,\Omega_\star)$.
These $H^\FF$-actions lift (cf. Proposition \ref{ABVAEnd}) to a
$\RA^\cop$ adjoint action of $H^\FF$ 
on
$\big(\Hom_\bbK({V^\prime}_{\,\star}, \Hom_\bbK(V,\Omega)_\star)\big)^\op$
and to a 
${\RA_{\!\FF}}^{\:\cop}$ adjoint action of $H^\FF$ 
on  
$\big( \Hom_\bbK({V_\star}^{\,\prime}, \Hom_\bbK(V_\star,\Omega_\star))\big)^\op$.

The two $\MMMod{H^\FF}{}{}$-modules 
$\big(\Hom_\bbK({V^\prime}_{\,\star}, \Hom_\bbK(V,\Omega)_\star)\big)^\op$
and 
$\big( \Hom_\bbK({V_\star}^{\,\prime}, \Hom_\bbK(V_\star,\Omega_\star))\big)^\op$
are isomorphic:
\begin{Lemma}\label{lem:AdDF}
Let $H$ be a Hopf algebra with twist $\FF\in H\otimes H$,
 $A$ be an $\AAAlg{H}{}{}$-algebra,
  $V$ be an $\MMMod{H}{}{A}$-module and $\Omega$ be an $\MMMod{H}{A}{A}$-module.
The $\bbK$-linear map 
 \begin{flalign}
 \nn \mathrm{Ad}_{D_\FF}:\big(\Hom_\bbK({V^\prime}_{\,\star}, \Hom_\bbK(V,\Omega)_\star)\big)^\op
 \quad &\longrightarrow \quad 
\big( \Hom_\bbK({V_\star}^{\,\prime}, \Hom_\bbK(V_\star,\Omega_\star))\big)^\op~,\\
 T\quad &\longmapsto  \quad \mathrm{Ad}_{D_\FF}(T) = D_\FF\circ T\circ D_\FF^{-1}~,
 \end{flalign}
 is an $\MMMod{H^\FF}{}{}$-module isomorphism.

Furthermore, it restricts  to
an $\MMMod{H^\FF}{}{}$-module isomorphism
\begin{flalign}
\mathrm{Ad}_{D_\FF}: \big({}_{A_\star}\Hom({V^\prime}_{\,\star}, \Hom_\bbK(V,\Omega)_\star)\big)^\op
 \quad &\longrightarrow \quad 
\big( {}_{A_\star}\Hom({V_\star}^{\,\prime}, \Hom_\bbK(V_\star,\Omega_\star))\big)^\op~.
\end{flalign}
\end{Lemma}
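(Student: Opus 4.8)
The plan is to read everything off from the sandwich form $\mathrm{Ad}_{D_\FF}(T)=D_\FF\circ T\circ D_\FF^{-1}$ and to reduce the module-map property to the single intertwining identity (\ref{DHFintertwiner}), which the discussion preceding the lemma has already established for $D_\FF$ both on ${V^\prime}_{\,\star}\to{V_\star}^{\,\prime}$ (cf.~Example \ref{PDPLex}) and on $\Hom_\bbK(V,\Omega)_\star\to\Hom_\bbK(V_\star,\Omega_\star)$ (cf.~Theorem \ref{PDPHom}). First I would check that $\mathrm{Ad}_{D_\FF}$ is well-defined and $\bbK$-linear: for $T\in\Hom_\bbK({V^\prime}_{\,\star},\Hom_\bbK(V,\Omega)_\star)$ the composite uses $D_\FF^{-1}:{V_\star}^{\,\prime}\to{V^\prime}_{\,\star}$ and $D_\FF:\Hom_\bbK(V,\Omega)_\star\to\Hom_\bbK(V_\star,\Omega_\star)$, so it lands in $\Hom_\bbK({V_\star}^{\,\prime},\Hom_\bbK(V_\star,\Omega_\star))$, $\bbK$-linearity is immediate, and the inverse is $S\mapsto D_\FF^{-1}\circ S\circ D_\FF$.

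The main content is the $H^\FF$-equivariance. I would first spell out the two co-opposite adjoint actions in the style of Proposition \ref{ABVAEnd}: on the source $\xi\btrgl^\cop T=(\xi_{2_\FF}\RA)\circ T\circ\bigl((S^\FF)^{-1}(\xi_{1_\FF})\RA\bigr)$, where $\RA$ is the $H^\FF$-action on ${V^\prime}_{\,\star}$ and on $\Hom_\bbK(V,\Omega)_\star$, and on the target $\xi\,{\RA_\FF}^\cop\,S=(\xi_{2_\FF}\RA_\FF)\circ S\circ\bigl((S^\FF)^{-1}(\xi_{1_\FF})\RA_\FF\bigr)$. Applying $\mathrm{Ad}_{D_\FF}$ to $\xi\btrgl^\cop T$ and then pushing $D_\FF$ through the left factor via $D_\FF\circ(\zeta\RA)=(\zeta\RA_\FF)\circ D_\FF$, and pushing $D_\FF^{-1}$ through the right factor via the inverted identity $(\zeta\RA)\circ D_\FF^{-1}=D_\FF^{-1}\circ(\zeta\RA_\FF)$, converts each $\RA$ into $\RA_\FF$ and produces exactly $\xi\,{\RA_\FF}^\cop\,\mathrm{Ad}_{D_\FF}(T)$. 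This is a short computation that, unlike the earlier theorems, requires no $\RR$-matrix or cocycle manipulations.

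For the restriction to left $A_\star$-linear maps I would use that $D_\FF$ is not merely $H^\FF$-equivariant but an $\MMMod{H^\FF}{A_\star}{}$-module isomorphism on both ${V^\prime}_{\,\star}\to{V_\star}^{\,\prime}$ and $\Hom_\bbK(V,\Omega)_\star\to\Hom_\bbK(V_\star,\Omega_\star)$, hence left $A_\star$-linear. If $T$ is left $A_\star$-linear, then conjugation preserves this: $D_\FF^{-1}$ carries the $A_\star$-action through on the source, $T$ intertwines it, and $D_\FF$ carries it through on the target, so $\mathrm{Ad}_{D_\FF}(T)$ is again left $A_\star$-linear. The \emph{only} real care needed — the main obstacle, though it is bookkeeping rather than a genuine difficulty — is keeping straight which adjoint action ($\RA$ versus $\RA_\FF$, together with their $\cop$ variants) sits on which module, and invoking (\ref{DHFintertwiner}) in the correct direction on the source (where $D_\FF^{-1}$ appears) versus the target.
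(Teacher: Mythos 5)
Your proposal is correct and follows essentially the same route as the paper's proof: the $H^\FF$-equivariance is exactly the sandwich computation pushing $D_\FF$ and $D_\FF^{-1}$ through the co-opposite adjoint actions via the intertwining identity (\ref{DHFintertwiner}) of Theorem \ref{isoAAstDAA}, and the restriction statement is obtained, as in the paper, from left $A_\star$-linearity of $D_\FF$ as an $\MMMod{H^\FF}{A_\star}{}$-module isomorphism on both ${V^\prime}_{\,\star}\to{V_\star}^{\,\prime}$ and $\Hom_\bbK(V,\Omega)_\star\to\Hom_\bbK(V_\star,\Omega_\star)$. You also correctly place the $H^\FF$ coalgebra data ($\Delta^\FF$, $S^{\FF\,-1}$) on both source and target with only the inner actions differing ($\RA$ versus $\RA_\FF$), which is the one bookkeeping point the paper's displayed computation (\ref{ADintertwi}) makes precise.
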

\begin{proof}
For all $T\in \big(\Hom_\bbK({V^\prime}_{\,\star},
\Hom_\bbK(V,\Omega)_\star)\big)^\op$ and $\xi\in H$ we have
\begin{flalign}\label{ADintertwi}
\nn \xi \,{\RA_{\!\FF}}^{\:\cop}\,\big( \mathrm{Ad}_{D_\FF}(T) \big)&= \xi_{2_\FF}\RA_\FF\, \circ \, \mathrm{Ad}_{D_\FF}(T)  \circ\ S^{\FF\,-1}(\xi_{1_\FF})\RA_\FF\, \\
\nn &= \xi_{2_\FF}\RA_\FF\,\circ \,D_\FF\circ  T  \circ D_\FF^{-1}\circ S^{\FF\,-1}(\xi_{1_\FF})\RA_\FF\, \\
\nn &=D_\FF\circ \xi_{2_\FF}\RA\,\circ \, T  \circ  S^{\FF\,-1}(\xi_{1_\FF})\RA\, \circ \,D_\FF^{-1}\\
&= \mathrm{Ad}_{D_\FF}\big(\xi\, {\RA}^{\cop}\, T\big)~,
\end{flalign}
where we used that $D_\FF$ intertwines between the $\RA$ and  $\RA_\FF$ 
adjoint actions,
$D_\FF\circ \xi\RA~ = \xi\RA_\FF~\circ D_\FF$ (cf.~Theorem
\ref{isoAAstDAA}). The map $\mathrm{Ad}_{D_\FF}$ is obviously invertible.

 Let now $T\in \big({}_{A_\star}\Hom({V^\prime}_{\,\star}, \Hom_\bbK(V,\Omega)_\star)\big)^\op$, i.e.~for
all $a\in A_\star$ and $v^\prime \in {V^\prime}_{\,\star}$, $T(a\star v^\prime)  = a\star T(v^\prime)$.
Then, for all $v^\prime _\star\in {V_\star}^{\,\prime} $,
\begin{flalign}
\nn \big(\mathrm{Ad}_{D_\FF}(T)\big)(a\cdot v^\prime_\star) &= D_\FF\Big(T\big(D_\FF^{-1}(a\cdot v^\prime_\star)\big)\Big)=
D_\FF\Big(T\big(a\star D_\FF^{-1}(v^\prime_\star)\big)\Big)\\
\nn &=
D_\FF\Big(a\star\big( T\big( D_\FF^{-1}(v^\prime_\star)\big)\big)\Big)=
a\cdot D_\FF\Big(T\big( D_\FF^{-1}(v^\prime_\star)\big)\Big)\\
&=a\cdot \Big(\big(\mathrm{Ad}_{D_\FF}(T)\big)(v^\prime_\star)\Big)~,
\end{flalign}
where  in the second and fourth equality we used  that $D_\FF$ is an $\MMMod{H^\FF}{A_\star}{}$-module isomorphism.
\end{proof}
\begin{Remark}\label{rem:abuseAdDF}
We later consider a slight variation of the map $\mathrm{Ad}_{D_\FF}$, namely the $\bbK$-linear map (denoted  with
 abuse of notation also by $\mathrm{Ad}_{D_\FF}$)
\begin{flalign}
 \nn\mathrm{Ad}_{D_\FF}:  \big(\Hom_\bbK({V^\prime}_{\,\star}, \Omega_\star\otimes_{A_\star} {V^\prime}_{\,\star}) \big)^\op
 \quad &\longrightarrow \quad 
 \big(\Hom_\bbK({V_\star}^{\,\prime}, \Omega_\star\otimes_{A_\star} {V_\star}^{\,\prime})\big)^\op~,\\
 T\quad &\longmapsto  \quad\mathrm{Ad}_{D_\FF}(T)=  (\id\otimes_\star D_\FF)\circ T\circ D_\FF^{-1}~.
 \end{flalign}
 With a similar calculation as in Lemma \ref{lem:AdDF} one shows 
that this map is an $\MMMod{H^\FF}{}{}$-module isomorphism 
and  that it restricts to the  $\MMMod{H^\FF}{}{}$-module isomorphism (denoted by the same symbol)
 \begin{flalign}
 \mathrm{Ad}_{D_\FF}: \big({}_{A_\star}\Hom({V^\prime}_{\,\star}, \Omega_\star\otimes_{A_\star} {V^\prime}_{\,\star})\big)^\op
 \quad &\longrightarrow \quad 
\big( {}_{A_\star}\Hom({V_\star}^{\,\prime}, \Omega_\star\otimes_{A_\star} {V_\star}^{\,\prime})\big)^\op~.
 \end{flalign}
\end{Remark}
\sk

As a first step towards the twist deformation of dual connections we
study the map ${~}^{\Hom_\star}\,$ between
homomorphisms of deformed modules.  Let $\FF\in H\otimes H$ be a twist of the Hopf algebra $H$. For
all $P_\star\in \Hom_\bbK(V_\star,V_\star\otimes_{A_\star}\Omega_\star)$ we define
the map $P_\star^{\Hom_\star}\in \big({}_{A_\star}\Hom({V_\star}^{\,\prime},\Hom_\bbK(V_\star,\Omega_\star))\big)^\op$
as in  (\ref{eqn:Phom}) by, for all $v_\star^\prime\in {V_\star}^{\,\prime}$,
\begin{flalign}\label{eqn:Phomstar}
P_\star^{\Hom_\star}(v_\star^\prime) := -\wedge_\star\circ\,( v_\star^\prime\otimes_\star\id)\circ P_\star~.
\end{flalign}
As in Lemma \ref{lem:equivarianceHommap} we have an $\MMMod{H^\FF}{}{}$-module isomorphism
\begin{flalign}
{}^{\Hom_\star}: \Hom_\bbK(V_\star,V_\star\otimes_{A_\star}\Omega_\star)\longrightarrow \big({}_{A_\star}\Hom({V_\star}^{\,\prime},\Hom_\bbK(V_\star,\Omega_\star))\big)^\op~,~~P_\star\longmapsto P_\star^{\Hom_\star}~.
\end{flalign}
We compare this map with the map $\;^\Hom\,$, that, due to its
$H$-equivariance, can be considered as the $\MMMod{H^\FF}{}{}$-module isomorphism 
\begin{flalign}
{}^\Hom\,:\Hom_\bbK(V,V\otimes_A\Omega)_\star \longrightarrow
\big({}_{A}\Hom(V^\prime,\Hom_\bbK(V,\Omega))\big)^\op_{~~\star} ~\,,~~P
\longmapsto P^{\Hom}~.
\end{flalign}
 \begin{Proposition}\label{propo:HomHomstarcom}
 Let $\FF\in H\otimes H$ be a twist of the Hopf algebra $H$.  Then the following diagram of $\MMMod{H^\FF}{}{}$-module
  isomorphisms commutes:
 \begin{flalign}
 \xymatrix{
 \Hom_\bbK(V,V\otimes_A\Omega)_\star \ar[dd]_-{\widetilde{D}_\FF}\ar[rrr]^-{{}^\Hom} & & & \big({}_{A}\Hom(V^\prime,\Hom_\bbK(V,\Omega))\big)^\op_{~~\star} \ar[d]^-{D_\FF^\cop}\\
 &&& \big({}_{A_\star}\Hom({V^\prime}_{\,\star}, \Hom_\bbK(V,\Omega)_\star) \big)^\op\ar[d]^-{\mathrm{Ad}_{D_\FF}}\\
 \Hom_\bbK(V_\star,V_\star\otimes_{A_\star}\Omega_\star) \ar[rrr]^-{{}^{\Hom_\star}} &&& \big( {}_{A_\star}\Hom({V_\star}^{\,\prime}, \Hom_\bbK(V_\star,\Omega_\star)) \big)^\op
 }
 \end{flalign}
 \end{Proposition}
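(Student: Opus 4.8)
The goal is to verify the operator identity
\[
\mathrm{Ad}_{D_\FF}\circ D_\FF^\cop\circ {}^\Hom \;=\; {}^{\Hom_\star}\circ\widetilde{D}_\FF
\]
as maps from $\Hom_\bbK(V,V\otimes_A\Omega)_\star$ to $\big({}_{A_\star}\Hom({V_\star}^{\,\prime},\Hom_\bbK(V_\star,\Omega_\star))\big)^\op$. Since every arrow in the diagram has already been established to be an $\MMMod{H^\FF}{}{}$-module isomorphism (Lemma \ref{lem:equivarianceHommap}, Theorem \ref{PDPLHom}, Lemma \ref{lem:AdDF} and Theorem \ref{theo:condef}), commutativity is a purely computational claim. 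The plan is to evaluate both composites on an arbitrary $P$, apply the resulting left $A_\star$-linear map to an arbitrary $v_\star^\prime\in{V_\star}^{\,\prime}$, and finally evaluate the resulting element of $\Hom_\bbK(V_\star,\Omega_\star)$ on an arbitrary $v\in V_\star$, reducing the statement to an equality of fully evaluated, $\Omega_\star$-valued expressions which I would then match termwise.

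First I would unwind the left path: by Theorem \ref{theo:condef} we have $\widetilde{D}_\FF(P)=\varphi^{-1}\circ(\of^\al\btrgl P)\circ\of_\al\ra$, and by (\ref{eqn:Phomstar})
\[
\big(\widetilde{D}_\FF(P)\big)^{\Hom_\star}(v_\star^\prime)=-\wedge_\star\circ\,(v_\star^\prime\otimes_\star\id)\circ\widetilde{D}_\FF(P)~,
\]
where the deformed product is the one of Theorem \ref{Theorem1}, so that $a\wedge_\star\omega=(\of^\be\ra a)\wedge(\of_\be\ra\omega)$, and where $\varphi^{-1}=\FF\ra$ is the isomorphism of Lemma \ref{lemma4} relating $V_\star\otimes_{A_\star}\Omega_\star$ with $(V\otimes_A\Omega)_\star$. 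On the top path I would expand ${}^\Hom$ via (\ref{eqn:Phom}), then $D_\FF^\cop(T)=(\of_\al\btrgl^\cop T)\circ\of^\al\ra$ from Theorem \ref{PDPLHom}, and finally the conjugation $\mathrm{Ad}_{D_\FF}(T)=D_\FF\circ T\circ D_\FF^{-1}$ from Lemma \ref{lem:AdDF}. The two structural tools that force the expanded expressions to agree are the intertwining property $D_\FF(\xi\RA {-})=\xi\RA_\FF D_\FF({-})$ of Theorem \ref{isoAAstDAA}, which converts the adjoint actions conjugated by $D_\FF$ into $H^\FF$-adjoint actions, together with the $H$-equivariance of $\wedge$ (used already inside Lemma \ref{lem:equivarianceHommap}); and the twist $2$-cocycle identity in the form (\ref{ass}), which is precisely what collapses the nested $\f,\of$ factors produced by $\wedge_\star$, by $\varphi^{-1}$, and by the two deformation maps into one common pattern.

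The delicate point, and the place where I expect the real work to lie, is the bookkeeping of the isomorphism $\varphi^{-1}:(V\otimes_A\Omega)_\star\to V_\star\otimes_{A_\star}\Omega_\star$ and its interaction with the evaluation map. The contraction $\wedge_\star\circ(v_\star^\prime\otimes_\star\id)$ lives naturally on $V_\star\otimes_{A_\star}\Omega_\star$, whereas the undeformed $\wedge\circ(v^\prime\otimes\id)$ lives on $V\otimes_A\Omega$; I would isolate as a separate lemma-like step the identity asserting that the deformed evaluation $\wedge_\star\circ(v_\star^\prime\otimes_\star\id)\circ\varphi^{-1}$ coincides with the $D_\FF$-transported undeformed evaluation $\wedge\circ(v^\prime\otimes\id)$ associated with $v^\prime=D_\FF^{-1}(v_\star^\prime)$. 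This sub-identity is checked on a simple tensor $v\otimes_A\omega$ using only the cocycle property (\ref{ass}) and the $\AAAlg{H^\FF}{}{}$-algebra structure of $\bigl(\Omega^\bullet,\wedge_\star\bigr)$; once it is in place, the remaining rearrangements are exactly the $\f,\of$-reshuffling already performed in the proofs of Theorem \ref{PDP} and Theorem \ref{theo:condef}, and the two composites become manifestly equal. The overall sign $(-1)$ present in both ${}^\Hom$ and ${}^{\Hom_\star}$ is common to the two paths and plays no role.
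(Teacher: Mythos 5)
Your proposal is correct and follows essentially the same route as the paper's proof: evaluate both composites on an arbitrary $v_\star^\prime\in {V_\star}^{\,\prime}$, use Lemma \ref{lem:equivarianceHommap} together with the $H$-equivariance of $\wedge$ to move the adjoint actions through $^{\Hom}$, and then apply the multiplicativity of $D_\FF$ under $\circ_\star$ and the identity $\wedge_\star=\wedge\circ\varphi$ to land on $-\wedge_\star\circ(v_\star^\prime\otimes_\star\id)\circ\widetilde{D}_\FF(P)$, which is the lower path by definition of $^{\Hom_\star}$. The only (minor, organizational) difference is that your isolated sub-lemma $\wedge_\star\circ(v_\star^\prime\otimes_\star\id)\circ\varphi^{-1}=\wedge\circ D_\FF\bigl(D_\FF^{-1}(v_\star^\prime)\otimes\id\bigr)$, which you verify on simple tensors via the cocycle identity (\ref{ass}), is precisely the special case (second map $=\id$, hence no $\RR$-matrix) of Theorem \ref{theo:promodhomdefA} that the paper cites instead of re-deriving.
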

 \begin{proof}
We consider any $P\in \Hom_\bbK(V,V\otimes_A\Omega)_\star$, follow the
upper path in the diagram and act on an arbitrary element
$v_\star^\prime\in {V_\star}^{\,\prime}$, 
 \begin{flalign}\label{6.80}
\nn \Big( \mathrm{Ad}_{D_\FF}\big(D_\FF^\cop(P^\Hom)\big)\Big)(v_\star^\prime) &= D_\FF\Big(D_\FF^\cop(P^\Hom)\big(D_\FF^{-1}(v_\star^\prime)\big)\Big)\\
\nn &=D_\FF\Big((\of_\alpha\RA^\cop P^\Hom)\big(\of^\alpha\RA D_\FF^{-1}(v_\star^\prime)\big)\Big)\\
\nn &=D_\FF\Big((\of_\alpha\RA P)^\Hom\,\big(\of^\alpha\RA D_\FF^{-1}(v_\star^\prime)\big)\Big)\\
\nn &=- D_\FF\Big(\wedge \circ_{\,} \big((\of^\alpha\RA D_\FF^{-1}(v_\star^\prime))\otimes\id\big)\circ (\of_\alpha\RA P)\Big)\\
\nn &= -D_\FF\Big(\wedge\circ_\star \big(D_\FF^{-1}(v_\star^\prime)\otimes\id\big)\circ_\star P\Big)\\
\nn &= -\wedge\circ\, D_\FF\Big(D_\FF^{-1}(v_\star^\prime)\otimes\id\Big)\circ D_\FF(P)\\
\nn &=-\wedge\circ \,\varphi \circ \big(v_\star^\prime\otimes_\star \id\big)\circ\varphi^{-1}\circ D_\FF(P) \\
&= -\wedge_\star 
\circ \,\big(v_\star^\prime\otimes_\star \id\big)\circ \widetilde{D}_\FF(P)~.
 \end{flalign}
 In the third line we have used Lemma \ref{lem:equivarianceHommap}, in the fourth line (\ref{eqn:Phom}), in the fifth line that
 $\wedge$ is $H$-equivariant and in the sixth line this property and that
  $D_\FF(Q\circ_\star \check Q) = D_\FF(Q)\circ D_\FF(\check Q)$, which holds for arbitrary composable $\bbK$-linear 
  maps $Q$ and $\check Q$.
  Then in line seven we used Theorem \ref{theo:promodhomdefA} and that $\id$ is $H$-equivariant.
  The last passage follows by noticing that $\wedge_\star =
  \wedge\circ \varphi$ and recalling the definition
   of $\widetilde{D}_\FF$ (\ref{DtildeFHOM}).

We now follow the lower path in the diagram, from
(\ref{eqn:Phomstar}) we immediately obtain (\ref{6.80}), and hence
commutativity of the diagram.
\end{proof}
 \begin{Remark}\label{rem:homcomdiag}
By restricting to right $A$-linear maps $\Hom_A(V,V\otimes_A\Omega)_\star$, we also obtain 
the following commutative diagram of $\MMMod{H^\FF}{}{}$-module isomorphisms:
 \begin{flalign}
 \xymatrix{
 \Hom_A(V,V\otimes_A\Omega)_\star \ar[dd]_-{\widetilde{D}_\FF}\ar[rrr]^-{{}^\Hom} & & & \big({}_{A}\Hom(V^\prime,\Hom_A(V,\Omega))\big)^\op_{~~\star} \ar[d]^-{D_\FF^\cop}\\
 &&& \big({}_{A_\star}\Hom({V^\prime}_{\,\star}, \Hom_A(V,\Omega)_\star) \big)^\op\ar[d]^-{\mathrm{Ad}_{D_\FF}}\\
 \Hom_{A_\star}(V_\star,V_\star\otimes_{A_\star}\Omega_\star) \ar[rrr]^-{{}^{\Hom_\star}} &&& \big({}_{A_\star}\Hom({V_\star}^{\,\prime}, \Hom_{A_\star}(V_\star,\Omega_\star))\big)^\op
 }
 \end{flalign}
 \end{Remark} 
 \sk
 
We now consider the map
 \begin{flalign}
 \widehat{{}{~~}}^\star : \Con_{A_\star}(V_\star) \longrightarrow\big(\Hom_\bbK({V_\star}^{\,\prime}, \Hom_{A_\star}(V_\star,\Omega_\star)) \big)^\op
\end{flalign}
defined by,
 for all $\nabla_{\!\star}\in\Con_{A_\star}(V_\star)$ and
 $v_\star^\prime\in {V_\star}^{\,\prime}$,
\begin{flalign}\label{eqn:hatmapstar}
 {\widehat{\,\nabla_{\!\star}}}^{\!\star} (v_\star^\prime) := \dif\circ v_\star^\prime + \nabla^{\Hom_\star}(v_\star^\prime)~.
 \end{flalign}
The corresponding  map $\widehat{{}{~~}}$ , defined
in (\ref{eqn:hatmap}), 
can be seen as a map
 \begin{flalign}\
\widehat{~~\,}: \Con_{A}(V)_\star\longrightarrow  \big(\Hom_\bbK(V^\prime,\Hom_A(V,\Omega))\big)^\op_{~~\star} ~,
\end{flalign}
where we recall that $\Con_{A}(V)_\star$ differs from $\Con_{A}(V)$ just because it
is seen as an affine space over the  $\MMMod{H^\FF}{}{}$-module $
\Hom_A(V,V\otimes_A\Omega)_\star $ rather than over the $\MMMod{H}{}{}$-module  $
\Hom_A(V,V\otimes_A\Omega)$.

It is easy to see that the $\widehat{~~\,}$ and $\widehat{~~\,}^\star$
maps close a commutative diagram, where the vertical
 arrows are given by deformation maps $D_\FF$, $\widetilde{D}_\FF$ and $D_\FF^\cop$.
\begin{Corollary}\label{cor:widehatcomdiag}
  Let $\FF\in H\otimes H$ be a twist of the Hopf algebra $H$.  Then the following diagram commutes:
 \begin{flalign}
 \xymatrix{
 \Con_{A}(V)_\star \ar[dd]_-{\widetilde{D}_\FF}\ar[rrr]^-{\widehat{~~\,}} & & & \big(\Hom_\bbK(V^\prime,\Hom_A(V,\Omega))\big)^\op_{~~\star} \ar[d]^-{D_\FF^\cop}\\
 &&& \big(\Hom_\bbK({V^\prime}_{\,\star}, \Hom_A(V,\Omega)_\star)\big)^\op \ar[d]^-{\mathrm{Ad}_{D_\FF}}\\
 \Con_{A_\star}(V_\star) \ar[rrr]^-{\widehat{~~\,}^\star} &&& \big(\Hom_\bbK({V_\star}^{\,\prime}, \Hom_{A_\star}(V_\star,\Omega_\star)) \big)^\op
 }
 \end{flalign}
 \end{Corollary}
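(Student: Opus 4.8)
The plan is to deduce the corollary from Proposition \ref{propo:HomHomstarcom} by exploiting that the hat maps are affine refinements of the $^\Hom$ maps. Writing $e$ for the $\bbK$-linear map $V^\prime\to\Hom_\bbK(V,\Omega)$, $v^\prime\mapsto\dif\circ v^\prime$, and $e_\star$ for its deformed analogue $v_\star^\prime\mapsto\dif\circ v_\star^\prime$, the definitions (\ref{eqn:hatmap}) and (\ref{eqn:hatmapstar}) read $\widehat{\dd}=e+\dd^\Hom$ and $\widehat{\dd_\star}^{\star}=e_\star+\dd_\star^{\Hom_\star}$, where $\dd^\Hom$ is the image of $\dd$ under (\ref{eqn:Phommap}). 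Here $\dd^\Hom$ is left $A$-linear in $v^\prime$ (the computation after (\ref{eqn:Phommap}) never uses right $A$-linearity of its argument), hence lies in $\big({}_A\Hom(V^\prime,\Hom_\bbK(V,\Omega))\big)^\op$, while $e$ is only $\bbK$-linear; their sum is nevertheless $\Hom_A(V,\Omega)$-valued, matching the corner $\big(\Hom_\bbK(V^\prime,\Hom_A(V,\Omega))\big)^\op_{~~\star}$. I therefore read the whole diagram inside the larger $\Hom_\bbK$-valued spaces, where $D_\FF^\cop$ and $\mathrm{Ad}_{D_\FF}$ are defined (Theorem \ref{PDPLHom}, Lemma \ref{lem:AdDF} and Remark \ref{rem:abuseAdDF}).

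Since $D_\FF^\cop$ and $\mathrm{Ad}_{D_\FF}$ are $\bbK$-linear, the upper path splits as
\[
\mathrm{Ad}_{D_\FF}\big(D_\FF^\cop(\widehat{\dd})\big)=\mathrm{Ad}_{D_\FF}\big(D_\FF^\cop(e)\big)+\mathrm{Ad}_{D_\FF}\big(D_\FF^\cop(\dd^\Hom)\big)~.
\]
The second summand is handled directly by Proposition \ref{propo:HomHomstarcom}, applied to $P=\dd\in\Hom_\bbK(V,V\otimes_A\Omega)_\star$ (so that $P^\Hom=\dd^\Hom$): it equals $\widetilde{D}_\FF(\dd)^{\Hom_\star}=\dd_\star^{\Hom_\star}$, where $\dd_\star:=\widetilde{D}_\FF(\dd)$. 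The whole point is thus to show that the first summand equals $e_\star$.

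I would establish $\mathrm{Ad}_{D_\FF}\big(D_\FF^\cop(e)\big)=e_\star$ in two steps, both resting on the $H$-equivariance (\ref{eqn:equivar}) of $\dif$. First, $e$ is $H$-equivariant for the $\RA^\cop$ action, $\xi\RA^\cop e=\varepsilon(\xi)\,e$: unwinding $\xi\RA^\cop e=\xi_2\RA\circ e\circ S^{-1}(\xi_1)\RA$ and pushing $\dif$ through the $\ra$ actions by (\ref{eqn:equivar}) produces the factor $\xi_3 S^{-1}(\xi_2)$ acting on $\Omega$ and $\xi_1 S(\xi_4)$ acting on $V$, which collapse to $\varepsilon(\xi)$ by the skew-antipode identity $\eta_2 S^{-1}(\eta_1)=\varepsilon(\eta)$ and the antipode identity $\eta_1 S(\eta_2)=\varepsilon(\eta)$. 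Equivariance then gives $D_\FF^\cop(e)=(\of_\al\RA^\cop e)\circ\of^\al\ra=e\circ\big(\varepsilon(\of_\al)\of^\al\big)\ra=e$ by the twist normalization (\ref{propF2}). Second, again by (\ref{eqn:equivar}), $\dif$ intertwines the deformation isomorphisms $D_\FF$ on $V^\prime$ and on $\Hom_\bbK(V,\Omega)$, i.e.~$\dif\circ D_\FF(v^\prime)=D_\FF(\dif\circ v^\prime)$ for all $v^\prime\in {V^\prime}_\star$; hence $\mathrm{Ad}_{D_\FF}(e)=D_\FF\circ e\circ D_\FF^{-1}=e_\star$. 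Putting the two summands together yields $\mathrm{Ad}_{D_\FF}\big(D_\FF^\cop(\widehat{\dd})\big)=e_\star+\dd_\star^{\Hom_\star}=\widehat{\dd_\star}^{\star}$, which is the lower path, proving commutativity.

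The main obstacle is the Sweedler-notation bookkeeping in the first step above: threading the single map $\dif$ through the nested source- and target-adjoint actions and collapsing the resulting four-fold coproduct with the $S$ and $S^{-1}$ identities. This is routine but delicate, and is the exact analogue of the way $H$-equivariance of $\wedge$ is exploited in the proof of Proposition \ref{propo:HomHomstarcom}.
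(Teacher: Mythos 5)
Your proposal is correct and follows essentially the same route as the paper's proof: decompose $\widehat{\nabla}=\dif\circ(\,\cdot\,)+\nabla^{\Hom}$, dispatch the $^\Hom$ addend via Proposition \ref{propo:HomHomstarcom} and $\bbK$-linearity of the vertical arrows, and use $H$-equivariance of $\dif$ for the remaining addend. Your two explicit steps, $D_\FF^\cop(e)=e$ and $\mathrm{Ad}_{D_\FF}(e)=e_\star$, are precisely the computation the paper compresses into the single displayed identity $\mathrm{Ad}_{D_\FF}\big(D_\FF^\cop(\widehat{\nabla}-\nabla^\Hom)\big)(v^\prime_\star)=\dif\circ v^\prime_\star$, and your Sweedler bookkeeping checks out.
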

 \begin{proof}
From Proposition \ref{propo:HomHomstarcom} we know that this diagram
holds for the second addend in
the $\widehat{~~}$ and $\widehat{~~}^\star$ maps in
(\ref{eqn:hatmap}) and (\ref{eqn:hatmapstar}). Since the vertical
arrows $D^\cop_\FF$ and $\mathrm{Ad}_{D_\FF}$
are $\bbK$-linear maps, it remains to check the first addend. 
Use of $H$-equivariance of the differential $\dif$ then implies,  for all $v_\star^\prime\in {V_\star}^{\,\prime}$,
 \begin{flalign}
 \Big( \mathrm{Ad}_{D_\FF}\big(D_\FF^\cop(\widehat{\nabla} - \nabla^\Hom)\big)\Big)(v_\star^\prime)  = \dif \circ v_\star^\prime
 =\big(\widehat{\,\widetilde{D}_\FF(\nabla)\,}^{\!\,\!\!\!\!\star} - \big(\widetilde{D}_{\FF}(\nabla)\big)^{\Hom_\star}\big)(v_\star^\prime)~.
 \end{flalign}
 \end{proof}
 
 The second step in the study of the deformation of the dual
 connection $\nabla^\prime=\iota^{-1}\circ \widehat{\nabla}$ of Theorem \ref{theo:dualconn}
 is the study of the relation between the $\iota$ map of Proposition
 \ref{propo:projhomiso} and the corresponding one $\iota_\star$ between deformed modules.
It is here that finitely generated and projective modules are
needed. We denote by 
\eq\label{iotastar}
\iota_\star: \Omega_\star \otimes_{A_\star}
{V_\star}^{\,\prime}\to\Hom_{A_\star}(V_\star, \Omega_\star)
\en
the $\MMMod{H^\FF}{A_\star}{}$-module isomorphism defined by, for all $v\in V_\star$, $\omega\in \Omega_\star$,
 $v_\star^\prime\in {V_\star}^{\,\prime}$,
$\bigl(\iota_\star(\omega\otimes_{A_\star}v_\star^\prime)\bigr)(v):= \omega\star v_\star^\prime(v)$.
The maps $\iota$ and $\iota_\star$ close a commutative diagram where the vertical
 arrows are given by deformation maps $D_\FF$ and $\varphi$.
\begin{Lemma}\label{lem:varphistar}
Let $\FF\in H\otimes H$ be a twist of the Hopf algebra
$H$. Then the following diagram of  $\MMMod{H^\FF}{A_\star}{}$-module isomorphisms commutes:
\begin{flalign}\label{eqn:varphistar}
 \xymatrix{
\Omega_\star\otimes_{A_\star} {V_\star}^{\,\prime} \ar[rr]^-{\iota_\star} \ar[d]_-{\id\otimes_\star D_\FF^{-1}} & &\Hom_{A_\star}(V_\star,\Omega_\star)\ar[dd]^-{D_\FF^{-1}}\\
 \Omega_\star\otimes_{A_\star} {V^\prime}_{\,\star }\ar[d]_-{\varphi} & &\\
 (\Omega\otimes_A V^\prime)_\star \ar[rr]^-{\iota_{}} & & (\Hom_A(V,\Omega))_\star 
}
\end{flalign}
\end{Lemma}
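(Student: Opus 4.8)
The plan is to reduce the square to a single identity of $\bbK$-linear maps and then verify it on generators. All four arrows are isomorphisms of the appropriate modules: $\iota$ and $\iota_\star$ are $\MMMod{H^\FF}{A_\star}{}$-module isomorphisms by Proposition \ref{propo:projhomiso}, $\varphi$ is one by Lemma \ref{lemma4}, and $D_\FF$ is one by Theorem \ref{PDPHom} together with Example \ref{PDPLex}. Hence commutativity of the diagram is equivalent to the plain $\bbK$-linear equality $D_\FF^{-1}\circ\iota_\star = \iota\circ\varphi\circ(\id\otimes_\star D_\FF^{-1})$. Composing with $D_\FF$ on the left and evaluating on a generator $\omega\otimes_{A_\star} v^\prime_\star$ written as $v^\prime_\star = D_\FF(v^\prime)$ with $v^\prime\in {V^\prime}_{\,\star}$, the statement becomes
\begin{flalign}
\iota_\star\big(\omega\otimes_{A_\star} D_\FF(v^\prime)\big) = D_\FF\big(\iota(\varphi(\omega\otimes_{A_\star}v^\prime))\big)~,
\end{flalign}
an equality in $\Hom_{A_\star}(V_\star,\Omega_\star)$ which I would establish by evaluating both sides on an arbitrary $v\in V_\star$.

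First I would unfold the left-hand side using the definition of $\iota_\star$ after (\ref{iotastar}) and the action of $D_\FF$ on the dual functional $v^\prime$ (Example \ref{PDPLex}): namely $\iota_\star(\omega\otimes_{A_\star}D_\FF(v^\prime))(v) = \omega\star (D_\FF(v^\prime))(v)$, with $(D_\FF(v^\prime))(v)=(\of^\al\btrgl v^\prime)(\of_\al\ra v)\in A$. For the right-hand side I would insert $\varphi=\FF^{-1}\ra$, so that $\varphi(\omega\otimes_{A_\star}v^\prime)=(\of^\al\ra\omega)\otimes_A(\of_\al\ra v^\prime)$, apply $\iota$ (Proposition \ref{propo:projhomiso}) to obtain the map $v\mapsto (\of^\al\ra\omega)\cdot(\of_\al\ra v^\prime)(v)$, and finally apply $D_\FF$ in its explicit form $D_\FF(P)=\of^\be_1\ra\circ P\circ S(\of^\be_2)\of_\be\ra$ (cf.\ Theorem \ref{theo:firstcat}). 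Matching the two expressions then rests on the twist cocycle property in the form (\ref{ass}), the relation between the deformed and undeformed right actions $\omega\star a=(\of^\al\ra\omega)\cdot(\of_\al\ra a)$ of (\ref{rmodst}) (equivalently $\wedge_\star=\wedge\circ\varphi$, as recorded in the proof of Proposition \ref{propo:HomHomstarcom}), and the counit/antipode axioms (\ref{propF2}) and (\ref{2.6}).

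Since $\id\otimes_\star D_\FF^{-1}$ and $\varphi$ already descend to the tensor products over $A_\star$ and over $A$ by Lemma \ref{lemma4}, the reduced identity is automatically independent of the chosen representative of the class in $\Omega_\star\otimes_{A_\star}{V^\prime}_{\,\star}$, so no separate well-definedness check is needed. The only real obstacle is the combinatorial bookkeeping of the twist: one must distribute the legs of $\FF^{-1}$ produced by $\varphi$ against the two legs hidden in $D_\FF(P)$ and in $D_\FF(v^\prime)$ and reassemble them through (\ref{ass}) so that the redundant factors collapse by the counit and antipode identities. Conceptually this is exactly the manipulation already performed in Proposition \ref{propo:HomHomstarcom} and Theorem \ref{theo:promodhomdefA}; once it is organised, the equality $\wedge_\star=\wedge\circ\varphi$ combined with the intertwining property $D_\FF\circ(\xi\RA\,)=(\xi\RA_\FF\,)\circ D_\FF$ of Theorem \ref{isoAAstDAA} closes the computation.
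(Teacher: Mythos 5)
Your proposal is correct and takes essentially the same approach as the paper: a direct check of the square on generators $\omega\otimes_{A_\star}v^\prime_\star$, evaluated on $v\in V_\star$, with the twist cocycle property (\ref{ass}) together with the counit/antipode cancellations doing all the work. The only (harmless) cosmetic difference is that you conjugate by $D_\FF$ and verify the equivalent identity $\iota_\star\circ(\id\otimes_{A_\star}D_\FF)=D_\FF\circ\iota\circ\varphi$ via $D_\FF(P)=(\of^\alpha\RA P)\circ\of_\alpha\ra$, whereas the paper chases the diagram as stated using the explicit expression $D_\FF^{-1}(P)=\of^\alpha\, P\,\chi\, S(\of_\alpha)$ from Remark \ref{rem:Dinv}; your reduction does close as sketched.
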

\begin{proof}
 Using the explicit expression for $D_\FF^{-1}$ (see Remark \ref{rem:Dinv}), we find, when following the upper path,
for all $v\in V_\star$, $\omega\in \Omega_\star$ and $v_\star^\prime\in {V_\star}^{\,\prime}$,
\begin{flalign}
\nn \Bigl(D_\FF^{-1}\bigl(\iota_\star(\omega \otimes_{A_\star}v_\star^\prime)\bigr)\Bigr)(v) &= \of^\alpha\ra \Bigl(\omega\star v_\star^\prime \bigl(\chi S(\of_\alpha)\ra v\bigr)\Bigr)\\
\nn &= (\of^\alpha_1\of^\beta\ra \omega) \cdot
\of^\alpha_2\of_\beta\ra \Big( v_\star^\prime\bigl(\chi S(\of_\alpha)\ra v\bigr)\Big)\\
\nn &= (\of^\alpha\ra \omega) \cdot \of_{\alpha_1}\of^\beta\ra \Big( v_\star^\prime\bigl(\chi S(\of_\beta) S(\of_{\alpha_2})\ra v\bigr)\Big)\\
&= (\of^\alpha\ra \omega) \cdot \bigl(\of_\alpha\RA D_\FF^{-1}(v_\star^\prime)\bigr)(v)~,
\end{flalign}
where in line three we have used the twist cocycle property (\ref{propF1}).
Following the lower path we find the same expression, for all $v\in V_\star$, $\omega\in \Omega_\star$ and $v_\star^\prime\in 
 {V_\star}^{\,\prime}$,
\begin{flalign}
\nn \iota\Bigl(\varphi\bigl(\omega\otimes_{A_\star}D_\FF^{-1}(v_\star^\prime)\bigr) \Bigr) (v) 
&=\iota\Bigl((\of^\alpha\ra \omega)\otimes_{A}\bigl(\of_\alpha\RA D_\FF^{-1}(v_\star^\prime)\bigr)  \Bigr)(v) \\
&=(\of^\alpha\ra \omega) \cdot \bigl(\of_\alpha\RA D_\FF^{-1}(v_\star^\prime)\bigr)(v)~.
\end{flalign}
\end{proof} 
It is convenient to consider the following $\MMMod{H}{}{}$-module isomorphism induced by 
the $\MMMod{H}{A}{}$-module isomorphism $\iota^{-1}$ (and denoted with
a slight abuse of notation by the same symbol)
\begin{flalign}\label{TiotaT92}
\iota^{-1}:\big( \Hom_\bbK(V^\prime,\Hom_A(V,\Omega) )  \big)^\op \to \big(\Hom_\bbK(V^\prime,\Omega\otimes_A V^\prime)\big)^\op~,~~T\mapsto \iota^{-1}\circ T~.
\end{flalign}
With this definition the maps $\,^\prime$ in
(\ref{eqn:primemaphom}) and (\ref{eqn:primemapcon}) are just
the compositions 
$\,^\prime=\iota^{-1}\circ\, ^\Hom$ and  $\,^\prime=\iota^{-1}\circ\,
\widehat{~~\,}\:$. 
Since the map $\iota^{-1}$ is $H$-equivariant,  it can  also be seen as an
isomorphism  between $\MMMod{H^\FF}{}{}$-modules, 
$
\iota^{-1}:\big( \Hom_\bbK(V^\prime,\Hom_A(V,\Omega) )  \big)^\op_{~~\star} \to \big(\Hom_\bbK(V^\prime,\Omega\otimes_A V^\prime)\big)^\op_{~~\star}\,.
$

Similarly the isomorphism $\iota_\star$ in (\ref{iotastar}) induces
the  $\MMMod{H^\FF}{}{}$-module  isomorphism 
\begin{flalign}
\iota_\star^{-1}: \big(\Hom_\bbK( {V_\star}^{\,\prime},\Hom_{A_\star}(V_\star,\Omega_\star) )  \big)^\op \to \big( \Hom_\bbK( {V_\star}^{\,\prime},\Omega_\star \otimes_{A_\star}  {V_\star}^{\,\prime}) \big)^\op ~,~~T_\star\mapsto \iota^{-1}_\star\circ T_\star~.
\label{iotastar-1}\end{flalign}
\begin{Proposition}\label{propo:iotacomdiag}
Let $\FF\in H\otimes H$ be a twist of the Hopf algebra
$H$. Then the following diagram of $\MMMod{H^\FF}{}{}$-module isomorphisms
commutes: 
\begin{flalign}
\xymatrix{
\ar[d]_-{D_\FF^\cop}\big(\Hom_\bbK(V^\prime,\Hom_A(V,\Omega))\big)^\op_{~~\star} \ar[rrr]^-{\iota_{}^{-1}} & &  & \big(\Hom_\bbK(V^\prime,\Omega\otimes_A V^\prime)\big)^\op_{~~\star} \ar[d]^-{\widetilde{D}_\FF^\cop}\\
\big(\Hom_\bbK({V^\prime}_{\,\star},\Hom_A(V,\Omega)_\star) \big)^\op\ar[d]_-{\mathrm{Ad}_{D_\FF}}& & & \big(\Hom_\bbK({V^\prime}_{\,\star}, \Omega_\star\otimes_{A_\star} {V^\prime}_{\,\star}) \big)^\op\ar[d]^-{\mathrm{Ad}_{D_\FF}}\\
\big(\Hom_\bbK( {V_\star}^{\,\prime},\Hom_{A_\star}(V_\star,\Omega_\star)) \big)^\op \ar[rrr]^-{\iota_\star^{-1}} &&&  \big(\Hom_\bbK( {V_\star}^{\,\prime}, \Omega_\star\otimes_{A_\star} {V_\star}^{\,\prime}) \big)^\op
}
\end{flalign}
(where we recall the abuse of notation in the definition of
$\mathrm{Ad}_{D_\FF}$ in Remark \ref{rem:abuseAdDF}).
\end{Proposition}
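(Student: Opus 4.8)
The plan is to prove commutativity of the diagram in Proposition \ref{propo:iotacomdiag} by exploiting the commutativity of the diagram in Lemma \ref{lem:varphistar}, which relates the \emph{undeformed} evaluation isomorphism $\iota$ to its \emph{deformed} counterpart $\iota_\star$ through the maps $D_\FF^{-1}$ and $\varphi$. Since every arrow in the present diagram is an $\MMMod{H^\FF}{}{}$-module isomorphism, it suffices to verify the underlying equality of $\bbK$-linear maps; I would unwind the composite maps and act on an arbitrary element $T\in\big(\Hom_\bbK(V^\prime,\Hom_A(V,\Omega))\big)^\op_{~~\star}$ and then on an arbitrary $v_\star^\prime\in{V_\star}^{\,\prime}$.

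First I would spell out the two paths. Along the left-then-bottom path, one computes $\iota_\star^{-1}\circ \mathrm{Ad}_{D_\FF}\circ D_\FF^\cop$; recalling the abused definitions $\mathrm{Ad}_{D_\FF}(S)=D_\FF\circ S\circ D_\FF^{-1}$ on the $\Hom(\,\cdot\,,\Hom)$ side and $\mathrm{Ad}_{D_\FF}(T)=(\id\otimes_\star D_\FF)\circ T\circ D_\FF^{-1}$ on the $\Hom(\,\cdot\,,\Omega\otimes V^\prime)$ side (Remark \ref{rem:abuseAdDF}), and that $\iota_\star^{-1}$ acts by post-composition with $\iota_\star^{-1}$ (eq.~(\ref{iotastar-1})). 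Along the top-then-right path, one computes $\widetilde{D}^\cop_\FF\circ\iota^{-1}$, where $\iota^{-1}$ acts by post-composition (eq.~(\ref{TiotaT92})) and $\widetilde{D}^\cop_\FF=\varphi^{-1}\circ D^\cop_\FF$ is the left-module deformation map of Theorem \ref{theo:condefleft}. The key observation is that the single remaining discrepancy between the two outcomes is exactly the relation $\iota^{-1}_\star=\ldots$ versus $\iota^{-1}$ conjugated by the vertical deformation arrows, which is precisely what Lemma \ref{lem:varphistar} encodes: inverting the commutative square there gives
\begin{flalign}
\iota_\star^{-1} = (\id\otimes_\star D_\FF)\circ \varphi^{-1}\circ \iota^{-1}\circ D_\FF~,\nn
\end{flalign}
and substituting this into the left-then-bottom path produces the top-then-right path after the $D_\FF\circ D_\FF^{-1}$ and $\varphi\circ\varphi^{-1}$ cancellations, together with the identification of $\widetilde{D}^\cop_\FF$ as $\varphi^{-1}\circ D^\cop_\FF$.

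The main obstacle I expect is bookkeeping rather than conceptual: the ${}^\op$ decorations mean the order of composition is reversed, and both variants of $\mathrm{Ad}_{D_\FF}$ appear simultaneously, one acting on source arguments via $D_\FF^{-1}$ and one acting on targets via $\id\otimes_\star D_\FF$. I would therefore carefully track, at the level of $T(v_\star^\prime)\in\Hom_{A_\star}(V_\star,\Omega_\star)$, how the target-side $\iota_\star$ (which converts $\Omega_\star\otimes_{A_\star}{V_\star}^{\,\prime}$-valued data into $\Hom_{A_\star}(V_\star,\Omega_\star)$-valued data) interacts with the source-side $D_\FF^{-1}$ on $v_\star^\prime$. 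Once Lemma \ref{lem:varphistar} is invoked to replace $\iota_\star^{-1}$, the two paths coincide identically. Finally, since all four horizontal and vertical maps have already been established to be $\MMMod{H^\FF}{}{}$-module isomorphisms in Lemma \ref{lem:AdDF}, Remark \ref{rem:abuseAdDF}, and the $H$-equivariance of $\iota^{-1}$ and $\iota_\star^{-1}$, the verified equality of $\bbK$-linear maps upgrades automatically to commutativity in the category of $\MMMod{H^\FF}{}{}$-modules, completing the proof.
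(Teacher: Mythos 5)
Your proposal is correct and follows essentially the same route as the paper's proof: evaluate both composite paths on an arbitrary $T$ and $v_\star^\prime\in{V_\star}^{\,\prime}$, use the $H$-equivariance of $\iota^{-1}$ to slide it past the adjoint-action legs of $D^\cop_\FF$, and close the remaining gap with Lemma \ref{lem:varphistar}, which the paper invokes in precisely the inverted form you describe. One small correction: inverting that square gives $\iota_\star^{-1}=(\id\otimes_\star D_\FF)\circ\varphi^{-1}\circ\iota^{-1}\circ D_\FF^{-1}$ (equivalently $(\id\otimes_\star D_\FF)\circ\varphi^{-1}\circ\iota^{-1}=\iota_\star^{-1}\circ D_\FF$, the form used in the paper), not with $D_\FF$ on the right as in your displayed formula, which does not type-check since $D_\FF$ maps $\Hom_A(V,\Omega)_\star$ to $\Hom_{A_\star}(V_\star,\Omega_\star)$ and not conversely --- though the $D_\FF\circ D_\FF^{-1}$ cancellation you then invoke shows you intended the correct version.
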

\begin{proof}
We consider an arbitrary $T\in\big(\Hom_\bbK(V^\prime,\Hom_A(V,\Omega))\big)^\op_{~~\star}$, 
follow the upper path in the diagram
and act on an arbitrary $v_\star^\prime\in {V_\star}^{\,\prime}$, 
\begin{flalign}
\nn \mathrm{Ad}_{D_\FF}\Big(\widetilde{D}_\FF^\cop\big(\iota^{-1}(T)\big)\Big)(v_\star^\prime) &= (\id\otimes_\star D_\FF)\Big(\widetilde{D}_\FF^\cop\big(\iota^{-1}(T)\big)\big(D_\FF^{-1}(v_\star^\prime)\big)\Big)\\
\nn &= \big((\id\otimes_\star D_\FF)\circ \varphi^{-1}\big)\Big(\big(\of_\alpha\RA^\cop \iota^{-1}(T)\big)\big(\of^\alpha\RA D_\FF^{-1}(v_\star^\prime)\big)\Big)\\
\nn &=\big((\id\otimes_\star D_\FF)\circ \varphi^{-1}\circ \iota^{-1}\big)\Big(\big(\of_\alpha\RA^\cop T\big)\big(\of^\alpha\RA D_\FF^{-1}(v_\star^\prime)\big)\Big)\\
\nn &=\big(\iota_\star^{-1}\circ D_\FF\big)\Big(D_\FF^\cop( T)\big(D_\FF^{-1}(v_\star^\prime)\big)\Big)\\
&=\iota_\star^{-1}\Big(\mathrm{Ad}_{D_\FF}\big(D_\FF^\cop(T)\big)\Big)(v_\star^\prime)~,
\end{flalign}
which is exactly the expression we obtain by following the lower path in the diagram.
In line two we have used the definition of $\widetilde{D}_\FF^\cop$ (cf.~Theorem \ref{theo:condefleft}),
in line three that $\iota^{-1}$ is $H$-equivariant and then
(\ref{TiotaT92}), finally in line four Lemma \ref{lem:varphistar} in the
form $(\id\otimes_\star D_\FF)\circ \varphi^{-1}\circ \iota^{-1}=\iota_\star^{-1}\circ D_\FF$.
\end{proof}
The   ${~}^{\prime_\star}$ maps between deformed modules corresponding
to the prime maps in (\ref{eqn:primemapcon}) and
(\ref{eqn:primemaphom}) (that with a slight abuse of notation are denoted with the same symbol) are defined by
\begin{subequations}
\begin{flalign}
{}^{\prime_\star}: \Con_{A_\star}(V_\star) \to {}_{A_\star}\Con(
{V_\star}^{\,\prime})~,~~\nabla_\star\mapsto
\nabla_\star^{\,\prime_\star} = \iota^{-1}_\star\circ 
 {\widehat{\,\nabla_{\!\star}}}^{\!\star}
\end{flalign}
and
\begin{flalign}
{}^{\prime_\star}: \Hom_{A_\star}(V_\star,V_\star\otimes_{A_\star}\Omega_\star) 
\to\big( {}_{A_\star}\Hom( {V_\star}^{\,\prime},\Omega_\star\otimes_{A_\star} {V_\star}^{\,\prime})\big)^\op~,~~P_\star\mapsto P_\star^{\,\prime_\star} = \iota^{-1}_\star\circ P_\star^{\Hom_\star}~,
\end{flalign}
\end{subequations}
or, using the $\iota^{-1}_\star$ map in (\ref{iotastar-1}), simply by the
compositions
$~^\prime{}^\star=\iota_\star^{-1}\circ\, ^{\Hom_\star}\,$ and $~^\prime{}^\star=\iota_\star^{-1}\circ \widehat{~~\,}^\star\;$.

Theorem \ref{theo:dualconn} and  Lemma \ref{lem:equivarianceHommap}
imply that these maps constitute an affine space isomorphism, where
$\Con_{A_\star}(V_\star)$ and $ {}_{A_\star}\Con(
{V_\star}^{\,\prime})$ are affine spaces respectively over the
$\MMMod{H^\FF}{}{}$-modules of right $A_\star$-linear maps
$\Hom_{A_\star}(V_\star,V_\star\otimes_{A_\star}\Omega_\star) $ and of left $A_\star$-linear maps
$\big( {}_{A_\star}\Hom( {V_\star}^{\,\prime},\Omega_\star\otimes_{A_\star} {V_\star}^{\,\prime})\big)^\op$.
\sk
We can now finally prove the main theorem of this subsection stating  that deforming the dual connection is 
equivalent to dualizing the deformed connection.
\begin{Theorem}\label{theo:dualcondef}
 Let $H$ be a Hopf algebra with twist $\FF\in H\otimes H$, $A$ be an $\AAAlg{H}{}{}$-algebra,
 $V$ be a finitely generated and projective $\MMMod{H}{}{A}$-module and
$\bigl(\Omega^\bullet,\wedge,\dif\bigr)$ be a left $H$-covariant differential calculus over $A$.
Then the following diagram of isomorphisms between affine spaces of
connections over $\MMMod{H^\FF}{}{}$-modules  commutes:
\begin{flalign}
\xymatrix{
\Con_A(V) _\star\ar[rrr]^-{^\prime}\ar[dd]_-{\widetilde{D}_\FF} & & & {_A}\Con(V^\prime)_\star \ar[d]^-{\widetilde{D}_\FF^\cop} \\
&&& {_{A_\star}}\Con({V^\prime}_{\,\star}) \ar[d]^-{\mathrm{Ad}_{D_\FF}}\\
\Con_{A_\star}(V_\star) \ar[rrr]^-{^{\prime_\star}} & & & {_{A_\star}}\Con({V_\star}^{\,\prime})
} 
\end{flalign}
\end{Theorem}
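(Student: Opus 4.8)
The plan is to recognise the large square as the vertical pasting of the two commutative diagrams already established in Corollary~\ref{cor:widehatcomdiag} and Proposition~\ref{propo:iotacomdiag}, after factoring both prime maps through the hat maps and the induced evaluation isomorphisms. Recall that by their very definitions (see (\ref{eqn:primemapcon}) and the displays introducing ${}^{\prime_\star}$) one has the factorizations ${}^\prime=\iota^{-1}\circ\widehat{~~\,}$ and ${}^{\prime_\star}=\iota_\star^{-1}\circ\widehat{~~\,}^\star$, where $\iota^{-1}$ and $\iota_\star^{-1}$ are the $\MMMod{H^\FF}{}{}$-module isomorphisms (\ref{TiotaT92}) and (\ref{iotastar-1}). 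All maps occurring in the square are affine space (respectively $\MMMod{H^\FF}{}{}$-module) isomorphisms by Theorems~\ref{theo:dualconn}, \ref{theo:condef}, \ref{theo:condefleft} together with Lemma~\ref{lem:AdDF} and Remark~\ref{rem:abuseAdDF}, so it suffices to verify commutativity as an identity of underlying (affine) maps.

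The diagram chase then reads
\begin{flalign*}
{}^{\prime_\star}\circ\widetilde{D}_\FF
&=\iota_\star^{-1}\circ\widehat{~~\,}^\star\circ\widetilde{D}_\FF
=\iota_\star^{-1}\circ\mathrm{Ad}_{D_\FF}\circ D_\FF^\cop\circ\widehat{~~\,}\\
&=\mathrm{Ad}_{D_\FF}\circ\widetilde{D}_\FF^\cop\circ\iota^{-1}\circ\widehat{~~\,}
=\mathrm{Ad}_{D_\FF}\circ\widetilde{D}_\FF^\cop\circ{}^\prime~,
\end{flalign*}
where the second equality is the commutative square of Corollary~\ref{cor:widehatcomdiag} (whose left-hand objects are already the connection spaces $\Con_A(V)_\star$ and $\Con_{A_\star}(V_\star)$), and the third equality is the commutative square of Proposition~\ref{propo:iotacomdiag}. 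Two bookkeeping points make the pasting legitimate: the codomain of $\widehat{~~\,}^\star$, namely $\big(\Hom_\bbK({V_\star}^{\,\prime},\Hom_{A_\star}(V_\star,\Omega_\star))\big)^\op$, is exactly the bottom-left object of Proposition~\ref{propo:iotacomdiag}, so the two diagrams glue along this corner; and the two distinct $\mathrm{Ad}_{D_\FF}$ maps (the $\Hom$-valued one of Lemma~\ref{lem:AdDF}, produced by Corollary~\ref{cor:widehatcomdiag}, and the $\Omega_\star\otimes_{A_\star}{V^\prime}_{\,\star}$-valued one of Remark~\ref{rem:abuseAdDF}, produced after applying Proposition~\ref{propo:iotacomdiag}) are each used on their proper domain.

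The one genuinely new point, which I expect to be the main (though modest) obstacle, is the well-definedness of the lower-right vertical arrow: one must know that $\mathrm{Ad}_{D_\FF}$ restricts to an affine space isomorphism ${}_{A_\star}\Con({V^\prime}_{\,\star})\to {}_{A_\star}\Con({V_\star}^{\,\prime})$, i.e.~that it carries left connections on ${V^\prime}_{\,\star}$ to left connections on ${V_\star}^{\,\prime}$. This may be checked directly by transporting the left Leibniz rule along the $\MMMod{H^\FF}{A_\star}{}$-module isomorphism $D_\FF:{V^\prime}_{\,\star}\to {V_\star}^{\,\prime}$, exactly as in the well-definedness arguments of Lemma~\ref{lem:AdDF} and Remark~\ref{rem:abuseAdDF}; alternatively it follows a posteriori from the chase, since ${}^{\prime_\star}$, $\widetilde{D}_\FF$, ${}^\prime$ and $\widetilde{D}_\FF^\cop$ are bijections onto the relevant connection spaces (Theorem~\ref{theo:dualconn} and its deformed analogue, Theorems~\ref{theo:condef} and \ref{theo:condefleft}), so that the established identity forces $\mathrm{Ad}_{D_\FF}={}^{\prime_\star}\circ\widetilde{D}_\FF\circ\big(\widetilde{D}_\FF^\cop\circ{}^\prime\big)^{-1}$ to be such an isomorphism. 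With this in place the two sub-squares paste and the theorem follows.
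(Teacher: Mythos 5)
Your proposal is correct and takes essentially the same approach as the paper, whose proof consists precisely of pasting the commutative diagrams of Corollary \ref{cor:widehatcomdiag} and Proposition \ref{propo:iotacomdiag} along the shared corner $\big(\Hom_\bbK({V_\star}^{\,\prime},\Hom_{A_\star}(V_\star,\Omega_\star))\big)^\op$, using the factorizations ${}^\prime=\iota^{-1}\circ\widehat{~~\,}$ and ${}^{\prime_\star}=\iota_\star^{-1}\circ\widehat{~~\,}^\star$. The only (harmless) divergence is in the affine-space bookkeeping: where you argue a posteriori that $\mathrm{Ad}_{D_\FF}$ restricts to an isomorphism of connection spaces and that the composites are affine, the paper instead settles the linear parts by additionally combining Remark \ref{rem:homcomdiag} with Proposition \ref{propo:iotacomdiag}.
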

\begin{proof}
Combining the commutative diagrams of Corollary \ref{cor:widehatcomdiag} and Proposition \ref{propo:iotacomdiag}
we obtain commutativity of the above diagram regarded as maps. The
commutativity of the diagram regarded as affine space maps follows by
combining the  diagrams of Remark
\ref{rem:homcomdiag} and Proposition \ref{propo:iotacomdiag}. The
vertical maps are obviously invertible, the horizontal ones are also
isomorphisms (recall  Theorem \ref{theo:dualconn}).
\end{proof}
\sk
\subsubsection*{Dual connections on $\MMMod{H}{A}{A}$-modules} 
Let us consider the case where $V$ is an $\MMMod{H}{A}{A}$-module, then
all the modules encountered in this subsection are also
$\MMMod{H}{A}{A}$-modules, we list them:
$V^\prime,\,\Omega\otimes_A V^\prime ,\,
\Hom_\bbK(V,\Omega),\,\Hom_A(V,\Omega)$, 
 $\Hom_\bbK(V,V\otimes_A\Omega)$, $\Hom_A(V,V\otimes_A\Omega)$,
$\big(\Hom_\bbK(V^\prime,\Omega\otimes_A V^\prime)\big)^\op$,
$\big({}_A\Hom(V^\prime,\Omega\otimes_A V^\prime)\big)^\op$ as well as
$\big(\Hom_\bbK({V^\prime}, \Hom_\bbK(V,\Omega))\big)^\op$ and
$\big({}_A\Hom({V^\prime}, \Hom_\bbK(V,\Omega))\big)^\op$. For example it follows from Proposition
\ref{ABVAEnd} that for all $T\in
\big(\Hom_\bbK({V^\prime}, \Hom_\bbK(V,\Omega))\big)^\op$ and $a\in A$
the right $A$-module structure is given by  $T\cdot^\op a= \boldsymbol{r}_a\circ T$, where $\boldsymbol{r}:A\to
{}_A\End\big(\Hom_\bbK(V,\Omega)\big)$ and, for all $P\in
\Hom_\bbK(V,\Omega) $,
$\boldsymbol{r}_a(P)=P\circ l_a$.
Similarly the left $A$-module structure is given by $a\cdot ^\op T=T\circ \boldsymbol{r}_a$ where now $\boldsymbol{r}:A\to
{}_A\End(V^\prime)$.

It is now straightforward to prove that in this case the isomorphisms $\iota$ in (\ref{iotadef}), 
${}^\Hom$ in (\ref{eqn:Phommap}) and (\ref{eqn:Phommaprestr}), and hence ${~}^\prime$
in (\ref{eqn:primemaphom}) are  $\MMMod{H}{A}{A}$-module
isomorphisms. 
Thus we have
\begin{Corollary}\label{coro:dualconnbi}
If $V$ is an $\MMMod{H}{A}{A}$-module, the isomorphism ${~}^\prime: \Con_A(V) \to {}_A\Con(V^\prime)$
of Theorem \ref{theo:dualconn} is  an affine space isomorphism,
where  $\Con_A(V)$ and ${}_A\Con(V^\prime)$ are 
affine spaces over the isomorphic $\MMMod{H}{A}{A}$-modules $\Hom_A(V,V\otimes_A\Omega)$ and $\big({}_A\Hom(V^\prime,\Omega\otimes_A V^\prime)\big)^\op$, respectively.
\end{Corollary}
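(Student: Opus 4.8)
The plan is to promote the affine space isomorphism ${}^\prime\colon \Con_A(V)\to {}_A\Con(V^\prime)$ of Theorem \ref{theo:dualconn}, which is established there over the $\bbK$-modules $\Hom_A(V,V\otimes_A\Omega)$ and $\big({}_A\Hom(V^\prime,\Omega\otimes_A V^\prime)\big)^\op$, to an isomorphism of affine spaces over the corresponding $\MMMod{H}{A}{A}$-modules. On the underlying difference spaces the map ${}^\prime$ is the composition ${}^\prime=\iota^{-1}\circ\,{}^\Hom$ of the evaluation map $\iota$ of Proposition \ref{propo:projhomiso} with the dualization map ${}^\Hom$ of (\ref{eqn:Phommap}); on connections the extra term $\dif\circ v^\prime$ entering $\widehat{\phantom{n}}$ contributes only to the affine base point. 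Hence it suffices to check that $\iota$ and ${}^\Hom$ are $\MMMod{H}{A}{A}$-module isomorphisms. Their $H$-equivariance is already known (Proposition \ref{propo:projhomiso}, Lemma \ref{lem:equivarianceHommap} and Corollary \ref{cor:equivarianceprimehatmaps}), and both are already shown to be left $A$-linear; the only genuinely new point is right $A$-linearity with respect to the bimodule structures now available on all modules in play.

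First I would record that, for $V$ an $\MMMod{H}{A}{A}$-module, every module appearing in the construction---$V^\prime$, $\Omega\otimes_A V^\prime$, $\Hom_\bbK(V,\Omega)$, $\Hom_A(V,\Omega)$, $\Hom_A(V,V\otimes_A\Omega)$ and the opposite $\Hom$-modules---carries a canonical $\MMMod{H}{A}{A}$-module structure, the opposite ones being governed by the $\boldsymbol{r}$-maps described immediately before the statement. Then I would verify right $A$-linearity of $\iota$ directly: for all $\omega\in\Omega$, $v^\prime\in V^\prime$, $a\in A$ and $v\in V$ one has $\iota\big((\omega\otimes_A v^\prime)\cdot a\big)(v) = \omega\cdot v^\prime(a\cdot v) = \big(\iota(\omega\otimes_A v^\prime)\cdot a\big)(v)$, where the right action on the target $\Hom_A(V,\Omega)$ is $P\cdot a = P\circ l_a$. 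For ${}^\Hom$ I would check both actions against the opposite conventions $T\cdot^\op a = \boldsymbol{r}_a\circ T$ and $a\cdot^\op T = T\circ \boldsymbol{r}_a$: a short manipulation of $-\wedge\circ (v^\prime\otimes \id)\circ P$ as in (\ref{eqn:Phom}) gives $(P\cdot a)^\Hom = \boldsymbol{r}_a\circ P^\Hom = P^\Hom\cdot^\op a$ and $(a\cdot P)^\Hom = P^\Hom\circ \boldsymbol{r}_a = a\cdot^\op P^\Hom$, where on the right the $\boldsymbol{r}$-maps act on $\Hom_\bbK(V,\Omega)$ and on $V^\prime$ respectively. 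Combined with the already-established $H$-equivariance and left $A$-linearity, this shows that $\iota$ and ${}^\Hom$, and hence ${}^\prime=\iota^{-1}\circ\,{}^\Hom$, are $\MMMod{H}{A}{A}$-module isomorphisms; restricting to right $A$-linear maps identifies the two difference modules, so that ${}^\prime\colon\Con_A(V)\to{}_A\Con(V^\prime)$ is an affine space isomorphism over them.

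The main obstacle I anticipate is purely bookkeeping: keeping the opposite $(\,\cdot\,)^\op$ structures and the two different $\boldsymbol{r}$-maps straight, so that right $A$-linearity of the bare maps $\iota$ and ${}^\Hom$ really corresponds, under the opposite convention of the target spaces, to the intended left and right $A$-actions. Once the conventions are fixed, each verification is a one-line computation with $\wedge\circ(v^\prime\otimes \id)$ and with the evaluation $\omega\cdot v^\prime(v)$, exactly parallel to the left-linearity check carried out before (\ref{eqn:Phommap}); no new input beyond Proposition \ref{propo:fgpmodprop} (used already in Theorem \ref{theo:dualconn} for the invertibility of ${}^\Hom$) is required.
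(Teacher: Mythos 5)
Your proposal is correct and follows essentially the same route as the paper: the paper's argument consists precisely of observing that all modules in play become $\MMMod{H}{A}{A}$-modules with the opposite structures given by the $\boldsymbol{r}$-maps, and then asserting as ``straightforward'' that $\iota$, ${}^\Hom$ and hence ${}^\prime$ are $\MMMod{H}{A}{A}$-module isomorphisms, which yields the corollary. You merely make explicit the right $A$-linearity checks for $\iota$ and ${}^\Hom$ that the paper leaves to the reader, and both of your one-line computations are correct.
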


\sk
Similarly, twist deformation of connections on $\MMMod{H}{A}{A}$-modules leads to
Theorem \ref{theo:dualcondef},  where now all the maps in the
commutative diagram are isomorphisms between affine spaces of
connections over $\MMMod{H^\FF}{A_\star}{A_\star}$-modules.
Indeed, all the maps  in the commutative diagram 
of $\MMMod{H^\FF}{A_\star}{A_\star}$-modules underlying the diagram in 
Theorem \ref{theo:dualcondef} 
are  $\MMMod{H^\FF}{A_\star}{A_\star}$-module
isomorphisms. (The map $\mathrm{Ad}_{D_\FF}$ is an
 $\MMMod{H^\FF}{A_\star}{A_\star}$-module  isomorphisms 
simply because all the
other maps in this commutative diagram are 
 $\MMMod{H^\FF}{A_\star}{A_\star}$-module isomorphisms).

\subsection{Connections on
 tensor products of modules and dual modules}
Combining the results of 
Corollary \ref{coro:dualconnbi}  on dual connections and of  Theorem
\ref{theo:leftrightconiso} on right to left  connections we immediately
obtain the following
 \begin{Corollary}\label{cor:rightdualcon}
 Let $(H,\RR)$ be a quasitriangular Hopf algebra, $A$  be a quasi-commutative $\AAAlg{H}{}{}$-algebra,
 $V$ be a strong quasi-commutative $\MMMod{H}{A}{A}$-module and
$(\Omega^\bullet,\wedge,\dif)$ be  a graded quasi-commutative left $H$-covariant differential calculus over $A$.
 Let further $V$ be finitely generated and projective
 as an $\MMMod{}{}{A}$-module and denote by $V'=\Hom_A(V,A)$ the dual $\MMMod{H}{A}{A}$-module of $V$.
  Then there is an affine space isomorphism
 \begin{flalign}
 \xymatrix{
 \Con_A(V)\ar[rr]^-{{}^\prime} & & {}_A\Con(V^\prime) \ar[rr]^-{\widetilde{D}_\RR^{-1}} & & \Con_A(V^\prime)~
}~,
 \end{flalign}
where the sets of right connections $\Con_A(V)$ and $\Con_A(V^\prime)$ are affine spaces over  the isomorphic
$\MMMod{H}{A}{A}$-modules  $\Hom_A(V,V\otimes_A\Omega)$ and
$\Hom_A(V^\prime,V^\prime\otimes_A\Omega)$, respectively.
Explicitly, given a right connection $\dd\in \Con_A(V)$ the right connection
on the dual module is $\widetilde{D}_\RR^{-1}\big(\nabla^\prime\big)\in\Con_A(V^\prime)$.
\end{Corollary}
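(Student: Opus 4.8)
The plan is to realize the asserted map as the composite of two affine space isomorphisms that have already been established, so that no genuinely new computation is needed beyond checking that the hypotheses of the second one are satisfied for the dual module $V^\prime$.

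First I would invoke Corollary \ref{coro:dualconnbi}. Since $V$ is an $\MMMod{H}{A}{A}$-module which is finitely generated and projective as an $\MMMod{}{}{A}$-module, the dualization map ${}^\prime$ is an affine space isomorphism $\Con_A(V)\to {}_A\Con(V^\prime)$, covering the $\MMMod{H}{A}{A}$-module isomorphism $\Hom_A(V,V\otimes_A\Omega)\to \big({}_A\Hom(V^\prime,\Omega\otimes_A V^\prime)\big)^\op$. This supplies the left-hand arrow of the diagram and requires no further work.

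Next I would apply Theorem \ref{theo:leftrightconiso} to the module $V^\prime$ in order to pass from left to right connections, obtaining $\widetilde{D}_\RR^{-1}:{}_A\Con(V^\prime)\to \Con_A(V^\prime)$. The only point that needs verification, and hence the main (if modest) obstacle, is that $V^\prime$ is strong quasi-commutative. I would argue this from Proposition \ref{Prop5.14} with $W=A$: it suffices to check that the algebra $A$, viewed as an $\MMMod{H}{A}{A}$-module, is strong quasi-commutative. Indeed, reading the right $A$-action on $A$ as multiplication, the algebra quasi-commutativity (\ref{QCalg}) is precisely the module quasi-commutativity condition (\ref{QCrightact}) for $A$, while the equivalent form $\tilde a\,a=(\R_\al\trgl a)(\R^\al\trgl\tilde a)$ recorded after that definition is exactly the strong quasi-commutativity condition (\ref{QCrightactstrong}) for $A$. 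Thus $A$ is a strong quasi-commutative $\MMMod{H}{A}{A}$-module, and Proposition \ref{Prop5.14} gives that $V^\prime=\Hom_A(V,A)$ is too. As $\bigl(\Omega^\bullet,\wedge,\dif\bigr)$ is graded quasi-commutative by hypothesis, Theorem \ref{theo:leftrightconiso} applies to $V^\prime$; inverting it yields the affine space isomorphism $\widetilde{D}_\RR^{-1}:{}_A\Con(V^\prime)\to \Con_A(V^\prime)$, covering the $\MMMod{H}{A}{A}$-module isomorphism $\big({}_A\Hom(V^\prime,\Omega\otimes_A V^\prime)\big)^\op\to \Hom_A(V^\prime,V^\prime\otimes_A\Omega)$.

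Finally I would compose the two maps. Because the codomain ${}_A\Con(V^\prime)$ of ${}^\prime$ equals the domain of $\widetilde{D}_\RR^{-1}$, the composite $\widetilde{D}_\RR^{-1}\circ{}^\prime:\Con_A(V)\to\Con_A(V^\prime)$ is well-defined, and as a composite of affine space isomorphisms it is again one. Its underlying $\bbK$-module (indeed $\MMMod{H}{A}{A}$-module) isomorphism is the composite $\Hom_A(V,V\otimes_A\Omega)\to\big({}_A\Hom(V^\prime,\Omega\otimes_A V^\prime)\big)^\op\to \Hom_A(V^\prime,V^\prime\otimes_A\Omega)$, which is exactly the isomorphism claimed in the statement, and the explicit prescription $\dd\mapsto \widetilde{D}_\RR^{-1}\big(\nabla^\prime\big)$ is just the evaluation of this composite. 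This completes the argument, the only non-formal ingredient having been the strong quasi-commutativity of $V^\prime$ settled above.
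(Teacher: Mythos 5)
Your proposal is correct and follows essentially the same route as the paper, which obtains the corollary precisely by composing the dualization isomorphism of Corollary \ref{coro:dualconnbi} with the (inverted) right-to-left isomorphism $\widetilde{D}_\RR$ of Theorem \ref{theo:leftrightconiso} applied to $V^\prime$. The one ingredient the paper leaves implicit — that $V^\prime=\Hom_A(V,A)$ is strong quasi-commutative, so that Theorem \ref{theo:leftrightconiso} indeed applies to it — is exactly what you verify via Proposition \ref{Prop5.14} with $W=A$ together with the observation that $A$, as an $\MMMod{H}{A}{A}$-module, satisfies both (\ref{QCrightact}) and (\ref{QCrightactstrong}), and that verification is valid.
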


This result allows  to induce from a connection on $V$ a connection on
the tensor algebra generated by $V$ and $V^\prime$. In other words, given a connection
on vector fields (elements of $V$) we extend it to a connection on
covariant and contravariant tensor fields (elements of the tensor
algebra generated by $V$ and $V'$).

\begin{Corollary}\label{coro:conntensoralgebra}
In the hypotheses of Corollary \ref{cor:rightdualcon}, 
given a right connection $\nabla\in \Con_A(V)$ we induce the right
connection 
$\widetilde{D}_\RR^{-1}\big(\nabla^\prime\big)\in\Con_A(V^\prime)$. 
Sums of the $\dd$ and
$\widetilde{D}_\RR^{-1}\big(\nabla^\prime\big)$
connections as in Theorem \ref{theo:conplus} allow  to extend the
connection $\dd$ to  the tensor algebra generated by $V$ and
$V^\prime$.
\end{Corollary}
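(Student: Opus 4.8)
The plan is to construct the connection on the whole tensor algebra by iterating the sum-of-connections operation $\oplus_\RR$ of Theorem \ref{theo:conplus}, using $V$ and its dual $V^\prime$ as the two elementary building blocks. First I would fix the two elementary right connections: on $V$ we are given $\nabla\in\Con_A(V)$, and on $V^\prime=\Hom_A(V,A)$ we set $\nabla_{V^\prime}:=\widetilde{D}_\RR^{-1}\big(\nabla^\prime\big)\in\Con_A(V^\prime)$, which is a genuine right connection by Corollary \ref{cor:rightdualcon} (this is where the finitely generated projective hypothesis enters, through Theorem \ref{theo:dualconn}). On the degree-zero component $A$ I would use the canonical flat connection $\dif:A\to A\otimes_A\Omega\cong\Omega$, which obeys the right Leibniz rule (\ref{eqn:rightcon}) precisely because of the graded Leibniz rule of the calculus restricted to $0$-forms.

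Next I would check the structural hypotheses needed to feed these data into Theorem \ref{theo:conplus} repeatedly. Both $V$ and $V^\prime$ are strong quasi-commutative $\MMMod{H}{A}{A}$-modules: for $V$ this is assumed, and for $V^\prime$ it follows from Proposition \ref{Prop5.14} applied with the second argument equal to $A$, noting that $A$ is strong quasi-commutative as a module over itself since for the algebra the quasi-commutativity relation (\ref{QCalg}) holds simultaneously with respect to $\RR^{-1}$ and $\RR_{21}$ (as observed in Subsection \ref{subsecprodonA}). Moreover, by Proposition \ref{qc1qc2thenqc12}, any tensor product over $A$ of copies of $V$ and $V^\prime$ is again a strong, hence quasi-commutative, $\MMMod{H}{A}{A}$-module. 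Then, for each word $w=X_1X_2\cdots X_n$ in the two letters $V$ and $V^\prime$, I would define a right connection $\nabla_w$ on $M_w:=X_1\otimes_A\cdots\otimes_A X_n$ by induction on $n$, setting $\nabla_w:=\nabla_{X_1}\oplus_\RR \nabla_{X_2\cdots X_n}$, where $\nabla_{X_2\cdots X_n}\in\Con_A(M_{X_2\cdots X_n})$ is produced at the previous inductive step. At each step the left factor $X_1$ is an $\MMMod{H}{}{A}$-module and the tail $M_{X_2\cdots X_n}$ is a quasi-commutative $\MMMod{H}{A}{A}$-module, so Theorem \ref{theo:conplus} applies and gives $\nabla_w\in\Con_A(M_w)$. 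Taking the direct sum of the $\nabla_w$ over all words of each fixed length, together with $\dif$ in degree zero, yields a right connection on every graded component and hence on the entire tensor algebra generated by $V$ and $V^\prime$.

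The main point to secure is that this construction is consistent, i.e.~independent of how each iterated tensor product is bracketed and of the order in which the $\oplus_\RR$ operations are performed. This is exactly the content of the associativity Theorem \ref{theo:tensorcon}, which guarantees $(\nabla_{X}\oplus_\RR\nabla_Y)\oplus_\RR\nabla_Z=\nabla_X\oplus_\RR(\nabla_Y\oplus_\RR\nabla_Z)$; together with the compatibility with the $H$-action expressed in (\ref{xiVW}), it ensures that $\nabla_w$ is well-defined and $H$-covariant. The one delicate feature to keep track of is the asymmetry of Theorem \ref{theo:conplus}, in which only the right-hand factor is required to be a quasi-commutative bimodule while the left-hand factor need only be a right module; this causes no difficulty here, because every elementary letter, and therefore by Proposition \ref{qc1qc2thenqc12} every tail module appearing in the recursion, is strong quasi-commutative, so both possible bracketings satisfy the hypotheses and are reconciled by associativity.
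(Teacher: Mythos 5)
Your proposal is correct and takes essentially the same approach as the paper: the paper's entire proof consists of the remark that the iterated $\oplus_\RR$ construction from $\nabla$ and $\widetilde{D}_\RR^{-1}\big(\nabla^\prime\big)$ is unambiguous by the associativity Theorem \ref{theo:tensorcon}, which is exactly the key point you invoke. Your extra verifications (strong quasi-commutativity of $V^\prime$ via Proposition \ref{Prop5.14} with $A$ as second argument, and of the tail modules via Proposition \ref{qc1qc2thenqc12}) correctly supply the hypotheses of Theorem \ref{theo:conplus} that the paper leaves implicit.
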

\begin{proof}
This construction is unambiguous due to the associativity of the sum
of connections, which is proven in Theorem \ref{theo:tensorcon}.
\end{proof}
\begin{Remark}
The deformation of the sum of a connection and its dual connection is
canonical, in fact  we have already shown that sum,
dualization and the left to right isomorphism $\widetilde{D}_{\RR}^{-1}$  are compatible with deformation, 
hence their composition is also compatible.
\end{Remark}

\section{\label{sec:curvature}Curvature}
After reviewing the basic definitions we investigate the behaviour of
the curvature of a connection under twist deformation. The twist
deformed curvature in general differs from the curvature
of the twist deformed connection, hence flat connections are twisted in
non flat ones and vice versa. We then explicitly calculate the curvature of the sum of
two connections, $\nabla_V\oplus_\RR\nabla_W$, and find that it is in
general not simply given by the sum of the individual curvatures.

\subsection{Definitions}
A connection on an $\MMMod{}{}{A}$-module $V$ extends as usual to a
$\bbK$-linear map from $V\otimes_A\Omega^\bullet$ to $V\otimes_A\Omega^\bullet$
(cf. (\ref{eqn:nabliftinduced})), where $(\Omega^\bullet,\wedge,\dif)$
is a differential calculus over $A$. 
For later purposes we analyse in subsequent steps this construction. 
We then define the curvature of a connection.

\begin{Lemma}\label{lem:connectionlift}
Let $V$ be an 
$\MMMod{}{}{A}$-module,
$(\Omega^\bullet,\wedge,\dif)$ a differential calculus over the
algebra $A$, and 
$\nabla \in\Con_A(V)$. We define the $\bbK$-linear map
 $\nabla^\bullet : V\otimes \Omega^\bullet \to V\otimes_A \Omega^\bullet$
by
\begin{flalign}\label{eqn:nablift}
\nabla^\bullet  := (\id\otimes \wedge) \circ (\nabla\otimes\id) + \pi\circ (\id\otimes\dif)~.
\end{flalign}
Here $\pi:V\otimes \Omega^\bullet \to V\otimes_A \Omega^\bullet$ is the canonical projection
and $\id\otimes \wedge: (V\otimes_A \Omega^\bullet)\otimes \Omega^\bullet \to V\otimes_A\Omega^\bullet$
is the right $A$-linear map defined by, for all $v\in V$ and $\omega,\tilde \omega\in\Omega^\bullet$,
\begin{flalign}
(\id\otimes\wedge)\big((v\otimes_A\omega)\otimes\tilde\omega\big) := (v\otimes_A\omega)\wedge \tilde\omega:= v\otimes_A \omega\wedge\tilde\omega~,
\end{flalign} 
and extended to all $ (V\otimes_A \Omega^\bullet)\otimes \Omega^\bullet $ by $\bbK$-linearity.

The map $\nabla^\bullet$ induces a well-defined map (still denoted by $\nabla^\bullet$ for ease of notation)
on the quotient, $\nabla^\bullet: V\otimes_A\Omega^\bullet \to V\otimes_A\Omega^\bullet$. Explicitly, for
all $v\in V$ and $\omega\in \Omega^\bullet$,
\begin{flalign}\label{eqn:nabliftinduced}
\nabla^\bullet ( v\otimes_A\omega ) = \nabla v\wedge \omega + v\otimes_A \dif \omega ~.
\end{flalign}

The map $\nabla^\bullet: V\otimes_A\Omega^\bullet \to V\otimes_A\Omega^\bullet$ satisfies,
for all $v\in V$, $a\in A$ and $\omega\in\Omega^\bullet$ of
homogeneous degree,
\begin{flalign}\label{eqn:nabliftleibnitz}
\nabla^\bullet\big((v\otimes_A \omega) a\big) 
= \big(\nabla^\bullet (v\otimes_A\omega) \big) a + (-1)^{\deg(\omega)}\,v\otimes_A \omega\wedge \dif a~.
\end{flalign}

\end{Lemma}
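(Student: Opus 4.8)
The plan is to verify the three assertions in turn, in each case reducing everything to the two Leibniz rules available: the right Leibniz rule (\ref{eqn:rightcon}) for $\nabla$ and the graded Leibniz rule for $\dif$. First I would read off the value of $\nabla^\bullet$ on a generator $v\otimes\omega\in V\otimes\Omega^\bullet$ straight from the definition (\ref{eqn:nablift}): since $\nabla v\in V\otimes_A\Omega$, the map $\id\otimes\wedge$ sends $(\nabla v)\otimes\omega$ to $(\nabla v)\wedge\omega$, while $\pi\circ(\id\otimes\dif)$ sends $v\otimes\omega$ to $v\otimes_A\dif\omega$; hence $\nabla^\bullet(v\otimes\omega) = (\nabla v)\wedge\omega + v\otimes_A\dif\omega$, which is the expression (\ref{eqn:nabliftinduced}) pending the passage to the quotient.

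Next, to establish that $\nabla^\bullet$ descends to $V\otimes_A\Omega^\bullet$, I would show that it annihilates the generators $v\cdot a\otimes\omega - v\otimes a\cdot\omega$ of $\ker\pi$. Applying the explicit expression above together with the right Leibniz rule $\nabla(v\cdot a) = (\nabla v)\cdot a + v\otimes_A\dif a$, and using the identifications $((\nabla v)\cdot a)\wedge\omega = (\nabla v)\wedge(a\cdot\omega)$ and $(v\cdot a)\otimes_A\dif\omega = v\otimes_A(a\wedge\dif\omega)$ inside $V\otimes_A\Omega^\bullet$, one obtains
\[
\nabla^\bullet(v\cdot a\otimes\omega) = (\nabla v)\wedge(a\cdot\omega) + v\otimes_A(\dif a\wedge\omega) + v\otimes_A(a\wedge\dif\omega).
\]
Expanding the corresponding term for $\nabla^\bullet(v\otimes a\cdot\omega)$ by the graded Leibniz rule $\dif(a\wedge\omega) = (\dif a)\wedge\omega + a\wedge\dif\omega$ reproduces exactly these three summands, so the difference vanishes and the induced map on the quotient is well-defined and given by (\ref{eqn:nabliftinduced}).

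Finally, for the graded Leibniz property (\ref{eqn:nabliftleibnitz}) I would evaluate $\nabla^\bullet$ on $(v\otimes_A\omega)\cdot a = v\otimes_A(\omega\wedge a)$ via (\ref{eqn:nabliftinduced}) and expand $\dif(\omega\wedge a) = (\dif\omega)\wedge a + (-1)^{\deg(\omega)}\,\omega\wedge\dif a$; collecting the terms carrying $a$ on the right reconstitutes $\big(\nabla^\bullet(v\otimes_A\omega)\big)\cdot a$ and leaves the remainder $(-1)^{\deg(\omega)}\,v\otimes_A(\omega\wedge\dif a)$, which is (\ref{eqn:nabliftleibnitz}). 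The computations are elementary, so the only point demanding care — the main, if minor, obstacle — is the consistent bookkeeping of the identifications relating the right $A$-action, the projection $\pi$ and the wedge product, in particular $a\cdot\omega = a\wedge\omega$ and $\omega\cdot a = \omega\wedge a$ for $a\in\Omega^0 = A$, which is what permits the degree-zero factor to pass freely through the $\otimes_A$ and $\wedge$ symbols.
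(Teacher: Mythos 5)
Your proposal is correct and follows essentially the same route as the paper: well-definedness is checked on the generators $v\cdot a\otimes\omega - v\otimes a\cdot\omega$ of $\ker\pi$ using the right Leibniz rule for $\nabla$ together with $\dif(a\,\omega)=\dif a\wedge\omega + a\,\dif\omega$, and the graded Leibniz property (\ref{eqn:nabliftleibnitz}) is obtained by expanding $\dif(\omega\, a)$, exactly as in the paper's proof. Your explicit attention to the identifications $a\cdot\omega=a\wedge\omega$ and the passage of degree-zero factors through $\otimes_A$ is the same bookkeeping the paper performs implicitly.
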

\begin{proof}
The map (\ref{eqn:nablift}) induces a well-defined map on the quotient,
since it annihilates the submodule $\mathcal{N}_{V,\Omega^\bullet}={\ker}(\pi)$, 
i.e.~$\nabla^\bullet\big(\mathcal{N}_{V,\Omega^\bullet}\big)=\{0\}$. Indeed, for
all $a\in A$, $v\in V$ and $\omega\in\Omega^\bullet$,
\begin{flalign}
\nn\nabla^\bullet(v\cdot a\otimes \omega) &= \nabla(v\cdot a)\wedge \omega+ v\cdot a\otimes_A \dif\omega\\
\nn&=  (\nabla v)_{\,}a\wedge \omega + v\otimes_A\dif a\wedge \omega + v\cdot a\otimes_A \dif\omega\\
\nn &= \nabla v\wedge a \,\omega + v\otimes_A(\dif a \wedge \omega + a\,\dif\omega)\\
\nn &=  \nabla v\wedge a \,\omega + v\otimes_A\dif (a \,\omega)\\
&=\nabla^\bullet (v\otimes a\,\omega)~.
\end{flalign}

Property (\ref{eqn:nabliftleibnitz}) hold because, for all
$v\in V$, $a\in A$ and $\omega\in\Omega^\bullet$ of homogeneous degree,
\begin{flalign}
\nn \nabla^\bullet\big(v\otimes_A \omega a\big) 
&= \nabla v\wedge \omega\,a + v\otimes_A\dif(\omega\,a)\\
\nn&= \nabla v\wedge \omega  a  + v\otimes_A\Big((\dif\omega)\,a+ (-1)^{\deg(\omega)}\,\omega\wedge\dif a\Big)\\
&= \big( \nabla^\bullet(v\otimes_A \omega)\big) a
+(-1)^{\deg(\omega)} \,v\otimes_A \omega\wedge\dif a~.
\end{flalign}
\end{proof}
\begin{Definition}\label{defi:curvature}
Let $\nabla\in\Con_A(V)$. The {\bf curvature} of $\nabla$ is the $\bbK$-linear map defined by
\begin{flalign}
R_\nabla:=\nabla^\bullet \circ\nabla \;:\; V\to V\otimes_A\Omega^2~.
\end{flalign}
\end{Definition}
\sk
The curvature is right $A$-linear,
i.e.~$R_\nabla\in\Hom_A(V,V\otimes_A\Omega^2)$, indeed,
for all $a\in A$ and $v\in V$,
\begin{flalign}
\nn R_\nabla(v\cdot a) &= \nabla^\bullet\big(\nabla(v\cdot a)\big) = \nabla^\bullet\big((\nabla v) a + v\otimes_A\dif a\big)\\
\nn &=\nabla^\bullet\big(\nabla v \big) \, a - \nabla v \wedge \dif a + \nabla v \wedge \dif a + v\otimes_A \dif{\:\!}\dif a \\
&=\nabla^\bullet\big(\nabla v\big)\, a = R_\nabla(v)\, a~.
\end{flalign}


\subsection{Curvature of deformed connections}
 Let $H$ be a Hopf algebra with twist $\FF\in H\otimes H$,
 $A$ be an $\AAAlg{H}{}{}$-algebra, $V$ be an $\MMMod{H}{}{A}$-module
 and
 $(\Omega^\bullet,\wedge,\dif)$ be a left $H$-covariant differential calculus over $A$.
We denote by  $H^\FF$, $A_\st$, $V_\st$ and  $(\Omega^\bullet,\wedge_\st,\dif)$
the deformations of $H$, $A$, $V$  and $(\Omega^\bullet,\wedge,\dif)$ 
(obtained  applying  Theorems \ref{TwistedHopfAlg},
\ref{Theorem1}, \ref{Theorem2} and  Proposition \ref{lem:dcdef}, respectively).
\sk

Consider an arbitrary connection $\nabla_\st\in\Con_{A_\st}(V_\st)$, then,
following Lemma \ref{lem:connectionlift}, we have a well-defined
extension  $\nabla_\st^{\bullet_\star}:
 V_\st \otimes_{A_\st} \Omega^\bullet_\st \to
 V_\st\otimes_{A_\st}\Omega^\bullet_\st$. The $\bbK$-linear map 
 $\nabla_\star^{\bullet_\star} : V_\st \otimes_{\star} \Omega^\bullet_\st \to
 V_\st\otimes_{A_\st}\Omega^\bullet_\st$ is defined by
\eq
\nabla_\st^{\bullet_\star} := (\id\otimes_\st \wedge_\st) \circ (\nabla_\st\otimes_\st\id) 
+ \pi_\st\circ (\id\otimes_\st\dif)\,,
\en
where $\pi_\st : V_\st\otimes_\st \Omega^\bullet_\st \to V_\st \otimes_{A_\st}\Omega^\bullet_\st$
 is the canonical projection, and $\id\otimes_\st\wedge_\st : (V_\st \otimes_{A_\st}\Omega^\bullet_\st)\otimes_\st \Omega^\bullet_\st \to  V_\st \otimes_{A_\st}\Omega^\bullet_\st$ is defined by, for all
 $v\in V_\st$ and $\omega,\tilde\omega\in\Omega^\bullet_\st$,
 \begin{flalign}
 (\id\otimes_\st\wedge_\st)\big((v\otimes_{A_\st}\omega)\otimes_\st\tilde\omega\big) := (v\otimes_{A_\st} \omega)\wedge_\star \tilde\omega := v\otimes_{A_\st}\omega\wedge_\st\tilde\omega ~.
 \end{flalign}
 The induced map $\nabla_\st^{\bullet_\star}:
 V_\st \otimes_{A_\st} \Omega^\bullet_\st \to
 V_\st\otimes_{A_\st}\Omega^\bullet_\st$ reads explicitly, for all $v\in V_\star$ and $\omega\in \Omega^\bullet_\star$,
$ \dd^{\bullet_\star}_\st(v\otimes_{A_\star} \omega)=(\nabla_\st v)\wedge_\st
\omega + v\otimes_{A_\st} \dif \omega$.
\begin{Lemma}\label{lem:conextdef}
 Let $H$ be a Hopf algebra with twist $\FF\in H\otimes H$,
 $A$ be an $\AAAlg{H}{}{}$-algebra, $V$ be an $\MMMod{H}{}{A}$-module and 
 $(\Omega^\bullet,\wedge,\dif)$  be a left $H$-covariant differential calculus over $A$. 
 For any $\dd\in  \Con_{A}(V)$
 we have the commutative diagram
\begin{flalign}\label{eqn:connbullet}
\xymatrix{
{V_\star\otimes_{A_\st}\Omega_\st^\bullet}\ar[d]_-{\varphi_{V_\star,\Omega^\bullet_\star}}
\ar[rrr]^-{{\widetilde{D}_\FF(\dd)}^{\,\bullet_\st}} & & &
 { V_\star\otimes_{A_\st}\Omega^\bullet_\star} \ar[d]^-{\varphi_{V_\star,\Omega^\bullet_\star}}\\
(V\otimes_A\Omega^\bullet)_\star
\ar[rrr]^-{D_{\FF}(\dd^\bullet)}& & &(V\otimes_A\Omega^\bullet)_\star
}
\end{flalign}
\end{Lemma}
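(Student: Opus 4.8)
The plan is to verify that the square in diagram (\ref{eqn:connbullet}) commutes by unpacking both composite maps $\varphi_{V_\star,\Omega^\bullet_\star}\circ {\widetilde{D}_\FF(\dd)}^{\,\bullet_\st}$ and $D_{\FF}(\dd^\bullet)\circ\varphi_{V_\star,\Omega^\bullet_\star}$ into their two additive pieces, corresponding to the two summands in the definition of the connection lift (\ref{eqn:nablift}). Since $\widetilde{D}_\FF(\dd)=\varphi^{-1}\circ D_\FF(\dd)$ by (\ref{DtildeFHOM}), and since all the maps involved are $\bbK$-linear, it suffices to check commutativity separately on the ``multiplication'' term built from $(\id\otimes\wedge)\circ(\dd\otimes\id)$ and on the ``differential'' term built from $\pi\circ(\id\otimes\dif)$. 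First I would recall that $\varphi_{V_\star,\Omega^\bullet_\star}=\FF^{-1}\trgl$ is, by Lemma \ref{lemma4} and Proposition \ref{lem:dcdef}, an $\MMMod{H^\FF}{}{A_\star}$-module isomorphism intertwining the two tensor-product-over-$A$ structures, so the vertical arrows are legitimate isomorphisms and the statement is really the claim that ${\widetilde{D}_\FF(\dd)}^{\,\bullet_\st}$ is the conjugate of $D_{\FF}(\dd^\bullet)$ by $\varphi$.

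The key technical ingredient is that the $\star$-product $\wedge_\star$ on $\Omega^\bullet$ factors as $\wedge_\star=\wedge\circ\FF^{-1}\trgl=\wedge\circ\varphi$ (Theorem \ref{Theorem1}), which is precisely the identity used in the last line of the proof of Proposition \ref{propo:HomHomstarcom}. Using this, together with the commutative diagram (\ref{eqn:higheriotast}) for the three-fold $\varphi$-maps and its induced version on quotients from Lemma \ref{inductions}, I would rewrite $\id\otimes_\star\wedge_\star$ in terms of $\id\otimes\wedge$ conjugated by the appropriate $\varphi$-isomorphisms. For the multiplication term the plan is to evaluate on a generator $v\otimes_{A_\star}\omega$ and track the $\FF^{-1}$-factors: applying ${\widetilde{D}_\FF(\dd)}^{\,\bullet_\st}$ produces $(\widetilde{D}_\FF(\dd)\,v)\wedge_\star\omega$, then $\varphi$ reorganizes the braiding factors, and one compares with first applying $\varphi$ and then $D_\FF(\dd^\bullet)$, which yields $D_\FF\big(\dd\,(\of^\alpha\trgl v)\wedge(\of_\alpha\trgl\omega)\big)$; the twist cocycle property (\ref{ass}) should make the two expressions coincide, exactly as in the computation (\ref{Dnabvsta}) and in Theorem \ref{theo:promodhomdefA}.

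For the differential term the main simplification is the $H$-equivariance of $\dif$ (equation (\ref{eqn:equivar})), which forces $D_\FF(\dif)=\dif$ up to the $\varphi$-reorganization, since an $H$-equivariant map is fixed by the adjoint action and hence $D_\FF$ acts trivially on it modulo the module-structure change; this is the same mechanism by which the de Rham complex is invariant under twisting, noted after Proposition \ref{lem:dcdef}. I expect the differential piece to go through almost formally once one observes that $\pi_\star$ and $\pi$ are related by $\varphi$ via the commutative square (\ref{eqn:lemma4}) and that $D_\FF(\pi)=\pi$, as already recorded inside the proof of Lemma \ref{lemma4}.

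The main obstacle will be bookkeeping the iterated coproducts and braiding indices when the wedge product mixes the $V$-factor with a form factor, because $\wedge_\star$ is associative but not graded-commutative and the three-fold tensor diagram (\ref{diagram}) from Lemma \ref{inductions} must be invoked in exactly the right orientation to align $\varphi_{V_\star,\Omega^\bullet_\star}$ on the domain side with $\varphi_{V_\star,\Omega^\bullet_\star}$ on the codomain side; in particular one must be careful that the form $\omega$ of arbitrary homogeneous degree is handled uniformly, since the lift $\dd^\bullet$ treats $V\otimes_A\Omega^\bullet$ and not just $V\otimes_A\Omega$. Rather than grinding through the index calculus, the cleanest route is to present the argument abstractly: observe that both $\dd^\bullet$ and $\dd_\star^{\bullet_\star}$ are the unique $\bbK$-linear extensions determined by $\dd$ (resp.\ $\widetilde{D}_\FF(\dd)$) and the graded Leibniz rule (\ref{eqn:nabliftleibnitz}), and then note that conjugation by the $\varphi$-isomorphisms transports the undeformed Leibniz rule into the deformed one because $\wedge_\star=\wedge\circ\varphi$ and $\dif$ is $H$-equivariant; uniqueness then yields the commuting square without a generator-level computation.
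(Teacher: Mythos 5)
Your first three paragraphs reproduce, in outline, exactly the paper's proof: split $\dd^\bullet$ into the two summands of (\ref{eqn:nablift}), use $H$-equivariance of $\id\otimes\wedge$, $\pi$ and $\id\otimes\dif$ to get $D_\FF(\dd^\bullet)=(\id\otimes\wedge)\circ D_\FF(\dd\otimes\id)+\pi\circ(\id\otimes\dif)$, expand $D_\FF(\dd\otimes\id)$ via Theorem \ref{theo:promodhomdef} (the paper invokes this version for $\otimes$, rather than Theorem \ref{theo:promodhomdefA} which you cite), and close two commutative squares relating $\wedge_\st$, $\pi_\st$, $\dif$ to $\wedge$, $\pi$, $\dif$ through the $\varphi$-maps (Lemma \ref{lemma4}, $\wedge_\st=\wedge\circ\varphi$). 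Had you carried this out, the proof would be complete and essentially identical to the paper's.

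The route you actually commit to in your last paragraph, however, has a genuine gap: $\dd^\bullet$ is \emph{not} the unique $\bbK$-linear extension of $\dd$ satisfying the graded Leibniz rule (\ref{eqn:nabliftleibnitz}). That rule only involves right multiplication by elements $a\in A=\Omega^0$, so for \emph{any} right $A$-linear map $P\in\Hom_A(V\otimes_A\Omega^\bullet,V\otimes_A\Omega^\bullet)$ raising the form degree by one, the map $\dd^\bullet+P$ satisfies (\ref{eqn:nabliftleibnitz}) as well; requiring agreement with $\dd$ in degree zero only forces $P$ to vanish on $V\otimes_A\Omega^0\simeq V$, leaving $P$ arbitrary on $V\otimes_A\Omega^n$ for $n\geq 1$ (such nonzero $P$ exist generically: the affine spaces of connections are modeled on exactly such Hom-modules). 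A genuine uniqueness statement would need the stronger Leibniz rule with respect to wedging with arbitrary homogeneous forms, $\dd^\bullet\bigl(v\otimes_A\omega\wedge\omega'\bigr)=\dd^\bullet(v\otimes_A\omega)\wedge\omega'+(-1)^{\deg(\omega)}\,v\otimes_A\omega\wedge\dif\omega'$, \emph{together with} the hypothesis that each $\Omega^n$, $n\geq 1$, is generated from $A$ by $\dif$ and $\wedge$ — an assumption the paper never makes (it assumes generation only for $\Omega^1$; compare the footnote in Section 6.3, where generation of the higher $\Omega^n$ by $\Omega$ appears as an explicit \emph{extra} hypothesis). Moreover, even granting uniqueness, checking that the conjugate $\varphi^{-1}\circ D_\FF(\dd^\bullet)\circ\varphi$ satisfies the deformed Leibniz rule and restricts to $\widetilde{D}_\FF(\dd)$ in degree zero requires precisely the identities ($\wedge_\st=\wedge\circ\varphi$, the square (\ref{eqn:lemma4}), Theorem \ref{theo:promodhomdef}) that drive the generator-level computation, so the abstract detour does not shorten the argument — it replaces a correct computation with an unproved, and as stated false, uniqueness claim. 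Revert to your original plan and the proof goes through.
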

\begin{proof}
$H$-equivariance of all maps in (\ref{eqn:nablift}), but $\nabla$, implies
\begin{flalign}
\nn D_\FF(\nabla^\bullet) &= D_\FF\Big( (\id\otimes \wedge) \circ (\nabla\otimes\id) + \pi\circ (\id\otimes\dif)\Big)\\
&=(\id\otimes\wedge)\circ D_\FF(\nabla\otimes\id) + \pi\circ (\id\otimes\dif)~.
\end{flalign} 
Theorem \ref{theo:promodhomdef} implies
\begin{flalign}
\nn D_\FF(\nabla\otimes\id) &= \varphi_{V\otimes_A\Omega^\bullet,\Omega^\bullet}\circ \big(D_\FF(\nabla)\otimes_\st\id\big) \circ \varphi^{-1}_{V,\Omega^\bullet} \\
&=\varphi_{V\otimes_A\Omega^\bullet,\Omega^\bullet}\circ (\varphi_{V_\st,\Omega^\bullet_\st}\otimes_\st\id)\circ \big(\widetilde{D}_\FF(\nabla)\otimes_\st\id\big) \circ \varphi^{-1}_{V,\Omega^\bullet}~.
\end{flalign}
The proof follows from the two
 commutative diagrams
 \begin{subequations}
 \begin{flalign}
 \xymatrix{
(V_\st\otimes_{A_\st}\Omega^\bullet_\st )\otimes_\st \Omega^\bullet_\st \ar[d]_-{ \varphi_{V_\st,\Omega_\st^\bullet} \otimes_\st\id}
\ar[rrr]^-{\id\,\otimes_\st\,\wedge_\st} & &  & V_\st\otimes_{A_\st} \Omega^\bullet_\st \ar[dd]^-{\varphi_{V_\st,\Omega_\st^\bullet}}
\\
( V\otimes_A\Omega^\bullet)_\st\otimes_\st \Omega^\bullet_\st  \ar[d]_-{\varphi_{V\otimes_A\Omega^\bullet,\Omega^\bullet}}
& & & \\
\big((V\otimes_A\Omega^\bullet)\otimes \Omega^\bullet\big)_\st \ar[rrr]_-{\id\,\otimes\,\wedge}& & & (V\otimes_A\Omega^\bullet)_\st
 }
\end{flalign}
 and
 \begin{flalign}
 \xymatrix{
 V_\st\otimes_{\st} \Omega^\bullet_\st \ar[d]_-{\varphi_{V,\Omega^\bullet}}\ar[rr]^-{\id\,\otimes_\st \dif} &&  V_\st\otimes_{\st} \Omega^\bullet_\st \ar[rr]^-{\pi_\st} \ar[d]_-{\varphi_{V,\Omega^\bullet}}  && V_\st\otimes_{A_\st}\Omega^\bullet_\st \ar[d]^-{\varphi_{V_\st,\Omega^\bullet_\st}}\\
 (V\otimes \Omega^\bullet)_\st \ar[rr]_-{\id\,\otimes\dif}&&  (V\otimes \Omega^\bullet)_\st \ar[rr]_-{\pi}&& (V\otimes_A\Omega^\bullet)_\st
 }
 \end{flalign}
 \end{subequations}
which are a consequence of Theorem \ref{theo:promodhomdef}, the $H$-equivariance of the maps and Lemma \ref{lemma4}.
\end{proof}

The curvature of a connection $\nabla_\st\in\Con_{A_\st}(V_\st)$ is
defined by $R_{\nabla_\st} := \nabla_\st^{\bullet_\st}\circ \nabla_\st \in
\Hom_{A_\st}(V_\st,V_\st\otimes_{A_\st}\Omega_\st^2)$.  Due to the
isomorphism  $\Con_A(V)\simeq Con_{A_\st}(V_\st)$ (cf. Theorem
\ref{theo:condef} and the text after the theorem) any
connection $\nabla_\st\in\Con_{A_\st}(V_\st)$ is the image $\widetilde
D_\FF(\dd)$ of a connection  $\nabla\in\Con_{A}(V)$.

 \begin{Theorem}\label{theo:curvaturedeform} 
 Let $H$ be a Hopf algebra with twist $\FF\in H\otimes H$,
 $A$ be an $\AAAlg{H}{}{}$-algebra, $V$ be an $\MMMod{H}{}{A}$-module and 
 $(\Omega^\bullet,\wedge,\dif)$  be a left $H$-covariant differential
 calculus over $A$. 
Consider an arbitrary connection
$\widetilde{D}_\FF(\nabla)\in\Con_{A_\st}(V_\st),$ (where $\nabla\in\Con_A(V)$). 
The curvature of the deformed connection $\widetilde{D}_\FF(\nabla)$ satisfies the identity
 \begin{flalign}
 R_{\widetilde{D}_\FF(\nabla)}  = \varphi^{-1}_{V_\st,\Omega^2_\st}\circ D_\FF\big(\nabla^\bullet\circ_\st \nabla\big) = \widetilde{D}_\FF\big(\nabla^{\bullet}\circ_\st \nabla\big)~.
 \end{flalign}
 \end{Theorem}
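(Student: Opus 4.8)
The plan is to unfold the definition of the curvature of the deformed connection and reduce everything to the commutative diagram already established in Lemma \ref{lem:conextdef} together with the multiplicativity of $D_\FF$. By Definition \ref{defi:curvature} applied to the deformed data, the curvature of $\widetilde{D}_\FF(\nabla)\in\Con_{A_\st}(V_\st)$ is $R_{\widetilde{D}_\FF(\nabla)}=\widetilde{D}_\FF(\nabla)^{\bullet_\st}\circ\widetilde{D}_\FF(\nabla)$, a $\bbK$-linear map $V_\star\to V_\star\otimes_{A_\star}\Omega^2_\star$. First I would record the two inputs: the definition $\widetilde{D}_\FF=\varphi^{-1}\circ D_\FF$ from Theorem \ref{theo:condef}, so that $\widetilde{D}_\FF(\nabla)=\varphi_{V_\star,\Omega_\star}^{-1}\circ D_\FF(\nabla)$, and the commutative diagram (\ref{eqn:connbullet}) of Lemma \ref{lem:conextdef}, which, restricted to the degree-one component of the source (the only part reached by $\nabla$, and landing in the degree-two component since $\nabla^\bullet$ raises the form degree by one), reads $\widetilde{D}_\FF(\nabla)^{\bullet_\st}=\varphi_{V_\star,\Omega^2_\star}^{-1}\circ D_\FF(\nabla^\bullet)\circ\varphi_{V_\star,\Omega_\star}$.

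Next I would simply substitute these two expressions into $R_{\widetilde{D}_\FF(\nabla)}=\widetilde{D}_\FF(\nabla)^{\bullet_\st}\circ\widetilde{D}_\FF(\nabla)$ and observe the crucial cancellation: the map $\varphi_{V_\star,\Omega_\star}$ appearing on the right of the extension term meets its inverse $\varphi_{V_\star,\Omega_\star}^{-1}$ sitting at the front of $\widetilde{D}_\FF(\nabla)$, leaving
\begin{flalign}
R_{\widetilde{D}_\FF(\nabla)}=\varphi_{V_\star,\Omega^2_\star}^{-1}\circ D_\FF(\nabla^\bullet)\circ D_\FF(\nabla)~.
\end{flalign}
At this point I would invoke that $D_\FF$ is an algebra homomorphism from the $\star$-composition to the ordinary composition, i.e.~$D_\FF(\nabla^\bullet)\circ D_\FF(\nabla)=D_\FF(\nabla^\bullet\circ_\st\nabla)$ (this is the homomorphism property (\ref{D alg-homo}) of Theorem \ref{isostar2}, as used in Theorem \ref{theo:firstcat}; the maps $\nabla^\bullet$ and $\nabla$ are composable). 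This yields the first claimed identity $R_{\widetilde{D}_\FF(\nabla)}=\varphi^{-1}_{V_\st,\Omega^2_\st}\circ D_\FF(\nabla^\bullet\circ_\st\nabla)$, and the second equality is then just the definition $\widetilde{D}_\FF=\varphi^{-1}\circ D_\FF$ read off at the module $V_\star\otimes_{A_\star}\Omega^2_\star$.

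The calculation itself is short; the only real care needed, and what I regard as the main obstacle, lies in the bookkeeping of the graded $\varphi$-isomorphisms. One must check that the degree-one restriction of (\ref{eqn:connbullet}) genuinely has source map $\varphi_{V_\star,\Omega_\star}$ and target map $\varphi_{V_\star,\Omega^2_\star}$, so that the two inner $\varphi$'s are truly inverse to one another and cancel, rather than being $\varphi$-maps on different graded pieces. Since the maps $\varphi_{V_\star,\Omega^\bullet_\star}$ are degree preserving (the $H$-action, and hence the twist, preserves the $\mathbb{N}^0$-grading, by Proposition \ref{lem:dcdef}), this restriction is legitimate and the cancellation is valid. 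Once this grading point is settled, no further computation is required, as both the extension diagram and the multiplicativity of $D_\FF$ are supplied by earlier results.
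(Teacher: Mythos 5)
Your proof is correct and follows exactly the paper's own argument: the paper proves this theorem in one line by citing Lemma \ref{lem:conextdef} together with the multiplicativity property $D_\FF(Q\circ_\st \check{Q})=D_\FF(Q)\circ D_\FF(\check{Q})$, which is precisely the cancellation-of-$\varphi$'s computation you spell out. Your additional remark on restricting the diagram (\ref{eqn:connbullet}) to the degree-one component, legitimate since the $H$-action and hence the $\varphi$-maps preserve the $\mathbb{N}^0$-grading, is a detail the paper leaves implicit but does not change the route.
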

 \begin{proof}
The proof follows from  Lemma \ref{lem:conextdef} and the property 
$D_\FF(Q\circ_\star \check Q) = D_\FF(Q)\circ D_\FF(\check Q)$, which holds for
 any two composable $\bbK$-linear maps $Q,\check Q$.
\end{proof}
Notice that the deformed curvature $\widetilde D_\FF(R_\dd)=\widetilde
D_\FF(\dd^\bullet\circ\dd)$ differs from the curvature of the deformed
connection $R_{\widetilde D_\FF(\dd)}=\widetilde
D_\FF(\dd^{\bullet}\circ_\st\dd)$. The right hand side of this latter 
equality can be recognized as the deformation of the
curvature of the connection $\dd$ seen as an element of the affine space
$\Con_{A}(V)_\st$ (rather than $\Con_A(V)$). Indeed, in this case
$\dd\in\Con_{A}(V)_\st$ can be seen as a morphism in the category
$\rep^H{}_{\,\st}$, which is characterized by the $\star$-composition $\circ_\st$.
\sk
The difference between $\widetilde D_\FF(R_\dd) $ and $R_{\widetilde
  D_\FF(\dd)} $ immediately leads to the following corollary on flat connections.
 \begin{Corollary}
 In the hypotheses of Theorem \ref{theo:curvaturedeform}, consider an
 arbitrary connection $\widetilde{D}_\FF(\nabla)\in\Con_{A_\st}(V_\st)$,
 (where $\nabla\in\Con_A(V)$). Then the connection $\widetilde{D}_\FF(\nabla)$ is flat
 (i.e.~$R_{\widetilde{D}_\FF(\nabla)} =0$) if and only if $\nabla^\bullet \circ_\st \nabla =0$.
  In general, the deformation
 of a flat connection $\nabla\in\Con_A(V)$ (i.e.~$R_\nabla=0$) is not flat (i.e.~$R_{\widetilde{D}_\FF(\nabla)}\neq 0$). 
 Notice, however, that in the case the flat connection $\nabla\in\Con_A(V)$ is additionally $H$-equivariant,
  then the deformed connection $\widetilde{D}_\FF(\nabla)$ is also flat.
 \end{Corollary}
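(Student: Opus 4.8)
The plan is to read off the result directly from Theorem \ref{theo:curvaturedeform}, which already furnishes the identity $R_{\widetilde{D}_\FF(\nabla)} = \widetilde{D}_\FF\big(\nabla^{\bullet}\circ_\st \nabla\big)$. Since $\widetilde{D}_\FF = \varphi^{-1}_{V_\st,\Omega^2_\st}\circ D_\FF$ is an $\MMMod{H^\FF}{}{}$-module isomorphism (in particular a $\bbK$-module isomorphism, hence injective), we have $R_{\widetilde{D}_\FF(\nabla)}=0$ if and only if $\widetilde{D}_\FF\big(\nabla^{\bullet}\circ_\st \nabla\big)=0$, which holds if and only if $\nabla^{\bullet}\circ_\st \nabla =0$. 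This establishes the first claim. The injectivity of $\widetilde{D}_\FF$ is the only ingredient needed, and it is immediate from the isomorphism property.

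For the statement that a flat connection generally deforms into a non-flat one, the point is to contrast $\nabla^{\bullet}\circ \nabla$ (whose vanishing is flatness of $\nabla$, by Definition \ref{defi:curvature}) with $\nabla^{\bullet}\circ_\st \nabla$ (whose vanishing governs flatness of $\widetilde{D}_\FF(\nabla)$). The $\star$-composition $\circ_\st$ is given by $P\circ_\st Q = (\of^\al\RA P)\circ (\of_\al\RA Q)$, so $\nabla^{\bullet}\circ_\st \nabla$ differs from $\nabla^{\bullet}\circ \nabla$ precisely by the insertion of the twist acting via the adjoint action $\RA$. I would note that these two expressions coincide when the maps being composed are $H$-equivariant, since then $\of_\al\RA \nabla = \varepsilon(\of_\al)\nabla$ and the normalization (\ref{propF2}) collapses the twist; but for a generic (non-equivariant) connection $\nabla$ they differ, so $R_\nabla = \nabla^\bullet\circ\nabla = 0$ does not entail $\nabla^\bullet\circ_\st\nabla = 0$. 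The honest way to present the ``in general'' clause is to observe that there is no reason for the two compositions to agree off the equivariant locus; a fully rigorous version would exhibit an explicit example, but the corollary only asserts the generic failure, so pointing to the structural difference between $\circ$ and $\circ_\st$ suffices.

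For the final sentence, assume $\nabla\in\Con_A(V)$ is flat and $H$-equivariant, meaning $\xi\RA\nabla = \varepsilon(\xi)\,\nabla$ for all $\xi\in H$. The plan is to show directly that $\circ_\st$ reduces to $\circ$ on the relevant maps. First I would check that $\nabla^\bullet$ inherits $H$-equivariance from $\nabla$: inspecting the defining formula (\ref{eqn:nablift}), $\nabla^\bullet = (\id\otimes\wedge)\circ(\nabla\otimes\id)+\pi\circ(\id\otimes\dif)$, every constituent map other than $\nabla$ is $H$-equivariant (the projection $\pi$, the differential $\dif$ by (\ref{eqn:equivar}), and the product $\wedge$), so $\nabla^\bullet$ is $H$-equivariant as well. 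Then
\begin{flalign}
\nabla^\bullet\circ_\st\nabla = (\of^\al\RA\nabla^\bullet)\circ(\of_\al\RA\nabla) = \varepsilon(\of^\al)\,\varepsilon(\of_\al)\,\nabla^\bullet\circ\nabla = \nabla^\bullet\circ\nabla = R_\nabla = 0~,\nonumber
\end{flalign}
where the third equality uses the normalization property (\ref{propF2}) of the twist. By the first part of the corollary this gives $R_{\widetilde{D}_\FF(\nabla)}=0$.

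The main obstacle is not any single computation but rather the precise justification of the word ``in general'' in the middle claim: strictly speaking one should either supply a concrete counterexample or argue that equality of $\circ$ and $\circ_\st$ forces equivariance. I expect the cleanest route is to lean on the structural observation that $\circ_\st$ and $\circ$ coincide exactly on $H$-equivariant arguments (which is what drives the last sentence), so that non-flatness after deformation is the expected behaviour precisely because connections are generically not $H$-equivariant.
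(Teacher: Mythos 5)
Your proposal is correct and follows exactly the route the paper intends: the paper states this corollary without proof, as an immediate consequence of the identity $R_{\widetilde{D}_\FF(\nabla)}=\widetilde{D}_\FF\bigl(\nabla^\bullet\circ_\star\nabla\bigr)$ from Theorem \ref{theo:curvaturedeform} together with invertibility of $\widetilde{D}_\FF$, and of the observation that $\circ_\star$ collapses to $\circ$ on $H$-equivariant maps by the normalization property of the twist. Your explicit verifications (injectivity of $\widetilde{D}_\FF$, equivariance of $\nabla^\bullet$ inherited from $\nabla$ since $\wedge$, $\dif$ and $\pi$ are $H$-equivariant, and $\varepsilon(\of^\al)\varepsilon(\of_\al)=1$) correctly fill in the details the paper leaves implicit, and your honest caveat about the unexhibited counterexample behind ``in general'' matches the paper, which likewise offers none.
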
 
 \sk
 The study of the cohomology of flat deformed connections could lead to new cohomology invariants 
 or interesting combinations of undeformed ones.


\subsection{Curvature of the sum of connections}
We conclude this section by calculating the curvature of the sum of
two connections.
Let $(H,\RR)$ be a quasitriangular Hopf algebra,  $A$ be a quasi-commutative $\AAAlg{H}{}{}$-algebra,
$W$ be a quasi-commutative $\MMMod{H}{A}{A}$-module and
 $\bigl(\Omega^\bullet,\wedge,\dif\bigr)$ be a graded
 quasi-commutative left $H$-covariant differential calculus over $A$.
 For an arbitrary $\MMMod{H}{}{A}$-module $V$ and arbitrary connections $\dd_V\in \Con_A(V)$,
 $\dd_W\in\Con_A(W)$,
 the sum of connections $\nabla_V\oplus_\RR \nabla_W\in\Con_A(V\otimes_A W) $ is given 
 by (cf.~Theorem \ref{theo:conplus}), for
 all $v\in V$ and $w\in W$,
 \begin{flalign}\label{eqn:sumconincurv}
 (\dd_V \oplus_\RR \dd_W)
(v\otimes_A w)=
 \tau^{-1}_{\RR\,23}\bigl(\dd_V (v)\otimes_A w\bigr)
+ (\oR^\be\ra v)\otimes_A (\oR_\be\RA\dd_W)(w)~.
 \end{flalign}
 We use the following formal notation
 \begin{flalign}\label{eqn:sumconformal}
 \dd_V \oplus_\RR \dd_W = \tau_{\RR\,23}^{-1}\circ (\dd_V \otimes_\RR \id_W) +  \id_V\otimes_\RR \dd_W~,
 \end{flalign}
 which is understood to give (\ref{eqn:sumconincurv}) when acting on generating elements $v\otimes_A w$.
 Here all maps are considered to act on the tensor product over $A$ (this is why there are no projections in this expression), and the 
 notation is formal in the sense that the individual terms in (\ref{eqn:sumconformal}) are not well-defined maps on 
 $V\otimes_A W$, but only their sum is.
The reason for introducing this compact notation is that it simplifies
the lengthy calculation in  Proposition \ref{prop7.5}.
The equations appearing in the proof of this proposition should be
understood as holding true when acting on 
 generating elements $v\otimes_A w$ of $V\otimes_A W$.

The canonical extension (\ref{eqn:nabliftinduced}) of the connection 
$\dd_V \oplus_\RR \dd_W$ reads, for all $v\in V$, $w\in W$ and $\omega\in \Omega^\bullet$,
 \begin{flalign}\label{eqn:liftsumconincurv}
 (\dd_V \oplus_\RR \dd_W)^\bullet(v\otimes_A w\otimes_A\omega) = \big((\dd_V \oplus_\RR \dd_W)(v\otimes_A w)\big)
 \wedge \omega 
 + v\otimes_A w\otimes_A\dif \omega~.
 \end{flalign}
Also here we use  a formal notation similar to (\ref{eqn:sumconformal}), 
\begin{flalign}\label{eqn:liftsumconformal}
(\dd_V \oplus_\RR \dd_W)^\bullet = (\id_{V\otimes_A W}\otimes_\RR \wedge) \circ 
\big((\dd_V \oplus_\RR \dd_W) \otimes_\RR\id_{\Omega^\bullet}\big) + \id_{V\otimes_A W}\otimes_\RR \dif~.
\end{flalign}
Again, the individual terms in this expression are not well-defined maps on $V\otimes_A W\otimes_A \Omega^\bullet$,
 but their sum is.
 
 We express  the curvature $R_{\nabla_V\oplus_\RR\nabla_W}$
in terms of the curvatures $R_{\nabla_V}$ and $R_{\nabla_W}$.
 \begin{Proposition}\label{prop7.5}
Let $(H,\RR)$ be a quasitriangular Hopf algebra, $A$ be a quasi-commutative $\AAAlg{H}{}{}$-algebra,
 $W$ be a quasi-commutative $\MMMod{H}{A}{A}$-module and
 $\bigl(\Omega^\bullet,\wedge,\dif\bigr)$ be a graded
 quasi-commutative left $H$-covariant differential calculus over $A$.
 Then for any $\MMMod{H}{}{A}$-module $V$ and arbitrary connections $\dd_V\in \Con_A(V)$,
 $\dd_W\in\Con_A(W)$,
 the curvature $R_{\nabla_V\oplus_\RR\nabla_W} \in\Hom_A(V\otimes_AW,V\otimes_AW\otimes_A\Omega^2)$ satisfies the identity
 \begin{multline}\label{eqn:curvaturesum}
 R_{\nabla_V\oplus_\RR\nabla_W}= \tau^{-1}_{\RR\,23}\circ (R_{\nabla_V}\otimes_\RR \id_W \big)
 + \id_V \otimes_\RR R_{\nabla_W} \\
 + (\id_{V\otimes_A W}\otimes_\RR\wedge)\circ \tau^{-1}_{\RR\,23} \circ \Big( \nabla_V \otimes_\RR \nabla_W - (\oR^\al\RA\nabla_V) \otimes_\RR (\oR_\al\RA \nabla_W )\Big)~,
 \end{multline}
 where $R_{\nabla_V}\in\Hom_A(V,V\otimes_A\Omega^2)$ and $R_{\nabla_W}\in\Hom_A(W,W\otimes_A\Omega^2)$ 
 are the curvatures of $\nabla_V$ and $\nabla_W$, respectively.
 The second line in (\ref{eqn:curvaturesum}) is understood in the same formal notation as used in (\ref{eqn:sumconformal}) 
 and (\ref{eqn:liftsumconformal}).
 \end{Proposition}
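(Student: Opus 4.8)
The plan is to compute the curvature $R_{\nabla_V\oplus_\RR\nabla_W}=(\nabla_V\oplus_\RR\nabla_W)^\bullet\circ(\nabla_V\oplus_\RR\nabla_W)$ directly, working with the formal expressions \eqref{eqn:sumconformal} and \eqref{eqn:liftsumconformal} and understanding every identity as holding on generating elements $v\otimes_A w\in V\otimes_A W$. Writing $\nabla_V\oplus_\RR\nabla_W = P + Q$ with $P := \tau^{-1}_{\RR\,23}\circ(\nabla_V\otimes_\RR\id_W)$ and $Q := \id_V\otimes_\RR\nabla_W$, and substituting $S^\bullet = (\id_{V\otimes_A W}\otimes_\RR\wedge)\circ(S\otimes_\RR\id_{\Omega^\bullet}) + \id_{V\otimes_A W}\otimes_\RR\dif$ for $S=P+Q$, I would expand $R_{\nabla_V\oplus_\RR\nabla_W}$ into six summands: four of the form $(\id\otimes_\RR\wedge)\circ(X\otimes_\RR\id)\circ Y$ with $X,Y\in\{P,Q\}$, together with $(\id\otimes_\RR\dif)\circ P$ and $(\id\otimes_\RR\dif)\circ Q$. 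The strategy is then to group these as ``pure $V$'', ``pure $W$'' and ``cross'' contributions.

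The pure $W$ contribution $(\id\otimes_\RR\wedge)\circ(Q\otimes_\RR\id)\circ Q + (\id\otimes_\RR\dif)\circ Q$ is the most transparent: since $\id_V$, $\wedge$ and $\dif$ are $H$-equivariant, repeated use of the associativity \eqref{eqn:RtensorassA} together with the fact that $\id_V\otimes_\RR(\,\cdot\,)$ is multiplicative when the first factor is $H$-equivariant (the $\RR$-factors in the braided composition law \eqref{eqn:RtensorcircA} collapse via the normalization of $\RR$) reduces this sum to $\id_V\otimes_\RR\big((\id_W\otimes\wedge)\circ(\nabla_W\otimes\id)\circ\nabla_W + (\id_W\otimes\dif)\circ\nabla_W\big) = \id_V\otimes_\RR R_{\nabla_W}$, recalling $R_{\nabla_W}=\nabla_W^\bullet\circ\nabla_W$ from Lemma \ref{lem:connectionlift} and Definition \ref{defi:curvature}. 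The pure $V$ contribution $(\id\otimes_\RR\wedge)\circ(P\otimes_\RR\id)\circ P + (\id\otimes_\RR\dif)\circ P$ is handled analogously, the only extra work being to commute the two copies of $\tau^{-1}_{\RR\,23}$ hidden in $P$ past $\wedge$ using the braid relations \eqref{hexagonA1}--\eqref{hexagonA2} and the $H$-equivariance of $\tau_\RR$, so that $\nabla_V^\bullet\circ\nabla_V=R_{\nabla_V}$ is recognised and a single braiding survives, yielding $\tau^{-1}_{\RR\,23}\circ(R_{\nabla_V}\otimes_\RR\id_W)$.

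The cross contribution consists of $(\id\otimes_\RR\wedge)\circ(P\otimes_\RR\id)\circ Q$ and $(\id\otimes_\RR\wedge)\circ(Q\otimes_\RR\id)\circ P$, each carrying exactly one $\nabla_V$ and one $\nabla_W$. Here I would invoke the braided composition law \eqref{eqn:RtensorcircA} to bring both terms to the common shape $(\id\otimes_\RR\wedge)\circ\tau^{-1}_{\RR\,23}\circ(\,\cdot\,)$; reordering the two factors in the second term relative to the first is precisely what produces the $\RR$-matrix, turning the bare $\nabla_V\otimes_\RR\nabla_W$ into $(\oR^\al\RA\nabla_V)\otimes_\RR(\oR_\al\RA\nabla_W)$. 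The relative minus sign is supplied by the graded Leibniz rule \eqref{eqn:nabliftleibnitz} (equivalently the graded quasi-commutativity \eqref{gradedqc}), since in one of the two orderings $\nabla_V$ acts on top of the degree-one form already created by $\nabla_W$, contributing the factor $(-1)^1$. Assembling the three groups then gives \eqref{eqn:curvaturesum}.

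I expect the main obstacle to be the cross term: keeping precise track of which tensor slot each braiding acts on, applying the braided composition law so that the $\RR$-matrix and the sign appear in the correct placement, and verifying that although the individual summands $\nabla_V\otimes_\RR\nabla_W$ are only formal (the connections being not right $A$-linear), the antisymmetrised difference composed with $(\id\otimes_\RR\wedge)\circ\tau^{-1}_{\RR\,23}$ descends to a genuine, well-defined right $A$-linear map on $V\otimes_A W$, as it must since the left-hand side $R_{\nabla_V\oplus_\RR\nabla_W}$ lies in $\Hom_A(V\otimes_A W,V\otimes_A W\otimes_A\Omega^2)$.
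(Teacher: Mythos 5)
Your proposal is correct and follows essentially the same route as the paper's proof: both expand $(\nabla_V\oplus_\RR\nabla_W)^\bullet\circ(\nabla_V\oplus_\RR\nabla_W)$ in the formal notation on generating elements, reduce the pure-$V$ and pure-$W$ groups to $\tau^{-1}_{\RR\,23}\circ(R_{\nabla_V}\otimes_\RR\id_W)$ and $\id_V\otimes_\RR R_{\nabla_W}$ via the braid relations and $H$-equivariance, and extract the cross term's $\RR$-matrix insertion from the braided composition law, with the minus sign arising from $\wedge\circ\tau^{-1}_{\RR}=-\wedge$ on one-forms, i.e.\ precisely the graded quasi-commutativity (\ref{gradedqc}) you cite as the equivalent source. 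Your closing observation on well-definedness also matches the paper's convention: the individual formal summands are not maps on $V\otimes_A W$, but the grouped expressions (and hence, since the first-line terms are genuinely right $A$-linear, the second line as a whole) are.
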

 \begin{proof}
 We have to calculate $R_{\nabla_V\oplus_\RR\nabla_W} = (\dd_V \oplus_\RR \dd_W)^\bullet \circ (\dd_V \oplus_\RR \dd_W)$.
 We use $\id_{V\otimes_A W} = \id_V\otimes_\RR \id_W =
 \id_V\otimes\id_W$. 
In order to simplify the notation
 we drop all module indices on the identity maps and write $\id_{V\otimes_A W}\otimes \wedge 
 = \id\otimes\id\otimes \wedge = \wedge_{34}$. We also recall that $P\otimes_\RR Q = P\otimes Q$, 
 whenever the map $Q$ is $H$-equivariant.
 
 Using the notation introduced in  (\ref{eqn:sumconformal}) and (\ref{eqn:liftsumconformal}), the curvature is given by the sum of the
  following four terms
 \begin{subequations} \label{eqn:sumcurvintermediate}
 \begin{flalign}
 \label{I1}  R_{\nabla_V\oplus_\RR\nabla_W} &=\Big(\wedge_{34} \circ\tau_{\RR\,23}^{-1}\circ (\nabla_V\otimes\id\otimes\id) + \id\otimes\id\otimes\dif\Big) \circ \tau_{\RR\,23}^{-1}\circ (\nabla_V\otimes\id)\\
 \label{I2}&~~~+\Big(\wedge_{34}\circ (\id\otimes_\RR\nabla_W\otimes_\RR\id) + \id\otimes\id\otimes\dif\Big)\circ (\id\otimes_\RR\nabla_W)\\
\label{I3}&~~~+ \wedge_{34}\circ \tau_{\RR\,23}^{-1} \circ (\nabla_V\otimes\id\otimes\id) \circ (\id\otimes_\RR\nabla_W)\\
 \label{I4} &~~~+\wedge_{34} \circ (\id\otimes_\RR\nabla_W \otimes_\RR\id) \circ \tau_{\RR\,23}^{-1}\circ (\nabla_V\otimes\id)~.
 \end{flalign}
 \end{subequations}
 We now simplify the individual terms in (\ref{eqn:sumcurvintermediate}). For the first term we find
 \begin{flalign}
\nn (\text{\ref{I1}}) &= \Big(\wedge_{34} \circ\tau_{\RR\,23}^{-1}\circ (\nabla_V\otimes\id\otimes\id) \circ\tau_{\RR\,23}^{-1} + (\id\otimes\id\otimes\dif) \circ \tau_{\RR\,23}^{-1}\Big) \circ (\nabla_V\otimes\id)\\
 \nn &=\Big(\wedge_{34} \circ\tau_{\RR\,23}^{-1}\circ \tau_{\RR\,34}^{-1}\circ (\nabla_V\otimes\id\otimes\id) +\tau_{\RR\,23}^{-1}\circ  (\id\otimes\dif\otimes\id) \Big) \circ (\nabla_V\otimes\id)\\
 \nn &=\Big(\wedge_{34} \circ\tau_{\RR\,(23)4}^{-1}\circ (\nabla_V\otimes\id\otimes\id) +\tau_{\RR\,23}^{-1}\circ  (\id\otimes\dif\otimes\id) \Big) \circ (\nabla_V\otimes\id)\\
\nn &=\tau_{\RR\,23}^{-1}\circ \Big(\wedge_{23}\circ  (\nabla_V\otimes\id\otimes\id) + \id\otimes\dif\otimes\id \Big) \circ (\nabla_V\otimes\id)\\
\nn &=\tau_{\RR\,23}^{-1}\circ \Big(\big((\wedge_{23}\circ  (\nabla_V\otimes\id) + \id\otimes\dif) \circ \nabla_V\big)\otimes\id\Big)\\
&= \tau_{\RR\,23}^{-1}\circ (R_{\nabla_V}\otimes\id)~.
 \end{flalign}
 For the second term we find
 \begin{flalign}
 \nn (\text{\ref{I2}}) &= \Big(\id\otimes_\RR \big(\wedge_{23} \circ (\nabla_W\otimes\id) + \id\otimes\dif\big)\Big) \circ (\id\otimes_\RR \nabla_W) \\
 \nn &= \id\otimes_\RR \Big(\big(\wedge_{23} \circ (\nabla_W\otimes\id) + \id\otimes\dif\big)\circ \nabla_W\Big)\\
 &= \id\otimes_\RR R_{\nabla_W}~.
 \end{flalign}
 The third term simplifies to
 \begin{flalign}
 \nn (\text{\ref{I3}}) &= \wedge_{34}\circ \tau_{\RR\,23}^{-1} \circ \big(\nabla_V\otimes_\RR (\id\otimes\id)\big) \circ (\id\otimes_\RR\nabla_W)\\
 &= \wedge_{34}\circ \tau_{\RR\,23}^{-1}\circ \big(\nabla_V\otimes_\RR\nabla_W\big)~,
 \end{flalign}
 and the last term to
 \begin{flalign}
 \nn (\text{\ref{I4}}) &= \wedge_{34}\circ \Big(\id\otimes_\RR\big((\nabla_W\otimes\id)\circ \tau_\RR^{-1}\big)\Big)\circ (\nabla_V\otimes\id)\\
 \nn &= \wedge_{34}\circ \Big(\id\otimes_\RR\big(\tau_{\RR\,1(23)}^{-1}\circ (\id\otimes_\RR\nabla_W)\big)\Big)\circ (\nabla_V\otimes\id)\\
\nn &=\wedge_{34}\circ \tau_{\RR\,2(34)}^{-1}\circ \big(\id\otimes_\RR\id\otimes_\RR\nabla_W\big)\circ (\nabla_V\otimes\id)\\
\nn &=\wedge_{34}\circ \tau_{\RR\,2(34)}^{-1}\circ \big((\oR^\alpha\RA \nabla_V)\otimes_\RR (\oR_\alpha\RA \nabla_W)\big)\\
\nn &=\wedge_{34}\circ \tau_{\RR\,34}^{-1}\circ \tau_{\RR\,23}^{-1}\circ \big((\oR^\alpha\RA \nabla_V)\otimes_\RR (\oR_\alpha\RA \nabla_W)\big)\\
&=- \wedge_{34}\circ\tau_{\RR\,23}^{-1}\circ \big((\oR^\alpha\RA \nabla_V)\otimes_\RR (\oR_\alpha\RA \nabla_W)\big)~.
 \end{flalign}
The sum of these four terms gives (\ref{eqn:curvaturesum}).
 \end{proof}
 \begin{Remark}
The first line in  (\ref{eqn:curvaturesum}) is the sum (of the lift to
$V\otimes_A W$) of the curvatures $R_{\nabla_V}$ and $R_{\nabla_{W}}$. 
The curvature $R_{\nabla_V\oplus_\RR\nabla_W}$ is not simply the sum of 
 $R_{\nabla_V}$ and $R_{\nabla_{W}}$; the second line in
 (\ref{eqn:curvaturesum}) gives an additional contribution due to the
 non $H$-equivariance of the connections. 
 In the special case that either $\nabla_V$ or $\nabla_W$ are
  $H$-equivariant,
 the second line in (\ref{eqn:curvaturesum}) vanishes and the curvature $R_{\nabla_V\oplus_\RR\nabla_W}$
 is simply the sum of the individual curvatures. 
\end{Remark}


\section*{Acknowledgements}
We would like to thank Ugo Bruzzo, Tomasz Brzezi{\'n}ski, 
Leonardo Castellani, Branislav Jur{\v c}o, Thorsten Ohl, Christoph F.~Uhlemann  
and Walter van Suijlekom for useful discussions and comments.

The work of P.A.~is in part supported by the exchange grant 2646 of the ESF Activity {\it Quantum Geometry 
and Quantum Gravity}, by the Deutsche Forschungsgmeinschaft through the Research Training Group GRK 1147 {\it Theoretical Astrophysics and Particle Physics} and by a
short term visit grant of CERN TH Division.

The work of A.S.~is in part supported by the Deutsche Forschungsgmeinschaft through the Research Training Group 
GRK 1147 {\it Theoretical Astrophysics and Particle Physics}, by the short visit grant 3267 of the ESF Activity {\it Quantum Geometry 
and Quantum Gravity} and by INFN sezione di Torino, gruppo collegato di Alessandria.




\begin{thebibliography}{10}

\bibitem[AC96]{AC} 
  P.~Aschieri and L.~Castellani,
  ``Bicovariant calculus on twisted ISO(N), quantum Poincare group and quantum Minkowski space,''
  Int.\ J.\ Mod.\ Phys.\ A {\bf 11}, 4513 (1996)
  [arXiv:q-alg/9601006].


\bibitem[AB02]{AB} 
  P.~Aschieri and F.~Bonechi,
  ``On the noncommutative geometry of twisted spheres,''
  Lett.\ Math.\ Phys.\  {\bf 59}, 133 (2002)
  [arXiv:math/0108136 [math.QA]].



\bibitem[A$^+$05]{Aschieri:2005yw} 
  P.~Aschieri, C.~Blohmann, M.~Dimitrijevic, F.~Meyer, P.~Schupp and J.~Wess,
  ``A Gravity theory on noncommutative spaces,''
  Class.\ Quant.\ Grav.\  {\bf 22}, 3511 (2005)
  [arXiv:hep-th/0504183].

\bibitem[ADMW06]{Aschieri:2005zs} 
  P.~Aschieri, M.~Dimitrijevic, F.~Meyer and J.~Wess,
  ``Noncommutative geometry and gravity,''
  Class.\ Quant.\ Grav.\  {\bf 23}, 1883 (2006)
  [arXiv:hep-th/0510059].
  
  \bibitem[A$^+$09]{book}
  P.~Aschieri, M.~Dimitrijevic, P.~Kulish, F.~Lizzi and J.~Wess,
  ``Noncommutative spacetimes: Symmetries in noncommutative geometry and field theory,''
  Springer, Lecture notes in physics. 774 (2009).
  
  \bibitem[Asc12]{AschieriPro}
  P.~Aschieri,
  ``Twisting all the way: From algebras to morphisms and connections,'' 
  Int.~J.~Mod.~Phys.~Conf.~Ser., {\bf 13}, 1 (2012).

   \bibitem[BM10]{BeggsMajid1}
   E.~Beggs and S.~Majid,
   ``Nonassociative Riemannian geometry by twisting,''
   Journal of Physics: Conference Series {\bf 254}, 012002 (2010)
   [arXiv:0912.1553 [math.QA]].

   \bibitem[BG11]{Bieliavsky} 
   P.~Bieliavsky and V.~Gayral
``Deformation Quantization for Actions of K\"ahlerian Lie Groups,''
 	arXiv:1109.3419 [math.OA]. To app. in Mem. Amer. Math. Soc..

\bibitem[Con85]{Connes}
A.~Connes, 
``Non-commutative differential geometry,''
Publ.~Math., Inst.~Hautes {\'E}tud.~Sci.~{\bf 62}, 41 (1985).
  

\bibitem[CDV02]{Dubois-Violette}
  A.~Connes and M.~Dubois-Violette,
  ``Noncommutative finite-dimensional manifolds. I. Spherical 
   manifolds and related examples''
  Commun.\ Math.\ Phys.\  {\bf 230}, 539 (2002)
   [arXiv:math/0107070 [math.QA]].


\bibitem[CL01]{Connes-Landi}
  A.~Connes and G.~Landi,
``Noncommutative manifolds: The instanton algebra and isospectral
  deformations,''
  Commun.\ Math.\ Phys.\  {\bf 221}, 141 (2001)
  [arXiv:math/0011194 [math.QA]].


    \bibitem[Dri83]{Drinfeld83}
  V.~G.~Drinfeld,
  ``On constant quasiclassical solutions of the Yang-Baxter equations,''
   Soviet Math.~Dokl.~{\bf 28}, 667 (1983).
   
   
\bibitem[Dri85]{Drinfeld85} 
V.~G.~Drinfeld,
``Hopf algebras and the quantum Yang-Baxter equation,''
Soviet Math.~Dokl.~{\bf 32}, 254 (1985).

 
 \bibitem[Dri89]{Drinfeld89} 
  V.~G.~Drinfeld,
  ``Quasi Hopf algebras,''
  Alg.\ Anal.\  {\bf 1}:6, 114 (1989)
  (Leningrad Math.~J.~{\bf 1}:6, 1419 (1990)).


 \bibitem[DVM96]{DuViMasson}
 M.~Dubois-Violette and T.~Masson,
 ``On the first order operators in bimodules,''
  Lett.~Math.~Phys. {\bf 37}, 467  (1996)
  [arXiv:q-alg/9507028].
  
\bibitem[DV01]{DuViLecture}
M.~Dubois-Violette,
``Lectures on graded differential algebras and noncommutative geometry,''
in Noncommutative Differential Geometry and Its Application to Physics,
Proceedings of the Workshop Shonan, Japan, June 1999, Y.~Maeda, H.~Moriyoshi et al (eds),
Kluwer Academic Publishers 2001, pp.~245-306 [arXiv:math/9912017 [math.QA]].

\bibitem[Fio10]{Fiore:2008sj}
 G.~Fiore,
 ``On second quantization on noncommutative spaces with twisted symmetries,''
 J.\ Phys.\ A {\bf 43}, 155401 (2010)
 [arXiv:0811.0773 [hep-th]].

\bibitem[FRT89]{FRT} 
L.~D.~Faddeev, N.~Yu.~Reshetikhin and L.~A.~Takhtajan,
``Quantization of Lie groups and Lie algebras,''
Algebra i Anal.~{\bf 1}:1, 178 (1989) 
(Leningrad Math.~J.~{\bf 1}:1, 193 (1990)).


\bibitem[GZ98]{GIAQUINTO}
A.~Giaquinto and J.~J.~Zhang, 
``Bialgebra actions, twists, and universal deformation formulas,''
J.~Pure Appl.~Alg.~{\bf 128}, 133 (1998)
[arXiv:hep-th/9411140]. 

\bibitem[GM94]{Gurevich:1991nr}
  D.~Gurevich and S.~Majid, 
  ``Braided groups of Hopf algebras obtained by twisting,'' 
  Pac.~J.~Math.~{\bf 162}, 27 (1994).

\bibitem[KM11]{Kulish:2010mr} 
  P.~Kulish and A.~Mudrov,
  ``Twisting adjoint module algebras,''
  Lett.\ Math.\ Phys.\  {\bf 95}, 233 (2011)
  [arXiv:1011.4758 [math.QA]].
  
\bibitem[Lam99]{LamBook}
T.~Lam, 
``Lectures on modules and rings,''
Graduate Texts in Mathematics 189,
Springer-Verlag New York Inc. (1999).


\bibitem[Mad00]{Madore:2000aq} 
  J.~Madore,
  ``An introduction to noncommutative differential geometry and its physical applications,''
  Lond.\ Math.\ Soc.\ Lect.\ Note Ser.\  {\bf 257}, 1 (2000).
  

\bibitem[Maj95]{Majid:1996kd} 
  S.~Majid,
  ``Foundations of quantum group theory,''
  Cambridge, UK: Univ. Pr. (1995).
  

\bibitem[Mou94]{Mourad:1994xa}
  J.~Mourad,
  ``Linear Connections In Noncommutative Geometry,''
  Class.\ Quant.\ Grav.\  {\bf 12}, 965 (1995)
  [arXiv:hep-th/9410201].

\bibitem[Res89]{Res89}
 	N.~Yu.~Reshetikhin, 
 	``Quasitriangular Hopf algebras and invariants of tangles,''
 	Algebra i Anal.~{\bf 1}:2, 169  (1989) 
 	(Leningrad Math.~J.~1 {\bf 2}, 491 (1990)).



\bibitem[Res90]{Reshetikhin}
  N.~Reshetikhin,
``Multiparameter Quantum Groups And Twisted Quasitriangular Hopf Algebras,''
 Lett.\ Math.\ Phys.\  {\bf 20} 331 (1990).

\bibitem[Rie93]{Rieffel}
M.~Rieffel,
``Deformation quantization for actions of 
$\mathbb{R}^d$,'' Mem. Amer. Math. Soc. 106 (1993).


  \bibitem[Sch11a]{SchenkelDiss}
   A.~Schenkel,
   ``Noncommutative Gravity and Quantum Field Theory on Noncommutative Curved Spacetimes'',
   PhD thesis, W\"urzburg University (2011),
   \verb+http://opus.bibliothek.uni-wuerzburg.de/volltexte/2011/6582/+
   
  \bibitem[Sch11b]{SchenkelPro}
  A.~Schenkel,
``Twist deformations of module homomorphisms and connections,''
 PoS(CORFU2011)056 (2011).
 
 \bibitem[Sit01]{Sitarz}
A. Sitarz, 
``Twists and spectral triples for isospectral deformations,'' 
Lett. Math. Phys. 58 (2001) 69-79
[arXiv:math/0102074 [math.QA]].


  \bibitem[Tak89]{Takhtadzhyan} 
  L.~A.~Takhtadzhyan, 
  ``Lectures on quantum groups,''
   in Nankai Lectures on Mathematical Physics,  Mo-Lin-Ge and Bao-Heng Zhao, Editors
World Scientific (1989). 
  

\bibitem[Var01]{Varilly}
  J.~C.~Varilly,
``Quantum symmetry groups of noncommutative spheres,''
  Commun.\ Math.\ Phys.\  {\bf 221} (2001) 511
  [arXiv:math/0102065 [math.QA]].

\bibitem[Wes07]{Wess:2006cm} 
  J.~Wess,
  ``Differential calculus and gauge transformations on a deformed space,''
  Gen.\ Rel.\ Grav.\  {\bf 39}, 1121 (2007)
  [arXiv:hep-th/0607251].

\bibitem[Wor89]{Woronowicz}
  S.~L.~Woronowicz,
  ``Differential calculus on compact matrix pseudogroups (quantum groups),''
  Commun.\ Math.\ Phys.\  {\bf 122}, 125 (1989).

  
\end{thebibliography}
\end{document}